\documentclass[12pt,reqno]{amsart}
\usepackage{graphicx}

\usepackage{enumerate}
\usepackage{amsmath, amsthm} 
\usepackage{amsfonts}
\usepackage{amssymb}
\usepackage{fullpage} 
\usepackage{xcolor}
\usepackage{tikz}
\usepackage{stmaryrd}
\usetikzlibrary{cd}
\usetikzlibrary{decorations.pathmorphing}

\newtheorem{theorem}{Theorem}[section]

\newtheorem*{theorem*}{Theorem}

\newtheorem{lemma}[theorem]{Lemma}
\newtheorem{proposition}[theorem]{Proposition}
\newtheorem{corollary}[theorem]{Corollary}

\theoremstyle{definition}
\newtheorem{definition}{Definition}[section]

\newtheorem{remark}[definition]{Remark}

\usepackage{mathabx,amsmath,amscd, latexsym, amssymb, bbold}

\usepackage{amsmath, amssymb, tikz}

\def\bk{\mathbb{F}}
\def\bZ{\mathbb{Z}}
\def\cZ{\mathcal{Z}}
\def\cC{\mathcal{C}}
\def\cD{\mathcal{D}}

\def\spa{\mathrm{span}}

\DeclareMathOperator{\opp}{op}
\DeclareMathOperator{\id}{id}
\DeclareMathOperator{\op}{op}
\DeclareMathOperator{\Hom}{Hom}
\DeclareMathOperator{\Aut}{Aut}
\DeclareMathOperator{\Rep}{Rep}

\DeclareMathOperator{\coker}{coker}
\DeclareMathOperator{\ob}{ob}
\renewcommand{\Vec}{\mathrm{Vec}}

\DeclareMathOperator{\End}{End}
\DeclareMathOperator{\Irr}{Irr}
\DeclareMathOperator{\ncl}{ncl}

\def\bk{\mathbb{k}}
\def\incl{\hookrightarrow}

\def\rib{\tilde{\mathbf{v}}}
\def\deq{:=}
\def\one{\mathbb{1}}
\DeclareMathOperator{\Ker}{Ker}

\DeclareMathOperator{\im}{im}
\DeclareMathOperator{\Inv}{Inv}

\renewcommand{\epsilon}{\varepsilon}
\newcommand\trel{{\boxtimes}}

\DeclareMathOperator{\Res}{Res}

\DeclareMathOperator{\To}{\Rightarrow}

\newcommand{\mcE}{\mathcal{E}}

\newcommand{\mcM}{\mathcal{M}}

\title{Exact sequences of representation categories of weak Hopf algebras}
\date{}

\author{Quinn T. Kolt}

\address{Department of Mathematics, University of California, Santa Barbara. Santa Barbara, CA 93106, USA}
\email{quinn@math.ucsb.edu}

\begin{document}
    \begin{abstract}
        We study exact sequences of representation categories of weak Hopf algebras over an arbitrary field $\bk$. Given a sequence $A\overset{k}{\to} B\overset{\pi}{\to} H$, where $A$ and $B$ are weak Hopf algebras and $H$ is a Hopf algebra, we develop verifiable algebraic conditions on $k$ and $\pi$ under which there is an exact sequence $\Rep(H)\to\Rep(B)\to\Rep(A)$ of tensor categories in the sense of Brugui\`eres and Natale (2011). Along the way, we develop a generalization of the restriction of scalars functor for maps $\pi:B\to H$ between associative algebras satisfying a weakened multiplicativity constraint depending on a relatively separable subalgebra $B_r\subseteq B$, as well as a theory of kernels and cokernels for weak Hopf algebras. We, in particular, find that the cokernel of a weak Hopf algebra homomorphism  $k:A\to B$ always exists, is a Hopf algebra, and the cokernel map is surjective when $A$ is connected. We conclude by studying examples of such exact sequences built from groupoids, formal ribbon extensions of quasitriangular weak Hopf algebras, and cocycled crossed products of a Hopf algebra acting weakly on a weak Hopf algebra. 
    \end{abstract}
    \maketitle
    \tableofcontents

    \section*{Introduction}
    In \cite{BruguieresNatale11}, the notion of an exact sequence of tensor categories was introduced. As an example, it was proven that a strictly exact sequence 
    \begin{equation}\label{wha-exact}
        \begin{tikzcd}
        A\arrow[r,"k",hook]&B\arrow[r,"\pi", two heads]&H
        \end{tikzcd}
    \end{equation}
    of finite-dimensional Hopf algebras gives rise to an exact sequence of finite tensor categories 
    \begin{equation}\label{rep-exact}
        \begin{tikzcd}
        \Rep(H)\arrow[r,"\pi^*",hook]&\Rep(B)\arrow[r,"k^*"]&\Rep(A),
        \end{tikzcd}
    \end{equation}
    where $\pi^*$ and $k^*$ denote the restriction of scalars of $\pi$ and $k$ respectively. The present article has one primary goal: to obtain a set of verifiable algebraic conditions for a sequence of the form \eqref{wha-exact} to produce an exact sequence of tensor categories of the form \eqref{rep-exact} when $A$ and $B$ are weak Hopf algebras and $H$ is a Hopf algebra. The first category in the sequence necessarily has a fiber functor, so it is necessarily representable by a Hopf algebra. This justifies our choice to assume $H$ is a Hopf algebra. This assumption is further justified by our proof that dominance and normality of $k^*$ are sufficient to prove exactness of \eqref{rep-exact} when $\pi:B\to H$ is the cokernel of $k$.
    
    One obstacle in this task is that kernels/cokernels don't exist in the category of weak Hopf algebras with weak Hopf algebra homomorphisms, even if we fix the minimal weak Hopf subalgebra. There is no general zero map, so a kernel/cokernel internal to this category is impossible. One resolution to construct a generalized kernel/cokernel is to not require that $\pi$ be an algebra homomorphism. Because of this, we cannot define the restriction of scalars in the classical sense. This necessitates a generalization of the restriction of scalars functor. Thus, the present article develops a theory of the ``relative'' restriction of scalars functor for a relatively multiplicative map $\pi$, kernels and cokernels for weak Hopf algebras, and conditions under which a sequence of the form \eqref{wha-exact} gives rise to an exact sequence of tensor categories of the form \eqref{rep-exact}, where $\pi:B\to H$ is the cokernel of the map $k:A\to B$.

    We make as few assumptions as possible on the weak Hopf algebras $A$ and $B$. Generally, we must assume that $A$ has a simple trivial representation so that $\Rep(A)$ is a tensor category. We work over an arbitrary field. We do not assume that $A$ nor $B$ is semisimple, with the exception of Proposition \ref{Inv-normal-whole-cat}, which is not applied for any other result. We never assume finite-dimensionality of $A$ or $B$, just finite-generation of $B$ as an $A$-module (which does imply that $H$ is finite-dimensional, however). This theory is therefore built with semisimple and non-semisimple, zero characteristic and positive characteristic, and finite- and infinite-dimensional applications in mind.

    It is not the goal of this article to provide a set of conditions for defining a strictly exact sequence of weak Hopf algebras, i.e., to generalize the definition for Hopf algebras discussed in \cite{Schneider93}. The conditions we arrive at are more general in some ways and more strict in others. Right flatness of $B$ over $A$ is unnecessary, and our notion of normality is more general. In the other direction, we assume that $B$ is pure over $A$ and finitely generated as an $A$-module, which is not necessary for general strictly exact sequences of Hopf algebras. The dual of a strictly exact sequence of Hopf algebras is again a strictly exact sequence of Hopf algebras. One would hope that a strictly exact sequence of weak Hopf algebras satisfies the same property. However, like the exact sequences of tensor categories of \cite{BruguieresNatale11}, our sequences are not closed under duals.
    
    In this article, we use the original definition of an exact sequence of tensor categories, introduced in \cite{BruguieresNatale11}. There are more general notions of exact sequences of tensor categories, such as the one introduced in \cite{ETINGOF20171187}. In the theory we construct, there is a lot of room for generalization, so it is likely that a similar theory may be built beyond the original definition, and perhaps this perspective will motivate another definition of exact sequences of tensor categories or further provide evidence that one definition should be preferred. It would be interesting to determine when the relative restriction of scalars functor is (lax) monoidal and which functors between fusion categories arise this way. Even in this less general context however, there are many questions left unanswered. For example, which exact sequences of fusion categories arise through this construction? 

    While many find weak Hopf algebras cumbersome and prefer to study their representation categories abstractly \cite{EGNO}, weak Hopf algebras still find recent applications in physics \cite{Jia2026,Jia2025} and algebra \cite{Calderón2024,liu2025weak}. Given a fusion category $\cC$ and a $\cC$-module category $\mcM$, the strip algebra $\mathrm{Str}_{\mcM}(\cC)$ has a representation theory which captures the behavior of solitons in a system whose symmetry is captured by $\cC$ and boundary behavior is captured by $\mcM$ \cite{Córdova2025}. The strip algebras are weak Hopf algebras which satisfy $\Rep(\mathrm{Str}_{\mcM}(\cC))\cong \cC_{\mcM}^*$. In particular, $\Rep(\mathrm{Str}_{\cC}(\cC))\cong \cC$. If $\cC$ and $\mcM$ are taken to be unitary, then $\mathrm{Str}_{\mcM}(\cC)$ is a $C^*$-weak Hopf algebra. The tube algebra $\mathrm{Tube}(\cC)$ is a related weak Hopf algebra which realizes the Drinfeld center $\Rep(\mathrm{Tube}(\cC))\cong \cZ(\cC)$, though this is not a monoidal equivalence in general \cite{Jia2024}. While this article does not explore this direction, a natural question is if we may learn about the structure of $\mathrm{Str}_{\mcM}(\cC), \mathrm{Tube}(\cC), \mcM, \cC^*_{\mcM},$ or $\cC$ by studying exact sequences constructed by our procedure. For example, one may ask, given an exact sequence of fusion categories $\cC\to\cD\to\mcE,$ when there is a sequence of the form
    $$\mathrm{Str}_{\mcE}(\mcE)\to \mathrm{Str}_{\cD}(\cD)\to \End(\cC\to\mcE),$$
    which induces the given exact sequence by our construction. The last term $\End(\cC\to\mcE)$ in this sequence is the Hopf algebra of natural endo-transformations of the fiber functor $\cC\to\mcE$, which, by Tannaka-Krein reconstruction satisfies $\Rep(\End(\cC\to\mcE)) = \cC$.

    In Section \ref{sec:prelim}, we review background for exact sequences of tensor categories and weak Hopf algebras. In Section \ref{sec:relative-mult}, we introduce the relative restriction of scalars. This functor is defined for a map $\pi:A\to C$ between associative algebras, such that $A$ is unital and contains a relatively separable subalgebra $A_r$, and $\pi$ satisfies a relative multiplicativity constraint depending on $A_r$. In Section \ref{sec:ker-coker}, we discuss kernels and cokernels for weak Hopf algebras using these relatively multiplicative maps. Note that a weak Hopf algebra $A$ contains two separable subalgebras $A_t$ and $A_s$, so we assume that our maps are relatively multiplicative with respect to both $A_t$ and $A_s$. When $C=H$ is a Hopf algebra, we further assume that $\pi:A\to H$ is a coalgebra homomorphism and preserves the antipode. In this situation, the relative restriction of scalars is a tensor functor. In Section \ref{sec:exact-seq}, we construct an exact sequence of tensor categories given a sufficiently well-behaved weak Hopf algebra homomorphism. In the semisimple case, we provide an algebraic condition which is necessary and sufficient for normality of the functor $k^*$. In general, we have another easy to check algebraic condition which implies normality. These conditions also imply that $\pi^*$ induces an equivalence between $\Rep(H)$ and $\Ker k^*$. Finally, we show a well-known algebraic condition implies dominance of $k^*$. In Section \ref{sec:examples}, we show that normal groupoid homomorphisms, the formal ribbon extension of a quasitriangular weak Hopf algebra, and cocycled crossed products all form exact sequences of representation categories, where the first and third assume some finiteness constraint.

    \section{Preliminaries}\label{sec:prelim}
    \subsection{Exact sequences of tensor categories}
    Fix a field $\bk$. By a tensor category $\cC$, we mean a rigid, $\bk$-linear, abelian category with simple unit $\one$. By a fusion category $\cC$, we mean a finite semisimple tensor category. Given a ($\bk$-)algebra $A$, denote by $\Rep(A)$, the category of finite-dimensional $A$-modules. Denote by $A$-Mod, the category of all $A$-modules.  
    
    By a tensor functor between tensor categories $\cC$ and $\cD$, we mean an exact, faithful, linear, monoidal functor $F:\cC\to\cD$. For a tensor functor $F:\cC\to\cD$ between tensor categories, we say that $F$ is 
    \begin{itemize}
        \item \textit{normal} if for any object $c\in\cC$, there is a subobject $c_0$ such that $F(c_0)$ is the maximal trivial (isomorphic to $\one^{\oplus n}\in\cD$ for some $n$) subobject of $F(c)$;
        \item \textit{dominant} if for any object $d\in\cD$, there is an object $c\in \cC$ such that $d$ is a subobject of $F(c)$.
    \end{itemize}
    We define the kernel of $F:\cC\to\cD$ as the full subcategory of $\cC$ with the following objects:
    \begin{align*}
        \Ker F = \{c\in\cC | F(c)\cong \one^{\oplus n} \text{ for some }n\geq 0\}\subseteq \cC.
    \end{align*}
    Then, an exact sequence of tensor categories is a sequence of tensor functors
    $$\cC\overset{G}{\to}\cD\overset{F}{\to}\mcE$$
    such that
    \begin{enumerate}[(\alph*)]
        \item $G$ induces an equivalence $\cC\overset{\sim}{\to}\Ker F$,
        \item $F$ is normal and dominant.
    \end{enumerate}
    
    \subsection{Weak Hopf algebra basics}
    The essential theory behind weak Hopf algebras was established in \cite{Bhm1998WeakHA,BOHM2000156}. We review some of the basic definitions and properties of weak Hopf algebras. 
    
    Throughout this article, for a coalgebra $A$, we make use of sum-less Sweedler's notation, where we write $\Delta(a) = a^{(1)}\otimes a^{(2)}$. This does not denote a simple tensor but rather a sum of simple tensors.
    \begin{definition}\label{wha-def}
        Let $A$ be a unital associative algebra as well as a counital coassociative coalgebra over a field $\bk$. Then, $A$ is a \textit{weak bialgebra} if these structures have the following compatibility:
        \begin{enumerate}
            \item $\Delta(a_1)\Delta(a_2) = \Delta(a_1a_2)$,
            \item $\Delta^2(1) = (\Delta(1)\otimes 1)(1\otimes\Delta(1))=(1\otimes\Delta(1))(\Delta(1)\otimes 1)$,
            \item $\epsilon(a_1a_2a_3) = \epsilon(a_1a_2^{(1)})\epsilon(a_2^{(2)}a_3) = \epsilon(a_1a_2^{(2)})\epsilon(a_2^{(1)}a_3)$.
        \end{enumerate}
        Furthermore, $A$ is a \textit{weak Hopf algebra} if it is a weak bialgebra with a linear map $S:A\to A$ such that
        \begin{enumerate}
            \setcounter{enumi}{3}
            \item $a^{(1)}S(a^{(2)}) = \epsilon(1^{(1)}a)1^{(2)}$,
            \item $S(a^{(1)})a^{(2)} = 1^{(1)}\epsilon(a1^{(2)})$,
            \item $S(a^{(1)})a^{(2)}S(a^{(3)}) = S(a)$.
        \end{enumerate}
    \end{definition}

    \begin{proposition}[\cite{Bhm1998WeakHA}]
        The following conditions are equivalent for a weak Hopf algebra $A$:
        \begin{enumerate}
            \item $A$ is a Hopf algebra,
            \item $\epsilon(a_1a_2) = \epsilon(a_1)\epsilon(a_2)$,
            \item $\Delta(1) = 1\otimes 1$,
            \item $S(a^{(1)})a^{(2)} = \epsilon(a)1$,
            \item $a^{(1)}S(a^{(2)}) = \epsilon(a)1$.
        \end{enumerate}
    \end{proposition}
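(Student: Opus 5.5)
The strategy is to use (3), $\Delta(1)=1\otimes 1$, as the hub. We prove (1)$\Leftrightarrow$(3), (2)$\Leftrightarrow$(3), (3)$\Rightarrow$(4),(5), and finally (4)$\Rightarrow$(3) and (5)$\Rightarrow$(3).

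First, (1)$\Rightarrow$ everything. If $A$ is a Hopf algebra, then $\Delta$ is unital, so (3) holds. Also $\epsilon$ is multiplicative, so (2) holds. The antipode axioms of a Hopf algebra are exactly (4) and (5). Note the standing convention that the antipode of a weak Hopf algebra is unique, so the given $S$ is the Hopf antipode.

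Second, the implications out of (3):
\begin{itemize}
\item If $\Delta(1)=1\otimes 1$, the weak bialgebra axiom (1) together with unitality of $\Delta$ makes $A$ a bialgebra once $\epsilon$ is shown to be multiplicative. Axiom (3) of Definition \ref{wha-def} with $\Delta(a_2)$ and $a_2=1$ gives only tautologies, so instead use the weak Hopf axioms.
\item Axiom (4) becomes $a^{(1)}S(a^{(2)})=\epsilon(1a)1=\epsilon(a)1$, which is (5). Axiom (5) becomes $S(a^{(1)})a^{(2)}=\epsilon(a)1$, which is (4).
\item From (4) and (5), $S$ is a convolution inverse of $\id$. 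Multiplicativity of $\epsilon$ then follows from the standard computation
\[
\epsilon(ab)1 = (ab)^{(1)}S((ab)^{(2)}) = a^{(1)}b^{(1)}S(b^{(2)}a^{(2)})\ ?
\]
This step would need anti-multiplicativity of $S$, which is a known weak Hopf fact from \cite{Bhm1998WeakHA}; I would rather avoid it. The cleaner route is to apply $\epsilon$ to the weak bialgebra axiom (3) with $a_2$ replaced by $1$ and $\Delta(1)=1\otimes1$: $\epsilon(a_1a_3)=\epsilon(a_1)\epsilon(a_3)$ directly. So (3)$\Rightarrow$(2), and with unitality $A$ is a bialgebra with antipode, i.e.\ (1).
\item Conversely, (2)$\Rightarrow$(3): using axiom (2) of Definition \ref{wha-def} and applying $\id\otimes\epsilon\otimes\id$ gives $\Delta(1)=1^{(1)}\epsilon(1^{(2)}1'^{(1)})\otimes 1'^{(2)}$. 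With $\epsilon$ multiplicative this becomes $1^{(1)}\epsilon(1^{(2)})\epsilon(1'^{(1)})\otimes1'^{(2)}=1\otimes1$.
\end{itemize}

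Third, the remaining implications (4)$\Rightarrow$(3) and (5)$\Rightarrow$(3). Assume (5) and compare it with axiom (4): $\epsilon(1^{(1)}a)1^{(2)}=\epsilon(a)1$ for all $a$. Write $\Delta(1)=\sum_i x_i\otimes y_i$ with the $y_i$ linearly independent. Then the functional $a\mapsto\epsilon(x_ia)$ satisfies $\sum_i\epsilon(x_ia)y_i=\epsilon(a)1$. By independence, $y_i$ may be taken so that $1$ is among them (say $y_1=1$, else contradiction by setting $a=1$, since $\epsilon(1^{(1)})1^{(2)}=1$ is nonzero). Then $\epsilon(x_i a)=0$ for $i\neq1$ and $\epsilon(x_1a)=\epsilon(a)$. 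The goal is to conclude $\Delta(1)=x_1\otimes 1$ with $x_1=1$. This uses counitality: $\epsilon(x_i)y_i$ summed is $1$, and $x_i\epsilon(y_i)$ summed is $1$, together with the fact that $\Delta(1)$ lies in $A_s\otimes A_t$, where $A_t$ is spanned by $\epsilon(1^{(1)}a)1^{(2)}$. Indeed, (5) shows $A_t=\bk1$, so $\Delta(1)\in A\otimes\bk1$, i.e.\ $\Delta(1)=z\otimes1$. Counitality then gives $z=1$. Symmetrically, (4) forces $A_s=\bk1$ and hence $\Delta(1)=1\otimes1$. The main obstacle is precisely justifying that the right leg of $\Delta(1)$ lies in the span of the image of $\epsilon_t:a\mapsto\epsilon(1^{(1)}a)1^{(2)}$. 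I would prove this directly: $\epsilon_t(1^{(1)})\otimes 1^{(2)}$ equals $\Delta(1)$ by the axiom $\Delta^2(1)=(1\otimes\Delta(1))(\Delta(1)\otimes1)$ after applying $\epsilon$ to the appropriate legs. Hence $\Delta(1)\in A\otimes \epsilon_t(A)=A\otimes\bk1$, which finishes the proof.
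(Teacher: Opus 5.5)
The paper gives no proof of this proposition; it is quoted from \cite{Bhm1998WeakHA}. Your plan, with (3) as the hub, is the standard argument, and most of it is sound:
\begin{itemize}
\item (3)$\Rightarrow$(2) by taking $a_2=1$ in weak-bialgebra axiom (3);
\item (2)$\Rightarrow$(3) by applying $\id\otimes\epsilon\otimes\id$ to axiom (2);
\item (3)$\Rightarrow$(4),(5) by specializing the weak antipode axioms;
\item (4),(5)$\Rightarrow$(3) by showing $A_s=\bk1$, respectively $A_t=\bk1$.
\end{itemize}
For (1)$\Rightarrow$(4),(5) you do not need uniqueness of the antipode: go (1)$\Rightarrow$(3)$\Rightarrow$(4),(5) with the given $S$. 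Two steps in the last part need repair.

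First, the identity you invoke, $\epsilon_t(1^{(1)})\otimes 1^{(2)}=\Delta(1)$, has $\epsilon_t$ on the wrong leg. Since $1^{(1)}\in A_s$ and $\epsilon_t|_{A_s}=S|_{A_s}$, the left side equals $S(1^{(1)})\otimes 1^{(2)}=e_t\in A_t\otimes A_t$. This is not $\Delta(1)$ in general, and it would not give $\Delta(1)\in A\otimes\epsilon_t(A)$ anyway. The computation you describe actually goes as follows. Apply $\id\otimes\epsilon\otimes\id$ to
\[
\Delta^2(1)=(1\otimes\Delta(1))(\Delta(1)\otimes 1)=1^{(1)}\otimes 1^{(1')}1^{(2)}\otimes 1^{(2')}
\]
to get
\[
\Delta(1)=1^{(1)}\epsilon(1^{(1')}1^{(2)})\otimes 1^{(2')}=1^{(1)}\otimes\epsilon_t(1^{(2)})=\epsilon_s(1^{(1')})\otimes 1^{(2')} .
\]
This is item 7 of the paper's list of source/target properties at $a=1$. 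The two readings of the same expression handle (5)$\Rightarrow$(3) and (4)$\Rightarrow$(3) at once.

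Second, the step ``counitality then gives $z=1$'' is incomplete. From $\Delta(1)=z\otimes 1$, counitality only gives $\epsilon(z)=1$ and $\epsilon(1)z=1$, i.e.\ $z=\epsilon(1)^{-1}1$. In a general weak Hopf algebra $\epsilon(1)\neq 1$; for a groupoid algebra it is the number of objects. You need one more input, for example:
\begin{itemize}
\item $\Delta(1)^2=\Delta(1)$ from axiom (1), or
\item $\epsilon(1)1=\epsilon_t(1)=1$ under (5).
\end{itemize}
It is cleaner to drop $z$ and substitute directly. Under (5), $\Delta(1)=1^{(1)}\otimes\epsilon(1^{(2)})1=1\otimes 1$. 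Under (4), $\Delta(1)=\epsilon(1^{(1)})1\otimes 1^{(2)}=1\otimes 1$.

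Finally, remove the exploratory fragments. Your remark that axiom (3) at $a_2=1$ ``gives only tautologies'' contradicts your later, correct use of exactly that specialization. The linear-independence digression is never completed: passing from $\epsilon(x_ia)=0$ for all $a$ to $x_i=0$ needs an argument you do not give. It is superseded by the $A_t$ argument in any case.
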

    
    For a weak Hopf algebra $A$, the \textit{target} $\epsilon_t:A\to A$ and \textit{source} $\epsilon_s:A\to A$ maps are defined, for $a\in A$, as 
    \begin{align*}
        \epsilon_t(a) &:= a^{(1)}S(a^{(2)}) = \epsilon(1^{(1)}a)1^{(2)} = S(1^{(1)})\epsilon(1^{(2)}a),\\
        \epsilon_s(a) &:= S(a^{(1)})a^{(2)} = 1^{(1)}\epsilon(a1^{(2)}) = \epsilon(a1^{(1)})S(1^{(2)}).
    \end{align*}
    Define the target $A_t = \im \epsilon_t$ and source $A_s = \im \epsilon_s$ algebras so that $\epsilon_t:A\to A_t$ and $\epsilon_s:A\to A_s$. These algebras are separable subalgebras of $A$ with the following separability idempotents:
    $$e_t = S(1^{(1)})\otimes 1^{(2)},\hspace{50pt}e_s = 1^{(1)}\otimes S(1^{(2)}).$$
    $A_t$ is the smallest subspace of $A$ such that $\Delta(1)\in A\otimes A_t$. Similarly, $A_s$ is the smallest subspace such that $\Delta(1)\in A_s\otimes A.$ In particular, $\Delta(1)\in A_s\otimes A_t$. The antipode defines a unital algebra isomorphism $S|_{A_s}:A_s\to A_t^{\opp}$ and $S|_{A_t}:A_t\to A_s^{\opp}$. 

    The source and target maps satisfy a handful of useful properties, which apply for any $a, a_1, a_2\in A$;
    \begin{enumerate}
        \item $\epsilon_t$ and $\epsilon_s$ are unit and counit preserving.
        \item $\epsilon_t$ and $\epsilon_s$ commute under multiplication, i.e., $\epsilon_t(a_1)\epsilon_s(a_2) = \epsilon_s(a_1)\epsilon_t(a_2).$
        In particular, $A_s$ and $A_t$ commute.
        \item Products combine as follows:
        \begin{enumerate}
            \item $\epsilon_t(a_1)\epsilon_t(a_2) = \epsilon_t(\epsilon_t(a_1)a_2)$,
            \item $\epsilon_s(a_1)\epsilon_s(a_2) = \epsilon_s(a_1\epsilon_s(a_2))$.
        \end{enumerate}
        \item Products inside $\epsilon$ have the following relation:
        \begin{enumerate}
            \item $\epsilon(a_1a_2) = \epsilon(a_1\epsilon_t(a_2))$,
            \item $\epsilon(a_1a_2) = \epsilon(\epsilon_s(a_1)a_2)$.
        \end{enumerate}
        \item Similarly, products inside $\epsilon_t$ or $\epsilon_s$ have the following relation:
        \begin{enumerate}
            \item $\epsilon_t(a_1a_2) = \epsilon_s(a_1\epsilon_t(a_2))$,
            \item $\epsilon_s(a_1a_2) = \epsilon_t(\epsilon_s(a_1)a_2)$.
        \end{enumerate}
        \item Comultiplications expand as follows:
        \begin{enumerate}
            \item $\Delta(\epsilon_t(a)) = 1^{(1)}\epsilon_t(a)\otimes 1^{(2)}$,
            \item $\Delta(\epsilon_s(a)) = 1^{(1)}\otimes \epsilon_s(a)1^{(2)}$.
        \end{enumerate}
        \item Source/target maps on one factor of a comultiplication may be expressed as:
        \begin{enumerate}
            \item $a^{(1)}\otimes\epsilon_t(a^{(2)}) = 1^{(1)}a\otimes 1^{(2)} = a^{(1)}\otimes a^{(2)}S(a^{(3)})$,
            \item $\epsilon_s(a^{(1)})\otimes a^{(2)} = 1^{(1)}\otimes a1^{(2)} = S(a^{(1)})a^{(2)}\otimes a^{(3)}$.
        \end{enumerate}
        \item $\epsilon_t$ and $\epsilon_s$ are idempotent under composition, i.e., $\epsilon_t\circ\epsilon_t = \epsilon_t$ and $\epsilon_s\circ\epsilon_s=\epsilon_s$.
        \item We have the following relation between compositions of $\epsilon_t$, $\epsilon_s$, and $S$:
        \begin{enumerate}
            \item $\epsilon_t\circ S=S\circ\epsilon_s = \epsilon_t\circ\epsilon_s$,
            \item $\epsilon_s\circ S=S\circ\epsilon_t = \epsilon_s\circ\epsilon_t$.
        \end{enumerate}
        \item $\epsilon_t$ and $\epsilon_s$ can be commuted across comultiplication up to an application of the antipode:
        \begin{enumerate}
            \item $a_1^{(1)}\epsilon_s(a_2)\otimes a_1^{(2)} = a_1^{(1)}\otimes a_1^{(2)}S(\epsilon_s(a_2))$,
            \item $S(\epsilon_t(a_2))a_1^{(1)}\otimes a_1^{(2)} = a_1^{(1)}\otimes \epsilon_t(a_2)a_1^{(2)}$.
        \end{enumerate}
    \end{enumerate}    
    The algebra $A_{\min} := A_tA_s$ is actually a weak Hopf subalgebra of $A$. In particular, this means $\Delta(A_t) \subseteq A_{\min}\otimes A_t$ and $\Delta(A_s)\subseteq A_s\otimes A_{\min}$. Moreover, $A_{\min}$ is minimal in that it contains no weak Hopf subalgebras other than itself. All other weak Hopf subalgebras of $A$ contain $A_{\min}$, so it is the unique minimal weak Hopf subalgebra of $A$. Such weak Hopf algebras necessarily have an invertible antipode. In particular, $S|_{A_{\min}}:A_{\min}\to A_{\min}$ is invertible. The classification of minimal weak Hopf algebras over an algebraically closed field of characteristic 0 is discussed in \cite{NIKSHYCH2002257}. 

    For a weak Hopf algebra $A$, denote $A^+_t=\ker\epsilon_t^A$ and $A^+_s=\ker\epsilon_s^A$. Both $A_t^+$ and $A_s^+$ are coideals of $A$. Moreover, $A_t^+\subset A$ is a left ideal, and $A_s^+\subset A$ is a right ideal. In particular, $A_t^+A_s^+$ is as associative algebra.

    A \textit{weak Hopf algebra homomorphism} $k:A\to B$ is a unital algebra homomorphism $k:A\to B$ which is also a counital coalgebra homomorphism and preserves the antipode.
    \begin{lemma}\label{targets-iso}
        If $k:A\to B$ is a weak Hopf algebra homomorphism, then $k$ induces unital algebra isomorphisms $k|_{A_t}:A_t\overset{\sim}{\to} B_t$ and $k|_{A_s}:A_s\overset{\sim}{\to} B_s$ and a weak Hopf algebra isomorphism $k|_{A_{\min}}:A_{\min}\overset{\sim}{\to} B_{\min}$.
    \end{lemma}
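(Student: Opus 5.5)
The approach is to show first that $k$ intertwines the target and source maps, then to get surjectivity and injectivity on $A_t$ and $A_s$ from the explicit formulas for $\epsilon_t,\epsilon_s$ in terms of $\Delta(1)$, and finally to treat $A_{\min}$ separately. For $a\in A$, since $k$ is a coalgebra map preserving the antipode,
$$k(\epsilon_t^A(a)) = k(a^{(1)})S(k(a^{(2)})) = k(a)^{(1)}S(k(a)^{(2)}) = \epsilon_t^B(k(a)).$$
Hence $k\circ\epsilon_t^A=\epsilon_t^B\circ k$, and likewise $k\circ\epsilon_s^A=\epsilon_s^B\circ k$. So $k(A_t)\subseteq B_t$ and $k(A_s)\subseteq B_s$, and both restrictions are unital algebra maps.

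\emph{Surjectivity.} Because $k$ is unital and a coalgebra map, $\Delta_B(1_B)=(k\otimes k)\Delta_A(1_A)$. Using $\epsilon_t^B(b)=\epsilon_B(1_B^{(1)}b)1_B^{(2)}$ we get
$$\epsilon_t^B(b)=\epsilon_B\bigl(k(1_A^{(1)})b\bigr)\,k(1_A^{(2)}).$$
Since $\Delta_A(1)\in A_s\otimes A_t$, every such element lies in $k(A_t)$. The same argument with $\epsilon_s^B(b)=1_B^{(1)}\epsilon_B(b1_B^{(2)})$ gives $B_s\subseteq k(A_s)$.

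\emph{Injectivity.} For $x\in A_t$ we have $x=\epsilon_t(x)=\epsilon_A(1^{(1)}x)1^{(2)}$. Suppose $k(x)=0$. Since $\epsilon_B\circ k=\epsilon_A$ and $k$ is multiplicative,
$$\epsilon_A(1^{(1)}x)=\epsilon_B\bigl(k(1^{(1)})k(x)\bigr)=0$$
for each component. Because the expansion above only needs the linear functional $\epsilon_A(1^{(1)}x\,\cdot)$ applied in the first tensor factor, we conclude $x=0$. Symmetrically, $x=1^{(1)}\epsilon_A(x1^{(2)})$ shows $k|_{A_s}$ is injective. So $k|_{A_t}$ and $k|_{A_s}$ are unital algebra isomorphisms.

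\emph{The minimal part.} Since $A_{\min}$ is a weak Hopf subalgebra, $k|_{A_{\min}}$ is a weak Hopf algebra homomorphism, and its image $k(A_t)k(A_s)=B_tB_s=B_{\min}$. It remains to show injectivity on $A_{\min}=A_tA_s$, and this is the step I expect to be the main obstacle, because multiplication $A_t\otimes A_s\to A_tA_s$ is not injective in general.

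My first attempt is to use the standard structural fact that multiplication induces an isomorphism $A_t\otimes_{Z}A_s\cong A_{\min}$, where $Z=A_t\cap A_s$; I would need to check this holds over an arbitrary field or cite it. Granting it, $k$ restricts to an isomorphism $Z\to B_t\cap B_s$: injectivity is inherited from $A_t$, and surjectivity follows since an element of $B_t\cap B_s$ has a preimage in $A_t$ and one in $A_s$, which agree by injectivity of $k$ on $A_{\min}\supseteq A_t+A_s$ restricted to each piece. The map on the balanced tensor products is then an isomorphism, and it is compatible with multiplication.

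As a fallback, I would show directly that the pairing $(y,y')\mapsto\epsilon(yy')$ has trivial left radical on $A_{\min}$. This would use $\epsilon(a_1a_2)=\epsilon(a_1\epsilon_t(a_2))$ together with separability of $A_t$ and $A_s$. Then $k(y)=0$ gives $\epsilon_A(yy')=\epsilon_B(k(y)k(y'))=0$ for all $y'$, hence $y=0$.
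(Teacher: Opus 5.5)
Your arguments for $A_t$ and $A_s$ are correct. This includes the intertwining $k\circ\epsilon_t^A=\epsilon_t^B\circ k$, surjectivity from $\Delta_B(1_B)=(k\otimes k)\Delta_A(1_A)\in k(A_s)\otimes k(A_t)$, and injectivity: writing $\Delta_A(1_A)=\sum_i u_i\otimes v_i$, one gets $x=\sum_i\epsilon_B(k(u_i)k(x))v_i$ for $x\in A_t$. The equality $k(A_{\min})=B_tB_s=B_{\min}$ is also right. The paper states this lemma without proof, so there is nothing to compare against.

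The step you flagged, injectivity of $k$ on $A_{\min}$, cannot be completed, because it is false in general. Here is a counterexample.
\begin{itemize}
\item Let $G$ be the pair groupoid on $\{1,2\}$, with one morphism $(i,j)\colon j\to i$ for each pair. Let $A=\bk^G$ be the dual of $\bk[G]$: it has basis $\delta_{(i,j)}$ with pointwise product, $\Delta(\delta_{(i,j)})=\sum_l\delta_{(i,l)}\otimes\delta_{(l,j)}$, $\epsilon(\delta_{(i,j)})=1$ if $i=j$ and $0$ otherwise, and $S(\delta_{(i,j)})=\delta_{(j,i)}$.
\item $A_t$ is spanned by $t_i=\delta_{(i,1)}+\delta_{(i,2)}$ and $A_s$ by $s_j=\delta_{(1,j)}+\delta_{(2,j)}$. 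Since $t_is_j=\delta_{(i,j)}$, we get $A_{\min}=A$, which is $4$-dimensional.
\item Let $B=\bk e_1\oplus\bk e_2$ be the groupoid algebra of the discrete groupoid on $\{1,2\}$, with $\Delta(e_i)=e_i\otimes e_i$, $\epsilon(e_i)=1$, $S=\id$. Then $B_{\min}=B$.
\item Let $k$ be restriction of functions to identity morphisms: $k(\delta_{(i,i)})=e_i$ and $k(\delta_{(1,2)})=k(\delta_{(2,1)})=0$.
\end{itemize}
This $k$ is a unital algebra map. Moreover $(k\otimes k)\Delta(\delta_{(i,j)})=\sum_l k(\delta_{(i,l)})\otimes k(\delta_{(l,j)})$ equals $e_i\otimes e_i$ if $i=j$ and $0$ otherwise, which is $\Delta(k(\delta_{(i,j)}))$. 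The counit and antipode are clearly preserved. So $k$ is a weak Hopf algebra homomorphism and an isomorphism on $A_t$ and $A_s$ (with $t_i,s_i\mapsto e_i$). Yet $k(\delta_{(1,2)})=0$ with $0\neq\delta_{(1,2)}\in A_{\min}$.

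This example shows exactly where both of your routes break.
\begin{itemize}
\item In your first route, $A_t\otimes_Z A_s\cong A_{\min}$ actually holds for both algebras here ($Z_A=A_t\cap A_s=\bk 1$ and $Z_B=B$). But $Z_A\to B_t\cap B_s=B$ is not onto: $e_1$ has the two different preimages $t_1\in A_t$ and $s_1\in A_s$. Your claim that the two preimages agree needs injectivity of $k$ on $A_t+A_s$. That is part of what you are trying to prove, and it does not follow from injectivity on $A_t$ and on $A_s$ separately, so the step is circular.
\item In your fallback, the counit form on $A_{\min}$ is degenerate. For every $y\in A$, $\delta_{(1,2)}y$ is a scalar multiple of $\delta_{(1,2)}$ and $\epsilon(\delta_{(1,2)})=0$, so $\delta_{(1,2)}$ lies in the radical.
\end{itemize}
What is true, and what you have essentially proved, is weaker: $k|_{A_{\min}}\colon A_{\min}\to B_{\min}$ is a surjective weak Hopf algebra homomorphism restricting to isomorphisms $A_t\cong B_t$ and $A_s\cong B_s$. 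The last clause of the statement should be weakened accordingly. The only use of that clause in the paper I can find is $k'|_{A_{\min}}^{-1}(x)$ in the proof of Theorem~\ref{ker}(b), and there any preimage of $x$ suffices.
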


    \subsection{The representation theory of weak Hopf algebras}
    The representation category of a semisimple weak Hopf algebra is a multifusion category. Every multifusion category arises this way \cite[Cor. 2.22]{eno2005fusion}. More generally, given a finitely semisimple module category $\mcM$ over a multifusion category $\cC$, there is a finite-dimensional semisimple weak Hopf algebra $A$ such that $\Rep(A)\cong\cC$ as tensor categories and $\Rep_{\Rep(A)}(A_t)\cong\mcM$ as $\Rep(A)$-module categories. The case where the representation category is fusion or more generally tensor is of particular interest in this article, as exact sequences are not defined in the more general context. 

    Let $A$ be a weak Hopf algebra. Given two $A$-modules $M, N$, there is an $A$-module structure on $1^{(1)}M\otimes 1^{(2)}N$ generated by
    $$a\cdot (1^{(1)}m\otimes 1^{(2)}n) = a^{(1)}m\otimes a^{(2)}n.$$
    $A_s$ and $A_t$ have isomorphic $A$-module structures. Given $a\in A$, $x\in A_t$ and $y\in A_s$, the following are well-defined actions of $A$ on $A_t$ and $A_s$:
    $$a\cdot_{A_t} x := \epsilon_t(ax), \hspace{20pt} a\cdot_{A_s} y := \epsilon_s(yS(a)).$$
    The $A$-module $A_t\cong A_s$ acts as the tensor unit in $\Rep(A)$ under the tensor product described earlier. The category $\Rep(A)$ is also rigid and linear, but, because $A_t$ is not necessarily simple as an $A$-module, $\Rep(A)$ need not be a tensor category. In many of our results, however, simplicity of $A_t$ as an $A$-module is assumed, aligning with our desired application to exact sequences of tensor categories.
    
    \begin{proposition}[\cite{EGNO}]\label{connected}
        Let $A$ be a weak Hopf algebra. Then, the following are equivalent:
        \begin{enumerate}[(\alph*)]
            \item $A$ has an irreducible trivial representation,
            \item $Z(A)\cap A_t = \bk 1_A$, and
            \item $\End_A(A_t) = \bk\id_{A_t}$.
        \end{enumerate}
    \end{proposition}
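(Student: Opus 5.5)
We prove the equivalences by identifying $\End_A(A_t)$ with $Z(A)\cap A_t$ directly, and then using the fact that $A_t$ is the tensor unit of $\Rep(A)$.

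\emph{Step 1: $\End_A(A_t)\cong Z(A)\cap A_t$ as algebras.} We use the module structure $a\cdot x=\epsilon_t(ax)$ on $A_t$. The element $1\in A_t$ generates $A_t$, since $a\cdot 1=\epsilon_t(a)$ and $\epsilon_t$ is onto $A_t$. Hence an endomorphism $f$ is determined by $z:=f(1)\in A_t$, and
$$f(\epsilon_t(a))=\epsilon_t(a\,z)=\epsilon_t(a)z,$$
where the second equality is property (3a). So $f$ is right multiplication by $z$ on $A_t$.

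Conversely, fix $z\in A_t$. Right multiplication $R_z$ is $A$-linear exactly when $\epsilon_t(ax)z=\epsilon_t(axz)$ for all $a\in A$ and $x\in A_t$. This always holds by (3a), since $\epsilon_t(axz)=\epsilon_t(\epsilon_t(ax)z)=\epsilon_t(ax)z$. That cannot be the whole story, so the constraint must come from the exact statement of (3a). Property (3a) reads $\epsilon_t(a_1)\epsilon_t(a_2)=\epsilon_t(\epsilon_t(a_1)a_2)$, which is multiplication on the \emph{left}. Redoing the computation with left multiplication $L_z$, we need $z\,\epsilon_t(ax)=\epsilon_t(a z x)$. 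By (3a), $z\epsilon_t(ax)=\epsilon_t(zax)$. So $A$-linearity of $L_z$ is equivalent to $\epsilon_t(zax)=\epsilon_t(azx)$ for all $a\in A$ and $x\in A_t$; with $x=1$ this is $\epsilon_t([z,a])=0$.

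The key claim is that this forces $z\in Z(A)$. To prove it, use the formula $\epsilon_t(b)=\epsilon(1^{(1)}b)1^{(2)}$ and the fact that $1^{(1)}\otimes 1^{(2)}$ is nondegenerate on $A_t$ (it comes from the separability idempotent). From $\epsilon_t(zab)=\epsilon_t(azb)$ for all $a,b$ we get $\epsilon(w\,[z,a]\,b)=0$ for all $w\in A_s$ and all $a,b$. Taking $w=1$ gives $\epsilon([z,a]b)=0$ for all $b$. Now apply the Frobenius-type nondegeneracy $\epsilon(c\,b)=0\ \forall b\Rightarrow c=0$. The easy route to it is $c=c^{(1)}\epsilon(c^{(2)})$ together with $\Delta(cb)=\Delta(c)\Delta(b)$: choose $b=S(\dots)$ to recover $c$ via counit identities. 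Hence $z$ is central. The converse is immediate: central $z\in A_t$ gives an $A$-linear $L_z$. Combining the two directions, $\End_A(A_t)\cong Z(A)\cap A_t$, and this gives (b)$\Leftrightarrow$(c).

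\emph{Step 2: (a)$\Leftrightarrow$(c).} If $A_t$ is simple, then Schur's lemma gives that $\End_A(A_t)$ is a division algebra. By Step 1 it is commutative and isomorphic to a subalgebra of the separable algebra $A_t$. To conclude it is $\bk$, use that the unit object in the rigid category $\Rep(A)$ has $\End(\one)$ commutative, and that $\End(\one)$ acts on every Hom space. If $\End(\one)$ were a proper field extension $K/\bk$, the central idempotent and splitting argument of EGNO would apply; I would simply cite \cite{EGNO} here, since over a non-closed field the condition should really be read as requiring $\End(\one)=\bk$ together with indecomposability. For the reverse direction, assume $\End_A(A_t)=\bk$. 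Then $A_t$ is indecomposable. In a rigid monoidal category the unit is semisimple: any subobject $U\subseteq\one$ splits via duality, because $U\otimes U^*\to \one$ yields an idempotent. So an indecomposable unit is simple.

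\emph{Main obstacle.} The main obstacle is the nondegeneracy step in Step 1, namely deducing $[z,a]=0$ from $\epsilon_t([z,a]b)=0$ for all $b$. I would first try to derive it from axiom (4) applied to $c=[z,a]$, using $c=\epsilon_t(c^{(1)})c^{(2)}$ and expressing $\epsilon_t(c^{(1)})$ through $\epsilon_t(c\,\cdot)$.
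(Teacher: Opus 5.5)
The paper gives no proof of this statement; it cites EGNO, where $\bk$ is algebraically closed throughout. So your argument has to stand on its own, and it has two genuine gaps in Step 1 and one in Step 2.

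\textbf{Step 1.} Your conclusion that an endomorphism $f$ is right multiplication $R_z$ by $z=f(1)$ is correct: for $x\in A_t$ one has $f(x)=f(x\cdot 1)=\epsilon_t(xz)=xz$. But the identity $\epsilon_t(az)=\epsilon_t(a)z$ that you used for general $a$ is false. In a groupoid algebra with $g\colon X\to Y$ and $X\neq Y$, one has $\epsilon_t(g\,\id_X)=\id_Y$ while $\epsilon_t(g)\id_X=0$. So $A$-linearity of $R_z$ is the genuine condition $\epsilon_t(a)z=\epsilon_t(az)$. Moreover, after switching to $L_z$ you never show that an arbitrary endomorphism has that form.

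The repair goes as follows. Each $y\in A_s$ acts on $A_t$ by $x\mapsto\epsilon_t(yx)=\epsilon_t(xy)=x\,\epsilon_t(y)$, and $\epsilon_t(A_s)=S(A_s)=A_t$. Taking $a\in A_s$ in $\epsilon_t(a)z=\epsilon_t(az)$ therefore gives $z\in Z(A_t)$. Hence $\epsilon_t(az)=\epsilon_t(a)z=z\epsilon_t(a)=\epsilon_t(za)$ for all $a\in A$.

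The second gap is the ``Frobenius-type nondegeneracy'' $\epsilon(cb)=0\ \forall b\Rightarrow c=0$, which is false. In any Hopf algebra $\epsilon(cb)=\epsilon(c)\epsilon(b)$, so every nonzero $c\in\ker\epsilon$ is a counterexample. Centrality of $z$ is therefore not established. The idea in your last paragraph is the one that works. Since $\Delta(z)=(z\otimes 1)\Delta(1)=\Delta(1)(z\otimes 1)$, one gets $\Delta(za)=za^{(1)}\otimes a^{(2)}$ and $\Delta(az)=a^{(1)}z\otimes a^{(2)}$. Also $b=\epsilon_t(b^{(1)})b^{(2)}$ for every $b$. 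Hence
\[ za=\epsilon_t(za^{(1)})a^{(2)}=\epsilon_t(a^{(1)}z)a^{(2)}=az. \]
With these two repairs, $\End_A(A_t)\cong Z(A)\cap A_t$, and (b)$\Leftrightarrow$(c) holds over any field.

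\textbf{Step 2.} The direction (a)$\Rightarrow$(c) cannot be completed over an arbitrary field, because it is false there. Take $\bk=\mathbb{R}$ and $A=\mathbb{C}\otimes_{\mathbb{R}}\mathbb{C}$, the minimal weak Hopf algebra $R\otimes R^{\op}$ on $R=\mathbb{C}$, with the following structure:
\begin{itemize}
\item comultiplication $\Delta(x\otimes y)=\sum_i(x\otimes g_i)\otimes(f_i\otimes y)$, where $\sum_i f_i\otimes g_i=\tfrac12(1\otimes 1-\sqrt{-1}\otimes\sqrt{-1})$;
\item counit $x\otimes y\mapsto\mathrm{Tr}_{\mathbb{C}/\mathbb{R}}(xy)$;
\item antipode the flip.
\end{itemize}
Then $A_t=\mathbb{C}\otimes 1$ and $\epsilon_t(x\otimes y)=xy\otimes 1$, so $A_t$ is the $\mathbb{C}$-bimodule $\mathbb{C}$, which is simple. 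But $A$ is commutative, so $Z(A)\cap A_t=A_t\cong\End_A(A_t)\cong\mathbb{C}$.

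Your suspicion is therefore right: one needs $\bk$ algebraically closed, or (a) read as absolute simplicity. Under that hypothesis this direction is just Schur's lemma together with $\dim A_t<\infty$, and the detour through separable subalgebras and splitting is unnecessary.

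For (c)$\Rightarrow$(a), your one-line duality sketch is not an argument. You should instead cite the EGNO result that the unit object of a ring category with duals is simple. This direction does hold over any $\bk$: extend scalars to $\overline{\bk}$, using that $Z(A)\cap A_t$ commutes with base change and that a proper submodule of $A_t$ would remain proper after extension.
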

    If $A$ satisfies the equivalent conditions of Proposition \ref{connected}, we say that $A$ is \textit{connected}. Now, we define the set of left invariants of an $A$-module $M$, which will allow us to identify trivial objects inside $\Rep(A)$.
    
    \begin{definition}
        $\Inv_A(M) = \{m\in M \mid a\cdot m = \epsilon_t(a)\cdot m, \forall a\in A\}$.
    \end{definition}
    We can equivalently write 
    $$\Inv_A(M) = \{m\in M \mid A_t^+\cdot m = 0\} = \{f(1_A) | f\in\Hom_A (A_t\to M)\}.$$ 
    Note that $\Inv_A(M)$ is a right $A$-module but not a left $A$-module in general, unlike the Hopf algebra case, because $\epsilon_t$ does not play as well with multiplication. However, the $A$-module $A\cdot \Inv_A(M)$ has particular significance in this article for the following reason.
    \begin{lemma}\label{Hinv-trivial}
        Let $A$ be a weak Hopf algebra and $M\in\Rep(A)$. The maximal submodule of $M$ isomorphic to a quotient of $A_t^{\oplus n}$ for some $n$ is $A\cdot \Inv_A(M)=A_t\cdot \Inv_A(M).$
    \end{lemma}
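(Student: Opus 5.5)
We prove three things: $A\cdot\Inv_A(M)=A_t\cdot\Inv_A(M)$; this space is a quotient of some $A_t^{\oplus n}$; and it contains every submodule of $M$ that is such a quotient.

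\textbf{Step 1: the two spans agree.} The inclusion $A_t\cdot\Inv_A(M)\subseteq A\cdot\Inv_A(M)$ is clear. For the reverse, take $a\in A$ and $m\in\Inv_A(M)$. By definition of invariants, $a\cdot m=\epsilon_t(a)\cdot m$, and $\epsilon_t(a)\in A_t$. Hence $A\cdot\Inv_A(M)=A_t\cdot\Inv_A(M)$. This also shows that $A_t\cdot\Inv_A(M)$ is an $A$-submodule of $M$.

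\textbf{Step 2: it is a quotient of a sum of copies of $A_t$.} We use the description $\Inv_A(M)=\{f(1_A)\mid f\in\Hom_A(A_t, M)\}$. For each $m\in\Inv_A(M)$, let $f_m$ be the corresponding morphism with $f_m(1)=m$. Its image is $A\cdot f_m(1)=A\cdot m$, because $A_t=A\cdot_{A_t}1$ (indeed $\epsilon_t(x)=x$ for $x\in A_t$).

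Since $M$ is finite-dimensional, $\Inv_A(M)$ is finite-dimensional. Choose a basis $m_1,\dots,m_n$ of it. The map
$$\bigoplus_i f_{m_i}\colon A_t^{\oplus n}\to M$$
is an $A$-module map with image $\sum_i A\cdot m_i=A\cdot\Inv_A(M)$. So $A\cdot\Inv_A(M)$ is a quotient of $A_t^{\oplus n}$.

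\textbf{Step 3: maximality.} Let $N\subseteq M$ be a submodule with a surjection $g\colon A_t^{\oplus n}\twoheadrightarrow N$. Write $\iota_j$ for the $j$-th inclusion $A_t\to A_t^{\oplus n}$. Each composite $g\circ\iota_j$ lies in $\Hom_A(A_t,M)$. Hence $g(\iota_j(1))\in\Inv_A(M)$.

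Because $A_t$ is generated by $1$ as an $A$-module, $A_t^{\oplus n}$ is generated by the elements $\iota_j(1)$. Therefore
$$N=\sum_j A\cdot g(\iota_j(1))\subseteq A\cdot\Inv_A(M).$$
Combined with Step 2, this shows $A\cdot\Inv_A(M)$ is the maximal such submodule.

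\textbf{Main obstacle.} The only delicate point is the characterization $\Inv_A(M)=\{f(1)\mid f\in\Hom_A(A_t,M)\}$, which the paper states as an equivalent description. If it needs checking: given $m\in\Inv_A(M)$, the map $x\mapsto x\cdot m$ on $A_t$ is $A$-linear. Indeed,
$$\epsilon_t(ax)\cdot m=(ax)\cdot m=a\cdot(x\cdot m),$$
where the first equality uses invariance of $m$ applied to the element $ax$. Conversely, if $f\in\Hom_A(A_t,M)$, then
$$a\cdot f(1)=f(\epsilon_t(a))=f(\epsilon_t(\epsilon_t(a)\cdot 1))=\epsilon_t(a)\cdot f(1),$$
using idempotence of $\epsilon_t$ and $A$-linearity of $f$. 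So $f(1)$ is invariant.

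The remaining steps are formal.
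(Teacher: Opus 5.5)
Your proof is correct and follows essentially the same route as the paper. The paper's surjection $A_t\otimes\Inv_A(M)\to A_t\cdot\Inv_A(M)$, $x\otimes\Lambda\mapsto x\cdot\Lambda$, is your map $\bigoplus_i f_{m_i}$ once a basis is chosen, and the maximality argument via the composites $A_t\to A_t^{\oplus n}\to N\to M$ is identical; your verification of $\Inv_A(M)=\{f(1_A)\mid f\in\Hom_A(A_t,M)\}$, which the paper only asserts, is also correct.
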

    When $A$ is connected, Lemma \ref{Hinv-trivial} says that $A\cdot \Inv_A(M)=A_t\cdot \Inv_A(M)$ is the maximal trivial submodule of $M$.
    \begin{proof}
        Observe that
        $$A\cdot \Inv_A(M) = \epsilon_t(A)\cdot\Inv_A(M) = A_t\cdot\Inv_A(M).$$
        Consider $A_t\otimes \Inv_A(M)$ as a left $A$-module with the action $a\cdot (x\otimes \Lambda) = \epsilon_t(ax)\otimes\Lambda$. Clearly,
        $$A_t^{\oplus\dim\Inv_A(M)} \cong A_t\otimes \Inv_A(M)$$
        as $A$-modules. Moreover, there is an obvious map $A_t\otimes \Inv_A(M)\to A_t\cdot\Inv_A(M)$ defined by $x\otimes \Lambda\mapsto x\cdot\Lambda$, which can be readily verified to be $A$-linear and surjective. Consequently, $A_t\cdot\Inv_A(M)$ is a quotient of $A_t^{\oplus\dim\Inv(A)}$.
    
        Finally, we show that $A_t\cdot \Inv_A(M)$ contains any such submodule $N\subseteq M$. Suppose $A_t^{\oplus n}/Q\cong N$, and let $f_i$ be the following composition:
        $$f_i:A_t\incl A_t^{\oplus n}\twoheadrightarrow A_t^{\oplus n}/Q\cong N\incl M$$
        where the inclusion sends elements of $A_t$ into the $i$th coordinate of $A_t^{\oplus n}$. Then, since the $f_i:A_t\to M$ are $A$-linear, we have
        $$N=\sum f_i(A_t) = \sum A_t\cdot f_i(1_A) \subseteq A_t\cdot \Inv_A(M).$$
    \end{proof}

    \section{Relatively multiplicative maps and restriction of scalars} \label{sec:relative-mult}   \subsection{Relatively multiplicative maps and basic properties}
    \begin{definition}
        Let $A_r\subseteq A$ be an inclusion of unital associative algebras. We say that this inclusion is \textit{relatively separable} if there is $e=e^{(1)}\otimes e^{(2)}\in A_r\otimes A$ such that 
        \begin{enumerate}[(\alph*)]
            \item $xe^{(1)}\otimes e^{(2)} = e^{(1)}\otimes e^{(2)}x$ for any $x\in A_r$,
            \item $e^{(1)}e^{(2)} = 1_A$.
        \end{enumerate}
    \end{definition}
    Technically, one should call this \textit{left} relatively separable. For this paper, right relative separability would not be sufficient. We will see that the property that the first tensor component of $e$ is contained in $A_r$ is necessary for defining the relative restriction of scalars functor $\Rep(B) \to \Rep(A)$. Right relative separability would work if we wanted a functor mod-$B\to$ mod-$A$. In the case of weak Hopf algebras, $A_t$ and $A_s$ end up being separable in their own right, so this relative generality ultimately isn't used here. However, we prove a handful of properties that hold in this level of generality.

    \begin{definition}
        A \textit{relatively multiplicative map} from a unital associative algebra $A$ to an associative algebra $C$ relative to a relatively separable subalgebra $A_r\subseteq A$ is a linear map $\pi:A\to C$ such that, for all $a, b\in A$,
        $$\pi(ab) = \pi(ae^{(1)})\pi(e^{(2)}b).$$
    \end{definition}
    
    \begin{proposition}\label{mult-implies-rel}
        Any multiplicative map $\pi:A\to C$ is a relatively multiplicative map relative to any relatively separable subalgebra of $A$. 
    \end{proposition}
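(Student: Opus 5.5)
The plan is a direct computation from the definition of relative separability. Fix a relatively separable subalgebra $A_r\subseteq A$ with element $e=e^{(1)}\otimes e^{(2)}\in A_r\otimes A$. Suppose $\pi:A\to C$ is multiplicative, meaning $\pi(ab)=\pi(a)\pi(b)$ for all $a,b\in A$. We must show that
$$\pi(ab)=\pi(ae^{(1)})\pi(e^{(2)}b)$$
for all $a,b\in A$. Note that $e$ is generally a sum of simple tensors, so the right-hand side is implicitly summed.

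First, I use multiplicativity on each factor of the right-hand side, together with bilinearity of multiplication in $C$ to handle the sum. This rewrites it as $\pi(a)\pi(e^{(1)})\pi(e^{(2)})\pi(b)$.

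Second, I recombine the middle two factors, again using multiplicativity and linearity of $\pi$. This gives $\pi(e^{(1)})\pi(e^{(2)})=\pi(e^{(1)}e^{(2)})$.

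Third, I apply axiom (b) of relative separability, $e^{(1)}e^{(2)}=1_A$. The expression becomes $\pi(a)\pi(1_A)\pi(b)$.

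Finally, I need $\pi(a)\pi(1_A)\pi(b)=\pi(ab)$. This holds because, by multiplicativity, $\pi(a)\pi(1_A)=\pi(a1_A)=\pi(a)$. So the expression equals $\pi(a)\pi(b)=\pi(ab)$.

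This last step is the only point needing care. Neither $C$ nor $\pi$ is assumed unital, so I cannot simply replace $\pi(1_A)$ by $1_C$. Instead I absorb it into $\pi(a)$ as above, and for the same reason I avoid cancelling it against $\pi(b)$.

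Alternatively, I could first combine $\pi(ae^{(1)})\pi(e^{(2)}b)=\pi(ae^{(1)}e^{(2)}b)$ directly by multiplicativity, which gives $\pi(ab)$ immediately. This version also shows that axiom (a) and the condition $e^{(1)}\in A_r$ are not needed; only $e^{(1)}e^{(2)}=1_A$ is used. I expect no real obstacle beyond this bookkeeping of the implicit sum.
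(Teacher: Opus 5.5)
Your proof is correct, and your closing one-liner, $\pi(ae^{(1)})\pi(e^{(2)}b)=\pi(ae^{(1)}e^{(2)}b)=\pi(a\,1_A\,b)=\pi(ab)$, is the evident argument the paper has in mind when it states this proposition without proof; it uses only $e^{(1)}e^{(2)}=1_A$ and bilinearity to handle the implicit sum. Your four-step route through $\pi(a)\pi(1_A)\pi(b)$ is also valid, and you rightly avoid assuming $\pi(1_A)=1_C$, but it is longer than needed.
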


    Relatively multiplicative maps are not closed under composition. In particular, there is no category of ``relatively separable subalgebras'' and ``relatively multiplicative maps.'' Later, we define birelatively multiplicative and Hopf maps, which again are not closed under composition. However, these maps do play well with algebra homomorphisms, as shown in the following proposition.

    \begin{proposition}\label{mult-comp-rel}\label{rel-comp-mult}\label{tensor-rel}
        One can build new relatively multiplicative maps as follows:
        \begin{enumerate}[(\alph*)]
            \item Let $\pi:A\to X$ be a relatively multiplicative map and $g:C\to C'$ be a multiplicative map. Then, $g\circ\pi$ is a relatively multiplicative map.
            \item Let $k:A\to A'$ be a multiplicative map such that $(k\otimes k)(e) = e'$ and $\pi:A'\to C$ be a relatively multiplicative map. Then, $\pi\circ k$ is a relatively multiplicative map.
            \item If $\pi:A\to C$ and $\pi':A'\to C'$ are relatively multiplicative maps, then $\pi\otimes \pi':A\otimes A'\to C\otimes C'$ is also relatively multiplicative.
        \end{enumerate}
    \end{proposition}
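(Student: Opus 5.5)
Each of the three parts is a direct one-line verification from the definition $\pi(ab)=\pi(ae^{(1)})\pi(e^{(2)}b)$, so the plan is simply to unpack the definition in each case and check which separability element is involved.

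For (a), take $a,b\in A$. Relative multiplicativity of $\pi$ gives $\pi(ab)=\pi(ae^{(1)})\pi(e^{(2)}b)$. Applying the multiplicative map $g$ to both sides yields
$$g\pi(ab)=g\bigl(\pi(ae^{(1)})\pi(e^{(2)}b)\bigr)=g\pi(ae^{(1)})\,g\pi(e^{(2)}b).$$
This works because $e$ is a sum of simple tensors and both sides are bilinear in $(e^{(1)},e^{(2)})$. Hence $g\circ\pi$ is relatively multiplicative with respect to the same $A_r$ and the same $e$.

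For (b), we must first decide which subalgebra of $A$ the composite is relative to. The hypothesis $(k\otimes k)(e)=e'$ is to be read as: $A_r\subseteq A$ is relatively separable with element $e$, and $A'_r\subseteq A'$ with element $e'$. Then, for $a,b\in A$, multiplicativity of $k$ and relative multiplicativity of $\pi$ give
$$\pi k(ab)=\pi\bigl(k(a)k(b)\bigr)=\pi\bigl(k(a)e'^{(1)}\bigr)\,\pi\bigl(e'^{(2)}k(b)\bigr).$$
Substituting $e'=k(e^{(1)})\otimes k(e^{(2)})$ and using multiplicativity of $k$ once more, this becomes $\pi k(ae^{(1)})\,\pi k(e^{(2)}b)$, as required. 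The only subtlety is that the substitution is legitimate because the expression is linear in the tensor $e'$, so it factors through the map $x\otimes y\mapsto \pi(k(a)x)\otimes\pi(yk(b))$ followed by multiplication.

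For (c), the first task is to equip $A_r\otimes A'_r\subseteq A\otimes A'$ with the separability element
$$E=(e^{(1)}\otimes e'^{(1)})\otimes(e^{(2)}\otimes e'^{(2)})\in (A_r\otimes A'_r)\otimes(A\otimes A').$$
We check the two conditions on simple tensors $x\otimes x'$:
\begin{itemize}
\item condition (a) holds componentwise, and extends to all of $A_r\otimes A'_r$ by linearity;
\item condition (b) holds because $E^{(1)}E^{(2)}=e^{(1)}e^{(2)}\otimes e'^{(1)}e'^{(2)}=1\otimes1$.
\end{itemize}
Next, for simple tensors $a\otimes a'$ and $b\otimes b'$,
$$(\pi\otimes\pi')\bigl((a\otimes a')(b\otimes b')\bigr)=\pi(ab)\otimes\pi'(a'b').$$
Expanding each factor by relative multiplicativity and regrouping via the multiplication in $C\otimes C'$ gives $(\pi\otimes\pi')\bigl((a\otimes a')E^{(1)}\bigr)\,(\pi\otimes\pi')\bigl(E^{(2)}(b\otimes b')\bigr)$. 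The general case follows by bilinearity.

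None of these steps is a real obstacle. The only care needed is bookkeeping: stating the implicit relatively separable subalgebra in (b) and (c), and checking that the tensor-flip regrouping in (c) is the standard isomorphism $(A\otimes A')^{\otimes2}\cong A^{\otimes2}\otimes A'^{\otimes2}$.
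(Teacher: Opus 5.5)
Your proof is correct. The paper states this proposition without proof, treating it as a routine unpacking of the definition, and your argument is exactly that check; in particular, you supply the bookkeeping the statement leaves implicit, namely the reading of (b) as relative to $A_r$ with $e$, and the explicit separability element $E=(e^{(1)}\otimes (e')^{(1)})\otimes(e^{(2)}\otimes (e')^{(2)})$ for $A_r\otimes A'_r\subseteq A\otimes A'$ in (c).
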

    
    \subsection{Relative restriction of scalars}
    By Proposition \ref{mult-implies-rel}, we can interpret a relatively multiplicative map as a generalization of a multiplicative map. Unital algebra homomorphisms $\pi:A\to C$ between algebras $A$ and $C$ induce a functor $\pi^*:\Rep(C)\to \Rep(A)$ called the restriction of scalars. In this section, given a relatively multiplicative map $\pi:A\to C$, we can still define a functor $\pi^*:\Rep(C)\to \Rep(A)$. In the case that $\pi$ is unital and multiplicative, this new functor agrees with the restriction of scalars functor (Proposition \ref{mult-res}), which motivates the name ``relative restriction of scalars.''

    \begin{theorem}
        Let $\pi:A\to C$ be a relatively multiplicative map and $M$ be a left $C$-module. Then, define 
        $$\pi^*(M) = A_r\otimes M / \spa\{x\otimes m - e^{(1)}\otimes \pi(e^{(2)}x)m \mid x\in A_r, m\in M\}.$$
        Then, 
        \begin{enumerate}[(\alph*)]
            \item $\pi^*(M)$ has a left $A$-module structure with action given by
        $$a\cdot (x\trel m) = e^{(1)}\trel \pi(e^{(2)}ax)m,$$
            where we denote the image of $x\otimes m$ under the quotient map $A_r\otimes M\to\pi^*(M)$ by $x\trel m$;
            \item for a $C$-linear map $f:M\to N$ between $C$-modules $M, N$, there is a unique $A$-linear map $\pi^*(f)$ which makes the following diagram commute:
        \begin{center}
            \begin{tikzcd}
                A_r\otimes M\arrow[r, "\id_{A_r}\otimes f"]\arrow[d, "\text{quot.}",two heads] & A_r\otimes N\arrow[d, "\text{quot.}",two heads]\\
                \pi^*(M)\arrow[r,"\pi^*(f)"]&\pi^*(N);
            \end{tikzcd}
        \end{center}
            \item $\pi^*$ is functorial.
        \end{enumerate}
    \end{theorem}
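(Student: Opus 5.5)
The plan is to define the action on $A_r\otimes M$ first, then check that it descends to the quotient and is associative and unital. Write $R=\spa\{x\otimes m - e^{(1)}\otimes \pi(e^{(2)}x)m\}$, and define $\tilde\rho_a:A_r\otimes M\to A_r\otimes M$ by $x\otimes m\mapsto e^{(1)}\otimes \pi(e^{(2)}ax)m$. This is well defined and linear, because it is the composite of linear maps: $x\otimes m\mapsto e^{(1)}\otimes e^{(2)}ax\otimes m$, then $\id\otimes\pi\otimes\id$, then the action of $C$ on $M$.

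\textbf{Step 1: $\tilde\rho_a$ preserves $R$.} Compute
$$\tilde\rho_a\bigl(e'^{(1)}\otimes\pi(e'^{(2)}x)m\bigr)=e^{(1)}\otimes \pi(e^{(2)}ae'^{(1)})\pi(e'^{(2)}x)m .$$
By relative multiplicativity this equals $e^{(1)}\otimes \pi(e^{(2)}ax)m=\tilde\rho_a(x\otimes m)$. So $\tilde\rho_a$ actually sends every spanning element of $R$ to $0$, and it induces a map $\rho_a$ on $\pi^*(M)$. The same identity also shows that $\tilde\rho_a(y\otimes n)=\tilde\rho_a(y'\otimes n')$ whenever the two agree modulo $R$.

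\textbf{Step 2: associativity and unitality.}
\begin{itemize}
\item For associativity, compute
$$\rho_a\rho_b(x\trel m)=e^{(1)}\trel\pi(e^{(2)}ae'^{(1)})\pi(e'^{(2)}bx)m .$$
Relative multiplicativity applied to the pair $(e^{(2)}a,\ bx)$ gives $e^{(1)}\trel \pi(e^{(2)}abx)m=\rho_{ab}(x\trel m)$.
\item For unitality, $\rho_1(x\trel m)=e^{(1)}\trel\pi(e^{(2)}x)m$, and this equals $x\trel m$ precisely because of the defining relation of $R$. This is the point where the quotient is essential. Linearity in $a$ is clear.
\end{itemize}
Property (a) of $e$ is not needed here. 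Note, however, that the first tensor leg of $e$ must lie in $A_r$ for the target $A_r\otimes M$ to make sense, which is why left relative separability is required.

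\textbf{Step 3: parts (b) and (c).}
\begin{itemize}
\item For (b): $\id_{A_r}\otimes f$ maps $R_M$ into $R_N$, since $f$ is $C$-linear and hence $f(\pi(e^{(2)}x)m)=\pi(e^{(2)}x)f(m)$. So it descends, and the induced map is unique because the quotient map is surjective.
\item For $A$-linearity in (b), $(\id\otimes f)\circ\tilde\rho_a=\tilde\rho_a\circ(\id\otimes f)$ on $A_r\otimes M$, again by $C$-linearity, and this passes to the quotient.
\item For (c), functoriality follows from uniqueness: $\id_{A_r}\otimes\id_M$ and $(\id\otimes g)(\id\otimes f)=\id\otimes gf$ make the respective squares commute, so $\pi^*(\id)=\id$ and $\pi^*(gf)=\pi^*(g)\pi^*(f)$. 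Linearity of $\pi^*$ on Hom spaces holds similarly.
\end{itemize}

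The main obstacle is Step 1, the descent of the action to the quotient. The key observation is that relative multiplicativity, applied with left factor $e^{(2)}a$ and right factor $x$, is exactly the identity needed. Associativity is then the same computation, and unitality is built into the relation.
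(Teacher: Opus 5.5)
Your proposal is correct and follows essentially the same route as the paper: unitality comes from the defining relation, associativity from relative multiplicativity applied to $(e^{(2)}a,\,bx)$, part (b) from $C$-linearity of $f$, and (c) from uniqueness. The one addition is that you explicitly check the action descends to the quotient ($\tilde\rho_a$ annihilates every spanning relation, by the same relative multiplicativity identity); the paper leaves this well-definedness step implicit.
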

    \begin{proof}
        (a): By construction,
        $$1\cdot (x\trel m) = e^{(1)}\trel \pi(e^{(2)}x)m = x\trel m.$$
        Associativity follows from relative multiplicativity:
        $$a\cdot (b\cdot (x\trel m)) = e^{(1)}\trel \pi(e^{(2)}ae^{(1')})\pi(e^{(2')}bx)m = e^{(1)}\trel \pi(e^{(2)}abx)m = ab\cdot (x\trel m).$$
        Thus, $\pi^*(M)$ has an $A$-module structure.

        (b): Given $f:M\to N$, consider the composition $\tilde f:A_r\otimes M\to A_r\otimes N\to \pi^*(N).$ By $C$-linearity of $f$,
        $$\tilde f(x\otimes m - e^{(1)}\otimes \pi(e^{(2)}x)m) = x\trel f(m) - e^{(1)}\trel \pi(e^{(2)}x)f(m) = 0,$$
        so $\tilde f$ factors through a linear map $\pi^*(f):\pi^*(M)\to \pi^*(N)$ defined by $x\trel m\mapsto x\trel f(m)$. Once again, it follows by relative multiplicativity that this map is $A$-linear.

        (c): Obvious by construction of $\pi^*(f)$.
    \end{proof}
    
    Note that, whether or not $C$ is unital, the action of $A$ on $\pi^*(M)$ is unit-preserving. The choice of relatively multiplicative subalgebra does affect the functor $\pi^*$ in general, if, say, $\pi$ was relatively multiplicative with respect to two distinct relatively separable subalgebras. However, this is not the case when $\pi$ is multiplicative.
    \begin{proposition} \label{mult-res}
        Let $\pi:A\to C$ be a unital algebra homomorphism. Then, $\pi^*\cong \Res_\pi$, the usual restriction of scalars regardless of the choice of relatively separable subalgebra $A_r\subseteq A$.
    \end{proposition}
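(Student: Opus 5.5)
We will exhibit mutually inverse natural $A$-linear maps between $\pi^*(M)$ and $\Res_\pi(M)$, where $\Res_\pi(M)$ is $M$ with the action $a\cdot m = \pi(a)m$. Throughout, $e = e^{(1)}\otimes e^{(2)}\in A_r\otimes A$ is the relative separability element attached to the chosen subalgebra $A_r$.

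Define $\Phi_M:A_r\otimes M\to M$ by $x\otimes m\mapsto \pi(x)m$. It kills the defining relations of $\pi^*(M)$: since $\pi$ is multiplicative and unital, and $e^{(1)}e^{(2)}=1$,
$$\Phi_M\bigl(e^{(1)}\otimes \pi(e^{(2)}x)m\bigr)=\pi(e^{(1)})\pi(e^{(2)}x)m=\pi(e^{(1)}e^{(2)}x)m=\pi(x)m.$$
Hence $\Phi_M$ descends to a map $\pi^*(M)\to M$, still written $\Phi_M$. It is $A$-linear, because
$$\Phi_M\bigl(a\cdot(x\trel m)\bigr)=\pi(e^{(1)})\pi(e^{(2)}ax)m=\pi(ax)m=\pi(a)\,\Phi_M(x\trel m).$$

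For the inverse, define $\Psi_M:M\to\pi^*(M)$ by $m\mapsto 1\trel m$. Then $\Phi_M\Psi_M(m)=\pi(1)m=m$, using that $\pi$ is unital and $M$ is a unital $C$-module. In the other direction,
$$\Psi_M\Phi_M(x\trel m)=1\trel \pi(x)m.$$
The defining relation with $x=1$ gives $1\trel \pi(x)m = e^{(1)}\trel\pi(e^{(2)})\pi(x)m$, which is a priori not visibly equal to $x\trel m$. Instead, apply the relation directly to $x\otimes m$:
$$x\trel m = e^{(1)}\trel \pi(e^{(2)}x)m = e^{(1)}\trel \pi(e^{(2)})\pi(x)m,$$
and apply it to $1\otimes \pi(x)m$:
$$1\trel \pi(x)m = e^{(1)}\trel \pi(e^{(2)})\pi(x)m.$$
The right-hand sides coincide, so $\Psi_M\Phi_M=\id$. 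Thus $\Phi_M$ is an $A$-module isomorphism. Note that both computations use only $e^{(1)}e^{(2)}=1$ and multiplicativity, never property (a) of $e$, so the isomorphism holds for every choice of $A_r$.

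Naturality follows from the description of $\pi^*(f)$ in part (b) of the theorem: $\Phi_N(x\trel f(m))=\pi(x)f(m)=f(\pi(x)m)$, by $C$-linearity of $f$.

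The only step needing care is this second composite $\Psi_M\Phi_M=\id$. It reduces to comparing both elements with the common expression $e^{(1)}\trel\pi(e^{(2)})\pi(x)m$, which is where multiplicativity of $\pi$ is used essentially.
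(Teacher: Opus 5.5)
Your proof is correct and follows essentially the same route as the paper: the paper's map $\iota: m\mapsto 1\trel m$ is your $\Psi_M$, surjectivity comes from the same identity $x\trel m = e^{(1)}\trel \pi(e^{(2)})\pi(x)m = 1\trel \pi(x)m$, and injectivity comes from the well-defined inverse $x\trel m\mapsto \pi(x)m$ (your $\Phi_M$). You simply write out the $A$-linearity and naturality checks that the paper leaves to the reader.
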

    \begin{proof}
        Observe that $$x\trel m = e^{(1)}\trel \pi(e^{(2)})\pi(x)m = 1\trel \pi(x)m.$$
        Thus, $\iota:M\incl\pi^*(M)$ is a surjective map. Conversely, the map $x\trel m\mapsto \pi(x)m$ preserves $x\trel m = e^{(1)}\trel \pi(e^{(2)}x)m$, so $\iota$ is also injective. One may readily verify that $\iota$ is also an $A$-linear and defines a natural isomorphism $\pi^*\cong \Res_\pi$. 
    \end{proof}
    Proposition \ref{mult-res} justifies our use of the notation of $\pi^*$ and calling $\pi^*$ the relative restriction of scalars. Moving forward, we use the notation $f^*$ for both restriction of scalars of a unital algebra homomorphism $f$ and relative restriction of scalars of a relatively multiplicative map $f$ because there is no ambiguity.
    
    \begin{proposition}
        Suppose $\pi:A\to C$ is a relatively multiplicative map and $g:C\to C'$ is a multiplicative map. Then, $(g\circ \pi)^{*} \cong \pi^{*}\circ g^*$.
    \end{proposition}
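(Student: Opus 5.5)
Both functors go $\Rep(C')\to\Rep(A)$. By part (a) of Proposition \ref{mult-comp-rel}, $g\circ\pi$ is relatively multiplicative relative to $A_r$, so $(g\circ\pi)^*$ is defined. Here $g^*$ means the relative restriction of scalars of $g$. When $C$ has no distinguished relatively separable subalgebra, I would read it as ordinary restriction: a $C'$-module $M$ becomes a $C$-module via $c\cdot m = g(c)m$. This is an associative action because $g$ is multiplicative, and unitality is never needed, since $\pi^*$ only uses the action of the elements $\pi(\cdot)$ of $C$. If instead $C$ is unital and $g$ is unital, Proposition \ref{mult-res} identifies $g^*$ with $\Res_g$ anyway, so it suffices to compare $(g\circ\pi)^*$ with $\pi^*\circ\Res_g$.

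Fix a $C'$-module $M$. I would write down both quotients of $A_r\otimes M$:
\[
(g\circ\pi)^*(M)=A_r\otimes M/\spa\{x\otimes m - e^{(1)}\otimes g(\pi(e^{(2)}x))m\},
\]
\[
\pi^*(\Res_g M)=A_r\otimes M/\spa\{x\otimes m - e^{(1)}\otimes \pi(e^{(2)}x)\cdot m\}.
\]
Since $\pi(e^{(2)}x)\cdot m = g(\pi(e^{(2)}x))m$ by definition of the restricted action, the two subspaces coincide. So $\id_{A_r\otimes M}$ descends to a linear isomorphism $\eta_M$ sending $x\trel m\mapsto x\trel m$.

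Next I check that $\eta_M$ is $A$-linear. The action of $a$ is $e^{(1)}\trel g(\pi(e^{(2)}ax))m$ on one side and $e^{(1)}\trel \pi(e^{(2)}ax)\cdot m$ on the other, and these agree termwise. For naturality, take a $C'$-linear map $f:M\to N$. Then $\Res_g f=f$ as a linear map, so by the uniqueness in part (b) of the preceding theorem both $(g\circ\pi)^*(f)$ and $\pi^*(\Res_g f)$ are induced by $\id_{A_r}\otimes f$, and $\eta$ commutes with them.

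The only real subtlety is the first step: fixing what $g^*$ means when $g$ is merely multiplicative and $C$ may be non-unital. Once it is taken as plain restriction, whether directly or via Proposition \ref{mult-res}, everything reduces to the defining relations being literally equal, and $\eta$ is even the identity on representatives.
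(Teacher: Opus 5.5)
Your proof is correct and is essentially the paper's argument. The paper simply says that unraveling the definitions shows the two functors are literally identical, $(g\circ\pi)^*=\pi^*\circ g^*$, and your comparison of the defining relations and the $A$-actions on $A_r\otimes M$ spells out exactly that, with $g^*$ read as plain restriction along $g$.
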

    \begin{proof}
        Unraveling definitions, one sees that, in fact, these functors are identical. That is, $(g\circ \pi)^{*} = \pi^*\circ g^{*}$.
    \end{proof}
    
    \begin{proposition}\label{rel-comp-mult-functor}
        Suppose $k:A\to A'$ is a unital algebra homomorphism such that $(k\otimes k)(e) = e'$, and $\pi:A'\to C$ is a relatively multiplicative map. Then, there is a natural transformation $\eta:(\pi\circ k)^{*}\To k^*\circ \pi^{*}$. Moreover, $\eta$ is a natural isomorphism if $k$ restricts to an isomorphism $k|_{A_r}:A_r\overset{\sim}{\to} A'_r$.
    \end{proposition}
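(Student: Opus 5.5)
Let $e\in A_r\otimes A$ and $e'\in A'_r\otimes A'$ be the relative separability elements, with $(k\otimes k)(e)=e'$, and let $M$ be a $C$-module. Unwinding the definitions, $(\pi\circ k)^*(M)$ is a quotient of $A_r\otimes M$ and $k^*(\pi^*(M))$ is the space $\pi^*(M)$, a quotient of $A'_r\otimes M$, with $A$ acting by $a\cdot(y\trel m)=e'^{(1)}\trel\pi(e'^{(2)}k(a)y)m$. Note that $k(A_r)\subseteq A'_r$: we have $k(e^{(1)})\otimes k(e^{(2)})=e'\in A'_r\otimes A'$, and for $x\in A_r$ the relation $x=xe^{(1)}e^{(2)}=e^{(1)}e^{(2)}x$ shows $x$ is recovered from applying multiplication after left multiplication by $x$ on the first factor of $e$, so $k(x)=k(xe^{(1)})k(e^{(2)})$. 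I will instead avoid needing this inclusion directly by defining
$$\eta_M:(\pi\circ k)^*(M)\to k^*\pi^*(M),\qquad x\trel m\mapsto e'^{(1)}\trel\pi(e'^{(2)}k(x))m,$$
which lands in the quotient of $A'_r\otimes M$ by construction. When $k(x)\in A'_r$ this equals $k(x)\trel m$.

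The first step is well-definedness. I must show that the relation $x\otimes m-e^{(1)}\otimes\pi(k(e^{(2)}x))m$ maps to zero. The image of $e^{(1)}\otimes \pi(k(e^{(2)}x))m$ is $e'^{(1)}\trel\pi(e'^{(2)}k(e^{(1)}))\pi(k(e^{(2)})k(x))m$. Rewriting $k(e^{(1)})\otimes k(e^{(2)})=e'^{(1')}\otimes e'^{(2')}$ and applying relative multiplicativity of $\pi$ collapses this to $e'^{(1)}\trel\pi(e'^{(2)}k(x))m$, which is the image of $x\otimes m$.

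The second step is $A$-linearity. On one side,
$$\eta(a\cdot(x\trel m))=e'^{(1)}\trel\pi(e'^{(2)}k(e^{(1)}))\pi(k(e^{(2)}ax))m,$$
and by the same collapse this equals $e'^{(1)}\trel\pi(e'^{(2)}k(a)k(x))m$. On the other side,
$$k(a)\cdot\eta(x\trel m)=e'^{(1)}\trel\pi(e'^{(2)}k(a)e'^{(1')})\pi(e'^{(2')}k(x))m,$$
which collapses to the same expression by relative multiplicativity. Naturality in $M$ is immediate, since both functors act on morphisms by $\id\otimes f$.

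The third step is the isomorphism when $k|_{A_r}:A_r\to A'_r$ is bijective. I would take as inverse the map $y\trel m\mapsto k^{-1}(y)\trel m$. This is well defined because the relation $y\otimes m-e'^{(1)}\otimes\pi(e'^{(2)}y)m$ pulls back exactly to $k^{-1}(y)\otimes m-e^{(1)}\otimes\pi(k(e^{(2)}k^{-1}(y)))m$, using $e'=(k\otimes k)(e)$ with $e^{(1)}\in A_r$. On generators, $\eta$ sends $x\trel m$ to $k(x)\trel m$, since $k(x)\in A'_r$ and the defining relation applies. Hence the two maps are mutually inverse.

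The main obstacle is the first step: the relation in $\pi^*$ only rewrites elements of $A'_r$, so I must verify that $\eta$ genuinely respects the quotient. The identity $(k\otimes k)(e)=e'$ is exactly what makes the collapse go through.
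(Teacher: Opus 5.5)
Your proof is correct and has the same outline as the paper's: define $\eta_M$ on generators, check that it kills the defining relations, show it is $A$-linear and natural, and invert it when $k|_{A_r}$ is bijective. The one real difference is the formula for $\eta_M$.

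The paper sets $\eta_M(x\trel m)=k(x)\trel m$, which only makes sense if $k(A_r)\subseteq A'_r$. Your map $\eta_M(x\trel m)=(e')^{(1)}\trel\pi((e')^{(2)}k(x))m$ always lands in $\pi^*(M)$. It agrees with the paper's map whenever $k(x)\in A'_r$. Your collapse computations (substitute $(k\otimes k)(e)=e'$, then apply relative multiplicativity of $\pi$) give well-definedness and $A$-linearity without assuming that inclusion.

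This is a genuine gain in generality, because the inclusion does not follow from $(k\otimes k)(e)=e'$. Take $A=A'=M_2(\bk)$, $k=\id$, and $e=e'=e_{11}\otimes e_{11}+e_{21}\otimes e_{12}$. Both $A_r=M_2(\bk)$ and $A'_r=\{\text{lower triangular matrices}\}$ are relatively separable with respect to this element, yet $k(A_r)\not\subseteq A'_r$.

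You should therefore delete your opening sentence asserting $k(A_r)\subseteq A'_r$. The argument you sketch for it does not prove it, and the claim is false in general. You only need $k(x)\in A'_r$ in the final step, and there it is supplied by the hypothesis that $k|_{A_r}$ is an isomorphism onto $A'_r$.

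In the paper's intended setting ($A_r=A_t$ for a weak Hopf algebra homomorphism), Lemma \ref{targets-iso} gives the inclusion, so the two definitions coincide there.
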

    \begin{proof}
        Let $M\in\Rep(C)$. Then, an element in $(\pi\circ k)^*(M)$ may be expressed as a sum of elements of the form $x\trel m$, where $x\in A_r$ and $m\in M$ with $A$-action given by
        $$a\cdot_A (x\trel m) = e^{(1)}\trel (\pi\circ k)(e^{(2)}ax)m = e^{(1)}\trel \pi(k(e^{(2)}a)k(x))m.$$
        Meanwhile, an element in $(k^*\circ\pi^*)(M)$ may be expressed as a sum of elements of the form $y\trel m$, where $y\in A'_r$ and $m\in M$ with $A$-action given by
        $$a\cdot_A (y\trel m) = k(a)\cdot_{A'} (y\trel m) = (e')^{(1)}\trel \pi((e')^{(2)}k(a)y)m = k(e^{(1)})\trel \pi(k(e^{(2)}a)y)m.$$
        It is easy to see that $\eta_M:x\trel m\mapsto k(x)\trel m$ generates a well-defined $A$-linear map. Moreover, it is clearly natural. Finally, we see that this map is invertible if $k$ is invertible on $A_r$.
    \end{proof}
    
    \begin{proposition}\label{counit-case}
        Suppose $\epsilon:A\to \bk$ is a relatively multiplicative map, and define $\epsilon_r:A\to A_r$ to be, for $a\in A$, 
        $$\epsilon_r(a)\deq e^{(1)}\epsilon(e^{(2)}a).$$
        Then, $\epsilon_r$ defines a multiplicative action of $A$ on $A_r$ via $a\cdot x = \epsilon_r(ax)$. Moreover, $1 = \epsilon_r(1)$ if and only if this action on $A_r$ is unital. In this case, $\epsilon^*:\Vec\to\Rep(A)$ is equivalent to the functor generated by $\bk\mapsto A_r$.
    \end{proposition}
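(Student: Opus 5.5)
The plan is to reduce everything to the definition of $\pi^*$ with $C=\bk$ and $\pi=\epsilon$, and to use only the two properties of the relative separability element $e$: (a) $xe^{(1)}\otimes e^{(2)}=e^{(1)}\otimes e^{(2)}x$ for $x\in A_r$, and (b) $e^{(1)}e^{(2)}=1$.

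\textbf{Step 1: multiplicativity of the action.} For $a,b\in A$ and $x\in A_r$, expand
$$a\cdot(b\cdot x)=\epsilon_r(a\,\epsilon_r(bx))=e^{(1)}\epsilon(e^{(2)}ae^{(1')})\,\epsilon(e^{(2')}bx),$$
where $e^{(1')}\otimes e^{(2')}$ is a second copy of $e$. Relative multiplicativity of $\epsilon$, applied to the pair $e^{(2)}a$ and $bx$, collapses the two counits into $\epsilon(e^{(2)}abx)$. The result is exactly $\epsilon_r(abx)=(ab)\cdot x$. This is the same computation as the associativity check in part (a) of the relative restriction of scalars theorem. Note that $\epsilon_r(A)\subseteq A_r$ because $e\in A_r\otimes A$.

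\textbf{Step 2: unitality.} The action is unital exactly when $\epsilon_r(x)=x$ for all $x\in A_r$. If this holds, then $\epsilon_r(1)=1$ since $1\in A_r$. Conversely, suppose $\epsilon_r(1)=1$, i.e. $e^{(1)}\epsilon(e^{(2)})=1$. For $x\in A_r$, property (a) moves $x$ across the tensor:
$$\epsilon_r(x)=e^{(1)}\epsilon(e^{(2)}x)=xe^{(1)}\epsilon(e^{(2)})=x\,\epsilon_r(1)=x.$$
This gives the claimed equivalence.

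\textbf{Step 3: identifying $\epsilon^*$.} For a vector space $V$,
$$\epsilon^*(V)=A_r\otimes V/\spa\{x\otimes v-e^{(1)}\otimes\epsilon(e^{(2)}x)v\}.$$
The scalar $\epsilon(e^{(2)}x)$ can be moved into the first tensor factor, so the relations read $x\otimes v=\epsilon_r(x)\otimes v$. Define
$$\phi:\epsilon^*(V)\to A_r\otimes V,\qquad x\trel v\mapsto \epsilon_r(x)\otimes v.$$
This map is well defined because $\epsilon_r$ is idempotent on $A_r$, by Step 2. It is bijective, with inverse $x\otimes v\mapsto x\trel v$: one composite is the identity by the relation, and the other is the identity because $\epsilon_r$ fixes $A_r$.

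For $A$-linearity, compare
$$a\cdot(x\trel v)=e^{(1)}\trel\epsilon(e^{(2)}ax)v,$$
whose image under $\phi$ is $\epsilon_r(\epsilon_r(ax))\otimes v=\epsilon_r(ax)\otimes v$. This matches the action $a\cdot x=\epsilon_r(ax)$ on $A_r\otimes V\cong A_r^{\oplus\dim V}$. Naturality in $V$ is immediate, so $\epsilon^*\cong A_r\otimes(-)$, which is the functor generated by $\bk\mapsto A_r$.

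\textbf{Main obstacle.} The delicate point is in Step 3: the quotient relations must cut $A_r\otimes V$ down to exactly $A_r\otimes V$ and no further. That requires $\epsilon_r$ to be the identity on $A_r$, which is precisely where the hypothesis $\epsilon_r(1)=1$ is used (through Step 2). Without it, $\epsilon^*(\bk)$ would only be identified with the image $\epsilon_r(A_r)$.
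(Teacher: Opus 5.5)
Your proof is correct and complete. The paper states this proposition without proof, and your Steps 1 and 3 are the computations the paper uses in analogous places: Step 1 is the associativity check in part (a) of the relative restriction of scalars theorem, and Step 3 mirrors Lemma~\ref{hopf-quotient-is-trivial}, where the defining relations collapse so that $\pi^*(M)$ is all of $A_t\otimes M$. One small addition would round it out: in the unital case $A_r=\epsilon_r(A_r)$ lies in the span of the finitely many first legs of $e$, so $A_r$ is finite-dimensional and $A_r\otimes V$ really lands in $\Rep(A)$.
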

    
    \begin{corollary}\label{epsAB-case}
        Suppose $C$ is a unital associative algebra and $\epsilon:A\to \bk$ is a relatively multiplicative map. Let $\epsilon_{A, C}:A\to C$ is defined by $\epsilon_{A,C}(a)=\epsilon(a)1_C$. Then, $\epsilon_{A, C}$ is a relatively multiplicative map and
        $$\epsilon_{A,C}^*(M) \cong  A_r^{\oplus\dim(M)}.$$
    \end{corollary}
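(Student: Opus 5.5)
The plan is to obtain both claims by factoring $\epsilon_{A,C}$ through the unit map and then invoking the results already established. Let $u:\bk\to C$, $u(\lambda)=\lambda 1_C$. This is a multiplicative map, and $\epsilon_{A,C}=u\circ\epsilon$. Since $\epsilon:A\to\bk$ is relatively multiplicative by hypothesis, Proposition \ref{mult-comp-rel}(a) shows immediately that $\epsilon_{A,C}$ is relatively multiplicative. Concretely, the identity is
$$\epsilon(ab)1_C=\epsilon(ae^{(1)})\epsilon(e^{(2)}b)1_C=\epsilon(ae^{(1)})1_C\,\epsilon(e^{(2)}b)1_C.$$

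For the module computation, the proposition on composing with a multiplicative map gives $\epsilon_{A,C}^*=(u\circ\epsilon)^*=\epsilon^*\circ u^*$. Here $u^*(M)$ is $M$ regarded as a $\bk$-vector space. Since $u$ is a unital algebra homomorphism, Proposition \ref{mult-res} applies with $\bk_r=\bk$, and $u^*(M)$ is isomorphic to the ordinary restriction, namely $\bk^{\oplus\dim M}$ as a $\bk$-module. We then apply $\epsilon^*$. It is additive, since it is built from $A_r\otimes(-)$ modulo a subspace defined termwise, so it commutes with direct sums. It therefore suffices to identify $\epsilon^*(\bk)$.

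The remaining step is to identify $\epsilon^*(\bk)\cong A_r$, and this is where I expect the real work. It is exactly Proposition \ref{counit-case}, but that proposition assumes $\epsilon_r(1)=1$. So I would first check whether this follows in general. Directly, $\epsilon^*(\bk)=A_r/\spa\{x-\epsilon_r(x)\}$. The map $x\mapsto\epsilon_r(x)$ is well defined on this quotient and lands in $\im\epsilon_r$, with $\epsilon_r$ idempotent on $A_r$ by relative multiplicativity. Hence $\epsilon^*(\bk)\cong\epsilon_r(A_r)$ with the action $a\cdot x=\epsilon_r(ax)$. In the weak Hopf settings of interest, $\epsilon_r$ is $\epsilon_t$ or $\epsilon_s$ and is unital and surjective onto $A_r$, so this is $A_r$. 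In general I would read the statement as implicitly under the unitality hypothesis of Proposition \ref{counit-case}, or with $A_r$ replaced by $\epsilon_r(A_r)$. Combining these steps gives $\epsilon_{A,C}^*(M)\cong\epsilon^*(\bk)^{\oplus\dim M}\cong A_r^{\oplus\dim M}$.
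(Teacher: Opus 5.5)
Your argument is the paper's: factor $\epsilon_{A,C}=1_C\circ\epsilon$, use $(g\circ\pi)^*=\pi^*\circ g^*$ with $1_C^*$ the forgetful functor, and identify $\epsilon^*(\bk)$ via Proposition \ref{counit-case}. Your caveat is also correct and more careful than the paper, whose proof cites Proposition \ref{counit-case} without checking its hypothesis $\epsilon_r(1)=1$. In general $\epsilon^*(\bk)\cong\epsilon_r(A_r)=A_r\,\epsilon_r(1)$, since $\epsilon_r(x)=x\,\epsilon_r(1)$ for $x\in A_r$; for instance, $\epsilon=0$ gives $0$ instead of $A_r$. The hypothesis does hold in the paper's application, $\epsilon=\epsilon_A$ relative to $A_t$, where $\epsilon_r=\epsilon_t$.
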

    \begin{proof}
        Note that $1_C:\bk\to C$ is multiplicative and $1_C^*$ is the forgetful functor $\Rep(C)\to\Vec$. By Proposition \ref{counit-case}, $\epsilon_A^*$ is the inclusion $\Vec\to\Rep(A)$, so
        $$\epsilon_{A,C}^*(M) \cong (\epsilon_A^*\circ 1_C^*)(M) \cong  A_r^{\oplus\dim(M)}.$$
    \end{proof}
    
    \section{Kernels and cokernels for weak Hopf algebras}\label{sec:ker-coker}
    
    \subsection{Hopf maps}
    \begin{definition}
        Let $A$ be a weak Hopf algebra and $C$ be an associative algebra. Then, a linear map $\pi:A\to C$ is \textit{birelatively multiplicative} if $\pi$ is a relatively multiplicative map relative to both $A_t\subseteq A$ and $A_s\subseteq A$ with their canonical separability idempotents. Explicitly, $\pi$ is birelatively multiplicative if, for all $a_1, a_2\in A$,
        $$\pi(a_1S(1^{(1)}))\pi(1^{(2)}a_2) = \pi(a_1a_2) = \pi(a_11^{(1)})\pi(S(1^{(2)})a_2).$$
        Suppose $H$ is a Hopf algebra. Then, a \textit{Hopf map} $\pi:A\to H$ is a birelatively multiplicative, antipode-preserving, coalgebra homomorphism.
    \end{definition}

    Note that $\epsilon_{A, H}:x\mapsto \epsilon_A(x) 1_H$ is Hopf for any weak Hopf algebra $A$ and Hopf algebra $H$. However, it is not comultiplicative or counital when $H$ is a weak Hopf algebra. However, in this case, the map is still birelatively multiplicative and antipode-preserving.
    
    \begin{proposition}\label{hopf-map-properties}
        Suppose $\pi:A\to H$ is a Hopf map. Then, for any $x_1, x_2\in A_t$ and $a, a_1, a_2, a_3\in A$, we have the following identities:
        \begin{enumerate}[(\alph*)]
            \item $\pi(\epsilon_t(a))=\epsilon_{A,H}(x)=\pi(\epsilon_s(a))$,
            \item $\pi(S(x_1)ax_2)=\pi(x_1aS^{-1}(x_2))$,
            \item $\pi(a_1\epsilon_t(a_2)a_3) = \pi(a_1a_2^{(2)})\pi(S(a_2^{(2)})a_3)$,
            \item $\pi(a_1\epsilon_s(a_2)a_3) = \pi(a_1S(a_2^{(1)}))\pi(a_2^{(2)}a_3)$.
        \end{enumerate}
    \end{proposition}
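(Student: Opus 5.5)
Throughout I read (a) as $\pi(\epsilon_t(a))=\epsilon(a)1_H=\pi(\epsilon_s(a))$ and (c) as $\pi(a_1\epsilon_t(a_2)a_3)=\pi(a_1a_2^{(1)})\pi(S(a_2^{(2)})a_3)$, which I take to be the intended statements. The plan is to combine three ingredients: the two relative multiplicativity identities, the coalgebra and antipode compatibility of $\pi$, and the Hopf identity $h^{(1)}S_H(h^{(2)})=\epsilon_H(h)1_H$ in $H$.

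\emph{(c) and (d).} I would prove these first, since they are the most direct. Write $\epsilon_t(a_2)=a_2^{(1)}S(a_2^{(2)})$ and apply $A_s$-relative multiplicativity, splitting between $a_2^{(1)}$ and $S(a_2^{(2)})$:
$$\pi(a_1a_2^{(1)}S(a_2^{(2)})a_3)=\pi(a_1a_2^{(1)}1^{(1)})\,\pi(S(1^{(2)})S(a_2^{(2)})a_3).$$
Since $S$ is anti-multiplicative, $S(1^{(2)})S(a_2^{(2)})=S(a_2^{(2)}1^{(2)})$. Then $a_2^{(1)}1^{(1)}\otimes a_2^{(2)}1^{(2)}=\Delta(a_2)\Delta(1)=\Delta(a_2)$ collapses the expression to $\pi(a_1a_2^{(1)})\pi(S(a_2^{(2)})a_3)$, which is (c). For (d), write $\epsilon_s(a_2)=S(a_2^{(1)})a_2^{(2)}$ and use $A_t$-relative multiplicativity, which inserts $S(1^{(1)})\otimes 1^{(2)}$. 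The identity $S(1^{(1)}a^{(1)})\otimes 1^{(2)}a^{(2)}=S(a^{(1)})\otimes a^{(2)}$ then absorbs the extra unit in the same way.

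\emph{(a).} Let $x=\epsilon_t(a)\in A_t$. By property 6(a), $\Delta(x)=1^{(1)}x\otimes 1^{(2)}$. Because $\pi$ is a coalgebra map and preserves the antipode, the Hopf identity in $H$ applied to $\pi(x)$ gives
$$\epsilon_H(\pi(x))1_H=\pi(1^{(1)}x)\,\pi(S(1^{(2)})).$$
Since $A_t$ and $A_s$ commute and $1^{(1)}\in A_s$, we have $1^{(1)}x=x1^{(1)}$. The $A_s$-relative multiplicativity identity with $a_1=x$ and $a_2=1$ then shows that the right-hand side equals $\pi(x)$. Counitality gives $\epsilon_H(\pi(x))=\epsilon(\epsilon_t(a))=\epsilon(a)$. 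The source version is symmetric, using 6(b), the identity $S(h^{(1)})h^{(2)}=\epsilon_H(h)1_H$, and $A_t$-relative multiplicativity.

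\emph{(b).} This is the step I expect to be the main obstacle, because there is no cancellation built into the definition. The idea is to move $x_1$ and $x_2$ across the separability idempotents. For $x\in A_t$, separability of $e_t$ gives $xS(1^{(1)})\otimes 1^{(2)}=S(1^{(1)})\otimes 1^{(2)}x$, and similarly for $e_s$ and elements of $A_s$, where $S(x_1)\in A_s$. I would expand $\pi(S(x_1)ax_2)$ using $A_s$-relative multiplicativity at the left end. I would then use (a) to turn the outer factor $\pi(S(x_1)1^{(1)})$, which has argument in $A_s$, into the scalar $\epsilon(S(x_1)1^{(1)})$. Next I would reassemble using the formulas $\epsilon_s(b)=\epsilon(b1^{(1)})S(1^{(2)})$ and $\epsilon_t(b)=S(1^{(1)})\epsilon(1^{(2)}b)$, together with the fact that $S$ is invertible on $A_{\min}$, where $S^{-1}$ makes sense. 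The goal of this bookkeeping is to see $S(x_1)$ reappear as $x_1$ on the left of $a$. The right end is handled in the same way with the roles of $A_t$ and $A_s$ exchanged, producing $S^{-1}(x_2)$.

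If this bookkeeping becomes unwieldy, I would fall back on a different route. First I would prove the special cases $\pi(S(x)b)=\pi(xb)$ and $\pi(bx)=\pi(bS^{-1}(x))$ directly from (c) and (d) with $a_1=1$ or $a_3=1$, combined with (a). Then I would combine the two special cases to get (b).
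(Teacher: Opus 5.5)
Your arguments for (a), (c) and (d) are correct, under your readings of the two typos. For (a), you apply the Hopf identity in $H$ to $\pi(x)$ and combine it with one relative multiplicativity identity. For (c) and (d), you absorb $\Delta(1)$ into $\Delta(a_2)$. The gap is (b), and neither of your routes closes it.

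The primary route is circular. $A_s$-relative multiplicativity followed by (a) turns $\pi(S(x_1)b)$ into $\pi(\epsilon(S(x_1)1^{(1)})S(1^{(2)})b)=\pi(\epsilon_s(S(x_1))b)$. This is $\pi(S(x_1)b)$ again, because $\epsilon_s$ fixes $A_s$. Inserting a separability idempotent next to an element of $A_t$ or $A_s$ and collapsing it with (a) can never trade $S(x_1)$ for $x_1$.

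The fallback breaks at the step you call direct. Taking $a_1=1$ and $a_2=x\in A_t$ in (d) gives $\pi(S(x)b)=\pi(S(1^{(1)}x))\,\pi(1^{(2)}b)$. Here $S(1^{(1)}x)=S(x)S(1^{(1)})$ lies in $A_sA_t\subseteq A_{\min}$, not in $A_t$ or $A_s$, so (a) says nothing about it.

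This is not a bookkeeping issue: (b) does not follow from the Hopf-map axioms. Take $\mathrm{char}\,\bk\neq 2$ and let $A=\bk[G]^*$ be the dual of the groupoid algebra of the pair groupoid on $\{1,2\}$.
\begin{itemize}
\item $A$ has a basis of orthogonal idempotents $\delta_{ik}$, with $\Delta(\delta_{ik})=\sum_j\delta_{ij}\otimes\delta_{jk}$, $S(\delta_{ik})=\delta_{ki}$, and $\epsilon(\delta_{ik})$ equal to $1$ if $i=k$ and $0$ otherwise.
\item Then $\Delta(1)=\sum_j s_j\otimes t_j$ with $s_j=\sum_i\delta_{ij}$ and $t_j=\sum_k\delta_{jk}$. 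So $A_t=\spa\{t_1,t_2\}$, $S(t_j)=s_j$, and the two separability idempotents are $\sum_j t_j\otimes t_j$ and $\sum_j s_j\otimes s_j$.
\item In $H=\bk[\bZ_2]$ with generator $g$, put $p_\pm=(1\pm g)/2$. Define $\pi(\delta_{11})=\pi(\delta_{22})=p_+$ and $\pi(\delta_{12})=\pi(\delta_{21})=p_-$.
\end{itemize}
Birelative multiplicativity amounts to each $\pi(\delta_{ik})$ being idempotent, with the values in a common row or column pairwise orthogonal. This holds since $p_+p_-=0$. The identities $\Delta(p_+)=p_+\otimes p_++p_-\otimes p_-$ and $\Delta(p_-)=p_+\otimes p_-+p_-\otimes p_+$ give comultiplicativity, and counit and antipode preservation are immediate. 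So $\pi$ is a Hopf map; note that this $A$ is not connected, but the proposition assumes no such thing. Yet for $x_1=t_1$, $x_2=1$, $a=\delta_{12}$ we get $\pi(S(t_1)\delta_{12})=\pi(s_1\delta_{12})=0\neq p_-=\pi(t_1\delta_{12})$.

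The missing hypothesis is $\pi|_{A_{\min}}=\epsilon_{A_{\min},H}$, which fails here. It holds whenever $\pi\circ k=\epsilon_{A',H}$ for some weak Hopf algebra homomorphism $k:A'\to A$, in particular for cokernel maps, by Lemma \ref{targets-iso}. Under it your fallback works:
\begin{itemize}
\item The troublesome factor becomes $\epsilon(S(1^{(1)}x))=\epsilon(1^{(1)}x)$, so $\pi(S(x)b)=\pi(\epsilon_t(x)b)=\pi(xb)$.
\item Symmetrically, (c) with $a_3=1$ and $a_2=S^{-1}(x)\in A_s$, using $\epsilon_t(S^{-1}(x))=x$, gives $\pi(bx)=\pi(bS^{-1}(x))$.
\item Composing the two special cases gives (b).
\end{itemize}
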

    When discussing the relative restriction of scalars $\pi^*:\Rep(C)\to\Rep(A)$ of a birelatively multiplicative map $\pi:A\to C$, we mean relative to $A_t$.

    \begin{lemma}\label{hopf-quotient-is-trivial}
        Suppose $\pi:A\to H$ is a Hopf map. Then, for an $H$-module $M$, $\pi^*(M)\cong A_t\otimes M$ as an $A_{t}$-module, with the action of left multiplication on the left tensor component.
    \end{lemma}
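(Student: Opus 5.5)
Let $\pi:A\to H$ be a Hopf map and $M$ an $H$-module. The plan is to write down an explicit linear isomorphism between $A_t\otimes M$ and $\pi^*(M)$ and then check that it intertwines the $A_t$-actions. The one nontrivial input is the value of $\pi$ on $A_t$. By Proposition \ref{hopf-map-properties}(a), $\pi(x)=\epsilon(x)1_H$ for every $x\in A_t$.

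The first step is to simplify the relations defining $\pi^*(M)$. Recall that the relation for $x\in A_t$ reads
$$x\otimes m \sim e^{(1)}\otimes\pi(e^{(2)}x)m,$$
with $e=S(1^{(1)})\otimes 1^{(2)}$. Since $e^{(2)}x=1^{(2)}x\in A_t$, we get $\pi(e^{(2)}x)=\epsilon(1^{(2)}x)1_H$. Hence the right-hand side equals $S(1^{(1)})\epsilon(1^{(2)}x)\otimes m=\epsilon_t(x)\otimes m$, using the formula $\epsilon_t(a)=S(1^{(1)})\epsilon(1^{(2)}a)$. Because $\epsilon_t$ is the identity on $A_t$ (idempotence, property 8), each relation becomes $x\otimes m-x\otimes m=0$. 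The quotient is therefore trivial, and $x\otimes m\mapsto x\trel m$ is a linear isomorphism $A_t\otimes M\cong\pi^*(M)$.

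The second step is to compute the action of $y\in A_t\subseteq A$. By definition,
$$y\cdot(x\trel m)=e^{(1)}\trel\pi(e^{(2)}yx)m.$$
Since $e^{(2)}yx\in A_t$, the argument above again gives $\pi(e^{(2)}yx)=\epsilon(e^{(2)}yx)1_H$. So the result is $\epsilon_t(yx)\trel m=yx\trel m$, which is left multiplication on the first tensor factor.

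The only points to verify carefully are the identity $S(1^{(1)})\epsilon(1^{(2)}a)=\epsilon_t(a)$ (stated in the preliminaries) and that $1^{(2)}\in A_t$, so that $1^{(2)}x\in A_t$ and Proposition \ref{hopf-map-properties}(a) applies. Both are immediate from the preliminaries, so I expect no real obstacle here.
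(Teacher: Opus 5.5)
Your proof is correct and essentially matches the paper's: both use $\pi(x)=\epsilon(x)1_H$ on $A_t$ to show every defining relation is already zero in $A_t\otimes M$, and both compute $y\cdot(x\trel m)=yx\trel m$ the same way. The only cosmetic difference is that the paper first slides $x$ across the separability idempotent, writing $e^{(1)}\otimes e^{(2)}x = xS(1^{(1)})\otimes 1^{(2)}$, and then evaluates $\pi(1^{(2)})$, whereas you evaluate $\pi(1^{(2)}x)$ directly and use $\epsilon_t(x)=x$.
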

    \begin{proof}
        For $x\in A_t$ and $m\in M$,
        $$x\trel m - xS(1^{(1)})\trel \pi(1^{(2)})m = x\trel m - xS(1^{(1)})\trel \epsilon(1^{(2)})m = 0,$$
        so the quotient is the full space $A_t\otimes M$. Of course, for $y\in A_t$.
        $$y\cdot (x\trel m) = S(1^{(1)})\trel \pi(1^{(2)}yx)m = yx\trel m.$$
    \end{proof}
    Comparing to Proposition \ref{mult-res}, it seems Hopf maps are on the opposite end of relative multiplicativity (relative to $A_t$) from weak Hopf algebra homomorphisms. This aligns with the observation that weak Hopf algebra homomorphisms preserve the target subalgebra while Hopf maps collapse the target subalgebra to $\bk 1$.
        
    \begin{theorem}\label{tensor-functor}
        Suppose $\pi:A\to H$ is a Hopf map. Then, $\pi^{*}$ is a tensor functor with unitor $\pi^{*}(\bk_\epsilon)\to A_t$ given by $x\trel 1_{\bk_\epsilon}\to x$ and tensorator $\pi^{*}(M\otimes^H N)\to \pi^{*}(M)\otimes^A \pi^{*}(N)$ given by $x\trel (m\otimes n)\mapsto 1_A^{(1)}(x\trel m)\otimes 1_A^{(2)}(1\trel n)$.
    \end{theorem}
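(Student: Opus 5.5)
We need to show that $\pi^*$ is monoidal, via the given unitor and tensorator, and that it is exact, faithful and linear. Since "tensor category" presupposes a simple unit, "tensor functor" here presumably means a monoidal exact faithful functor $\Rep(H)\to\Rep(A)$.

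\emph{Exactness, faithfulness and linearity.} Linearity is clear from the construction. By Lemma \ref{hopf-quotient-is-trivial}, $\pi^*(M)\cong A_t\otimes M$ as vector spaces, naturally in $M$, and $\pi^*(f)=\id_{A_t}\otimes f$. Since $A_t\neq 0$, exactness and faithfulness follow immediately.

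\emph{Unitor.} The action of $A$ on $\pi^*(\bk_\epsilon)$ is
$$a\cdot(x\trel 1)=S(1^{(1)})\trel\pi(1^{(2)}ax)\cdot 1 = S(1^{(1)})\epsilon(1^{(2)}ax)\trel 1 .$$
Here $\epsilon_H\circ\pi=\epsilon_A$ because $\pi$ is counital. Using the formula $\epsilon_t(b)=S(1^{(1)})\epsilon(1^{(2)}b)$, this equals $\epsilon_t(ax)\trel 1$. Hence $x\trel 1\mapsto x$ intertwines the action with $a\cdot_{A_t}x=\epsilon_t(ax)$, and it is bijective by Lemma \ref{hopf-quotient-is-trivial}.

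\emph{Tensorator.} Let $\phi$ denote the map $x\trel(m\otimes n)\mapsto 1^{(1)}(x\trel m)\otimes 1^{(2)}(1\trel n)$. I would proceed in four steps.

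First, $\phi$ lands in $1^{(1)}\pi^*M\otimes 1^{(2)}\pi^*N$ by construction. Since $A_t\otimes M\otimes N$ has no relations, $\phi$ is well defined.

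Second, I would check $A$-linearity: $\phi(a\cdot(x\trel(m\otimes n)))=a^{(1)}(x\trel m)\otimes a^{(2)}(1\trel n)$. The left side is
$$\phi\bigl(S(1^{(1)})\trel \pi(1^{(2)}ax)^{(1)}m\otimes \pi(1^{(2)}ax)^{(2)}n\bigr).$$
Comultiplicativity of $\pi$ gives $\Delta\pi(1^{(2)}ax)=\pi(1^{(2)}a^{(1)}x^{(1)})\otimes\pi(1^{(3)}a^{(2)}x^{(2)})$ after expanding $\Delta(1^{(2)}ax)$. Then I would use $\Delta(x)=1^{(1)}x\otimes 1^{(2)}$ for $x\in A_t$ (property 6a), together with birelative multiplicativity (relative to $A_s$ and $A_t$) and Proposition \ref{hopf-map-properties}(c),(d), to move the $S(1^{(1)})$ and $1$-legs into the separate tensor factors. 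The right side unfolds to
$$S(1^{(1)})\trel\pi(1^{(2)}a^{(1)}x)m\otimes S(1^{(1')})\trel\pi(1^{(2')}a^{(2)})n,$$
and the two need to be matched. This bookkeeping is the main obstacle. I would attack it by first reducing to $x=1$ via $x\trel m = x\cdot(1\trel m)$, which holds because $y\cdot(1\trel m)=y\trel m$ for $y\in A_t$. This uses that $\phi$ commutes with the $A_t$-action, which is easy, so $A$-linearity need only be checked on $1\trel(m\otimes n)$.

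Third, I would show that $\phi$ is bijective. Both sides are finite-dimensional, so I would construct an explicit inverse. Using $1^{(1)}(x\trel m)\otimes 1^{(2)}(y\trel n)$, together with the identity $1^{(2)}y$ pulled across by property 10(b), I would write the inverse as $1^{(1)}(x\trel m)\otimes 1^{(2)}(y\trel n)\mapsto xS^{-1}(y)\trel(m\otimes n)$. The antipode restricted to $A_{\min}$ is invertible, so $S^{-1}(y)$ makes sense. Checking that this is well defined uses Proposition \ref{hopf-map-properties}(b) and the separability of $A_t$, and checking that it is inverse to $\phi$ is a routine computation with $\Delta(1)\in A_s\otimes A_t$.

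Fourth, I would verify the coherence axioms (associativity hexagon and unit triangles) on generators $x\trel(m\otimes n\otimes p)$. They reduce to coassociativity $\Delta^2(1)=(\Delta(1)\otimes1)(1\otimes\Delta(1))$, with both paths giving $1^{(1)}(x\trel m)\otimes1^{(2)}(1\trel n)\otimes1^{(3)}(1\trel p)$. The unit triangles follow from $1^{(1)}\epsilon_t(\cdot)$-identities (property 7a), since $1\trel n\mapsto$ the appropriate element of $A_t$.
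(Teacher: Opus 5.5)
Your outline matches the paper's: exactness and faithfulness via Lemma~\ref{hopf-quotient-is-trivial}, the unitor, and then well-definedness, $A$-linearity, bijectivity and coherence of the tensorator $\phi$. Your unitor computation is correct. The tensorator part, however, has genuine gaps.

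\textbf{$A$-linearity is not proved.} This is the heart of the statement, and after your (valid) reduction to $x=1$ you only list tools. The tempting way to move the $1$-legs is to let $A_s$ act on $\pi^*(M)$ through $\epsilon$, i.e.\ to use $1^{(1)}(x\trel m)\otimes 1^{(2)}(1\trel n)=(xS(1^{(1)})\trel m)\otimes(1^{(2)}\trel n)$. This is the shortcut the paper's own proof takes. It, like Proposition~\ref{hopf-map-properties}(b), which you invoke, requires $\pi|_{A_{\min}}=\epsilon_{A,H}$. That holds for cokernel maps, since $k(A_{\min})=B_{\min}$, but not for arbitrary Hopf maps. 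Here is a counterexample.
\begin{itemize}
\item Let $A=\bk^{P_2}$ be the functions on the pair groupoid on $\{1,2\}$. It has basis $\delta_{xy}$ (the arrow $y\to x$), pointwise product, $\Delta(\delta_{xy})=\sum_z\delta_{xz}\otimes\delta_{zy}$, $S(\delta_{xy})=\delta_{yx}$, and $\epsilon(\delta_{xy})=1$ if $x=y$ and $0$ otherwise.
\item Thus $\Delta(1)=\sum_z s_z\otimes t_z$, where $s_z=\sum_x\delta_{xz}$ and $t_z=\sum_y\delta_{zy}$.
\item The map $\pi\colon A\to\bk^{S_2}$ with $\delta_{11},\delta_{22}\mapsto\delta_e$ and $\delta_{12},\delta_{21}\mapsto\delta_\tau$ is a Hopf map.
\item For this map, $s_z\cdot(t_u\trel m)=t_u\trel\pi(\delta_{uz})m$, so the displayed identity fails.
\item Proposition~\ref{hopf-map-properties}(b) fails too: $\pi(S(t_1)\delta_{21}t_2)=\delta_\tau\neq 0=\pi(t_1\delta_{21}S^{-1}(t_2))$.
\end{itemize}

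\textbf{Your inverse is not well defined.} The expression $xS^{-1}(y)\trel(m\otimes n)$ is undefined as written, because $xS^{-1}(y)\in A_tA_s$, not $A_t$. Read as $xS^{-1}(y)\cdot(1\trel(m\otimes n))$, it lets $\pi$ act on $n$ as well. In the example it then sends $\phi(t_1\trel(\delta_\tau m\otimes\delta_\tau n))$, written with $x=t_1$, $y=t_2$, to $0$. Moreover, its well-definedness on the non-unique expressions $1^{(1)}(x\trel m)\otimes 1^{(2)}(y\trel n)$ is deferred to the false (b).

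\textbf{A fix for both gaps.} Define
$$\psi\colon(x\trel m)\otimes(y\trel n)\mapsto\epsilon(y)\,x\trel(m\otimes n)$$
on all of $\pi^*(M)\otimes\pi^*(N)\cong A_t\otimes M\otimes A_t\otimes N$; being defined on a free space, it is automatically well defined.
\begin{itemize}
\item Since $1^{(2)}(1\trel n)=1^{(2)}\trel n$ and $1^{(1)}\epsilon(1^{(2)})=1$, we get $\psi\circ\phi=\id$.
\item By property 10(a), $\Delta(1)(1\otimes y)=\Delta(1)(S^{-1}(y)\otimes 1)$. Hence $1^{(1)}(x\trel m)\otimes 1^{(2)}(y\trel n)=\phi\bigl(\sum_i x_i\trel(m_i\otimes n)\bigr)$, where $S^{-1}(y)\cdot(x\trel m)=\sum_i x_i\trel m_i$.
\item So $\phi$ is onto $\pi^*(M)\otimes^A\pi^*(N)$, and $\psi$ is its inverse there.
\end{itemize}
For $A$-linearity, note that $\Delta(a)\phi(1\trel(m\otimes n))=a^{(1)}(1\trel m)\otimes a^{(2)}(1\trel n)$ lies in $\im\phi$. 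After your reduction, it therefore suffices to show
$$\psi\bigl(a^{(1)}(1\trel m)\otimes a^{(2)}(1\trel n)\bigr)=a\cdot(1\trel(m\otimes n)).$$
The left side equals $S(1^{(1)})\trel\bigl(\pi(1^{(2)}a^{(1)})m\otimes\pi(a^{(2)})n\bigr)$ because $\epsilon\circ S=\epsilon$. The right side is the same, because $\pi$ is comultiplicative and $\Delta(ya)=(y\otimes 1)\Delta(a)$ for $y\in A_t$ (property 6(a), using that $A_s$ and $A_t$ commute). Your coherence sketch then goes through.
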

    \begin{proof}
        First, note that, for $M\in \Rep(H)$, there is a vector space isomorphism $\pi^{*}(M)\cong A_t\otimes M$ by Lemma \ref{hopf-quotient-is-trivial}. Moreover, an $H$-linear map $f:M\to N$ is sent to $\pi^*(f) = \id_{A_t}\otimes f$.  Therefore, $\pi^*$ is exact, faithful, and $\bk$-linear because $\otimes$ is exact, faithful, and $\bk$-linear in $\Vec$.
        
        It is easy to see that the given unitor is invertible and $A$-linear. Note that 
        $$1_A^{(1)}(x\trel m)\otimes 1_A^{(2)}(1\trel n) = (S(1_A^{(1')})\trel \pi(1_A^{(2')}x1_A^{(1)})m) \otimes (1_A^{(2)}\trel n) = (xS(1_A^{(1)})\trel m) \otimes (1_A^{(2)}\trel n).$$
        Thus, the inverse map of the given tensorator
        $$(xS(1_A^{(1)})\trel m) \otimes (1_A^{(2)}\trel n)\mapsto xS(1_A^{(1)})1_A^{(2)}\otimes (m \trel n) = x\otimes (m \trel n)$$
        is well-defined as a linear map. Next, we show that the given tensorator is $A$-linear:
        \begin{align*}
            a\cdot (x\trel (m\otimes n)) &= S(1_A^{(1)}) \trel (\pi\otimes\pi)(\Delta(1_A^{(2)}ax))(m\otimes n)\\
            &= S(1_A^{(1)}) \trel(\pi(1_A^{(2)}a^{(1)}x^{(1)})m\otimes \pi(1_A^{(3)}a^{(2)}x^{(2)})n)\\
            &\mapsto (S(1_A^{(1)})S(1_A^{(1')})\trel \pi(1_A^{(2)}a^{(1)}x^{(1)})m)\otimes (1_A^{(2')}\trel \pi(1_A^{(3)}a^{(2)}x^{(2)})n)\\
            &= (S(1_A^{(1')})\trel\pi(1_A^{(2)}1_A^{(1''')}a^{(1)}x1_A^{(1'')})m)\otimes (1_A^{(2')}S(1_A^{(1)})\trel \pi(1_A^{(2''')}a^{(2)}1_A^{(2'')})n)\\
            &= (S(1_A^{(1')})\trel\pi(1_A^{(2')}a^{(1)}xS(1_A^{(1'')}))m)\otimes (S(1_A^{(1)})\trel \pi(1_A^{(2)}a^{(2)}1_A^{(2'')})n)\\
            &= a\cdot ((xS(1_A^{(1'')})\trel m)\otimes (1_A^{(2'')}\trel n)).
        \end{align*}
        Verifying naturality and compatibility with the associators and unitors is straightforward. 
    \end{proof}

    \subsection{The cokernel of a weak Hopf algebra homomorphism}\label{coker-section}
    The cokernel map of a weak Hopf algebra homomorphism $k:A\to B$ is defined to be the birelatively multiplicative map $\pi:B\to H$ which is universal among birelatively multiplicative maps $B\to H'$ to unital associative algebras $H'$. We would expect the representation category of $H$ to have a fiber functor, at least in the case when we can achieve an exact sequence of tensor categories. To our delight, this fiber functor is easily identifiable. Remarkably, $H$ always has a Hopf algebra structure and is also universal with respect to Hopf maps $B\to H'$ to Hopf algebras $H'$.
    \begin{theorem}[Cokernels for weak Hopf algebra homomorphisms] \label{coker}
    Let $k:A\to B$ be a weak Hopf algebra homomorphism. Then, 
    \begin{enumerate}[(\alph*)]
        \item there is a ``cokernel'' unital associative algebra $H$ and birelatively multiplicative map $\pi:B\to H$ such that the following diagram commutes:
        \begin{equation}\label{coker-diagram}
            \begin{tikzcd}
                &&B\arrow[dd, "\pi"]\\\\
                A\arrow[rruu, "k"]\arrow[rr, "\epsilon_{A,H}"'] && H
            \end{tikzcd}
        \end{equation}
        \item $H$ is ``universal'' among such unital associative algebras, in the sense that, for any unital associative algebra $H'$ and birelatively multiplicative map $\pi':B\to H'$ for which Diagram \eqref{coker-diagram} commutes, there is a unique unital algebra homomorphism $\gamma:H\to H'$ such that the following diagram commutes:
        \begin{equation}\label{universal-diagram}
            \begin{tikzcd}
                &&B\arrow[dd, "\pi"']\arrow[rrdddd, "\pi'"]\\\\
                A\arrow[rruu, "k"]\arrow[rr, "\epsilon_{A,H}"]\arrow[rrrrdd,"\epsilon_{A,H'}"'] && H\arrow[dashed, rrdd,"\gamma"']\\\\
                &&&&H'
            \end{tikzcd}
        \end{equation}
        \item $H$ has a Hopf algebra structure, 
        \item $\pi$ is a Hopf map,
        \item $H$ is also universal among Hopf algebras in the sense that, for any Hopf algebra $H'$ and Hopf map $\pi':B\to H'$ for which Diagram \eqref{coker-diagram} commutes, there is a unique Hopf algebra homomorphism $\gamma:H\to H'$ such that Diagram \eqref{universal-diagram} commutes.
    \end{enumerate}
    \end{theorem}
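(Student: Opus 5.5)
We construct $H$ as a quotient of a free object and then check that the relations defining it are compatible with the coalgebra and antipode structure of $B$. A birelatively multiplicative map $\pi':B\to H'$ is linear but not multiplicative, so the universal object should be a quotient of the tensor algebra $T(B)$. Write $\iota:B\to T(B)$ for the degree-one inclusion and let $I\subseteq T(B)$ be the two-sided ideal generated by the following elements, for $b_1,b_2\in B$ and $a\in A$:
\begin{align*}
& \iota(b_1b_2)-\iota(b_1S(1^{(1)}))\iota(1^{(2)}b_2),\qquad \iota(b_1b_2)-\iota(b_11^{(1)})\iota(S(1^{(2)})b_2),\\
& \iota(k(a))-\epsilon_A(a)1_{T(B)},
\end{align*}
where $1=1_B$ throughout. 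Set $H=T(B)/I$ and let $\pi$ be $\iota$ followed by the quotient map. Part (a) then holds by construction. For (b), a birelatively multiplicative $\pi':B\to H'$ making \eqref{coker-diagram} commute extends uniquely to an algebra map $T(B)\to H'$. This map kills $I$ and therefore factors through a unique $\gamma$; uniqueness of $\gamma$ holds because $\pi(B)$ generates $H$.

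For (c) and (d), we put a bialgebra structure on $T(B)$ and show that $I$ is a Hopf ideal.
\begin{enumerate}[(i)]
\item Define $\Delta_T(\iota(b))=\iota(b^{(1)})\otimes\iota(b^{(2)})$, $\epsilon_T(\iota(b))=\epsilon_B(b)$ and $S_T(\iota(b))=\iota(S(b))$, extended multiplicatively, respectively anti-multiplicatively for $S_T$. This is a bialgebra structure on $T(B)$, and $S_T$ is an anti-homomorphism; it is not yet an antipode.
\item Check that $I$ is a coideal. For the third generator, $\Delta_T$ gives $\iota(k(a^{(1)}))\otimes\iota(k(a^{(2)}))-\epsilon(a)1\otimes1$. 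Modulo $I\otimes T+T\otimes I$ this is $\epsilon(a^{(1)})\epsilon(a^{(2)})-\epsilon(a)$, which is $0$.

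For the first generator, we expand $\Delta(b_1S(1^{(1)}))\otimes\Delta(1^{(2)}b_2)$ using $\Delta^2(1)$ and the identity $\Delta(S(x))=S(x^{(2)})\otimes S(x^{(1)})$. Modulo $I\otimes T+T\otimes I$ we may use relative multiplicativity in each leg. One must also show that elements of $B_t$ and $B_s$ act as scalars in the quotient, namely $\iota(\epsilon_t(b))\equiv\epsilon(b)$. This follows from Lemma \ref{targets-iso}: since $B_t=k(A_t)$, we get $\iota(k(x))\equiv\epsilon(x)$ for $x\in A_t$. With these, $\Delta_T$ of the generator reduces to $\iota(b_1^{(1)}b_2^{(1)})\otimes\iota(b_1^{(2)}b_2^{(2)})$ minus itself.
\item Check that $S_T(I)\subseteq I$. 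This uses $S(B_t)=B_s$ and again the collapse of $B_t$ and $B_s$ to scalars.
\end{enumerate}
After these checks, $H$ is a bialgebra and $\pi$ is a birelatively multiplicative, comultiplicative, counital map that intertwines the antipodes.

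It remains to show that $\bar S$ is an antipode on $H$. It suffices to check this on the generators $\pi(b)$, provided we have the standard fact that the set where the antipode identities hold is closed under products when $\bar S$ is an anti-homomorphism and $\Delta$ is multiplicative. On a generator,
$$\pi(b^{(1)})\bar S(\pi(b^{(2)}))=\pi(b^{(1)})\pi(S(b^{(2)})),$$
and this equals $\pi(b^{(1)}S(b^{(2)}))=\pi(\epsilon_t(b))=\epsilon(b)$. To justify the middle step, rewrite the product via relative multiplicativity as $\pi(b^{(1)}S(1^{(1)}))\pi(1^{(2)}S(b^{(2)}))$ and use the weak Hopf identities, or use Proposition \ref{hopf-map-properties}(c) with $a_1=a_3=1$. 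The identity $S(b^{(1)})b^{(2)}=\epsilon_s(b)$ is handled the same way. This proves (c) and (d).

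For (e), let $\pi':B\to H'$ be a Hopf map to a Hopf algebra. The algebra map $\gamma$ from (b) satisfies $(\gamma\otimes\gamma)\Delta\pi=\Delta'\pi'$, because both sides are algebra maps agreeing on generators. Counit and antipode compatibility follow in the same way, so $\gamma$ is a Hopf algebra homomorphism.

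The main obstacle is step (ii): the coideal computation for the relative multiplicativity generators, where the bookkeeping of $\Delta^2(1)$, the properties of $S$ on $1^{(1)}\otimes 1^{(2)}$, and the collapse of $B_t$ and $B_s$ to scalars all interact. I would first establish the auxiliary congruence $\iota(b x)\equiv \iota(b)\iota(x)$ for $x\in B_t\cup B_s$ and similar one-sided versions. That reduces the calculation to the weak bialgebra axiom $\Delta(b_1b_2)=\Delta(b_1)\Delta(b_2)$.
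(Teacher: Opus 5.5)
\textbf{Verdict: genuine gap in step (ii).} Your architecture matches the paper's, but the mechanism you propose for step (ii) rests on a false lemma.

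Where you agree with the paper: you use the same presentation $H=T(B)/I$ and the same proof of (b). Proving that $I$ is a Hopf ideal of the bialgebra $T(B)$ is equivalent to the paper's check that $(\pi\otimes\pi)\circ\Delta_B$, $\epsilon_B$ and $\pi\circ S_B$ satisfy the defining relations, so that (b) induces $\Delta_H,\epsilon_H,S_H$. Your treatment of the third generator is fine. The inclusion $S_T(I)\subseteq I$ and the antipode check need no collapse at all:
\begin{itemize}
\item $S(1^{(2)})\otimes S^2(1^{(1)})=1^{(1)}\otimes S(1^{(2)})$ turns the $B_t$-generator for $(b_1,b_2)$ into the $B_s$-generator for $(S(b_2),S(b_1))$.
\item The $B_s$-relation gives $\pi(b^{(1)}S(b^{(2)}))=\pi(b^{(1)}1^{(1)})\pi(S(b^{(2)}1^{(2)}))=\pi(b^{(1)})\pi(S(b^{(2)}))$ directly.
\end{itemize}

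The gap is in step (ii), which you rightly flag as the crux. Your auxiliary congruence $\iota(bx)\equiv\iota(b)\iota(x)$ for $x\in B_t\cup B_s$, and its one-sided variants, is false. Since $\iota(x)\equiv\epsilon(x)$ for $x\in B_t$ (that part is correct), the congruence would say $\pi(bx)=\epsilon(x)\pi(b)$. Here is a counterexample. Take a wide groupoid inclusion $\bk[G]\to\bk[G']$ with at least two objects, some $g'\in G'$, and an object $X\neq s(g')$. Then $g'\,\id_X=0$, so $\pi(g'\,\id_X)=0$. On the other hand $\pi(g')\pi(\id_X)=\pi(g')$, which has counit $1$ and so is nonzero. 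Taking $X\neq t(g')$ kills the one-sided variant the same way. Elements of $B_t$ and $B_s$ become scalars only when they stand alone, never inside $\pi(\cdots)$. So the coideal check cannot be reduced to $\Delta(b_1b_2)=\Delta(b_1)\Delta(b_2)$ in the way you describe.

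What works is transporting $B_t$- and $B_s$-elements between tensor legs rather than collapsing them. Write $\Delta^3(1)=1^{(1)}\otimes 1^{(2)}1^{(1')}\otimes 1^{(2')}1^{(1'')}\otimes 1^{(2'')}$. Then the image of $\Delta_T\bigl(\iota(b_1S(1^{(1)}))\iota(1^{(2)}b_2)\bigr)$ in $H\otimes H$ is
\[
\pi\bigl(b_1^{(1)}S(1^{(1')})S(1^{(2)})\bigr)\pi\bigl(1^{(2')}1^{(1'')}b_2^{(1)}\bigr)\otimes\pi\bigl(b_1^{(2)}S(1^{(1)})\bigr)\pi\bigl(1^{(2'')}b_2^{(2)}\bigr).
\]
\begin{enumerate}
\item Commute $S(1^{(1')})\in B_t$ past $S(1^{(2)})\in B_s$, and apply the relation $\pi(cd)=\pi(cS(1^{(1)}))\pi(1^{(2)}d)$ in the first leg.
\item Then $S(1^{(2)})\otimes S(1^{(1)})=\Delta(1)$, $\Delta(b_1)\Delta(1)=\Delta(b_1)$ and $\Delta(1)\Delta(b_2)=\Delta(b_2)$ reduce the expression to $\pi(b_1^{(1)}b_2^{(1)})\otimes\pi(b_1^{(2)})\pi(b_2^{(2)})$.
\item On the other side, write $\pi(b_1^{(2)}b_2^{(2)})=\pi(b_1^{(2)}S(1^{(1)}))\pi(1^{(2)}b_2^{(2)})$.
\item Use $a^{(1)}y\otimes a^{(2)}=a^{(1)}\otimes a^{(2)}S(y)$ for $y\in B_s$, and $S(x)a^{(1)}\otimes a^{(2)}=a^{(1)}\otimes xa^{(2)}$ for $x\in B_t$. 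These move the two pieces into the first leg as $b_1^{(1)}1^{(1)}$ and $S(1^{(2)})b_2^{(1)}$.
\item There $1^{(1)}S(1^{(2)})=\epsilon_t(1)=1$, giving the same element as in step 2.
\end{enumerate}
The $B_s$-generator is handled analogously.

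For comparison, the paper disposes of this step by citing Proposition \ref{rel-comp-mult}. However, $(\Delta\otimes\Delta)(e_t)$ multiplies out to $\Delta(1)\neq 1\otimes 1$, so it is not the product idempotent that proposition requires. A leg-transport computation of this kind is what actually underlies the step there too.
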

    \begin{proof}
        (a) Let $T^\bullet(B)$ denote the tensor algebra of the vector space $B$, and set $H = T^{\bullet}(B)/I$, where $I$ is the two-sided ideal generated by
        \begin{align}
            I = \langle &[b_1S_B(1_B^{(1)})][1_B^{(2)}b_2] - [b_1b_2],\label{HwrtBt}\\
            &[b_11_B^{(1)}][S_B(1_B^{(2)})b_2] - [b_1b_2],\label{HwrtBs}\\
            &[k(a)] - \epsilon_A(a)1_{T^\bullet(B)}\mid b_1, b_2\in B, a\in A\rangle,\label{Hwrtf}
        \end{align}
        where, for $b\in B$, we denote the image of the inclusion of $b$ into $T^{\bullet}(B)$ by $[b]$. Of course, $\pi:B\to H$ is birelatively multiplicative by Relations \eqref{HwrtBt} and \eqref{HwrtBs}. Moreover, Diagram \eqref{coker-diagram}  clearly commutes by Relation \eqref{Hwrtf}.

        (b): Since $\im \pi$ generates $H$, it suffices to specify $\gamma$ on $\im\pi$. Let $\gamma(\pi(b)) = \pi'(b)$. It is clear that Relations \eqref{HwrtBt}-\eqref{Hwrtf} are preserved because $\pi'$ is birelatively multiplicative and satisfies Diagram \eqref{coker-diagram}. Therefore, $\gamma$ is a well-defined unital algebra homomorphism $\gamma:H\to H'$. Moreover, it is clearly unique because again $\im\pi$ generates $H$. 

        (c): We need to give $H$ a compatible comultiplication $\Delta_H:H\to H\otimes H$, counit $\epsilon_H:H\to \bk$, and antipode $S_H:H\to H^{\op}$. Observe that $\Delta$ is multiplicative and $\pi$ is birelatively multiplicative, so, by Proposition \ref{rel-comp-mult},  $(\pi\otimes\pi)\circ\Delta$ is birelatively multiplicative. Moreover, $\epsilon_B$ is birelatively multiplicative. We also have that
        \begin{align*}
            \pi(S_B(1_B^{(2)}b_2))\pi(S_B(b_1S_B(1_B^{(1)}))) &= \pi(S_B(b_2)1_B^{(1)})\pi(S_B(1_B^{(2)})S_B(b_1)) = \pi(S_B(b_1b_2)).
        \end{align*}
        so $\pi\circ S_B:B\to H^{\op}$ is relatively multiplicative with respect to $B_t$. It is easy to verify that it is also relatively multiplicative with respect to $B_s$. Finally, $k$ is a weak Hopf algebra homomorphism, so it is easy to verify that 
        \begin{align*}
            (\pi\otimes\pi)\circ\Delta_B\circ k = \epsilon_{A, H\otimes H},&&\epsilon_B\circ k = \epsilon_{A,\bk},&&\pi\circ S_B\circ k = \epsilon_{A,H^{\op}}.
        \end{align*}
        By (b), this implies there are unique unital algebra homomorphisms $\Delta_H:H\to H\otimes H$, $\epsilon_H:H\to\bk$, and $S_H:H\to H^{\op}$ such that  
        \begin{align}\label{pi-preserves}
            \Delta_H\circ\pi &= (\pi\otimes\pi)\circ\Delta_B, & \epsilon_H\circ\pi &= \epsilon_B, & S_H\circ\pi &= \pi\circ S_B.
        \end{align}
        We leave verifying the coalgebra axioms to the reader, but we show one side of the antipode identity on $\im \pi$ (which consequently implies that it is satisfied on all of $H$):
        $$\pi(b^{(1)})S_H(\pi(b^{(2)}))=\pi(b^{(1)}1_B^{(1)})\pi(S_B(1_B^{(2)})S(b^{(2)})) = \pi(b^{(1)}S_B(b^{(2)})) = \pi(\epsilon_t^B(b)) = \epsilon_H(\pi(b))1_H.$$

        (d): See Equations \eqref{pi-preserves} to see why $\pi$ preserves comultiplication, counit, and antipode.

        (e): A Hopf map is in particular a birelatively multiplicative map, so, by (b), there is a unique unital algebra homomorphism $\gamma:H\to H'$ such that $\gamma\circ\pi = \pi'$. Since both $\pi$ and $\pi'$ preserve $\Delta, \epsilon, S$ and $\im\pi$ is a generating set of $H$, $\gamma$ also preserves these operations. 
    \end{proof}
    
    In addition to being a Hopf map, the cokernel $\pi:B\to H$ satisfies some other useful properties described below.
    \begin{proposition}
        Suppose $k:A\to B$ is a weak Hopf algebra homomorphism. Let $\pi:B\to H$ be its cokernel map. Then, for any $a_1, a_2\in A$ and $b\in B$.
        \begin{itemize}
            \item $\pi(k(a_1)bk(a_2)) = \pi(\epsilon_s(k(a_1))b\epsilon_t(k(a_2)))$,
            \item $\pi(S(k(a_1))bS(k(a_2))) = \pi(\epsilon_t(k(a_1))b\epsilon_s(k(a_2)))$.
        \end{itemize}
    \end{proposition}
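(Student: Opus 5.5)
We will prove both identities by using the two relative multiplicativity constraints of $\pi$ to split off the factors $k(a_1)$ and $k(a_2)$. We then expand those factors through the weak Hopf algebra axioms, and finally let the relation $\pi\circ k=\epsilon_{A,H}$ from Diagram \eqref{coker-diagram} collapse the parts that are images of $A$. Throughout we write $a_i$ for $k(a_i)$ where harmless. Since $k$ is a weak Hopf algebra homomorphism, it commutes with $\Delta$, $S$, $\epsilon_t$ and $\epsilon_s$, so every element built from $k(a_i)$ by these operations stays in $k(A)$.

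For the first identity, the key observation is the following. Take $a\in A$ and use $\Delta(k(a))=k(a^{(1)})\otimes k(a^{(2)})$ together with property (7b), $S(a^{(1)})a^{(2)}\otimes a^{(3)}=1^{(1)}\otimes a1^{(2)}$. Then relative multiplicativity with respect to $B_s$ and $B_t$ should give
$$\pi(k(a)b)=\pi(\epsilon_s(k(a))b).$$
Concretely, we first apply Proposition \ref{hopf-map-properties}(d) in the form $\pi(\epsilon_s(k(a))b)=\pi(S(k(a^{(1)})))\pi(k(a^{(2)})b)$. The first factor is $\epsilon_A(S(a^{(1)}))1_H=\epsilon(a^{(1)})1_H$. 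Collapsing the counit gives $\pi(k(a)b)$. Symmetrically, Proposition \ref{hopf-map-properties}(c) gives $\pi(b\,\epsilon_t(k(a)))=\pi(bk(a^{(1)}))\pi(S(k(a^{(2)})))=\pi(bk(a))$. (The statement of (c) has a typo $a_2^{(2)}$ that should read $a_2^{(1)}$.) Here we use that the antipode of $A$ preserves the counit, $\epsilon\circ S=\epsilon$ on a weak Hopf algebra.

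We then combine the two sides. We apply the left reduction with $b$ replaced by $bk(a_2)$, and the right reduction with $b$ replaced by $\epsilon_s(k(a_1))b$. Chaining the two gives the first bullet.

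For the second identity we apply the first bullet to $S(a_1)$ and $S(a_2)$; this is legitimate because $S(k(a_i))=k(S(a_i))$. This yields $\pi(\epsilon_s(S(k(a_1)))\,b\,\epsilon_t(S(k(a_2))))$. Property (9) then gives $\epsilon_s\circ S=S\circ\epsilon_t$ and $\epsilon_t\circ S=S\circ\epsilon_s$. This leaves $\pi(S(\epsilon_t(k(a_1)))\,b\,S(\epsilon_s(k(a_2))))$.

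The main obstacle is converting $S(x_1)bS(y)$, with $x_1\in B_t$ and $y\in B_s$, into $x_1 b\,y$. For this we use Proposition \ref{hopf-map-properties}(b), $\pi(S(x_1)ax_2)=\pi(x_1aS^{-1}(x_2))$, with $x_2=S(y)\in B_t$. This requires $S^{-1}$ on $B_{\min}$, which exists. It gives $\pi(x_1\,b\,y)=\pi(\epsilon_t(k(a_1))\,b\,\epsilon_s(k(a_2)))$. We must check that the conventions in (b) match the position of $y$ on the right. If they do not, the fallback is to redo the argument directly: write $S(y)=S(\epsilon_s(k(a_2)))$ via the separability idempotent, use relative multiplicativity with respect to $B_s$, and collapse by $\pi(x)=\epsilon(x)1_H$ on $B_t,B_s$ (Proposition \ref{hopf-map-properties}(a)).
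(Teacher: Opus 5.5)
Your argument is correct. The paper states this proposition without proof, but the evident intended argument is the one you give for the first bullet. That is, apply parts (c) and (d) of Proposition \ref{hopf-map-properties} with one outer argument set to $1_B$, then use $\pi\circ k=\epsilon_{A,H}$ and $\epsilon\circ S=\epsilon$. You are also right that (c) should read $\pi(a_1a_2^{(1)})\pi(S(a_2^{(2)})a_3)$.

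For the second bullet you take a detour. You feed $S(a_1),S(a_2)$ into the first bullet, move the antipodes outward with property (9), and then strip them with part (b). This works, and the conventions in (b) do match: $x_2=S(\epsilon_s(k(a_2)))\in B_t$ gives $S^{-1}(x_2)=\epsilon_s(k(a_2))$, so your fallback is unnecessary.

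The same two tools give the second bullet directly, though. Part (c) with $a_1=1$ yields $\pi(\epsilon_t(k(a))b)=\pi(k(a^{(1)}))\pi(S(k(a^{(2)}))b)=\pi(S(k(a))b)$. Part (d) with $a_3=1$ yields $\pi(b\,\epsilon_s(k(a)))=\pi(bS(k(a^{(1)})))\pi(k(a^{(2)}))=\pi(bS(k(a)))$. Chaining these exactly as in your first bullet finishes the proof.

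The direct route is shorter and needs nothing about $\pi$ beyond birelative multiplicativity and $\pi\circ k=\epsilon_{A,H}$. Your route additionally invokes (b), which rests on $\pi|_{B_{\min}}=\epsilon_{B,H}|_{B_{\min}}$. That holds here, for instance because $B_{\min}=k(A_{\min})$.

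Since the paper also leaves Proposition \ref{hopf-map-properties} unproved, it is worth recording why (c) and (d) hold. Each follows from a single application of $B_s$-relative (respectively $B_t$-relative) multiplicativity, together with $\Delta(b)\Delta(1)=\Delta(b)$ (respectively $\Delta(1)\Delta(b)=\Delta(b)$).
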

        
    Note that the cokernel $H$ is not a quotient of $B$ in general. While $\im\pi$ generates $H$ as an algebra, there are cases where, say, $H$ is infinite-dimensional even when $B$ is not. An example of this is seen in Corollary \ref{groupoid-cokernel}. However, in the cases relevant to us, namely when $\Rep(A)$ is a tensor category, $H$ generally does arise as a quotient of $B$, as coalgebras, by Proposition \ref{simple-surj}. That said, the multiplication on $H$ may be nontrivial to compute directly in general. We will find that to compute the cokernel $H$, typically it is easier to exhibit a well-behaved Hopf map $\pi':B\to H'$ to some candidate cokernel Hopf algebra $H'$ and apply the universal property of $H$. 

    \begin{lemma}\label{projected-image-surj}
        Suppose $k:A\to B$ is a weak Hopf algebra homomorphism and $A$ is connected. Let $\pi:B\to H$ be the cokernel of $k$. Then, for any nonzero $x_1, x_2\in B_t$, $\im\pi = \pi(x_1B) = \pi(Bx_2) = \pi(x_1Bx_2)$.
    \end{lemma}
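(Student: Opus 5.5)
We want, for nonzero $x\in B_t$, that $\pi(xB)=\im\pi$; the other cases follow by symmetry or by iterating. The key input is that $\pi$ collapses $B_t$ to scalars. Since $k|_{A_t}:A_t\to B_t$ is an isomorphism by Lemma \ref{targets-iso}, every $x\in B_t$ equals $k(x')$ with $x'\in A_t$. The cokernel relation \eqref{Hwrtf} then gives $\pi(k(a))=\epsilon_A(a)1_H$. Birelative multiplicativity relative to $B_t$, together with Proposition \ref{hopf-map-properties}, should let us pull elements of $B_t$ out of $\pi$ as counit values.

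Concretely, for $x\in B_t$ and $b\in B$ I would compute
$$\pi(xb)=\pi(xS(1^{(1)}))\pi(1^{(2)}b).$$
Note that $xS(1^{(1)})$ lies in $B_s B_t\cdot$ (products of target and source elements), i.e.\ in $B_{\min}=k(A_{\min})$. Hence
$$\pi(xS(1^{(1)}))=\epsilon_B(xS(1^{(1)}))1_H .$$
So $\pi(xb)=\pi(y_x b)$, where
$$y_x:=\epsilon_B(xS(1^{(1)}))1^{(2)} .$$
This element is the image of $x$ under the bijection $B_t\to B_t$ given by $x\mapsto \epsilon(xS(1^{(1)}))1^{(2)}$, essentially $\epsilon_t\circ S$ composed with the antipode isomorphism, which lands in $B_t$. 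Thus for every $x\in B_t$ there is a $z\in B_t$ (essentially $S^2$-twisted) with $\pi(xb)=\pi(zb)$, and more generally $\pi(x b)=\pi(z)\pi(b)$-type behavior modulo the $B_t$ twisting. The cleaner route I would actually take is to define the linear map
$$\phi:B_t\to \Hom(B,H),\qquad \phi(x)=\pi(x\,\cdot\,).$$
The goal is to show $\sum_j \pi(x w_j b)$ recovers $\pi(b)$ for suitable elements $w_j$.

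The main step uses connectedness. The span $B x B$ is not obviously all of $B$. Instead, consider the left ideal $J=\{y\in B_t : \pi(yB)\subseteq \pi(xB)\}$. Using relative multiplicativity one checks that $J$ is stable under the action $b\cdot y=\epsilon_t(by)$, i.e.\ $J$ is a $B$-submodule of the trivial module $B_t\cong A_t$. Indeed, $\pi(\epsilon_t(by)b')$ expands by Proposition \ref{hopf-map-properties}(c) as
$$\pi(b^{(2)}y\,\cdot)\ \text{times}\ \pi(S(\cdots)b'),$$
which lands in $\pi(yB)\cdot\pi(B)$. This suggests it is better to work with the set $\pi(xB)\pi(B)$, or to note that $\pi(B)$ is closed under products via birelative multiplicativity. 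Since $A$ is connected, $A_t\cong B_t$ is a simple $A$-module. Here $B_t$ is an $A$-module via $k$, and the action agrees with $a\cdot y=\epsilon_t(k(a)y)$ because $k$ preserves $\epsilon_t$. Since $x\in J$ is nonzero, $J=B_t\ni 1$, so $\im\pi=\pi(1B)\subseteq\pi(xB)$. The right-hand version $\pi(Bx_2)$ follows by the analogous argument with $\epsilon_s$ and $S$, or by applying $S_H$. Then $\pi(x_1Bx_2)=\pi(x_1(Bx_2))$ is handled by first using the right statement inside, giving $\pi(Bx_2)$-type sets, and then the left statement. I expect the main obstacle to be precisely verifying that $J$ is an $A$-submodule: showing $\pi(\epsilon_t(k(a)y)b)\in\pi(yB)$ using Proposition \ref{hopf-map-properties}(c) and relation \eqref{Hwrtf} to kill the stray $\pi(k(\cdot))$ factors into counits.
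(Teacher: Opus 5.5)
Your overall strategy matches the paper's: $B_t\cong A_t$ is a simple $A$-module under $a\cdot y=\epsilon_t(k(a)y)$, and this lets you pass from a nonzero $x$ to $1_B$. There is, however, a genuine gap. The one step that carries the content, namely $\pi(\epsilon_t(k(a)y)b)\in\pi(yB)$ for $y\in B_t$, is exactly the one you leave open, and neither route you suggest closes it.

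Expanding by Proposition \ref{hopf-map-properties}(c) gives $\pi(k(a^{(1)})y)\,\pi(S(k(a^{(2)}))b)$. Its first factor is a scalar because $k(a^{(1)})y\in k(A)$, so the expansion exhibits nothing in $\pi(yB)$, let alone in $\pi(yB)\pi(B)$. Closure of $\pi(B)$ under products is Proposition \ref{simple-surj}, which the paper deduces \emph{from} this lemma, so appealing to it is circular. It would in any case only land you in $\pi(B)$, not in $\pi(xB)$.

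The missing ingredient is a pair of identities:
\begin{itemize}
\item for $c\in A$, $\pi(S(k(c))b)=\pi(\epsilon_t(k(c))b)$ (the proposition after Theorem \ref{coker}; it comes from $B_s$-relative multiplicativity, $B_s=k(A_s)$ and $\pi\circ k=\epsilon_{A,H}$, together with the next identity);
\item for $y\in B_t$, $\pi(S(y)d)=\pi(yd)$ (Proposition \ref{hopf-map-properties}(b)).
\end{itemize}
Since $B_t=k(A_t)$, we have $k(a)y\in k(A)$, and these identities give
\[
\pi(\epsilon_t(k(a)y)b)=\pi(S(k(a)y)b)=\pi(S(y)S(k(a))b)=\pi(y\,S(k(a))\,b).
\]
With this, the auxiliary set $J$ is unnecessary. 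Connectedness gives a single $a_1\in A$ with $\epsilon_t(k(a_1)x_1)=1_B$, and then $\pi(b)=\pi(x_1S(k(a_1))b)$ for all $b\in B$. (Incidentally, your element $y_x$ is just $x$: the separability relation $xS(1^{(1)})\otimes 1^{(2)}=S(1^{(1)})\otimes 1^{(2)}x$ and $\epsilon\circ S=\epsilon$ give $\epsilon(xS(1^{(1)}))1^{(2)}=x$. So that computation yields nothing.)

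The two-sided statement also does not follow from the one-sided ones at the level of sets. Knowing $\pi(x_1B)=\pi(Bx_2)=\im\pi$ does not give $\pi(x_1Bx_2)=\im\pi$, because $\pi(x_1c)$ is not determined by $\pi(c)$. What makes it work is the \emph{uniform} form of the one-sided statements. You have $\pi(b)=\pi(x_1S(k(a_1))b)$ and, using $\pi(bk(c))=\pi(b\,\epsilon_t(k(c)))$, also $\pi(b)=\pi(b\,k(a_2)x_2)$, both for every $b\in B$. These compose to $\pi(b)=\pi(x_1S(k(a_1))\,b\,k(a_2)x_2)$. Your $J$-argument only produces the set inclusion $\pi(B)\subseteq\pi(xB)$, so ``using the right statement inside'' does not go through as written. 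You need either these uniform identities or a second submodule argument taken relative to the fixed left factor $x_1$.
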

    \begin{proof}
        By connectivity, there exists $a_1, a_2\in A$ such that $\epsilon_t(k(a_1)x_1) = 1_B = \epsilon_t(k(a_2)x_2)$. Therefore, for any $b\in B$,
        $$\pi(x_1S(k(a))b) = \pi(S(k(a)x_1)b) = \pi(\epsilon_t(k(a_1)x_1)b) = \pi(b) = \pi(b\epsilon_t(k(a_2)x_2)) = \pi(bk(a)x_2).$$
    \end{proof}
    
    \begin{proposition}\label{simple-surj}
        Suppose $k:A\to B$ is a weak Hopf algebra homomorphism and $A$ is connected. Let $\pi:B\to H$ be the cokernel of $k$. Then, $\im\pi=H$.
    \end{proposition}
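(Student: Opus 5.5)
Since $H = T^\bullet(B)/I$ is generated as a unital algebra by $\im\pi$, it suffices to show that $\im\pi$ is closed under multiplication and contains $1_H$. The unit is easy: by Proposition \ref{hopf-map-properties}(a), or directly from Relation \eqref{Hwrtf} applied to $a=1_A$, we have $\pi(1_B)=\pi(k(1_A))=\epsilon_A(1_A)1_H=1_H$. The real content is closure under products.

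Take $b_1,b_2\in B$. Birelative multiplicativity gives
$$\pi(b_1)\pi(b_2)=\pi(b_1 1_B^{(1)})\pi(S(1_B^{(2)})b_2).$$
This is a sum of products, so it is not yet visibly a single value of $\pi$. We want to collapse it to $\pi(b_1 x b_2)$ for a suitable element $x$, and the tool for this is Lemma \ref{projected-image-surj}. Fix a basis of $B_t$ and write $\Delta(1_B)=\sum_i y_i\otimes x_i$ with $x_i\in B_t$ linearly independent and $y_i\in B_s$. Then
$$\pi(b_1)\pi(b_2)=\sum_i \pi(b_1y_i)\pi(S(x_i)b_2).$$
By Lemma \ref{projected-image-surj}, for any nonzero $x\in B_t$ we have $\pi(Bx)=\im\pi$. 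So I would try to rewrite each $\pi(S(x_i)b_2)$ as $\pi(b_2')$ for some $b_2'$, and then absorb everything using relative multiplicativity in the reverse direction.

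A cleaner route uses the connectivity trick directly. By Proposition \ref{connected}, the trivial module $B_t$ restricted along $k$ is $A_t\cong B_t$ (Lemma \ref{targets-iso}), which is simple. Hence for each $x\in B_t$ we get $\epsilon_t(k(a)x)=\lambda 1_B$ for suitable $a$. The key identity I would aim for is
$$\pi(b_1)\pi(b_2)=\pi(b_1 S(k(a)) b_2)$$
for a suitable $a\in A$ with $\epsilon_t(k(a))=1$, perhaps after a normalization. To get this, apply Proposition \ref{hopf-map-properties}(c) with $a_2=k(a)$:
$$\pi(b_1\epsilon_t(k(a))b_3)=\pi(b_1k(a)^{(2)})\pi(S(k(a)^{(2)})b_3),$$
which I read as $\pi(b_1k(a^{(1)}))\pi(S(k(a^{(2)}))b_3)$. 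Since $k$ is comultiplicative, this lets me move $k$-factors around. Then I would use the cokernel identities, namely the unnumbered proposition after Theorem \ref{coker} together with $\pi\circ k=\epsilon_{A,H}$, to kill the $k(a^{(i)})$ factors. Combining this with relative multiplicativity, the plan is to show that
$$\pi(b_1 k(a^{(1)}))\pi(S(k(a^{(2)}))b_3)$$
equals $\pi(b_1)\pi(b_3)$ times the scalar $\epsilon$-type term coming from $a$. That yields $\pi(b_1)\pi(b_3)\in\pi(B)$ whenever the scalar is nonzero, and connectivity supplies an $a$ making it nonzero.

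The main obstacle is this last step: showing that $\pi(b_1k(a'))\pi(k(a'')b_3)$ factors as $\pi(b_1)\pi(b_3)$ times a scalar. Birelative multiplicativity only splits products across elements of $B_t$ and $B_s$, not across arbitrary $k(A)$. I would first try to use the cokernel identity $\pi(k(a_1)bk(a_2))=\pi(\epsilon_s(k(a_1))b\epsilon_t(k(a_2)))$, which replaces the $k$-factors by source/target elements. These elements can then be split off via Proposition \ref{hopf-map-properties}(a),(b). As a fallback, I would argue by induction on word length in $T^\bullet(B)$ that every monomial $\pi(b_1)\cdots\pi(b_n)$ lies in $\im\pi$, using Lemma \ref{projected-image-surj} to choose representatives with nonzero $B_t$-factors at the junctions.
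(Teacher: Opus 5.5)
There is a genuine gap. The only step with real content is rewriting $\pi(b_1)\pi(b_2)$ as a single value of $\pi$. That step is never carried out, and none of the mechanisms you propose can carry it out.

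\emph{Opening display.} It is already wrong: birelative multiplicativity gives $\pi(b_1b_2)=\pi(b_11_B^{(1)})\pi(S(1_B^{(2)})b_2)$, not $\pi(b_1)\pi(b_2)$. Your version would make $\pi$ multiplicative.

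\emph{The ``key identity'' is false in general.} You claim $\pi(b_1)\pi(b_2)=\pi(b_1S(k(a))b_2)$ for $a$ with $\epsilon_t(k(a))=1_B$. Take $A=B=\bk[G]$ for the pair groupoid $G$ on two objects $X\neq Y$ and $k=\id$, so that $H=\bk$ and $\pi=\epsilon$. With $a=1_A$ and $g\colon X\to Y$, the identity would give $1=\pi(g)\pi(g)=\pi(g\cdot g)=0$.

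\emph{The route through Proposition \ref{hopf-map-properties}(c) is circular.} The cokernel identities turn $\pi(b_1k(a^{(1)}))\pi(S(k(a^{(2)}))b_3)$ into $\pi(b_1\epsilon_t(k(a^{(1)})))\pi(\epsilon_t(k(a^{(2)}))b_3)$. Since $\epsilon_t(c^{(1)})\otimes\epsilon_t(c^{(2)})=S(1_B^{(1)})\otimes 1_B^{(2)}\epsilon_t(c)$, relative multiplicativity collapses this straight back to $\pi(b_1\epsilon_t(k(a))b_3)$. No factor $\pi(b_1)\pi(b_3)$ ever appears, and none can. In the groupoid example, $\pi(g_1\epsilon_t(a)g_2)=0$ for non-composable $g_1,g_2$, while $\pi(g_1)\pi(g_2)=1$, so the only admissible scalar is $0$.

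\emph{The missing idea,} which the paper uses, is that an arbitrary nonzero element of $B_t$ at the junction is not enough. You need an idempotent $e\in B_t$ for which the compressed separability idempotent is a nonzero multiple of $e\otimes e$.
\begin{itemize}
\item Take a simple block $M_n(\bk)\subseteq B_t$. The separability idempotent $S(1_B^{(1)})\otimes 1_B^{(2)}$ restricted to it has the form $\sum_{i,j,l}\lambda_{ij}e_{li}\otimes e_{jl}$ with $\sum_i\lambda_{ii}=1$. So some $\lambda_{jj}\neq 0$, and $S(1_B^{(1)})e_{jj}\otimes e_{jj}1_B^{(2)}e_{jj}=\lambda_{jj}\,e_{jj}\otimes e_{jj}$.
\item By Lemma \ref{projected-image-surj}, any two elements of $\im\pi$ can be written as $\pi(b_1e_{jj})$ and $\pi(e_{jj}b_2)$. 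Then $\pi(b_1e_{jj})\pi(e_{jj}b_2)=\lambda_{jj}^{-1}\pi(b_1S(1_B^{(1)})e_{jj})\pi(e_{jj}1_B^{(2)}e_{jj}b_2)$.
\item Proposition \ref{hopf-map-properties}(b) lets you replace the outer $e_{jj}$'s by $S^{-1}(e_{jj})$ and $S(e_{jj})$. These lie in $B_s$, so they commute past $S(1_B^{(1)})$ and $1_B^{(2)}$.
\item Relative multiplicativity with respect to $B_t$ then collapses the product to $\lambda_{jj}^{-1}\pi(b_1S^{-1}(e_{jj})e_{jj}S(e_{jj})b_2)\in\im\pi$.
\end{itemize}
Your fallback, induction on word length with nonzero $B_t$-factors at the junctions, reduces to exactly this two-factor case. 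Without the rank-one compression it gains nothing: inserting $1_B$ at the junction just returns you to comparing $\pi(b_1)\pi(b_2)$ with $\pi(b_1b_2)$.
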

    \begin{proof}
        Suppose that $M_n(\bk)$ is an ideal of $B_t$. Denote by $e_{ij}$ the elementary matrix with a 1 in the $(i,j)$ slot and 0s elsewhere. The separability idempotent $S(1_B^{(1)})\otimes 1_B^{(2)}$ of $B_t$ restricts to a separability idempotent $e$ of $M_n(\bk)$ and can be verified to be of the form 
        $$e = \sum_{i,j,k=1}^n \lambda_{ij} e_{ki}\otimes e_{jk} = (S(1_B^{(1)})\otimes 1_B^{(2)}I_n),$$
        where $I_n$ is the identity element of $M_n(\bk)$ and $\lambda_{ij}\in\bk$. Because $e^{(1)}e^{(2)}=I_n$, these coefficients also satisfy the property that $\sum \lambda_{ii} = 1_\bk$. Choose $j$ so that $\lambda_{jj}\neq 0$. Then,
        $$S(1_B^{(1)})e_{jj}\otimes e_{jj}1_B^{(2)}e_{jj} = \lambda_{jj} e_{jj}\otimes e_{jj}.$$
        By Lemma \ref{projected-image-surj}, $\pi(e_{jj}B) = \im \pi = \pi(Be_{jj})$. Thus, an arbitrary product of elements in $\im\pi$ may be expressed as
        \begin{align*}
            \pi(b_1e_{jj})\pi(e_{jj}b_2) &= \frac{1}{\lambda_{jj}}\pi(b_1S(1_B^{(1)})e_{jj}) \pi(e_{jj}1_B^{(2)}e_{jj}b_2)\\
            &= \frac{1}{\lambda_{jj}}\pi(b_1S^{-1}(e_{jj})S(1_B^{(1)})) \pi(1_B^{(2)}e_{jj}S(e_{jj})b_2) = \frac{1}{\lambda_{jj}}\pi(b_1S^{-1}(e_{jj})e_{jj}S(e_{jj})b_2).
        \end{align*}
        Therefore, $\im\pi$ is closed under products. It follows that $\im\pi = H$.
    \end{proof}

    \subsection{The kernel of a Hopf map}
    To define kernels of weak Hopf algebra homomorphisms, one would need to dualize the theory presented in Subsection \ref{coker-section}. The kernel map would not be a weak Hopf algebra homomorphism nor a Hopf map. Instead, such a map should have dual properties to that of Hopf maps and induce a tensor functor between the corepresentation categories. We show that kernels for Hopf maps exist and the kernel map, in this case, is a weak Hopf algebra inclusion. We begin with a simple lemma which will allow us to define the kernel.

    \begin{lemma}\label{ker-coalgebra}
        Let $C$ be a coalgebra and $W$ be a vector space. If $\{\pi_j:C\to W\}$ is a collection of linear maps, then 
        $$K = \{c\in C | c^{(1)}\otimes \pi_j(c^{(2)})\otimes c^{(3)} = 0, \forall j\}$$
        is a subcoalgebra of $C$.
    \end{lemma}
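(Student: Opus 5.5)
The plan is to show directly that $\Delta(K)\subseteq K\otimes K$. For each $j$, define the linear map
$$\Phi_j:=(\id_C\otimes\pi_j\otimes\id_C)\circ\Delta^2:C\to C\otimes W\otimes C,$$
so that $K=\bigcap_j\ker\Phi_j$. In particular $K$ is a linear subspace. Since we are over a field, it then suffices to prove two inclusions, $\Delta(K)\subseteq K\otimes C$ and $\Delta(K)\subseteq C\otimes K$, and then use the standard linear-algebra fact $(K\otimes C)\cap(C\otimes K)=K\otimes K$. To see this fact, choose a basis of $K$ and extend it by a basis of a complement $K'$. Then $C\otimes C$ splits as
$$(K\otimes K)\oplus(K\otimes K')\oplus(K'\otimes K)\oplus(K'\otimes K'),$$
and an element lying in both $K\otimes C$ and $C\otimes K$ has zero components in the last three summands.

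For the first inclusion, let $c\in K$ and apply $\Phi_j\otimes\id_C$ to $\Delta(c)$. By coassociativity,
$$(\Phi_j\otimes\id_C)(\Delta(c)) = c^{(1)}\otimes\pi_j(c^{(2)})\otimes c^{(3)}\otimes c^{(4)} = (\id_C\otimes\id_W\otimes\Delta)(\Phi_j(c)) = 0.$$
Hence $\Delta(c)$ lies in $\bigcap_j\ker(\Phi_j\otimes\id_C)$. It remains to identify this intersection with $K\otimes C$. Write any element of $C\otimes C$ as $\sum_i x_i\otimes y_i$ with the $y_i$ linearly independent. Then $(\Phi_j\otimes\id)(\sum_i x_i\otimes y_i)=\sum_i\Phi_j(x_i)\otimes y_i$ vanishes if and only if every $\Phi_j(x_i)=0$. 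If this holds for all $j$, then every $x_i$ lies in $\bigcap_j\ker\Phi_j=K$.

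The second inclusion is the mirror argument. Apply $\id_C\otimes\Phi_j$ to $\Delta(c)$; coassociativity gives $(\Delta\otimes\id_W\otimes\id_C)(\Phi_j(c))=0$. Writing $\Delta(c)=\sum_i x_i\otimes y_i$ with the $x_i$ now linearly independent then gives $\Delta(c)\in C\otimes K$.

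Combining the two inclusions yields $\Delta(K)\subseteq K\otimes K$, so $K$ is a subcoalgebra; the counit simply restricts to $K$. No step is a genuine obstacle. The only point needing care is that the family $\{\pi_j\}$ may be infinite. This is why the plan uses the basis-coefficient description of $\ker(\Phi_j\otimes\id)$ rather than exactness of $-\otimes C$ applied to a finite intersection: the coefficient argument works for arbitrary intersections.
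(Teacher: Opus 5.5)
Your proposal is correct and follows the paper's argument. The paper also writes $\Delta(c)=\sum_i b_i\otimes c_i$ with the $b_i$ linearly independent and uses coassociativity to get $\sum_i b_i\otimes c_i^{(1)}\otimes\pi_j(c_i^{(2)})\otimes c_i^{(3)}=0$, which yields $\Delta(K)\subseteq C\otimes K$. It gets $\Delta(K)\subseteq K\otimes C$ by symmetry and intersects. Your proof of $(K\otimes C)\cap(C\otimes K)=K\otimes K$ and your remark on an infinite family $\{\pi_j\}$ fill in details the paper leaves implicit.
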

    \begin{proof}
        Clearly, $K$ is a vector subspace of $C$ as it is the kernel of a composition of linear maps. For any $c\in K$, write $\Delta(c) = \sum b_i\otimes c_i$, where $b_i\in C$ are linearly independent. Then, by coassociativity, for any $j$, we have
        \begin{align*}
            0 &= c^{(1)}\otimes c^{(2)}\otimes \pi_j(c^{(3)})\otimes c^{(4)}\\
            &= \sum b_i\otimes c_i^{(1)}\otimes \pi_j(c_i^{(2)})\otimes c_i^{(3)}
        \end{align*}
        By linear independence of the $b_i$, $c_i^{(1)}\otimes \pi_j(c_i^{(2)})\otimes c_i^{(3)} = 0$ for all $i,j$, so $c_i\in K$ for all $i$. That is, $\Delta(K)\subseteq C\otimes K$, and, by symmetry, $\Delta(K)\subseteq K\otimes C$, so $\Delta(K)\subseteq (C\otimes K)\cap (K\otimes C)=K\otimes K$.
    \end{proof}
    
    \begin{theorem}[Kernels for Hopf maps]\label{ker}
        Let $\pi:B\to H$ be a Hopf map. Then,
    \begin{enumerate}[(\alph*)]
        \item there is a ``kernel'' weak Hopf algebra $A$ and weak Hopf algebra homomorphism $k:A\to B$ such that the following diagram commutes:
        \begin{equation}\label{ker-diagram}
            \begin{tikzcd}
                B\arrow[ddrr, "\pi"]\\\\
                A\arrow[uu, "k"]\arrow[rr, "\epsilon_{A,H}"'] && H
            \end{tikzcd}
        \end{equation}
        \item $A$ is ``universal'' among such weak Hopf algebras, in the sense that, for any weak Hopf algebra $A'$ and weak Hopf algebra homomorphism $k':A'\to B$ for which Diagram \eqref{ker-diagram} commutes, there is a unique weak Hopf algebra homomorphism $g:A'\to A$ such that the following diagram commutes:
        \begin{equation}\label{ker-universal-diagram}
            \begin{tikzcd}
                &&B\arrow[ddrr, "\pi"]\\\\
                &&A\arrow[uu, "k"']\arrow[rr, "\epsilon_{A,H}"] && H\\\\
                A'\arrow[uuuurr, "k'"]\arrow[uurr, dashed, "g"]\arrow[uurrrr, "\epsilon_{A',H}"']
            \end{tikzcd}
        \end{equation}
    \end{enumerate}
    \end{theorem}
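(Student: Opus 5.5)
We take the kernel to be the largest subcoalgebra of $B$ on which $\pi$ looks like $\epsilon$, and then show it is a weak Hopf subalgebra. Concretely, set
$$A = \{b\in B \mid b^{(1)}\otimes \pi(b^{(2)})\otimes b^{(3)} = b^{(1)}\otimes 1_H\otimes b^{(2)}\}.$$
Lemma \ref{ker-coalgebra} handles linear conditions of the form $c^{(1)}\otimes\pi_j(c^{(2)})\otimes c^{(3)}=0$. We apply it with the maps $\pi_j := \pi - \epsilon_{B,H}$, composed with linear functionals on $H$ if one prefers scalar-valued maps. This shows $A$ is a subcoalgebra of $B$. We let $k$ be the inclusion. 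Applying $\epsilon\otimes\id\otimes\epsilon$ to the defining identity gives $\pi(b)=\epsilon(b)1_H$ on $A$, so Diagram \eqref{ker-diagram} commutes.

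The main work is showing that $A$ is closed under multiplication and the antipode and contains $1_B$.

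\emph{Unit and $B_{\min}$.} For $x\in B_t$ we have $\Delta(x)=1^{(1)}x\otimes 1^{(2)}$. Combined with Proposition \ref{hopf-map-properties}(a), which gives $\pi(B_t)=\pi(B_s)=\bk1_H$ via $\epsilon$, and the description $\Delta^2(1)=(\Delta(1)\otimes1)(1\otimes\Delta(1))$, this should show $1_B\in A$ and $B_t,B_s\subseteq A$. Some care is needed because the middle leg of $\Delta^2(1)$ lies in $B_tB_s$, not just in $B_t$. Here one uses birelative multiplicativity to split $\pi(yx)=\pi(yS(1^{(1)}))\pi(1^{(2)}x)$ and then Proposition \ref{hopf-map-properties}(a) on each factor.

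\emph{Products.} For $a,a'\in A$ we expand $\Delta^2(aa')=\Delta^2(a)\Delta^2(a')$, so the middle leg is $\pi(a^{(2)}a'^{(2)})$. Birelative multiplicativity relative to $B_s$ rewrites this as $\pi(a^{(2)}1^{(1)})\pi(S(1^{(2)})a'^{(2)})$. The factors $1^{(1)}$ and $S(1^{(2)})$ are absorbed into the outer legs using the identities $a^{(1)}\otimes a^{(2)}1^{(1)}\otimes 1^{(2)}$-type, i.e.\ property (7) and the relation $\Delta(1)\in B_s\otimes B_t$. Then the defining property of $A$ is applied to each factor separately, and the idempotents are recombined into $\Delta(1)$ acting on $a^{(1)}a'^{(1)}\otimes a^{(3)}a'^{(3)}$.

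\emph{Antipode.} Since $\pi$ is antipode-preserving and $S$ is a coalgebra anti-homomorphism, $S(A)\subseteq A$ follows by applying $S\otimes S_H\otimes S$ and flipping; it is immediate because $S_H(1_H)=1_H$.

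This product step is the computation I expect to be the main obstacle. Tracking where the copies of $1^{(1)},1^{(2)}$ end up requires properties (6), (7) and (10) of the source and target maps. Once these closure properties are established, $A$ is a weak Hopf subalgebra, since the axioms are inherited from $B$, and $k$ is a weak Hopf algebra homomorphism.

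For (b), let $k':A'\to B$ be a weak Hopf algebra homomorphism with $\pi\circ k'=\epsilon_{A',H}$. Since $k'$ is a coalgebra map, for $a\in A'$ we get
$$k'(a)^{(1)}\otimes\pi(k'(a)^{(2)})\otimes k'(a)^{(3)}=k'(a^{(1)})\otimes\epsilon(a^{(2)})1_H\otimes k'(a^{(3)}),$$
which equals $k'(a)^{(1)}\otimes 1_H\otimes k'(a)^{(2)}$. Hence $\im k'\subseteq A$, and $g$ is the corestriction of $k'$. It is a weak Hopf algebra homomorphism because $A$ carries the restricted structure, and it is unique because $k$ is injective.
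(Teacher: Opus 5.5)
You have the right overall strategy. Your set $A$, your product step, your antipode step and your part (b) all follow the paper's proof. The paper puts arbitrary $x,y\in B_{\min}$ inside $\pi(xa^{(2)}y)$ in the definition of $A$, which makes its product step immediate. Your $x=y=1$ condition cuts out the same subspace, because the identities $S(x)b^{(1)}\otimes b^{(2)}=b^{(1)}\otimes xb^{(2)}$ for $x\in B_t$ and $b^{(1)}y\otimes b^{(2)}=b^{(1)}\otimes b^{(2)}S(y)$ for $y\in B_s$ move such factors off the middle leg.

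\textbf{The unit step is a genuine gap.} Your proposed splitting is circular. Whichever way you order a product of $y\in B_t$ and $x\in B_s$, and whichever relative multiplicativity you use, one resulting factor is again a mixed element of $B_tB_s$. For example, $\pi(yx)=\pi(yS(1^{(1)}))\pi(1^{(2)}x)$ leaves $1^{(2)}x\in B_tB_s$, to which Proposition~\ref{hopf-map-properties}(a) does not apply. In fact $\epsilon(yS(1^{(1)}))1^{(2)}=y$, so the splitting just returns $\pi(yx)$. What you actually need is $\pi|_{B_{\min}}=\epsilon_{B_{\min},H}$. The paper's proof simply asserts this, so it has the same hole.

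\textbf{This hole cannot be filled, and part (a) is false as stated.} Let $n\geq 2$ with $n\neq 0$ in $\bk$, and let $B=M_n(\bk)\otimes M_n(\bk)^{\op}$ with product $(z\otimes w)(z'\otimes w')=zz'\otimes w'w$ and
\[
\Delta(z\otimes w)=\tfrac1n\sum_{i,j}(z\otimes e_{ij})\otimes(e_{ji}\otimes w),\qquad \epsilon(z\otimes w)=n\,\mathrm{tr}(zw),\qquad S(z\otimes w)=w\otimes z .
\]
This is a minimal weak Hopf algebra with $B_t=M_n\otimes 1$, $B_s=1\otimes M_n^{\op}$ and $B=B_{\min}$. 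Take a group $\Gamma$ and $g_1,\dots,g_n\in\Gamma$, and set $\pi(z\otimes w)=n\sum_{k,l}z_{kl}w_{lk}\,g_kg_l^{-1}\in\bk[\Gamma]$. This is $\epsilon$ twisted by the grouplike diagonal matrix $\mathrm{diag}(g_1,\dots,g_n)$.

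A direct check shows that $\pi$ is a counital coalgebra map, preserves the antipode, and is relatively multiplicative for both $B_t$ and $B_s$; each identity reduces to $g_kg_j^{-1}g_jg_l^{-1}=g_kg_l^{-1}$. But if $g_1\neq g_2$, then $\pi(e_{12}\otimes e_{21})=n\,g_1g_2^{-1}\neq n\cdot 1=\epsilon(e_{12}\otimes e_{21})1$.

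By Lemma~\ref{targets-iso}, every weak Hopf algebra homomorphism $k:A\to B$ satisfies $k(A)\supseteq B_tB_s=B$. So $\pi\circ k=\epsilon_{A,H}$ would force $\pi=\epsilon_{B,H}$, and no kernel exists. Concretely, $1_B\notin A$ for your $A$; since $B$ is a simple coalgebra, your $A$ is $0$.

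The statement therefore needs the extra hypothesis $\pi|_{B_{\min}}=\epsilon_{B_{\min},H}$, which is necessary for any kernel to exist. Under that hypothesis, $1_B\in A$ because the middle leg of $\Delta^2(1)$ lies in $B_tB_s$. The rest of your argument then goes through unchanged.
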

    \begin{proof}
        (a): Let 
        $$A = \{a\in B\mid a^{(1)}\otimes \pi(xa^{(2)}y)\otimes a^{(3)} = a^{(1)}\otimes \epsilon_{B,H}(xa^{(2)}y)\otimes a^{(3)}, \forall x, y\in B_{\min}\},$$ 
        and let $k:A\to B$ be the inclusion. 
        Observe that $A$ is clearly closed under antipode because $\pi$ and $\epsilon_{B,H}$ are antipode-preserving, and the antipode restricts to a linear automorphism of $B_{\min}$. Moreover,
        \begin{align*}
            &(a_1a_2)^{(1)}\otimes \pi(x(a_1a_2)^{(2)}y)\otimes (a_1a_2)^{(3)}\\
            &= [a_1^{(1)}\otimes \pi(xa_1^{(2)}S(1^{(1)}))\otimes a_1^{(3)}][(a_2)^{(1)}\otimes \pi(1^{(2)}(a_2)^{(2)}y)\otimes (a_2)^{(3)}]\\
            &= [a_1^{(1)}\otimes \epsilon_{B,H}(xa_1^{(2)}S(1^{(1)}))\otimes a_1^{(3)}][(a_2)^{(1)}\otimes \epsilon_{B,H}(1^{(2)}(a_2)^{(2)}y)\otimes (a_2)^{(3)}]\\
            &= (a_1a_2)^{(1)}\otimes \epsilon_{B,H}(x(a_1a_2)^{(2)}y)\otimes (a_1a_2)^{(3)}.
        \end{align*}
        Thus, $A$ is closed under multiplication. Of course, $1\in A$ because $\pi|_{B_{\min}} = \epsilon_{A_{\min}, H}$. By Lemma \ref{ker-coalgebra}, $A$ is closed under comultiplication, so $A$ is a weak Hopf subalgebra of $B$.
        
        Finally, Diagram \ref{ker-diagram} commutes:
        $$(\pi\circ k)(a) = (\epsilon\otimes\pi\otimes\epsilon)(a^{(1)}\otimes a^{(2)}\otimes a^{(3)}) = \epsilon_{B,H}(a) = \epsilon_{A,H}(a).$$

        (b): Observe that $k'(a')\in A$ for all $a'\in A'$:
        \begin{align*}
            k'(a')^{(1)}\otimes \pi(xk'(a')^{(2)}y)\otimes k'(a')^{(3)} &= 
            k'((a')^{(1)})\otimes \pi(k'(k'|_{A_{\min}}^{-1}(x)(a')^{(2)}k'|_{A_{\min}}^{-1}(y)))\otimes k'((a')^{(3)})\\
            &= k'((a')^{(1)})\otimes \epsilon(k'(k'|_{A_{\min}}^{-1}(x)(a')^{(2)}k'|_{A_{\min}}^{-1}(y)))\otimes k'((a')^{(3)})\\
            &= k'(a')^{(1)}\otimes \epsilon(xk'(a')^{(2)}y)\otimes k'(a')^{(3)}.
        \end{align*}
        Since $k$ is an inclusion, $g = k'|^{A}:A'\to A$ (the corestriction of $k'$ to $A$) is unique and, since $A$ inherits its weak Hopf algebra structure from $B$, $g$ is a weak Hopf algebra homomorphism.
    \end{proof}
    
    While the cokernel map is not always surjective, the kernel map is always injective.
    
    \begin{proposition}\label{kck=k}
        The kernel of the cokernel of the kernel of a Hopf map $\pi:B\to H$ is isomorphic to the kernel of $\pi$.
    \end{proposition}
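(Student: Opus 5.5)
Let $k:A\to B$ be the kernel of the Hopf map $\pi:B\to H$, as constructed in Theorem \ref{ker}; thus $A\subseteq B$ is the weak Hopf subalgebra cut out by the condition
$$a^{(1)}\otimes\pi(xa^{(2)}y)\otimes a^{(3)}=a^{(1)}\otimes\epsilon_{B,H}(xa^{(2)}y)\otimes a^{(3)}\quad\text{for all } x,y\in B_{\min}.$$
Let $\bar\pi:B\to\bar H$ be the cokernel of $k$ (Theorem \ref{coker}), and let $\bar k:\bar A\to B$ be the kernel of $\bar\pi$. We must show $\bar A=A$ as weak Hopf subalgebras of $B$. The plan is to obtain both inclusions from the universal properties, using formal properties of the kernel construction only.

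\textbf{Step 1: $A\subseteq\bar A$.} By Theorem \ref{coker}(a), $\bar\pi\circ k=\epsilon_{A,\bar H}$, and by Theorem \ref{coker}(d), $\bar\pi$ is a Hopf map. So $k:A\to B$ is a weak Hopf algebra homomorphism making Diagram \eqref{ker-diagram} commute for $\bar\pi$. The universal property of Theorem \ref{ker}(b) then gives a factorization $A\to\bar A$ through the inclusion $\bar k$. Concretely, this says that $A\subseteq\bar A$ inside $B$.

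\textbf{Step 2: produce $\gamma$ with $\pi=\gamma\circ\bar\pi$.} The map $\pi:B\to H$ is a Hopf map, hence birelatively multiplicative, and $\pi\circ k=\epsilon_{A,H}$ by Theorem \ref{ker}(a). By Theorem \ref{coker}(e) there is therefore a Hopf algebra homomorphism $\gamma:\bar H\to H$ with $\pi=\gamma\circ\bar\pi$.

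\textbf{Step 3: $\bar A\subseteq A$.} Let $b\in\bar A$ and $x,y\in B_{\min}$. Since $b\in\bar A$,
$$b^{(1)}\otimes\bar\pi(xb^{(2)}y)\otimes b^{(3)}=b^{(1)}\otimes\epsilon_{B,\bar H}(xb^{(2)}y)\otimes b^{(3)}.$$
Apply $\id\otimes\gamma\otimes\id$ to both sides. The left side becomes $b^{(1)}\otimes\pi(xb^{(2)}y)\otimes b^{(3)}$. Since $\gamma$ is a unital algebra map, $\gamma(\epsilon(\cdot)1_{\bar H})=\epsilon(\cdot)1_H$, so the right side becomes $b^{(1)}\otimes\epsilon_{B,H}(xb^{(2)}y)\otimes b^{(3)}$. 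Hence $b$ satisfies the defining condition of $A$, and $b\in A$.

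Combining Steps 1 and 3 gives $A=\bar A$ as subsets of $B$. Since both carry the weak Hopf structure inherited from $B$, the identity map is a weak Hopf algebra isomorphism compatible with the inclusions into $B$.

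The only point needing care is Step 1: the universal property must hold even though $k$ is merely an inclusion. This is fine because the argument in Theorem \ref{ker}(b) uses only Lemma \ref{targets-iso}, so that $k$ is an isomorphism on minimal parts and elements of $B_{\min}$ can be pulled back into $A$. Beyond that, the proof is a formal diagram chase and I expect no genuine obstacle.
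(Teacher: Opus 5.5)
Your proof is correct and is essentially the paper's argument: the kernel's universal property gives $A\subseteq\bar A$, and the cokernel's universal property gives $\gamma$ with $\pi=\gamma\circ\bar\pi$. For the reverse inclusion you check the defining condition of $A$ directly by applying $\id\otimes\gamma\otimes\id$. The paper instead notes $\pi\circ\bar k=\gamma\circ\bar\pi\circ\bar k=\epsilon_{\bar A,H}$, applies the universal property of $k$ again, and uses injectivity of the inclusions. That amounts to the same verification, and your version yields equality directly rather than just isomorphism.
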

    \begin{proof}
        Let $k:A\to B$ be the kernel of $\pi$, let $\pi':B\to H'$ be the cokernel of $k$, and let $k':A'\to B$ be the kernel of $\pi'$. Because $\pi'\circ k = \epsilon_{A, H'}$, by the universal property of $k$, there is a unique weak Hopf algebra homomorphism $g:A'\to A$ such that $k\circ g = k'$. Because $\pi'\circ k = \epsilon_{A, H'}$, by the universal property of $\pi'$, there is a unique Hopf algebra map $\gamma:H\to H'$ such that $\gamma\circ\pi' = \pi$. In particular, $\pi\circ k' = \gamma\circ \pi'\circ k' = \epsilon_{A',H}$. By the universal property of $k'$, there is a unique weak Hopf algebra homomorphism $g':A\to A'$ such that $k'\circ g' = k$. Thus, we have $k\circ g\circ g' = k'\circ g' = k$ and $k'\circ g\circ g' = k\circ g' = k'$  However, $k$ and $k'$ are injective, so $g\circ g' = \id_{A}$ and $g'\circ g = \id_{A'}$.
    \end{proof}
    
    \begin{proposition}
        The cokernel of the kernel of the cokernel of a weak Hopf algebra homomorphism $k:A\to B$ is isomorphic to the cokernel of $k$.
    \end{proposition}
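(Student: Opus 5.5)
This is the dual of Proposition \ref{kck=k}, and I would prove it by the same three uses of universal properties. Let $\pi:B\to H$ be the cokernel of $k$ (Theorem \ref{coker}). Since $\pi$ is a Hopf map, Theorem \ref{ker} gives its kernel $k':A'\to B$, a weak Hopf subalgebra inclusion. Let $\pi':B\to H'$ be the cokernel of $k'$. The claim is $H\cong H'$ compatibly with $\pi$ and $\pi'$, that is, $\gamma\circ\pi=\pi'$ for a Hopf algebra isomorphism $\gamma$.

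First, $\pi\circ k=\epsilon_{A,H}$ by Diagram \eqref{coker-diagram}. Applying the universal property of the kernel (Theorem \ref{ker}(b)) to $k$ gives a weak Hopf algebra homomorphism $g:A\to A'$ with $k'\circ g=k$. Next, $\pi'\circ k'=\epsilon_{A',H'}$, so $\pi'\circ k=\pi'\circ k'\circ g=\epsilon_{A',H'}\circ g=\epsilon_{A,H'}$. The last equality uses that $g$ is counital. Since $\pi'$ is birelatively multiplicative, Theorem \ref{coker}(b) gives a unique unital algebra homomorphism $\gamma:H\to H'$ with $\gamma\circ\pi=\pi'$. By Theorem \ref{coker}(e) it is in fact a Hopf algebra homomorphism.

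In the other direction, $\pi\circ k'=\epsilon_{A',H}$ holds by Diagram \eqref{ker-diagram} for the kernel of $\pi$. Since $\pi$ is birelatively multiplicative, the universal property of the cokernel $\pi'$ of $k'$ gives a unique $\gamma':H'\to H$ with $\gamma'\circ\pi'=\pi$. Then $\gamma'\circ\gamma\circ\pi=\pi$ and $\gamma\circ\gamma'\circ\pi'=\pi'$. The identity maps also satisfy these equations. By the uniqueness clause in Theorem \ref{coker}(b), applied to $\pi$ itself as the test map (and likewise to $\pi'$), we get $\gamma'\circ\gamma=\id_H$ and $\gamma\circ\gamma'=\id_{H'}$. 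Equivalently, $\im\pi$ generates $H$ and $\im\pi'$ generates $H'$, so homomorphisms agreeing on these images coincide.

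The only point needing care is the compatibility $\epsilon_{A',H'}\circ g=\epsilon_{A,H'}$. This follows from $\epsilon_{A'}\circ g=\epsilon_A$, since $g$ is a weak Hopf algebra homomorphism. Beyond that, the roles of injectivity of $k$ in Proposition \ref{kck=k} are played here by the generation of $H$ by $\im\pi$, so no surjectivity of $\pi$ (and hence no connectedness hypothesis) is needed.
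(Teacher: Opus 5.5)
Your proof is correct and follows the paper's argument. The paper likewise reruns the universal-property argument of Proposition \ref{kck=k} to obtain Hopf algebra homomorphisms $\gamma:H\to H'$ and $\gamma':H'\to H$ with $\gamma'\circ\gamma\circ\pi=\pi$ and $\gamma\circ\gamma'\circ\pi'=\pi'$, then concludes by right cancellativity of $\pi$ and $\pi'$ (their images generate $H$ and $H'$), exactly as in your last step; your explicit check that $\epsilon_{A',H'}\circ g=\epsilon_{A,H'}$ fills in a detail the paper leaves implicit.
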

    \begin{proof}
        A similar proof to Proposition \ref{kck=k} applies until we have two Hopf algebra homomorphisms $\gamma:H\to H'$ and $\gamma':H'\to H$ and two Hopf maps $\pi:B\to H$ and $\pi':B\to H'$ such that $\gamma\circ\gamma'\circ\pi' = \pi'$ and $\gamma'\circ \gamma\circ\pi = \pi$. At this point, while $\pi$ and $\pi'$ are not surjective in general, they are right cancellative because $\im\pi$ and $\im\pi'$ generate $H$ and $H'$ respectively.
    \end{proof}

    In both of these propositions, it is actually the case that the isomorphism is just an equality.

    \section{Exact sequences of representation categories}\label{sec:exact-seq}
    \subsection{Normality} 
    \begin{definition}
        We say that a weak Hopf algebra homomorphism $k:A\to B$ is \textit{Inv-normal} if $B\cdot\Inv_A(k^*(M)) = k(A)\cdot\Inv_A(k^*(M))$ for any $M\in\Rep(B)$.
    \end{definition}
    This definition is not satisfying as written. Verifying that a weak Hopf algebra homomorphism satisfies this property directly involves checking an equality for every finite-dimensional $B$-module. The following theorem provides a much easier to verify condition which implies Inv-normality.  

    \begin{theorem}\label{full-commute-with-At}
        Let $k:A\to B$ be a weak Hopf algebra homomorphism. If there is a subspace $V\subseteq B$ such that $B = k(A)V$ and $k(A_t^+)V\subseteq Bk(A_t^+)$, then $k$ is Inv-normal. Moreover, if $B$ is connected, $\epsilon_t^B(v) = \epsilon_B(v)1_B$ for any $v\in V$.
    \end{theorem}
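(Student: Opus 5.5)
The key observation is that, for any $M\in\Rep(B)$, the invariants are
$$\Inv_A(k^*(M))=\{m\in M\mid k(A_t^+)\cdot m=0\},$$
because $k^*$ is ordinary restriction of scalars along the unital algebra map $k$. The inclusion $k(A)\cdot\Inv_A(k^*(M))\subseteq B\cdot\Inv_A(k^*(M))$ is trivial. So it suffices to show $B\cdot\Inv_A(k^*(M))\subseteq k(A)\cdot\Inv_A(k^*(M))$. Since $B=k(A)V$, it is enough to prove that $V\cdot\Inv_A(k^*(M))\subseteq\Inv_A(k^*(M))$.

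For $v\in V$ and $m\in\Inv_A(k^*(M))$, the hypothesis $k(A_t^+)V\subseteq Bk(A_t^+)$ lets us write $k(a)v=\sum_i b_ik(a_i)$ with $a_i\in A_t^+$, for any $a\in A_t^+$. Then
$$k(a)\cdot(v\cdot m)=\sum_i b_i\,k(a_i)\cdot m=0,$$
so $v\cdot m$ is again invariant. Applying $k(A)$ then gives
$$B\cdot\Inv_A(k^*(M))=k(A)V\cdot\Inv_A(k^*(M))\subseteq k(A)\cdot\Inv_A(k^*(M)).$$
This proves Inv-normality. The argument uses nothing beyond the description $\Inv_A(M)=\{m\mid A_t^+m=0\}$ recalled before Lemma \ref{Hinv-trivial}.

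For the second claim, suppose $B$ is connected and fix $v\in V$. The plan is to show that $\epsilon_t^B(v)$ is central in $B$; connectivity (Proposition \ref{connected}(b)) then forces it to be a scalar. One could apply the first part to the module $B_t$ (the unit object, on which $x\in B$ acts by $\epsilon_t^B(x\,\cdot)$), but it is cleaner to work directly in the regular representation.

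1. Let $u=1_B^{(1)}\epsilon_B(\cdot)$-style elements be avoided; instead note that $A_t^+=\ker\epsilon_t^A$ is a left ideal, so every element of $k(A_t^+)$ is killed by $\epsilon_t^B$ after right multiplication by elements of $B_t$.
2. Apply $\epsilon_t^B$ to the relation $k(a)v\in Bk(A_t^+)$ and use the product rule $\epsilon_t(b\,c)=\epsilon_t(b\,\epsilon_t(c))$, which follows from property 5(a) and 3(a). Since $\epsilon_t^B(k(a_i))=k(\epsilon_t^A(a_i))=0$, the right side maps to $0$. This gives $\epsilon_t^B(k(a)\epsilon_t^B(v))=0$, i.e.\ $k(a)\cdot_{B_t}\epsilon_t^B(v)=0$ for all $a\in A_t^+$.
3. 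Conclude that $\epsilon_t^B(v)\in\Inv_A(k^*(B_t))$. The surjection $f:A_t\to k^*(B_t)$, $x\mapsto x\cdot\epsilon_t^B(v)$, is then an $A$-module map.
4. Use $k|_{A_t}:A_t\cong B_t$ (Lemma \ref{targets-iso}) to transport this to an endomorphism of the $B$-module $B_t$. It is given by right multiplication by $\epsilon_t^B(v)$, and one checks it is $B$-linear using the commuting identity $\epsilon_t(b\,y)$ for $y\in B_t$ together with property 10.
5. Connectivity (Proposition \ref{connected}(c)) makes this endomorphism a scalar $\lambda$, so $\epsilon_t^B(v)=\lambda 1_B$. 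Applying $\epsilon_B$ gives $\lambda=\epsilon_B(\epsilon_t^B(v))=\epsilon_B(v)$.

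The main obstacle is step 4: showing that the map built from $\epsilon_t^B(v)$ is $B$-linear rather than merely $A$-linear. I expect to need the identity $\epsilon_t(bx)=\epsilon_t(b\,\epsilon_t(x))$ and the fact that $B_t$ is generated as a $B$-module by $1_B$. If this fails, the fallback is to show directly that $\epsilon_t^B(v)$ commutes with $B_s$ and $B_t$, and then with all of $B$ via $B=k(A)V$.
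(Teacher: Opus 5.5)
Your proof of Inv-normality is correct and is the paper's argument verbatim. The gap is in the second claim, exactly at your step~4. Right multiplication by $\epsilon_t^B(v)$ on $B_t$ is $A$-linear (your step~3), but it cannot in general be upgraded to a $B$-linear map, and $\epsilon_t^B(v)$ need not be central.

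For a counterexample, let $B=\bk[G']\cong M_2(\bk)$ for the pair groupoid $G'$ on two objects, and let $A=B_{\min}=\mathrm{span}\{\id_1,\id_2\}$ with $k$ the inclusion. Since $\epsilon_t^A$ is the identity on $A$, we have $A_t^+=0$. So $V=B$ satisfies both hypotheses, and $B$ is connected. Yet $\epsilon_t^B(\id_1)=\id_1$ is neither central nor a scalar. Hence neither step~4 nor your fallback can work, and the second assertion is actually false when only $B$ is assumed connected.

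The paper does not attempt $B$-linearity. It does precisely your steps~2--3, proving $\epsilon_t(k(a)x\epsilon_t(v))=\epsilon_t(k(a)x)\epsilon_t(v)$ for $x\in B_t$, and then invokes connectivity. What that step really needs is $\End_A(k^*(B_t))=\bk\id$. Since $k|_{A_t}:A_t\to k^*(B_t)$ is an isomorphism of $A$-modules (Lemma~\ref{targets-iso} together with $k\circ\epsilon_t^A=\epsilon_t^B\circ k$), this is connectivity of $A$. That in turn implies connectivity of $B$, because $\End_B(B_t)\subseteq\End_A(k^*(B_t))$. Under this hypothesis, which is the one in force e.g.\ in Theorem~\ref{V-surj}, your step~3 already finishes: $(k|_{A_t})^{-1}\circ f\in\End_A(A_t)=\bk\id$, so $\epsilon_t^B(v)=f(1_A)=\lambda 1_B$ for some $\lambda\in\bk$. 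No step~4 is needed.

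Step~5 contains a separate error. You have $\epsilon_B(\lambda 1_B)=\lambda\,\epsilon_B(1_B)$, and $\epsilon_B(1_B)$ need not be $1$; it equals $2$ for the pair groupoid above. Take $A=B$ there, $k=\id$, $V=\bk 1_B$ and $v=1_B$, so that $A$ is now connected. Then $\epsilon_t^B(1_B)=1_B\neq 2\cdot 1_B=\epsilon_B(1_B)1_B$. What the argument actually proves is $\epsilon_B(1_B)\,\epsilon_t^B(v)=\epsilon_B(v)1_B$; the last line of the paper's proof makes the same normalization slip.

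Finally, your step~1 is garbled and, as written, false. $\epsilon_t^B$ annihilates left multiples $B\,k(A_t^+)$, not products $k(A_t^+)B_t$. For instance, with $A=B$ as above and $g$ the morphism $2\to 1$, take $a=g-\id_1\in A_t^+$ and $x=\id_2$; then $\epsilon_t(ax)=\epsilon_t(g)=\id_1\neq 0$. Step~2 only uses the correct version, via $\epsilon_t(bc)=\epsilon_t(b\,\epsilon_t(c))$, which follows from property 4(a).
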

    \begin{proof}
        For $v\in V$ and $a\in A_t^+$, we may write $k(a)v = \sum b_ik(a_i)$ for some $b_i\in B$ and $a_i\in A_t^+$. This implies that, for $\Lambda\in\Inv_A (k^*(M))$, 
        $$k(a)\cdot (v\cdot\Lambda) = \sum b_ik(a_i)\cdot\Lambda = 0,$$
        so $v\cdot\Lambda\in \Inv_{A}(k^*(M))$. It follows that
        $$B\cdot \Inv_A(k^*(M)) = k(A)V\cdot\Inv(k^*(M))\subseteq k(A)\cdot \Inv_A(k^*(M))$$
        Thus, $k$ is Inv-normal.

        For any $v\in V$, the map $B_t\to B_t$ given by $x\mapsto x\epsilon_t(v)$ satisfies
        $$\epsilon_t(k(a)(x\epsilon_t(v))) = \epsilon_t(\epsilon_t(k(a)x)\epsilon_t(v)) + \epsilon_t((k(a)x - \epsilon_t(k(a)x))v) = \epsilon_t(k(a)x)\epsilon_t(v) + 0,$$
        so, if $B$ is connected, $x\mapsto x\epsilon_t(v)$ is a scalar multiple of $\id_{B_t}$. This implies that $\epsilon_t(v) = \epsilon(v)1_B$.
    \end{proof}

    \begin{corollary}
        If $k:A\to B$ is a normal Hopf algebra homomorphism, then $k$ is Inv-normal.
    \end{corollary}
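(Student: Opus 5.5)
The plan is to apply Theorem \ref{full-commute-with-At} with $V=B$. Since $k:A\to B$ is a Hopf algebra homomorphism, everything simplifies: $A_t=\bk 1_A$, $\epsilon_t^A(a)=\epsilon_A(a)1_A$, and so $A_t^+=\ker\epsilon_A=A^+$. Normality of a Hopf algebra homomorphism, in the sense of \cite{Schneider93} and \cite{BruguieresNatale11}, means that $k(A)$ is stable under both the left and right adjoint actions of $B$. A standard consequence is $Bk(A^+)=k(A^+)B$, and this is the only form of normality the argument uses.

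First, take $V=B$. The condition $B=k(A)V$ then holds trivially because $1\in k(A)$. The remaining hypothesis, $k(A_t^+)V\subseteq Bk(A_t^+)$, becomes $k(A^+)B\subseteq Bk(A^+)$, so it suffices to prove that inclusion.

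To prove it, take $a\in A^+$ and $b\in B$. Using the antipode identities in the Hopf algebra $B$, rewrite
$$k(a)b=b^{(2)}\,S^{-1}(b^{(1)})\,k(a)\,b^{(3)}.$$
Here $S^{-1}(b^{(1)})k(a)b^{(2)}$ is a right adjoint action term, while $S(b^{(1)})k(a)b^{(2)}$ is the left one. By normality this lies in $k(A)$, and applying $\epsilon$ shows it lies in $k(A^+)$.

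This step is the main obstacle, because bijectivity of the antipode is not given for an arbitrary Hopf algebra. If $S$ is not bijective, I would instead invoke right normality directly: $ad_r$-stability gives $k(a)b=b^{(1)}S(b^{(2)})k(a)b^{(3)}$, and $S(b^{(2)})k(a)b^{(3)}$ is the right adjoint action term $\mathrm{ad}_r(b^{(2)})(k(a))\in k(A)$. That expression lies in $k(A^+)$ since $\epsilon$ kills it. Hence $k(a)b=b^{(1)}\,\mathrm{ad}_r(b^{(2)})(k(a))\in Bk(A^+)$, which requires no inverse antipode.

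With both hypotheses of Theorem \ref{full-commute-with-At} verified, that theorem gives that $k$ is Inv-normal. The ``moreover'' clause of the theorem is not needed here.
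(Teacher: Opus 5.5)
Your proof is correct and follows the paper's argument: take $V=B$ in Theorem \ref{full-commute-with-At}, note $A_t^+=A^+$, and use normality to get $k(A^+)B\subseteq Bk(A^+)$; the paper simply quotes this inclusion from normality, while you derive it via $k(a)b=b^{(1)}\,\mathrm{ad}_r(b^{(2)})(k(a))$, which needs no bijectivity of $S$ and so covers every case. You should drop the $S^{-1}$ detour, though, because in $b^{(2)}S^{-1}(b^{(1)})k(a)b^{(3)}$ the outside factor is the middle leg and the flanking legs $b^{(1)},b^{(3)}$ are not adjacent, so that bracket is not an adjoint-action term.
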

    \begin{proof}
        Let $V = B$ and observe $B = k(1_A)V = k(A)V$. By normality, $k(A_t^+)V=k(A^+)B=Bk(A^+) = Bk(A_t^+)$.
    \end{proof}

    Let $V' = \{v\in B | k(a)v\in Bk(A_t^+), \forall a\in A_t^+\}$. Observe that $V'$ is a unital associative subalgebra of $B$. Moreover, $V'$ contains $k(A_t^+)$, $\Inv_A(k^*(B))$, and any subspace $V$ which satisfies the conditions of Theorem \ref{full-commute-with-At}. In particular, if those conditions are satisfied for some $V$, they are satisfied for $V=V'$. Looking at the $V$ chosen in Theorem \ref{ribbon-exact}, it's clear that $V$ is not unique, as this choice is not an algebra in general. The following corollary is simply a rephrasing of Theorem \ref{full-commute-with-At} in terms of $V'$.
    
    \begin{corollary}
        Let $k:A\to B$ be a weak Hopf algebra homomorphism. If 
            $$B = k(A)\{v\in B | k(a)v\in Bk(A_t^+), \forall a\in A_t^+\},$$ 
        then $k$ is Inv-normal.
    \end{corollary}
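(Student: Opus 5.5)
This is a direct application of Theorem \ref{full-commute-with-At} with the specific choice
$$V' = \{v\in B \mid k(a)v\in Bk(A_t^+)\ \text{for all } a\in A_t^+\}.$$

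First I will note that $V'$ is a linear subspace of $B$. The defining condition is linear in $v$, and $Bk(A_t^+)$ is a subspace, since it is the span of the products $bk(a)$. So $V'$ is an admissible choice of subspace $V$ in Theorem \ref{full-commute-with-At}.

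Next I will check the two hypotheses of that theorem for $V=V'$.
\begin{enumerate}
\item The equality $B=k(A)V'$ is exactly the assumption of the corollary.
\item The inclusion $k(A_t^+)V'\subseteq Bk(A_t^+)$ holds because an element of $k(A_t^+)V'$ is a finite sum of products $k(a)v$ with $a\in A_t^+$ and $v\in V'$. Each such product lies in $Bk(A_t^+)$ by the definition of $V'$, and that subspace is closed under sums.
\end{enumerate}

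Theorem \ref{full-commute-with-At} then yields that $k$ is Inv-normal. No step is a genuine obstacle. The only point needing care is reading $k(A_t^+)V'$ as the linear span of products and $Bk(A_t^+)$ as a subspace, which makes the inclusion immediate.
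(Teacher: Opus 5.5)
Your proposal is correct and is exactly the paper's argument: the paper presents this corollary as a direct rephrasing of Theorem~\ref{full-commute-with-At} with $V = V'$. It also remarks that every admissible $V$ lies in $V'$, and your check of the two hypotheses for $V'$ covers everything needed. Reading $Bk(A_t^+)$ as a linear span is consistent with how the paper uses it in the theorem's proof.
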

    
    In the Hopf algebra case, if there is a $V$ which satisfies the conditions of Theorem \ref{full-commute-with-At}, then $V' = B$:
    $$k(A^+)B = k(A^+)k(A)V = k(A^+)V \subseteq Bk(A^+).$$
    This does not work in the weak Hopf algebra case because $A_t^+$ is not a right ideal of $A$ in general.

    Any finite-dimensional algebra may be written as a subalgebra of $M_m(\bk)$. In the case of the target subalgebra $A_t$, we may embed minimally by $A_t\cong \bigoplus M_{n_i}(\bk)\subseteq M_{m}(\bk)$, where $m=\sum n_i$. The matrix rank is invariant under the choice of inclusion $A_t\incl M_m(\bk)$. Thus, an element $x\in A_t$ has a well-defined rank if we identify $x$ with its image under this inclusion. The following theorem gives us a better handle on the image of the cokernel map in terms of an element of rank 1. This is particularly useful in computing the cokernel in examples, as we will see in Section \ref{sec:examples}. 
    \begin{theorem}\label{V-surj}
        Suppose $k:A\to B$ is a weak Hopf algebra homomorphism and $V\subseteq B$ satisfies the conditions of Theorem \ref{full-commute-with-At}. Suppose $A$ is connected. Let $\pi:B\to H$ be the cokernel map of $k$. Then, there is a rank one element $z\in B_t$ such that $H = \pi(zV)$ and $\epsilon(z) = 1$. 
    \end{theorem}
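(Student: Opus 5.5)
We first choose $z$ from the proof of Proposition \ref{simple-surj}. Pick a matrix block $M_n(\bk)$ of $B_t$ and write the separability idempotent $S(1_B^{(1)})\otimes 1_B^{(2)}$, restricted to this block, as $\sum\lambda_{ij}e_{ki}\otimes e_{jk}$. Choose $j$ with $\lambda_{jj}\neq 0$. We then set $z$ to be a suitable rescaling of $e_{jj}$, which is a rank one element of $B_t$. To arrange $\epsilon(z)=1$, we compute $\epsilon(e_{jj})$. Since $\epsilon$ restricted to $B_t$ is the nondegenerate trace form attached to the separability idempotent, we expect $\epsilon(e_{jj})$ to be $\lambda_{jj}^{-1}$ or a closely related nonzero scalar. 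Establishing this is a small side computation using $\epsilon(S(1^{(1)})1^{(2)}x)=\epsilon(x)$. Once $\epsilon(e_{jj})\neq 0$ is known, we take $z=e_{jj}/\epsilon(e_{jj})$, which has rank one and satisfies $\epsilon(z)=1$.

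Next, we show $H=\pi(zB)$. By Proposition \ref{simple-surj}, $H=\im\pi$, and by Lemma \ref{projected-image-surj}, $\im\pi=\pi(zB)$ because $z$ is a nonzero element of $B_t$. Since $B=k(A)V$, it follows that $H=\pi(zk(A)V)$. It remains to absorb $k(A)$, that is, to show $\pi(zk(a)v)\in\pi(zV)$.

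For this step, we use relative multiplicativity together with the relation $\pi\circ k=\epsilon_{A,H}$. We write
$$\pi(zk(a)v)=\pi(zk(a)S(1^{(1)}))\,\pi(1^{(2)}v).$$
Then we use Proposition \ref{hopf-map-properties} to move target and source elements across. The term $\pi(zk(a)S(1^{(1)}))$ should reduce to a scalar times $\pi(\text{element of }B_t)$, hence to a scalar multiple of $1_H$. This reduction would use $\pi(k(a_1)bk(a_2))=\pi(\epsilon_s(k(a_1))b\epsilon_t(k(a_2)))$ and $z\in B_t$. After the reduction, the expression becomes $\sum c\,\pi(x v)$ with $x\in B_t$.

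We still have to turn $\pi(xv)$ into something of the form $\pi(zv')$. For this, we again use the matrix-unit trick from the proof of Proposition \ref{simple-surj}. First, $\pi(xv)=\pi(x)\pi(v)$ up to separability factors. Then we rewrite $\pi(v)=\lambda_{jj}^{-1}\pi(zS(z)v)$-type expressions so that $z$ appears on the left. An alternative route is to use the property $\epsilon_t(v)=\epsilon(v)1_B$ from Theorem \ref{full-commute-with-At}, when $B$ is connected, to commute $B_t$ past $v$.

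\textbf{Main obstacle.} The hard part is making the element of $B_t$ to the right of $z$ land inside $\pi(zV)$. Arbitrary products $z\,x\,v$ are not of the form $z v'$ with $v'\in V$, and $V$ is not assumed to be stable under left multiplication by $B_t$. My first attempt would use the condition $k(A_t^+)V\subseteq Bk(A_t^+)$ together with $k|_{A_t}:A_t\cong B_t$ (Lemma \ref{targets-iso}). Writing $x=\epsilon(x)1+(x-\epsilon_t\text{-part})$, the defect would lie in $k(A_t^+)$, and $\pi$ kills $Bk(A_t^+)$ on the right, since $\pi(bk(a))=\pi(b)\epsilon(a)$-type relations hold for $a\in A_t^+$. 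This would leave only the scalar part, giving $\pi(zxv)=c\,\pi(zv)$.
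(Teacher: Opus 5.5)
\paragraph{The absorption step fails.} The gap is at the step you yourself flag as the main obstacle. You claim that for $t\in B_t$ the defect $t-\epsilon(t)1_B$ lies in $k(A_t^+)$. But $k(A_t^+)\cap B_t=0$: if $a\in A_t^+$ and $k(a)\in B_t$, then $k(a)=\epsilon_t(k(a))=k(\epsilon_t(a))=0$.

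The hypothesis $k(A_t^+)V\subseteq Bk(A_t^+)$, together with $\pi(Bk(A_t^+))=0$, only strips off the non-target part of $k(a)$, giving $\pi(b\,k(a)\,v)=\pi(b\,\epsilon_t(k(a))\,v)$. This is exactly how the paper uses it. It gives no control over a $B_t$-factor sitting between $z$ and $v$. In fact the identity you are aiming for, $\pi(ztv)=\epsilon(t)\pi(zv)$, is false. Take the pair groupoid $G$ on two objects $X\neq Y$, with $A=B=\bk[G]$, $k=\id$ and $V=\bk 1$, so $H=\bk$ and $\pi=\epsilon$. Then $\pi(\id_X\id_Y)=0$, but $\epsilon(\id_Y)\pi(\id_X)=1$.

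Your preliminary reduction via $B_t$-relative multiplicativity is valid but counterproductive. It discards the rank-one left factor and leaves $\pi(tv)$ for an essentially arbitrary $t\in B_t$, and the ``reinsert $z$'' step is not an argument.

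The missing idea is the paper's, in four moves:
\begin{enumerate}
\item Keep $e_{11}$ on the left throughout. After replacing $k(a_i)$ by $\epsilon_t(k(a_i))$, the left factor $e_{11}\epsilon_t(k(a_i))$ lies in the span of the $e_{1j}$.
\item Rewrite $\pi(e_{11}e_{1j}v)=\pi(S^{-2}(e_{1j})e_{11}v)$.
\item Use Skolem--Noether to write $S^{-2}(y)=xyx^{-1}$ on the block.
\item Note $e_{1j}x^{-1}e_{11}=\rho_je_{11}$, so every term becomes a scalar multiple of $\pi(xe_{11}v)$.
\end{enumerate}
Hence $z=xe_{11}$ is twisted by the element implementing $S^2$. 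The paper deduces $\epsilon(z)\neq 0$ only afterwards, from $H=\pi(zV)$ (otherwise $\epsilon_H$ would vanish on $H$), and then normalizes.

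\paragraph{The choice of $z$ also fails.} Your choice of $z$ is wrong in general, not just unjustified. Write $\epsilon=\mathrm{tr}(Q\,\cdot\,)$ on the block. The identity $y=\epsilon_t(y)=S(1^{(1)})\epsilon(1^{(2)}y)$ for $y\in B_t$ forces $\lambda_{ij}=(Q^{-1})_{ji}$. So $\lambda_{jj}=(Q^{-1})_{jj}$ and $\epsilon(e_{jj})=Q_{jj}$ are unrelated, and $\epsilon(e_{jj})=\lambda_{jj}^{-1}$ is false.

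For example, let $A=B=M_2(\bk)\otimes M_2(\bk)^{\op}$ be the minimal weak Hopf algebra attached to $\phi=\mathrm{tr}(Q\,\cdot\,)$ with $Q^{-1}=\begin{pmatrix}1&1\\1&0\end{pmatrix}$. This $\phi$ has index one, since $\mathrm{tr}(Q^{-1})=1$. Concretely, $\Delta(p\otimes q)=\sum_i(p\otimes u_i)\otimes(w_i\otimes q)$ for the $\phi$-dual basis $\sum_i u_i\otimes w_i$, and $\epsilon(p\otimes q)=\phi(pq)$. Take $k=\id$ and $V=\bk1$, so $H=\bk$.

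Your recipe forces $j=1$, yet $\epsilon(e_{11})=Q_{11}=0$. So no rescaling exists, and indeed $\pi(e_{11}V)=0\neq H$. Here $S^2$ is conjugation by $Q$, and the paper's $z=Q^{-1}e_{11}$ has $\epsilon(z)=1$.
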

    \begin{proof}
        Let $M_n(\bk)$ be an ideal of $B_t$. We apply the same notation for elementary matrices as in the proof of Proposition \ref{simple-surj}. By Lemma \ref{projected-image-surj} and \ref{simple-surj}, we know that $H = \pi(B) = \pi(e_{11}B)$. Moreover, for any $b\in B$, $e_{11}\epsilon_t(b) = \sum \lambda_j e_{1j}$, for some $\lambda_j\in\bk$.
        
        One may verify that $S^2$ restricts to a unital algebra automorphism of $M_n(\bk)$. In particular, by the Skolem–Noether theorem, $S^2$ is an inner automorphism of $M_n(\bk)$. Let $x\in M_n(\bk)$ satisfy $S^2(y) = x^{-1}yx$ for any $y\in M_n(\bk)$. Observe that, for any $b\in B$, $a\in A$, and $v\in V$, we also have the following equality
        $$\pi(b\epsilon_t(k(a))v) = \pi(b(k(a) - \epsilon_t(k(a)))v) + \pi(b\epsilon_t(k(a))v) = \pi(bk(a)v).$$
        
        For $b\in B$, we may express $b = \sum_i k(a_i)v_i$, where $a_i\in A$ and $v_i\in V$. Using this decomposition, we have
        \begin{align*}
            \pi(e_{11}b) &= \sum_i \pi(e_{11}k(a_i)v_i) = \sum_i \pi(e_{11}\epsilon_t(k(a_i))v_i).
            \intertext{Suppose $e_{11}\epsilon_t(k(a_i)) = \sum_j \lambda_{ij} e_{1j} $. Then,}
            &= \sum_{i,j} \lambda_{ij}\pi(e_{11}e_{1j}v_i) = \sum_{i,j} \lambda_{ij}\pi(S^{-2}(e_{1j})e_{11}v_i)= \sum_{i,j} \lambda_{ij}\pi(xe_{1j}x^{-1}e_{11}v_i).
            \intertext{Note that $e_{1j}x^{-1}e_{11} = \rho_j e_{11}$ for some $\rho_j\in\bk$, so}
            &= \sum_{i,j} \lambda_{ij}\rho_j\pi(zv_i) = \pi\left(z\sum_{i,j} \lambda_{ij}\rho_jv_i\right),
        \end{align*}
        where $z = xe_{11}$. Thus, $H=\pi(zV)$. Because $x$ is invertible and $e_{11}$ has rank 1, $z$ has rank 1. Finally, observe that, for any $v\in V$, 
        $$\epsilon(zv) = \epsilon(z)\epsilon(v),$$
        so $\epsilon(z)$ must be nonzero, as otherwise $\epsilon(H) = \epsilon(\pi(zV)) = \{0\}$). Consequently, we may normalize so that $\epsilon(z) = 1$.
    \end{proof}
    If $B$ is a weak Hopf algebra and not a Hopf algebra, $z$ is necessarily not unique. We could have chosen $e_{ii}$ rather than $e_{11}$ or a different ideal $M_n(\bk)$ and arrived at a different element $z$. In the face algebra (weak Hopf algebra where $A_t$ is commutative) case, $z$ may be chosen to be any rank one idempotent.
    \begin{proposition}
        Suppose $k:A\to B$ is a face algebra homomorphism and $V\subseteq B$ satisfies the conditions of Theorem \ref{full-commute-with-At}. Suppose $A$ is connected. Let $\pi:B\to H$ be the cokernel map of $k$. Then, for any rank one idempotent $z\in B_t$, we have $H = \pi(zV)$. 
    \end{proposition}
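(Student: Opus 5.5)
Since $B_t$ is commutative in the face algebra case and finite-dimensional separable, I will take $B_t\cong \bk^{m}$ after a possible extension of scalars; more carefully, each simple ideal of $B_t$ is a field. A rank one idempotent is then a primitive idempotent $z=e_{11}$ sitting in a one-dimensional block $M_1(\bk)$. The plan is to rerun the proof of Theorem \ref{V-surj} with this particular block and this particular $z$, and to check that the twisting element $x$ coming from Skolem–Noether can be taken to be $1$. Then $z=xe_{11}=e_{11}$ is exactly the given idempotent.

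First, by Lemma \ref{projected-image-surj} and Proposition \ref{simple-surj}, $H=\pi(B)=\pi(zB)$, since $z$ is a nonzero element of $B_t$. Next, write an arbitrary $b\in B$ as $b=\sum_i k(a_i)v_i$ with $a_i\in A$ and $v_i\in V$. As in the proof of Theorem \ref{V-surj}, the identity $k(a)v-\epsilon_t(k(a))v\in Bk(A_t^+)$ shows that $\pi(z k(a_i)v_i)=\pi(z\epsilon_t(k(a_i))v_i)$. More precisely, one expands $\pi(zk(a)v)=\pi(z(k(a)-\epsilon_t(k(a)))v)+\pi(z\epsilon_t(k(a))v)$ and kills the first term. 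This uses $k(A_t^+)V\subseteq Bk(A_t^+)$ together with $\pi(b'k(a'))=\pi(b'\epsilon_t(k(a')))=\epsilon(\cdots)$-type collapsing for $a'\in A_t^+$, coming from Relation \eqref{Hwrtf} and the cokernel property. By commutativity of $B_t$, we get $z\epsilon_t(k(a_i))=\lambda_i z$ for scalars $\lambda_i$, because $zB_t=\bk z$ for a rank one idempotent. Hence $\pi(zb)=\pi\bigl(z\sum_i\lambda_i v_i\bigr)\in\pi(zV)$.

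The step I expect to be the main obstacle is justifying $zB_t=\bk z$. The concern is whether "rank one" in a commutative separable algebra over a non-closed field forces the block to be $\bk$ rather than a separable field extension. By the paper's definition of rank via a minimal matrix embedding, a rank one idempotent generates a block $M_1(\bk)=\bk$. I would first check this against the rank convention; it is what makes the argument go through without any $S^2$ correction. In the noncommutative case the product $e_{11}e_{1j}$ had to be moved past $e_{11}$ using $S^{-2}$, which introduced $x$. Here no such reordering is needed: $z\epsilon_t(k(a_i))$ is already a multiple of $z$, and multiplication is on the correct side. Finally, I would check that $\epsilon(z)\neq0$ is not needed for the surjectivity claim, and that the conclusion holds for every block and every rank one idempotent, not just a chosen one.
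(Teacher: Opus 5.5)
Your argument is correct and is essentially the paper's proof. Like the paper, you reduce to $H=\pi(zB)$ by connectivity, write $b=\sum_i k(a_i)v_i$, replace $k(a_i)$ by $\epsilon_t(k(a_i))$ under $\pi$, and use that $z\epsilon_t(k(a_i))$ is a scalar multiple of $z$. The paper names that scalar as $\epsilon(zk(a_i))$ via $\epsilon_t(zb)=\epsilon(zb)z$, whereas you only need $zB_t=\bk z$, and the Skolem--Noether element $x$ never actually enters your argument.
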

    \begin{proof}
        Let $z\in B_t$ be a rank one idempotent. Then, $\epsilon_t(zb) = \epsilon(zb)z$ for any $b\in B$. Therefore, if $b = \sum k(a_i)v_i\in k(A)V$, we have
        $$\pi(zb) = \sum_i \pi(zk(a_i)v_i) = \sum \pi(z \epsilon_t(k(a_i))v_i) = \sum \epsilon(zk(a_i))\pi(zv_i).$$
        The result follows because $\pi(zB) = H$ by connectivity.
    \end{proof}
    
    \begin{proposition}\label{Inv-normal-whole-cat}
        Suppose $k:A\to B$ is a weak Hopf algebra homomorphism. If $B$ is semisimple and $B\cdot\Inv_A(k^*(B)) = k(A)\cdot\Inv_A(k^*(B))$, then $k$ is Inv-normal.
    \end{proposition}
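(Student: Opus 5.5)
Since $B$ is semisimple, I reduce the condition for an arbitrary $M\in\Rep(B)$ to the single module $B$ (the regular module). The inclusion $k(A)\cdot\Inv_A(k^*(M))\subseteq B\cdot\Inv_A(k^*(M))$ always holds, so only the reverse inclusion needs proof. Every $M\in\Rep(B)$ is a direct sum of simple $B$-modules, and each simple $B$-module is a direct summand of the regular module $B$. So $M$ is a direct summand of $B^{\oplus n}$ for some $n$, with $B$-linear maps $\iota:M\to B^{\oplus n}$ and $p:B^{\oplus n}\to M$ satisfying $p\iota=\id_M$. I then check that the property is additive and descends to summands.

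\textbf{Step 1: direct sums.} The functor $\Inv_A\circ k^*$ commutes with finite direct sums, since $\Inv_A(N)=\{m\mid A_t^+ m=0\}$ is defined componentwise. Hence $B\cdot\Inv_A(k^*(B^{\oplus n}))=(B\cdot\Inv_A(k^*(B)))^{\oplus n}$, and the same holds with $k(A)$ in place of $B$. The hypothesis therefore gives the equality for $B^{\oplus n}$.

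\textbf{Step 2: passing to the summand $M$.} Any $B$-linear map $f:N\to N'$ is $A$-linear after restriction, so $f(\Inv_A(k^*N))\subseteq\Inv_A(k^*N')$. Also, $f(b\cdot\Lambda)=b\cdot f(\Lambda)$ for $b\in B$, and similarly for $k(a)$. Take $m=\sum b_i\Lambda_i$ with $\Lambda_i\in\Inv_A(k^*M)$. Then
\[
\iota(m)=\sum b_i\,\iota(\Lambda_i)\in B\cdot\Inv_A(k^*B^{\oplus n})=k(A)\cdot\Inv_A(k^*B^{\oplus n}),
\]
so $\iota(m)=\sum k(a_j)\Lambda'_j$ with $\Lambda'_j\in\Inv_A(k^*B^{\oplus n})$. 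Applying $p$ gives $m=p\iota(m)=\sum k(a_j)\,p(\Lambda'_j)\in k(A)\cdot\Inv_A(k^*M)$. This proves $B\cdot\Inv_A(k^*(M))\subseteq k(A)\cdot\Inv_A(k^*(M))$, and hence equality.

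\textbf{Main obstacle.} The delicate point is the claim that every finite-dimensional $M$ embeds as a summand of a finite power of $B$. If $B$ is infinite-dimensional, $B$ is not in $\Rep(B)$. However, the argument above never used finite-dimensionality of $B$: it only used modules and $B$-linear maps, and these make sense in $B$-Mod. So I will read the hypothesis for $k^*(B)$ as $B$-Mod data. I will invoke semisimplicity of $B$ as a ring, so that $B$-Mod is semisimple, every simple module is a quotient and hence a summand of $B$, and finite-dimensional $M$ has finite length. This gives the split embedding $M\hookrightarrow B^{\oplus n}$.
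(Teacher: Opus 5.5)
Your proof is correct and follows essentially the paper's route. Semisimplicity makes every finite-dimensional module a direct summand of copies of the regular module $B$, and the equality $B\cdot\Inv_A(k^*(-)) = k(A)\cdot\Inv_A(k^*(-))$ is preserved under direct sums and under passage to direct summands. The only difference is in the ordering: the paper treats each simple $M$ as a summand of $B$ via the intersection $M\cap k(A)\cdot\Inv_A(k^*(B))$ and then sums over simples, whereas you embed $M$ in $B^{\oplus n}$ at once and pull back with the retraction $p$, which makes the summand step explicit.
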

    \begin{proof}
        Let $\Irr(\Rep(B))\subset\Rep(B)$ be a complete set of distinct simple $B$-modules. By semisimplicity, $B$ contains every $M\in\Irr(\Rep(B))$ as a direct summand. Observe that $\Inv_A k^*(M)\subseteq M$, so for any $b\in B$ and $\Lambda\in \Inv_A (k^*(M))$, $b\cdot \Lambda\in M\cap B\cdot \Inv_A (k^*(B)) = k(A)\cdot\Inv_A (k^*(M))$. Thus, $B\cdot\Inv_A k^*(M) = k(A)\cdot\Inv_A (k^*(M))$ for all $M\in\Irr(\Rep(B))$. The result follows by linearity, as we preserve this equality under direct sums.
    \end{proof}
    
    \begin{theorem}\label{normal}
        Suppose $k:A\to B$ is a weak Hopf algebra homomorphism and $A$ is connected. Then, $k$ is Inv-normal if and only if $k^*$ is normal.
    \end{theorem}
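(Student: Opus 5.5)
The plan is to reduce both conditions to a statement about one subspace of $M$. For $M\in\Rep(B)$ we have $k^*(M)=M$ as vector spaces, since $k$ is a unital algebra homomorphism and $k^*$ is the ordinary restriction of scalars (Proposition \ref{mult-res}). A $B$-submodule $M_0\subseteq M$ is then sent to the same subspace, now viewed as an $A$-submodule. So normality of $k^*$ says: for every $M$ there is a $B$-submodule $M_0\subseteq M$ whose underlying subspace equals the maximal trivial $A$-submodule of $k^*(M)$.

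The first step identifies that maximal trivial subobject using connectivity of $A$. By Proposition \ref{connected}, $A_t$ is a simple $A$-module and is the unit of $\Rep(A)$. The trivial objects are therefore the modules $A_t^{\oplus n}$, and these are exactly the quotients of some $A_t^{\oplus m}$: a quotient of a finite direct sum of copies of a simple module is again such a direct sum. Lemma \ref{Hinv-trivial} then says the maximal trivial submodule of $k^*(M)$ is
$$N_M:=k(A)\cdot\Inv_A(k^*(M))=k(A_t)\cdot\Inv_A(k^*(M)).$$

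For the direction Inv-normal $\Rightarrow$ normal, set $M_0=B\cdot\Inv_A(k^*(M))$. This is a $B$-submodule of $M$, being the $B$-span of a subset. Inv-normality gives $M_0=N_M$ as subspaces, so $k^*(M_0)$ is precisely the maximal trivial subobject of $k^*(M)$, which is normality.

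For the converse, suppose $k^*$ is normal, and let $M_0\subseteq M$ be a $B$-submodule with $k^*(M_0)=N_M$. Every $\Lambda\in\Inv_A(k^*(M))$ satisfies $\Lambda=1\cdot\Lambda\in N_M=M_0$, and $M_0$ is $B$-stable, so
$$B\cdot\Inv_A(k^*(M))\subseteq M_0=k(A)\cdot\Inv_A(k^*(M))\subseteq B\cdot\Inv_A(k^*(M)).$$
The two ends coincide, and this holds for every $M$, which is Inv-normality.

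I expect the only real care to be needed in the bookkeeping rather than in any computation. First, one must check that the phrase ``$F(c_0)$ is the maximal trivial subobject'' may be read literally as an equality of subspaces; this is legitimate because $k^*$ is the identity on underlying vector spaces and on inclusions. Second, one must confirm that connectivity really makes ``quotient of $A_t^{\oplus n}$'' in Lemma \ref{Hinv-trivial} equivalent to ``trivial'' in the sense of the definition of normality. This is the one point where simplicity of $A_t$ is used, and it requires no semisimplicity of $A$ or $B$.
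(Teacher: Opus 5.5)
Your proof is correct and follows essentially the same route as the paper. For Inv-normal $\Rightarrow$ normal, you use the $B$-stable subspace $B\cdot\Inv_A(k^*(M))=k(A)\cdot\Inv_A(k^*(M))$, which Lemma \ref{Hinv-trivial} together with connectivity identifies as the maximal trivial subobject; for the converse, you sandwich $B\cdot\Inv_A(k^*(M))$ between the given $B$-submodule and $k(A)\cdot\Inv_A(k^*(M))$, as the paper does via $N=B\cdot N$. The paper's first direction cites Proposition \ref{Inv-normal-whole-cat}, but your argument shows only the definition of Inv-normality is needed there, so no semisimplicity enters.
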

    \begin{proof}
        By Proposition \ref{Inv-normal-whole-cat}, for any $M\in\Rep(B)$, we have $$B(k(A)\cdot\Inv_A(k^*(M))) = B\cdot\Inv_A(k^*(M))=k(A)\cdot\Inv_A(k^*(M)),$$ so $k(A)\cdot\Inv_A(k^*(M))$ is closed under the action of $B$. If we consider $N=k(A)\cdot\Inv_A(k^*(M))$ as a $B$-module, then $k^*(N)=k(A)\cdot\Inv_A(k^*(M))$, which, by Lemma \ref{Hinv-trivial}, is the maximal trivial $A$-submodule of $M$.

        Conversely, if $k^*$ is normal, there exists a $B$-submodule $N\subseteq M$, such that $k^*(N) = k(A)\cdot \Inv_A(k^*(M))$, the trivial $A$-submodule of $k^*(M)$. Because $k^*$ fixes $N$ as a set, this says that $N=k(A)\cdot \Inv_A(k^*(M))$ and this set is closed under the action of $B$, so 
        $$k(A)\cdot \Inv_A(k^*(M)) = N = B\cdot N = B\cdot\Inv_A(k^*(M)).$$ 
    \end{proof}
    Note that we did not use connectivity anywhere here explicitly. There may be a more general definition of normality of a tensor functor between multitensor categories involving quotients of sums of the trivial object. However, it is not the purpose of this article to generalize exact sequences of tensor categories. Moreover, our next section heavily depends on the assumption that $A$ is connected.

    \subsection{Kernel agrees with image}
    \begin{theorem}\label{coker-full}
        Suppose $A$ is a connected weak Hopf algebra. Let $k:A\to B$ be a weak Hopf algebra homomorphism and $\pi:B\to H$ be the cokernel map of $k$. Then, $\pi^*:\Rep(H)\incl \ker k^*$ is a full embedding.
    \end{theorem}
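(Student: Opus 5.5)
We need three things: $\pi^*$ lands in $\Ker k^*$, $\pi^*$ is faithful, and $\pi^*$ is full.

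\emph{Image lies in the kernel.} Fix $M\in\Rep(H)$. By Lemma \ref{hopf-quotient-is-trivial}, $\pi^*(M)\cong A_t\otimes M$ as vector spaces (here $B_t\otimes M$). Theorem \ref{coker}(a) gives $\pi\circ k=\epsilon_{A,H}$. For $a\in A$, $x\in B_t$, $m\in M$, I will compute
$$k(a)\cdot(x\trel m)=S(1_B^{(1)})\trel\pi(1_B^{(2)}k(a)x)m .$$
Using birelative multiplicativity and Proposition \ref{hopf-map-properties}(c) to move the factor $k(a)$ out, together with $\pi(k(a'))=\epsilon_A(a')1_H$, this should collapse to $\epsilon_t^B(k(a)x)\trel m$. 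So $k^*\pi^*(M)\cong B_t\otimes M$ with $A$ acting only on the first factor through $A_t\cong B_t$ (Lemma \ref{targets-iso}). That module is $A_t^{\oplus\dim M}$, as in Corollary \ref{epsAB-case} applied to $\pi\circ k=\epsilon_{A,H}$ via Proposition \ref{rel-comp-mult-functor}; the latter applies because $k|_{A_t}$ is an isomorphism carrying $e_t$ to $e_t$. Hence $\pi^*(M)\in\Ker k^*$.

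\emph{Faithfulness.} Theorem \ref{tensor-functor} gives $\pi^*(f)=\id_{B_t}\otimes f$, which is faithful.

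\emph{Fullness, the main step.} Take a $B$-linear map $g:B_t\otimes M\to B_t\otimes N$. It is in particular $A$-linear, so it is a morphism between objects of the form $A_t\otimes M$ with $A$ acting only on the first factor. By connectivity, $\End_A(A_t)=\bk$ (Proposition \ref{connected}), so every such $A$-linear map has the form $\id\otimes f$ for a linear $f:M\to N$. Choose bases of $M,N$; each matrix entry of $g$ is then an element of $\Hom_A(A_t,A_t)=\bk$, which gives $f$. It remains to show $f$ is $H$-linear. Apply $B$-linearity of $g$ to $1\trel m$:
$$b\cdot(1\trel m)=S(1^{(1)})\trel\pi(1^{(2)}b)m ,$$
and $g$ of this equals $S(1^{(1)})\trel f(\pi(1^{(2)}b)m)$, which must equal $S(1^{(1)})\trel\pi(1^{(2)}b)f(m)$. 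Apply the linear map $B_t\otimes N\to N$, $x\otimes n\mapsto\epsilon(x)n$. Since $\epsilon(S(1^{(1)}))1^{(2)}=1$, and using Proposition \ref{hopf-map-properties}(a) to absorb the target element into $\pi$, this yields
$$f(\pi(b)m)=\pi(b)f(m)\quad\text{for all } b\in B .$$
Proposition \ref{simple-surj} (connectivity of $A$) gives $\im\pi=H$, so $f$ is $H$-linear and $g=\pi^*(f)$.

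The delicate points are the two computations that rely on relative multiplicativity: showing that $k(A)$ acts through $\epsilon_t$ on $\pi^*(M)$, and extracting $f(\pi(b)m)=\pi(b)f(m)$ cleanly from the twisted action. I would handle both with Proposition \ref{hopf-map-properties}(a),(c).
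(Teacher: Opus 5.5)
Your proposal is correct and follows essentially the same route as the paper: faithfulness from Theorem~\ref{tensor-functor}, membership in $\Ker k^*$ via Proposition~\ref{rel-comp-mult-functor} and Corollary~\ref{epsAB-case}, scalar matrix entries from $\End_A(A_t)=\bk$, and $H$-linearity by applying $\epsilon\otimes\id$ to the two expressions for $g(b\cdot(1\trel m))$. The appeals to Proposition~\ref{hopf-map-properties} are unnecessary: since $B_t=k(A_t)$, the element $1_B^{(2)}k(a)x$ already lies in $k(A)$, and the last step only uses linearity of $\pi$ and $\epsilon(S(1^{(1)}))1^{(2)}=1$. Likewise, instead of Proposition~\ref{simple-surj} it suffices, as in the paper, that $\im\pi$ generates $H$ as an algebra.
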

    \begin{proof}
        We already know that $\pi^*$ is faithful by Theorem \ref{tensor-functor}, so we must check that it is also full and that $\pi^*(M)\in \ker k^*$ for any $M\in\Rep(H)$. 
    
        Let $\rho:\pi^*(M)\to\pi^*(N)$ be a $B$-linear map. Let $\{n_i\}$ be a basis of $N$. Then, $\rho$ is generated by 
        $$\rho:1_B\trel m\mapsto \sum y_i(m)\trel n_i,$$
        where the $y_i:M\to B_t$ are linear maps, only finitely many of which are nonzero for fixed $m$. To verify $A$-linearity, we must show that, for any $a\in A$ and $x\in A_t$,
        $$\epsilon_t(y_i(k(a)\cdot_B m)) = \epsilon_t(k(a)\cdot_B y_i(m)).$$
        Note that, for $a\in A$,
        $$k(a)\cdot_B (1_B\trel m) = \epsilon_t(k(a))\trel m\mapsto \sum_i\epsilon_t(k(a))y_i(m)\trel n_i.$$
        This expression is equal to, by $B$-linearity,
        $$k(a)\cdot_B \sum_i(y_i(m)\trel n_i) = \sum_i\epsilon_t(k(a)y_i(m))\trel n_i.$$
        By comparing tensor factors, we see $\epsilon_t(k(a)y_i(m)) = \epsilon_t(k(a))y_i(m)$ for all $i$. By connectivity, for every $i$ and $m\in M$, there is $c_i(m)\in\bk$ such that $Y_m^i(x) = c_i(m)x$ and $y_i(m) = c_i(m)1_B$. This makes $\rho$ of the form $1_B\trel m\mapsto 1_B\trel n(m)$, where $n(m) = \sum c_i(m)n_i$. 
        
        Next, we show that $n:M\to N$ is $H$-linear. Observe that $\im\pi$ generates $H$, so it is enough to show that $n(\pi(b)\cdot m) = \pi(b)\cdot n(m)$ for all $b\in B$ and $m\in M$. Choose a basis $S$ of $B_s$ containing an element $x'\in S$ with $\epsilon_B(x') = 1_{\bk}$ and no other elements of nonzero counit. Then,
        $$\Delta(1_B) = x'\otimes 1_B + \sum x'_i\otimes x_i,$$
        where each $x_i\in A_t$ and $x'_i\in S\setminus \{x\}$ for all $i$ and $r\in \{s,t\}$. Using this formula for $\Delta(1_B)$, we have
        $$S(1_B^{(1)})\trel \pi(1_B^{(2)}b)m\mapsto S(1_B^{(1)})\trel n(\pi(1_B^{(2)}b)m) = S(x')\trel n(\pi(b)m) + \sum S(x'_i)\trel n(\pi(x_i b)m).$$
        Meanwhile, by $A$-linearity, this expression should be equal to
        $$k\cdot (1_B\trel n(m)) = S(1_B^{(1)})\trel \pi(1_B^{(2)}b)n(m) = S(x')\trel \pi(b)n(m)+ \sum S(x'_i)\trel \pi(x_i b)n(m).$$
        By applying $\epsilon\otimes\id$, we see $n(\pi(b)m) = \pi(b)n(m)$, so $n$ is $H$-linear and $\pi^*(n) = \rho$. Therefore, $\pi^*$ is a full functor. 
        
        Let $M\in\Rep(H)$. Then, by Proposition \ref{rel-comp-mult-functor} and Corollary \ref{epsAB-case}, 
        $$(k^*\circ\pi^*)(M)\cong (\pi\circ k)^*(M) = \epsilon_{A,H}^*(M) \cong  A_t^{\oplus\dim(M)}.$$
        Therefore, $\pi^*:\Rep(H)\incl\ker k^*$ is a full embedding. 
    \end{proof}
    
    \begin{lemma}\label{r-linear-iso}
        Suppose $k:A\to B$ is weak Hopf algebra homomorphism and $A$ is connected. For $M\in B$-Mod, consider the obvious surjective linear map $r:B_t\otimes \Inv_A(k^*(M))\to B_t\cdot \Inv_A(k^*(M))$. Then, $r$ is an $A$-linear isomorphism, where $A$ acts on $B_t\otimes \Inv_A(k^*(M))$ by $a\cdot (x\otimes \Lambda) = \epsilon_t(k(a)x)\otimes\Lambda$.
    \end{lemma}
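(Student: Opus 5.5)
The plan is to first reduce the $A$-action on $B_t\otimes \Inv_A(k^*(M))$ to a direct sum of copies of the trivial module $A_t$. Then check $A$-linearity of $r$ directly. Finally, prove injectivity by showing that a nonzero kernel would contain a nonzero invariant vector.

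\emph{Step 1: identifying the source.} By Lemma \ref{targets-iso}, $k|_{A_t}:A_t\to B_t$ is a unital algebra isomorphism. For $x=k(x_0)$ with $x_0\in A_t$ we have $\epsilon_t(k(a)x)=k(\epsilon_t(ax_0))$. Hence the prescribed action makes $B_t\otimes \Inv_A(k^*(M))\cong A_t\otimes \Inv_A(k^*(M))$, with $a\cdot(x_0\otimes\Lambda)=\epsilon_t(ax_0)\otimes\Lambda$. Choosing a basis $\{\Lambda_j\}$ of $\Inv_A(k^*(M))$ (possibly infinite, since $M\in B$-Mod), this module is $\bigoplus_j A_t$. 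Because $A$ is connected, $A_t$ is simple (Proposition \ref{connected}). Every finite sub-sum is therefore semisimple, and $\End_A(A_t)=\bk$.

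\emph{Step 2: $A$-linearity of $r$.} For $x=k(x_0)\in B_t$ and $\Lambda\in\Inv_A(k^*(M))$, we have $k(a)\cdot(x\cdot\Lambda)=k(ax_0)\cdot\Lambda$. Invariance of $\Lambda$ under $A$ gives $k(ax_0)\cdot \Lambda = k(\epsilon_t(ax_0))\cdot\Lambda=\epsilon_t(k(a)x)\cdot\Lambda$, which is exactly $r(a\cdot(x\otimes\Lambda))$. Surjectivity of $r$ holds by definition.

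\emph{Step 3: injectivity (main step).} Suppose $0\neq n\in\ker r$. Then $n$ lies in a finite sub-sum $A_t\otimes V_0$, where $V_0$ is spanned by finitely many $\Lambda_j$. The cyclic submodule $A\cdot n$ is a nonzero submodule of the semisimple module $A_t^{\oplus m}$, so it contains a simple submodule isomorphic to $A_t$. That submodule gives a nonzero $A$-linear map $f:A_t\to\ker r$.

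Its value $f(1_A)$ is a nonzero element of $\Inv_A(A_t\otimes V_0)$. Working coordinatewise in the basis $\{\Lambda_j\}$, we have $\Inv_A(A_t^{\oplus m})=\Inv_A(A_t)^{\oplus m}$. By connectivity, $\Inv_A(A_t)=\{g(1)\mid g\in\End_A(A_t)\}=\bk 1_A$. Thus $f(1_A)=1_B\otimes\Lambda$ for some nonzero $\Lambda\in V_0$. But then $r(1_B\otimes\Lambda)=\Lambda\neq0$, which contradicts $f(1_A)\in\ker r$. Hence $\ker r=0$, and $r$ is an $A$-linear isomorphism.

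The delicate point is this step: we must handle the possibly infinite-dimensional $\Inv_A(k^*(M))$ by passing to finite sub-sums, and we must compute the invariants of $A_t^{\oplus m}$ via connectivity.
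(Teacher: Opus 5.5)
Your proof is correct, but it runs the semisimplicity argument on the opposite side of $r$ from the paper.

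The paper uses Lemma \ref{Hinv-trivial} and connectivity to decompose the \emph{target} $k(A_t)\cdot\Inv_A(k^*(M))$ as $\bigoplus_\alpha i_\alpha(A_t)$. It then uses $\End_A(A_t)=\bk\,\id_{A_t}$ to show that $L=\{i_\alpha(1_A)\}$ is a basis of $\Inv_A(k^*(M))$ and, at the same time, an $A_t$-basis of the target. Injectivity of $r$ is read off from this adapted basis.

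You instead decompose the \emph{source}, which is $\bigoplus_j A_t$ for an arbitrary basis $\{\Lambda_j\}$. You observe that its invariants are exactly $1_B\otimes\Inv_A(k^*(M))$, on which $r$ is visibly injective. A nonzero submodule of a sum of copies of the simple module $A_t$ contains a copy of $A_t$, hence a nonzero invariant, so $\ker r=0$.

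\textbf{Comparison.}
\begin{itemize}
\item Your route does not need to know beforehand that the image is a sum of copies of $A_t$, and it works with any basis of invariants.
\item The paper's route builds an explicit decomposition of $B_t\cdot\Inv_A(k^*(M))$ indexed by a basis of invariants, which is the form it reuses in Theorem \ref{pi-induces-equiv}. Your bijectivity of $r$ yields the same $A_t$-freeness statement, and for every basis.
\item Both arguments use connectivity in the same two ways: simplicity of $A_t$, and $\End_A(A_t)=\bk$ (equivalently $\Inv_A(A_t)=\bk 1_A$).
\end{itemize}
Your reduction to finite sub-sums is harmless but not needed, since arbitrary direct sums of simple modules are semisimple.
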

    \begin{proof}
        Because we assume that $A$ is connected, trivial $A$-modules are direct sums of copies of $A_t$, so, by Proposition \ref{Hinv-trivial}, $k(A_t)\cdot \Inv_A (k^*(M))$ is isomorphic a direct sum of copies of $A_t$ as an $A$-module. Let $i_\alpha:A_t\to k(A_t)\cdot \Inv_A (k^*(M))$ and $p_\alpha:k(A_t)\cdot \Inv_A (k^*(M))\to A_t$ be compatible inclusions and projections which witness this biproduct. Let $L = \{i_{\alpha}(1_A)\}\subseteq \Inv_A(k^*(M))$. Suppose $\Lambda\in\Inv_A(k^*(M))$. Then, $\Lambda = F(1)$ for some $A$-linear $F:A_t\to k^*(M)$. By connectivity, $f_\alpha = p_\alpha\circ F:A_t\to A_t$ is a scalar multiple of $\id_{A_t}$. In particular, $p_\alpha(\Lambda) = c_{\alpha}1_A$ where $c_\alpha\in\bk$. Thus, $\Lambda = \sum i_\alpha\circ p_{\alpha}(\Lambda) = \sum c_\alpha i_\alpha(1_A)$, so $\Lambda\in\spa (L)$. Of course, the $L$ is linearly independent because $k(A_t)\cdot \Inv_A (k^*(M)) \cong\bigoplus i_\alpha(A_t)$, so $L$ is an $\bk$-basis of $\Inv_A (k^*(M))$. Thus, an arbitrary element of $A_t\otimes \Inv_A k^*(M)$ may be expressed uniquely as $\sum x_i \otimes \Lambda_i$ for $x_i\in A_t$ and $\Lambda_i\in L$. Of course, $L$ is also an $A_t$-basis for $k(A_t)\cdot \Inv_A (k^*(M))$. Therefore, if $\sum x_i \cdot \Lambda_i = 0$ for $\Lambda_i\in L$ and $x_i\in A_t$, then $x_i = 0$ for all $i$. In particular, $\sum x_i\otimes\Lambda_i = 0$, so $r$ is an injection. Finally, observe that 
        $$r(a\cdot x\otimes_B \Lambda) = r(\epsilon_t(k(a)x)\otimes_B\Lambda) = \epsilon_t(k(a)x)\cdot_B\Lambda = a\cdot_A (x\cdot_B\Lambda),$$
        so $r$ is $A$-linear.
    \end{proof}
    
    Let $B'\subseteq B$ be a subalgebra. Observe that if $M, N\in B$-Mod and $f:M\to N$ is a $B$-linear map, then for any $m\in\Inv_A(k^*(M))$, 
    $$k(a)\cdot f(\Lambda) = f(k(a)\cdot \Lambda) = f(\epsilon_t(k(a))\cdot \Lambda) = \epsilon_t(k(a))\cdot f(\Lambda),$$
    so $f(\Lambda)\in \Inv_A(k^*(N))$. Moreover, if $b'\cdot \Lambda \in B'\cdot \Inv_A(k^*(M))$, $f(b'\cdot \Lambda) = b'\cdot f(\Lambda)$, so $f(b'\cdot \Lambda)\in B'\cdot \Inv_A(k^*(N))$. Thus, $M\mapsto B'\cdot \Inv_A(k^*(M))$ defines a linear functor $\Rep(B)\to \Rep(B')$, where morphisms map under restriction. 
    
    Suppose $k:A\to B$ is Inv-normal weak Hopf algebra homomorphism and $A$ is connected. Then, for any $M\in\Rep(B)$, we can define the linear map $\Pi_M:B\cdot \Inv_A(k^*(M))\to \Inv_A(k^*(B))$ by 
    $$\hspace{-5pt}\begin{tikzcd}
    \Pi_M = [B\cdot \Inv_A(k^*(M))=B_t\cdot \Inv_A(k^*(M))\arrow[r, "r^{-1}"]&B_t\otimes \Inv_A(k^*(M))\arrow[r,"\epsilon\otimes \id"]&\Inv_A(k^*(M))].
    \end{tikzcd}$$
    It is not hard to show that $\Pi_M$ is a natural transformation $B\cdot \Inv_A (k^*(-))\To \Inv_A (k^*(-))$, where these are interpreted as functors $\Rep(B)\to\Vec$.
    \begin{lemma}\label{inv-is-birelative}
        Suppose $k:A\to B$ is an Inv-normal weak Hopf algebra homomorphism and $A$ is connected. Then, for any $M\in \Rep(B)$, $H$ acts on $\Inv_A(k^*(M))$ by $\pi(b)\cdot\Lambda = \Pi_M(b\cdot \Lambda)$, and
        \begin{equation}\label{Bt-sep-PiM}
            b\cdot \Lambda = 1_B^{(1)}\cdot \Pi_M(1_B^{(2)}b\cdot \Lambda)
        \end{equation}
        for any $b\in B$ and $\Lambda\in\Inv_A(k^*(M))$.
    \end{lemma}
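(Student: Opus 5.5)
First I will establish \eqref{Bt-sep-PiM}, and then use it to build the $H$-action. Take $b\in B$ and $\Lambda\in\Inv_A(k^*(M))$. Inv-normality gives $b\cdot\Lambda\in B\cdot\Inv_A(k^*(M))=B_t\cdot\Inv_A(k^*(M))$. By Lemma \ref{r-linear-iso} we can therefore write $b\cdot\Lambda=\sum_i x_i\cdot\Lambda_i$, with $x_i\in B_t$ and $\Lambda_i\in\Inv_A(k^*(M))$, and this representation is unique as an element $\sum_i x_i\otimes\Lambda_i$ of $B_t\otimes\Inv_A(k^*(M))$. The identity $\epsilon_t(1^{(1)}x)\cdots$ for $B_t$ reads $x=1_B^{(1)}\epsilon(1_B^{(2)}x)$ for $x\in B_t$, since $x=\epsilon_t(x)=\epsilon(1^{(1)}x)1^{(2)}$ and $\Delta(1)\in B_s\otimes B_t$; I would take the correct form from the listed properties. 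The plan is then to show
$$\Pi_M(1_B^{(2)}b\cdot\Lambda)=\sum_i\epsilon(1_B^{(2)}x_i)\Lambda_i .$$
This will follow once I know that $1_B^{(2)}\cdot(x_i\cdot\Lambda_i)=(1_B^{(2)}x_i)\cdot\Lambda_i$ with $1_B^{(2)}x_i\in B_t$ (property 6(a) gives $\Delta(1)\in B_s\otimes B_t$ in the appropriate order), and from the definition $\Pi_M=(\epsilon\otimes\id)\circ r^{-1}$. Summing against $1_B^{(1)}$ reconstructs $\sum_i x_i\cdot\Lambda_i=b\cdot\Lambda$. The factor $1_B^{(1)}$ has to be moved past $\epsilon$ using $1^{(1)}\otimes 1^{(2)}$ with the Sweedler legs in the right slots. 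I will settle the exact order by the formula $\epsilon_t(a)=\epsilon(1^{(1)}a)1^{(2)}$, and if the roles need to be swapped I will use the separability idempotent $e_t=S(1^{(1)})\otimes 1^{(2)}$ together with the fact that $x\mapsto S(1^{(1)})\epsilon(1^{(2)}x)$ is the identity on $B_t$.

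Next I will show that $b\mapsto\rho(b)\Lambda:=\Pi_M(b\cdot\Lambda)$ is birelatively multiplicative into $\End_\bk(\Inv_A(k^*(M)))$ and kills $k(A)$ in the sense of Diagram \eqref{coker-diagram}. The universal property of Theorem \ref{coker}(b) then yields a unital algebra map $\gamma:H\to\End(\Inv_A(k^*(M)))$ with $\gamma\circ\pi=\rho$, and that is exactly the claimed action. For the diagram: $k(a)\cdot\Lambda=\epsilon_t(k(a))\cdot\Lambda$, so $r^{-1}$ sends it to $\epsilon_t(k(a))\otimes\Lambda$, and $\epsilon\circ\epsilon_t=\epsilon$ gives $\rho(k(a))=\epsilon_A(a)\id$. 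For relative multiplicativity with respect to $B_t$, I apply \eqref{Bt-sep-PiM} to $b_2\cdot\Lambda$:
$$\Pi_M(b_1b_2\cdot\Lambda)=\Pi_M\bigl(b_1 1_B^{(1)}\cdot\Pi_M(1_B^{(2)}b_2\cdot\Lambda)\bigr)=\rho(b_1 1^{(1)})\rho(1^{(2)}b_2)\Lambda,$$
which is the $B_t$-relation with idempotent $e_t$ once $1^{(1)}$ is rewritten as $S(1^{(1)})$ using the form of \eqref{Bt-sep-PiM} actually obtained. The $B_s$-relation follows by the symmetric argument, using $S$ to interchange $B_s$ and $B_t$, or by expressing elements of $B_s$ acting on $\Inv$ through the identity $\epsilon_s$ and property 10.

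I expect the main obstacle to be bookkeeping rather than ideas. Specifically, I have to check that the Sweedler legs of $1_B$ line up with the separability idempotent conventions in \eqref{Bt-sep-PiM}, and that the $B_s$-relative multiplicativity really holds. For the latter, I would first try to show that $\rho(b y)=\rho(b\,\epsilon_t(y))$-type identities hold for $y\in B_s$, exploiting that $B_s$ commutes with $B_t$ and that $\Lambda$ is killed by $k(A_t^+)$.
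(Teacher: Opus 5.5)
\textbf{A genuine gap in reaching the stated form of \eqref{Bt-sep-PiM}.} Your skeleton is the paper's: decompose $b\cdot\Lambda=\sum_i x_i\cdot\Lambda_i$ using Inv-normality and Lemma \ref{r-linear-iso}, reconstruct $b\cdot\Lambda$ from $\Pi_M$, read off relative multiplicativity, and invoke Theorem \ref{coker}(b). Your check that $\rho(k(a))=\epsilon_A(a)\id$ is fine.

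The identity you start from is false, though. For $x\in B_t$,
\[
1_B^{(1)}\epsilon(1_B^{(2)}x)=S^{-1}\bigl(S(1_B^{(1)})\epsilon(1_B^{(2)}x)\bigr)=S^{-1}(\epsilon_t(x))=S^{-1}(x)\in B_s,
\]
which is not $x$ in general. Done correctly with $e_t$, your computation therefore yields $b\cdot\Lambda=S(1_B^{(1)})\cdot\Pi_M(1_B^{(2)}b\cdot\Lambda)$. That is not the equation you were asked to prove, and you cannot simply rewrite the statement to match it.

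The missing fact is this: for $y\in B_s$ and $\Lambda'\in\Inv_A(k^*(M))$, we have $y\cdot\Lambda'=S(y)\cdot\Lambda'$.
\begin{itemize}
\item By Lemma \ref{targets-iso}, $B_s=k(A_s)$, so $y\in k(A)$ acts on invariants as $\epsilon_t(y)$.
\item By properties 8 and 9, $\epsilon_t(y)=\epsilon_t(\epsilon_s(y))=S(\epsilon_s(y))=S(y)$.
\end{itemize}
Now $1_B^{(1)}\in B_s$ and $\Pi_M(\cdots)\in\Inv_A(k^*(M))$. So this fact converts your $S(1_B^{(1)})$-form into the stated $1_B^{(1)}$-form. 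This is exactly the unexplained first equality in the paper's proof of \eqref{Bt-sep-PiM}.

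\textbf{The $B_s$-relation is left unproved.} The same fact is what this relation needs. There is no ``symmetric argument'', because $\Inv_A$ and $\Pi_M$ are built from $B_t$ alone. Your closing guess $\rho(by)=\rho(b\,\epsilon_t(y))$ for $y\in B_s$ is the right identity. It holds only because $y\in k(A)$; otherwise $y-\epsilon_t(y)$ need not lie in $k(A_t^+)$. With it the computation closes as follows.
\begin{itemize}
\item Write $b_2\cdot\Lambda=\sum_i x_i\cdot\Lambda_i$. Commuting $S(1_B^{(2)})\in B_s$ past $x_i\in B_t$ and applying the fact gives $\rho(S(1_B^{(2)})b_2)\Lambda=\sum_i\epsilon(x_iS^2(1_B^{(2)}))\Lambda_i$.
\item Since $S^2$ is a coalgebra automorphism fixing $1$, we have $1_B^{(1)}\otimes S^2(1_B^{(2)})=S^{-2}(1_B^{(1)})\otimes 1_B^{(2)}$. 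Hence $1_B^{(1)}\epsilon(xS^2(1_B^{(2)}))=S^{-2}(\epsilon_s(x))=S^{-1}(x)$ for $x\in B_t$.
\item Using the fact once more,
\[
\rho(b_11_B^{(1)})\rho(S(1_B^{(2)})b_2)\Lambda=\sum_i\Pi_M\bigl(b_1S^{-1}(x_i)\cdot\Lambda_i\bigr)=\sum_i\Pi_M(b_1x_i\cdot\Lambda_i)=\rho(b_1b_2)\Lambda .
\]
\end{itemize}
The paper reaches the same relation differently. It starts from \eqref{Bt-sep-PiM} in its $1_B^{(1)}$-form and trades $1_B^{(2)}$ for $S(1_B^{(2)})$ inside $\epsilon$. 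That route also relies on the $B_s$-acts-through-$S$ observation.
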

    \begin{proof}
        We first prove Equation \eqref{Bt-sep-PiM}. Write $b\cdot\Lambda = \sum x_i\cdot\Lambda_i$, where $x_i\in B_t$ and $\Lambda_i\in \Inv_A(k^*(M))$. Then, 
        $$1_B^{(1)}\cdot\Pi_M(1_B^{(2)}b\cdot\Lambda) = \sum_i S(1_B^{(1)})\cdot \Pi_M(1_B^{(2)}x_i\cdot\Lambda_i) = \sum_i\epsilon_t(x_i)\cdot\Lambda_i = \sum_i x_i\cdot \Lambda_i = b\cdot\Lambda.$$
        Next, observe that, because $\epsilon(b_1b_2) = \epsilon(\epsilon_s(b_1)b_2)$, we have
        \begin{align*}
            \Pi_M(b_1b_2\cdot \Lambda) &= \Pi_M(b_11_B^{(1)}\cdot ((\epsilon\otimes\id)\circ r^{-1})(1_B^{(2)}b_2\cdot \Lambda)) \\
            &= \Pi_M(b_11_B^{(1)}\cdot ((\epsilon\otimes\id)((1_B^{(2)}\otimes 1)\cdot r^{-1})(b_2\cdot \Lambda))) \\
            &= \Pi_M(b_11_B^{(1)}\cdot ((\epsilon\otimes\id)((S(1_B^{(2)})\otimes 1)\cdot r^{-1})(b_2\cdot \Lambda))) \\
            &= \Pi_M(b_11_B^{(1)}\cdot \Pi_M(S(1_B^{(2)})b_2\cdot \Lambda)).
        \end{align*}
        Moreover, by applying $\Pi_M(b_1\cdot -)$ to Equation \eqref{Bt-sep-PiM}, we have
        \begin{align*}
            \Pi_M(b_1b_2\cdot \Lambda) &= \Pi_M(b_1S(1_B^{(1)})\cdot \Pi_M(1_B^{(2)}b_2\cdot \Lambda)).
        \end{align*}
        Thus, the map $B\to\End(\Inv_A(k^*(M)))$ given by $b\mapsto \Pi(b\cdot -)$ is birelatively multiplicative. Moreover, $\Pi_M(a\cdot -) = \epsilon_A(a)\id_{\Inv_A(k^*(M))}$. Thus, by Theorem \ref{coker}, $\Pi_M(a\cdot -)$ factors through an algebra homomorphism $H\to \End(\Inv_A(k^*(M)))$. This defines an action of $H$ on $\Inv_A(k^*(M))$ by $\pi(b)\cdot\Lambda = \Pi_M(b\cdot\Lambda)$.
    \end{proof}
    Note that $\Inv_A(k^*(B))$ is an $H$-$B$-bimodule because this space is a right ideal of $B$.
    
    \begin{theorem}\label{pi-induces-equiv}
        Suppose $A$ and $B$ are connected weak Hopf algebras. Let $k:A\to B$ be an Inv-normal weak Hopf algebra homomorphism, and $\pi:B\to H$ be the cokernel map of $k$. Then, $\pi^*:\Rep(H)\overset{\sim}{\to}\Ker k^*$ is an equivalence.
    \end{theorem}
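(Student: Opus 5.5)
By Theorem \ref{coker-full}, $\pi^*:\Rep(H)\to\Ker k^*$ is already a full embedding. It therefore remains to prove essential surjectivity: every $M\in\Ker k^*$ should be isomorphic to $\pi^*(N)$ for some $N\in\Rep(H)$. The natural candidate is $N=\Inv_A(k^*(M))$ with the $H$-action $\pi(b)\cdot\Lambda=\Pi_M(b\cdot\Lambda)$ provided by Lemma \ref{inv-is-birelative}. Since $M\in\Ker k^*$ and $A$ is connected, $k^*(M)$ is a direct sum of copies of $A_t$. Hence $k(A)\cdot\Inv_A(k^*(M))=M$ by Lemma \ref{Hinv-trivial}, and therefore $M=B\cdot\Inv_A(k^*(M))=B_t\cdot \Inv_A(k^*(M))$. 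Moreover, $N$ is finite-dimensional, since it embeds in $M$.

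Next I would define $\phi:\pi^*(N)\to M$ on representatives by $x\trel\Lambda\mapsto x\cdot\Lambda$ for $x\in B_t$, $\Lambda\in N$. By Lemma \ref{hopf-quotient-is-trivial}, $\pi^*(N)\cong B_t\otimes N$ as vector spaces, so no relations need to be checked and $\phi$ is well defined. In fact $\phi$ is exactly the map $r$ of Lemma \ref{r-linear-iso}, which is a linear isomorphism $B_t\otimes\Inv_A(k^*(M))\to B_t\cdot\Inv_A(k^*(M))=M$. So $\phi$ is bijective, and the whole matter reduces to $B$-linearity.

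The main step, which I expect to be the obstacle, is checking that $\phi(b\cdot(x\trel\Lambda))=b\cdot x\cdot\Lambda$. In $\pi^*(N)$ we have
$$b\cdot(x\trel\Lambda)=S(1^{(1)})\trel \pi(1^{(2)}bx)\cdot\Lambda=S(1^{(1)})\trel\Pi_M(1^{(2)}bx\cdot\Lambda),$$
so $\phi$ sends this element to $S(1^{(1)})\cdot\Pi_M(1^{(2)}bx\cdot\Lambda)$. I would compare this with Equation \eqref{Bt-sep-PiM} applied to $bx\in B$, which gives $bx\cdot\Lambda=1^{(1)}\cdot\Pi_M(1^{(2)}bx\cdot\Lambda)$. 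The mismatch between $1^{(1)}$ and $S(1^{(1)})$ has to be resolved. I would first check whether the proof of \eqref{Bt-sep-PiM} in fact uses the target separability idempotent $S(1^{(1)})\otimes 1^{(2)}$; that proof already writes $S(1_B^{(1)})$, which suggests the intended formula is with $S$. If so, the two sides agree directly. If not, I would rewrite $b x\cdot\Lambda=\sum_i x_i\cdot\Lambda_i$ with $x_i\in B_t$ and $\Lambda_i$ in the basis $L$, and use the identity $S(1^{(1)})\epsilon(1^{(2)}x_i)=\epsilon_t(x_i)=x_i$. This recovers $\sum_i x_i\cdot\Lambda_i$, because $\Pi_M(y\cdot\Lambda_i)=\epsilon(y)\Lambda_i$ for $y\in B_t$ by the definition of $\Pi_M$ via $r^{-1}$ followed by $\epsilon\otimes\id$.

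Once $B$-linearity holds, $\phi$ is a $B$-module isomorphism $\pi^*(N)\cong M$, and essential surjectivity follows. Combined with Theorem \ref{coker-full}, this shows that $\pi^*$ is an equivalence onto $\Ker k^*$. Connectivity of $B$ is only needed so that the categories involved are tensor categories and $\Ker k^*$ is well behaved; connectivity of $A$ enters through Lemmas \ref{r-linear-iso} and \ref{inv-is-birelative}.
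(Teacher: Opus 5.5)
Your proposal is correct and follows the paper's proof essentially step for step. Full faithfulness comes from Theorem \ref{coker-full}. Essential surjectivity then uses the $H$-module $\Inv_A(k^*(M))$ of Lemma \ref{inv-is-birelative} and the map $x\trel\Lambda\mapsto x\cdot\Lambda$, which is bijective as in Lemma \ref{r-linear-iso} and $B$-linear by Equation \eqref{Bt-sep-PiM}. You are right to read \eqref{Bt-sep-PiM} with $S(1_B^{(1)})$ rather than $1_B^{(1)}$: the paper's displayed chain has the same slip. Your fallback computation, using $S(1^{(1)})\epsilon(1^{(2)}x_i)=\epsilon_t(x_i)=x_i$ and $\Pi_M(y\cdot\Lambda_i)=\epsilon(y)\Lambda_i$ for $y\in B_t$, is exactly the justification that step needs.
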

    \begin{proof}
        By Theorem \ref{coker-full}, $\pi^*:\Rep(H)\incl \ker k^*$ is fully faithful, so it suffices to verify it is essentially surjective. Let $M\in\ker k^*$, so 
        $M = B_t\cdot \Inv_A(k^*(M)) = B\cdot \Inv_A(k^*(M)).$
        By Lemma \ref{inv-is-birelative}, $\Inv_A(k^*(M))$ has a compatible $H$-module structure. Consider the map $\pi^*(\Inv_A(k^*(M)))\to M$ generated by $x\boxtimes \Lambda\mapsto x\cdot \Lambda$. Note that this is a linear isomorphism for the same reason that $r$ (of Lemma \ref{r-linear-iso}) is a linear isomorphism. This map is also $B$-linear because, by Lemma \ref{inv-is-birelative},
        \begin{align*}
            S(1_B^{(1)})\cdot_B (\pi(1_B^{(2)}bx)\cdot_H \Lambda) = 1_B^{(1)}\cdot_B \Pi_M(1_B^{(2)}bx\cdot_B \Lambda) = bx\cdot \Lambda = b\cdot_B (x\cdot_B \Lambda).
        \end{align*}
        Thus, $\pi^*(\Inv_A(k^*(M)))\cong M$.
    \end{proof}  

    \subsection{Dominance}
    In the following lemma, we make the identification $M\otimes_A A^n = M^n$ for right $A$-modules $M$. 
    \begin{lemma}[\cite{pjm/1102983324}]
        Let $A$ be a ring, and let $k:M\to M'$ be an inclusion of right $A$-modules. The following conditions are equivalent:
        \begin{enumerate}[(\alph*)]
            \item for any positive integers $n_1, n_2\geq 1$, right $A$-linear map $T:A^{n_1}\to A^{n_2}$, and element $u'\in (M')^{n_1}$ such that $(k\otimes T)v\in M^{n_2}$, there is $u\in M^{n_1}$ such that $(k\otimes T)u = (k\otimes T)u';$
            \item $\coker(k)$ is flat as a right $A$-module (where this is the cokernel in the category of right $A$-modules);
            \item for any $N\in \Rep(A)$, the $A$-linear map $k\otimes\id_N:M\otimes_A N\to M'\otimes_A N$ is an inclusion;
            \item for any $N\in \Rep(A)$, the sequence $0\to M\otimes_A N\to M'\otimes_A N\to \coker(k)\otimes_A N\to 0$ is exact.
        \end{enumerate}
    \end{lemma}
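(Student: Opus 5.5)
The plan is to treat this as the classical characterization of pure inclusions, due to Cohn and recorded in \cite{pjm/1102983324}, and to prove the cycle of implications in the order (c)$\Leftrightarrow$(d), (a)$\Leftrightarrow$(c), and then (b)$\Leftrightarrow$(c). Throughout I read ``$N\in\Rep(A)$'' as ``$N$ finitely presented'' when $A$ is a general ring. This costs nothing, for two reasons. First, every module is a filtered colimit of finitely presented ones. Second, $-\otimes_A N$ and injectivity both commute with filtered colimits. So condition (c) for finitely presented $N$ implies it for all $N$.

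The step (c)$\Leftrightarrow$(d) is immediate. The tensor product $-\otimes_A N$ is right exact, so $M\otimes_A N\to M'\otimes_A N\to\coker(k)\otimes_A N\to 0$ is always exact. Exactness at the left end is exactly injectivity of $k\otimes\id_N$.

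For (a)$\Leftrightarrow$(c), I present a finitely presented left module as $N=\coker(T^t\colon A^{n_1}\to A^{n_2})$. By right exactness, $M\otimes_A N\cong \coker(M^{n_1}\to M^{n_2})$, and similarly for $M'$; here the map $M^{n_1}\to M^{n_2}$ is $\id_M\otimes T$.
\begin{itemize}
\item[(a)$\Rightarrow$(c):] take an element of $M^{n_2}$ whose image in $M'\otimes_A N$ vanishes. Then it equals $(\id\otimes T)u'$ for some $u'\in (M')^{n_1}$, and (a) produces $u\in M^{n_1}$ with the same image. Hence the element already vanishes in $M\otimes_A N$.
\item[(c)$\Rightarrow$(a):] run the same diagram chase backwards, using the specific $N$ built from the given $T$.
\end{itemize}
The main bookkeeping in this step is keeping track of left versus right actions, and of the transpose of the matrix of $T$.

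For (b)$\Rightarrow$(c), I use the long exact Tor sequence
\[\operatorname{Tor}_1^A(\coker k,N)\to M\otimes_A N\to M'\otimes_A N.\]
Flatness kills the first term, so the second map is injective.

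The converse (c)$\Rightarrow$(b) is where I expect the real obstacle. The same Tor sequence identifies $\operatorname{Tor}_1^A(\coker k,N)$ with $\ker(M\otimes_A N\to M'\otimes_A N)$ only when $\operatorname{Tor}_1^A(M',N)=0$, for instance when $M'$ is flat. Purity alone does not force $\coker(k)$ to be flat; a pure submodule of a non-flat module is a counterexample. I would therefore prove (c)$\Rightarrow$(b) under the standing hypothesis, which holds in the intended application, that $M'$ is flat (for example free, such as $A^n$ or $B$ free over $A$). Under that hypothesis, (c) gives $\operatorname{Tor}_1^A(\coker k,N)=0$ for all finitely presented $N$, hence for all $N$ by the colimit argument, and therefore $\coker k$ is flat. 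I would flag this extra assumption explicitly in the proof.
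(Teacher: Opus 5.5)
\textbf{Verdict: correct, including your objection to (c)$\Rightarrow$(b).} The paper gives no argument for this lemma; it only cites \cite{pjm/1102983324}. So the comparison is with the standard proof, which is what you give. Your arguments for (c)$\Leftrightarrow$(d), for (a)$\Leftrightarrow$(c) via finite presentations, and for (b)$\Rightarrow$(c) via $\operatorname{Tor}$ are correct.

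As literally stated, the lemma is false, and the cleanest witness is the zero submodule. Take $A=\mathbb{Z}$, or $A=\bk[x]$ if you want a $\bk$-algebra, and $k\colon 0\hookrightarrow M'$ with $M'=\mathbb{Z}/2\mathbb{Z}$, respectively $\bk[x]/(x)$. Then (a), (c), (d) hold trivially, but $\coker(k)=M'$ is torsion and hence not flat. You should sharpen your phrase ``a pure submodule of a non-flat module'' to this example: for instance $M'\subseteq M'$ is pure with flat cokernel.

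The repair you propose is the right one: (b) is sufficient in general, and equivalent to the others when $M'$ is flat. That is the standard form of the criterion, cf.\ \cite[Section 4J]{lam1999lectures}.

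One correction to your framing: flatness of $B$ over $A$ is not a hypothesis of Theorem \ref{exact-seq-of-rep-cats}. So it does not hold ``in the intended application'' in general. The error is harmless for a different reason. The paper only ever uses condition (c), in the dominance proposition, and only reaches (c) through implications that remain valid, such as (b)$\Rightarrow$(c) in the von Neumann regular case, or freeness of $B$ over $A$.

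\textbf{Your reading of $\Rep(A)$ is necessary, not cost-free.} Reading $N\in\Rep(A)$ as ``finitely presented'' is genuinely needed. Your filtered-colimit remark compares finitely presented $N$ with arbitrary $N$. It says nothing about the paper's meaning of $\Rep(A)$, namely finite-dimensional modules, and with that meaning even (c)$\Rightarrow$(a) fails.

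To see this, take the Weyl algebra $A=\bk\langle x,\partial\rangle/(\partial x-x\partial-1)$ with $\operatorname{char}\bk=0$. It has no nonzero finite-dimensional modules, so (c) holds vacuously for $xA\subseteq A$. Yet (a) fails for the $1\times 1$ matrix $(x)$ and $u'=1$:
\begin{itemize}
\item $u'x=x\in xA$;
\item any $u=xa$ with $ux=x$ would give $x(ax-1)=0$, hence $ax=1$ in this domain, which is impossible by a degree count.
\end{itemize}
With your reading, every implication you prove is fine. Your corrected version still delivers what the dominance proposition needs, namely injectivity of $N\to B\otimes_A N$ for finite-dimensional $N$.
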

    When an inclusion $k:M\to M'$ satisfies these equivalent conditions, $k$ is known as \textit{pure}. For a discussion on purity of submodules, including examples and more equivalent conditions due to Cohn, Fieldhouse, Warfield, et. al., see \cite[Section 4J]{lam1999lectures}.

    \begin{proposition}
        Suppose $k:A\to B$ is an algebra inclusion. If $B$ is right free over $A$ or $A$ is von Neumann regular (i.e., for any element $a\in A$, there is $x\in A$ such that $a = axa$), then $k$ is pure.
    \end{proposition}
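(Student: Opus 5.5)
The plan is to handle the two hypotheses separately, using a different equivalent characterization of purity from the preceding lemma in each case. Throughout, $k:A\to B$ is regarded as an inclusion of right $A$-modules, with $M=A$ and $M'=B$.

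\emph{Case $A$ von Neumann regular.} This is the short case, and I would do it first via condition (b). It is a standard fact that over a von Neumann regular ring every module (left or right) is flat. The usual argument: every finitely generated left ideal is generated by an idempotent, hence is a direct summand; so $\mathrm{Tor}_1(-,A/I)=0$ for every finitely generated left ideal $I$, and the ideal criterion for flatness applies. In particular $\coker(k)=B/A$ is flat as a right $A$-module, so $k$ is pure by (b).

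\emph{Case $B$ right free over $A$.} I would use condition (c). For a left $A$-module $N$ we must show that
\[
N\cong A\otimes_A N\to B\otimes_A N,\qquad n\mapsto 1_B\otimes n,
\]
is injective. Naively writing $1_B=\sum e_i a_i$ in a right basis $\{e_i\}$ turns this map into $n\mapsto (a_i n)_i$, and its injectivity is not evident because $1_B$ need not be a basis element. So I would instead run the standard faithfully flat descent argument. Since $B$ is free and nonzero as a right $A$-module, the functor $B\otimes_A-$ on left $A$-modules is exact and faithful. Being a direct sum of copies of $A\otimes_A -=\id$, it reflects zero objects, and therefore it reflects injectivity.

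It then suffices to show that $B\otimes_A N\to B\otimes_A B\otimes_A N$, $b\otimes n\mapsto b\otimes 1_B\otimes n$, is injective. This holds because it has the left inverse $b\otimes b'\otimes n\mapsto bb'\otimes n$. That map is well defined since multiplication $B\otimes_A B\to B$ is balanced over $A$ and left $B$-linear, and the composite is the identity because $b\cdot 1_B=b$. Exactness of $B\otimes_A-$ identifies this map with $B\otimes_A(\text{original map})$, so its kernel is $B\otimes_A\Ker$. Hence $B\otimes_A \Ker=0$, and faithfulness gives $\Ker=0$.

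The main obstacle is this second case, specifically the temptation to split off $A$ as a direct summand of $B$ using a basis. That fails in general because $1_B$ need not be part of a basis, and the descent argument is what avoids it. The remaining care is bookkeeping of sides: $B$ must be flat and faithful as a \emph{right} $A$-module so that $B\otimes_A-$ acts on the left modules $N$ appearing in condition (c).
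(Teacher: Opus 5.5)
The paper states this proposition without proof; it appears as a standard fact right after the purity lemma and the pointer to Lam, \S 4J. So there is no argument in the paper to compare against. Your proof is correct and is the standard one. Over a von Neumann regular ring every right module is flat, so condition (b) holds. In the free case $B$ is faithfully flat as a right $A$-module, and your descent argument establishes condition (c) for every left $A$-module $N$, not only finite-dimensional ones. The ingredients are the left inverse $b\otimes b'\otimes n\mapsto bb'\otimes n$, exactness of $B\otimes_A-$, and $B\otimes_A N\cong N^{(I)}$ with $I\neq\emptyset$.

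Two points are worth making explicit.

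First, you verify (b) in one case and (c) in the other, relying on the lemma's claimed equivalence. That equivalence fails for an arbitrary inclusion $M\subseteq M'$: for example, $\bk[x]\subseteq\bk[x]\oplus\bk[x]/(x)$ is split, so (c) holds, yet the cokernel $\bk[x]/(x)$ is not flat. This causes no harm here, because under either hypothesis $B$ is flat over $A$. The long exact $\mathrm{Tor}$ sequence then gives $\mathrm{Tor}_1^A(B/A,N)\cong\ker\bigl(N\to B\otimes_A N\bigr)$. Since you prove injectivity for all $N$, you get (b) in the free case as well, and (b) always implies (c) and (d). Saying this makes clear that both cases deliver the same notion of purity.

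Second, the coordinate approach you set aside works directly. Write $1_B=\sum_i e_ia_i$ and $e_je_i=\sum_k e_kc^{(j)}_{ik}$. Comparing $e_j$-coefficients in $e_j=e_j1_B=\sum_i e_je_ia_i$ gives $\sum_i c^{(j)}_{ij}a_i=1_A$. Hence $a_in=0$ for all $i$ forces $n=0$. This is your descent argument unwound in a basis. What genuinely need not hold is that $1_B$ extends to a basis (equivalently, that $B/A$ is free), and neither argument uses that.
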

    
    \begin{proposition}
        Let $k:A\to B$ be a pure algebra inclusion. If $B$ is finitely-generated over $A$, then $k^*:\Rep(B)\to\Rep(A)$ is dominant.
    \end{proposition}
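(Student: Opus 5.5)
Given $N\in\Rep(A)$, I will exhibit it as a subobject of $k^*(M)$ with $M\in\Rep(B)$ by inducing: take $M = B\otimes_A N$, where $B$ is viewed as a $B$-$A$-bimodule via $k$ on the right. Since $B$ is finitely generated as a (right) $A$-module, say by $b_1,\dots,b_n$, the module $B\otimes_A N$ is spanned by the elements $b_i\otimes n_j$ with $n_j$ ranging over a basis of the finite-dimensional $N$. Hence $M$ is finite-dimensional, so $M\in\Rep(B)$. (If the hypothesis is phrased for left generation, I would use the side matching the purity convention, namely right $A$-modules tensored with left $A$-modules, which is the one in the cited lemma.)

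Next I will produce the embedding. The map $\iota: N\cong A\otimes_A N\to B\otimes_A N$, $n\mapsto 1_B\otimes n$, is the map $k\otimes \id_N$ for the inclusion $k:A\to B$ of right $A$-modules. By purity, condition (c) of the cited lemma applies to $N\in\Rep(A)$, so $\iota$ is injective. It is $A$-linear for the restricted action, since $k(a)(1\otimes n) = k(a)\otimes n = 1\otimes a n$. Therefore $N$ is isomorphic to a subobject of $k^*(B\otimes_A N)$, which is exactly dominance.

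The main subtlety is only bookkeeping: checking that the module structure on $k^*(M)$ is ordinary restriction of scalars, which holds because $k$ is a unital algebra homomorphism (Proposition \ref{mult-res}), and making sure the sides of the finite-generation and purity hypotheses match the tensor product $B\otimes_A N$. No other step should be an obstacle.
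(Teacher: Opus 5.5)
Your proposal is correct and follows essentially the same approach as the paper: induce to $B\otimes_A N$, which lies in $\Rep(B)$ by finite generation of $B$ over $A$, and use purity (condition (c) of the cited lemma) to see that $N\cong A\otimes_A N\to B\otimes_A N = k^*(B\otimes_A N)$ is an injective $A$-linear map. Your extra checks (the explicit spanning set for $B\otimes_A N$ and the $A$-linearity computation) are details the paper leaves implicit.
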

    \begin{proof}
        Let $M\in\Rep(A)$. Because $B$ is finitely-generated over $A$, $B\otimes_A M\in\Rep(B)$. Thus, $M\cong A\otimes_A M\to B\otimes_A M=k^*(B\otimes_A M)$ is a well-defined inclusion in $\Rep(A)$. 
    \end{proof}
    
    Suppose $k^*$ is dominant. For any $M\in\Rep(A)$, there is $U\in \Rep(B)$ and an inclusion $\iota:M\incl k^*(U)$ as $A$-modules. We may assume $U = B\iota(M)$. In this case, there is a surjective $B$-linear map $B\otimes_A M\to U$ generated by $b\otimes m\mapsto b\iota(m)$, so $U$ is a quotient of $B\otimes_A M$. Thus, assuming $B$ is finitely-generated over $A$, $k^*$ is dominant if and only if, for every $M\in\Rep(B)$, there is a $B$-linear surjective map $p:B\otimes_A M\to U$ such that $p\circ(k\otimes\id)$ is injective. The author has yet to find a satisfying algebraic characterization of this property, so we focus on the pure case, as this is a well-known condition. 

    Combining together the results of this section yields the main theorem of this article.
    \begin{theorem}\label{exact-seq-of-rep-cats}
        Consider a sequence 
        $$\begin{tikzcd}
            A\arrow[r,"k",hook]&B\arrow[r,"\pi", two heads]&H,
        \end{tikzcd}$$
        where 
        \begin{enumerate}[(\alph*)]
            \item $A$ and $B$ are connected weak Hopf algebras and $H$ is a Hopf algebra,
            \item $k:A\incl B$ is an Inv-normal weak Hopf algebra inclusion,
            \item $\pi:B\to H$ is the cokernel map of $k$, and
            \item $B$ is finitely-generated over $A$ and $k$ is pure.
        \end{enumerate}
        Then, the following sequence of tensor categories is exact:
        $$\begin{tikzcd}
            \Rep(H)\arrow[r,hook, "\pi^*"]&\Rep(B)\arrow[r,two heads, "k^*"]&\Rep(A).
        \end{tikzcd}$$
    \end{theorem}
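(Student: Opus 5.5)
The proof will assemble results already established in this section, checking in turn the conditions in the definition of an exact sequence of tensor categories. First, all three categories must be tensor categories and both functors tensor functors. Connectivity of $A$ and $B$ (hypothesis (a)) gives simple units $A_t$ and $B_t$ by Proposition \ref{connected}, so $\Rep(A)$ and $\Rep(B)$ are tensor categories. $\Rep(H)$ is a tensor category because $H$ is a Hopf algebra by Theorem \ref{coker}(c). Since $H$ is the image of the finitely generated $B$ under $\pi$ (Proposition \ref{simple-surj}), it is finite-dimensional, which I will remark on for finiteness if needed.

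The restriction $k^*$ along the weak Hopf algebra homomorphism $k$ is exact, faithful and linear. It is strict monoidal because $k(1_A^{(1)})\otimes k(1_A^{(2)}) = 1_B^{(1)}\otimes 1_B^{(2)}$. The identification of the unit uses Lemma \ref{targets-iso}: the isomorphism $A_t\cong B_t$ makes $k^*(B_t)\cong A_t$. For $\pi^*$, Theorem \ref{coker}(d) says $\pi$ is a Hopf map, and then Theorem \ref{tensor-functor} says $\pi^*$ is a tensor functor.

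Next comes condition (a), that $\pi^*$ induces $\Rep(H)\simeq \Ker k^*$. This is exactly Theorem \ref{pi-induces-equiv}, whose hypotheses are connectivity of $A$ and $B$, Inv-normality of $k$, and $\pi$ being the cokernel; these are (a)–(c). Theorem \ref{coker-full} already places the image of $\pi^*$ inside $\Ker k^*$.

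For condition (b), normality of $k^*$ follows from Theorem \ref{normal}, using Inv-normality and connectivity of $A$. Dominance follows from the proposition on pure inclusions with $B$ finitely generated over $A$, hypothesis (d). The step that needs the most care is that dominance argument: $B\otimes_A M$ must be finite-dimensional (finite generation of $B$ over $A$) and $M\to B\otimes_A M$ must be injective. Purity as stated concerns $k\otimes\id_M$, and I would apply it to the inclusion $A\hookrightarrow B$ of right $A$-modules. The remaining items are routine bookkeeping, in particular checking that "exact sequence" needs nothing more than these two conditions together with the tensor-functor properties.
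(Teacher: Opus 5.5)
Your proposal is correct and follows the paper's own argument, which proves this theorem by combining Theorem \ref{pi-induces-equiv} (for $\Rep(H)\simeq\Ker k^*$), Theorem \ref{normal} (normality of $k^*$ from Inv-normality), the proposition that a pure inclusion with $B$ finitely generated over $A$ gives dominance, and Theorem \ref{tensor-functor} (with $\pi$ a Hopf map by Theorem \ref{coker}) for $\pi^*$ being a tensor functor. One side remark: your reason for $H$ being finite-dimensional (``image of the finitely generated $B$'') is too quick, since a linear image of a finitely generated $A$-module need not be finite-dimensional; what actually works is that birelative multiplicativity together with $\pi\circ k=\epsilon_{A,H}$ gives $\pi(b_ik(a))\in\pi(b_iB_s)$ with $B_s$ finite-dimensional, but this finiteness is not needed for the statement.
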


    In the case where $A$ and $B$ are Hopf algebras, this theorem simplifies dramatically. Condition (a) is trivially satisfied as Hopf algebras are always connected. Conditions (b) and (c) are implied by the definition of a strictly exact sequence of Hopf algebras. Condition (d) is true in the finite-dimensional case by the Nichols-Z\"oeller freeness theorem. In particular, any strictly exact sequence of finite-dimensional Hopf algebras satisfies the above conditions, so our result is a direct generalization of \cite[Prop. 2.9]{BruguieresNatale11}.

    \section{Examples}\label{sec:examples}
        Suppose $k:A\to B$ is a weak Hopf algebra homomorphism such that $A$ is connected. By Theorems \ref{coker} and \ref{V-surj}, to compute the cokernel $H=\im\pi$ of $k$, we may exhibit a surjective Hopf map $B\to H'$ for some Hopf algebra $H'$, which is injective on $zV$ for some $V\subseteq B$ satisfying the conditions of Theorem \ref{full-commute-with-At} and $z$ as in Theorem \ref{V-surj}. In this case, there is a Hopf algebra isomorphism $\pi(zV)\to H'$. We apply this technique in our second and third examples. However, we do the groupoid case explicitly so that we need not assume connectivity.
    
    \subsection{Groupoids}
        Given a groupoid $G$ with finitely many objects, there is a canonical weak Hopf algebra structure on the groupoid algebra $\bk[G]$. Much like group algebras are the canonical example of a Hopf algebra, this is the canonical example of a weak Hopf algebra. Groupoid algebras motivate much of the notation and terminology in the theory of weak Hopf algebras. For example, a finite groupoid is connected if and only if its groupoid algebra is connected.
        
        We denote a morphism $g:s(g)\to t(g)$ in $G$ by $g\in G$ or $g\in\Hom_{G}(s(g)\to t(g))$, where $s(g)$ is the source object of $g$, and $t(g)$ is the target object of $g$. We denote the set of objects of $G$ by $\ob(G)$. Then, the weak Hopf algebra structure on $\ob(G)$ is given as follows: for $g, g_1, g_2\in G$,
        \begin{align*}
            g_1\cdot_{\bk[G]} g_2 &= \begin{cases}
                g_1\cdot_G g_2 & \text{if $s(g_1) = t(g_2)$},\\
                0 & \text{if $s(g_1)\neq t(g_2)$},
            \end{cases}\\
            1_{\bk[G]} &= \sum_{x\in\ob(G)} \id_x,\\
            \Delta(g) &= g\otimes g,\\
            \epsilon(g) &= 1,\\
            S(g) &= g^{-1}.
        \end{align*}
        In this algebra, $1_G = \sum_{X\in\ob(G)} \id_X$. This is precisely why we require finitely many objects. Otherwise, this element would not exist in $\bk[G]$. Of course, the source and target maps send $g\in G$ to the identity on its source and target respectively:
        $$\epsilon_s(g) = \id_{s(g)},\hspace{50pt} \epsilon_t(g) = \id_{t(g)}.$$
        Hence, the target and source subalgebras agree and are generated by the identity morphisms:
        $$\bk[G]_s = \bk[G]_t = \spa\{\id_x\mid x\in\mathrm{ob}(G)\}.$$

        \begin{definition}
            Let $G\subseteq G'$ be a subgroupoid. Then, $G\subseteq G'$ is 
            \begin{itemize}
                \item \textit{wide} if $\ob(G)=\ob(G')$;
                \item \textit{subconnected} if it is wide and the connected components of $G$ and $G'$ agree (by a \textit{connected component} of $G$, we mean an isomorphism class in $G$);
                \item \textit{normal} if it is wide and, for any $g\in G$ and any $g'\in G'$ for which $g'g$ is well-defined, there is $\hat g\in G$ such that $g'g = \hat gg'$.
            \end{itemize}
        \end{definition}
        The notions of wideness and normality are further discussed in \cite{avila2019normalsubgroupoids}.

        \begin{lemma}
            If $f:G\to G'$ is a groupoid homomorphism, then the map $\bk[f]:\bk[G]\to\bk[G']$ is a weak Hopf algebra homomorphism if and only if $f(G)$ is a wide subgroupoid of $G'$. A wide inclusion of groupoids $G\incl G'$ with finitely many objects induces a weak Hopf algebra inclusion $\iota:\bk[G]\incl\bk[G']$. 
        \end{lemma}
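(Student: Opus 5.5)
The plan is to verify the weak Hopf algebra homomorphism axioms for $\bk[f]$ directly on the basis of morphisms. The key point is that multiplicativity can only fail on the unit. First, for arbitrary morphisms $g_1,g_2\in G$, I will show $\bk[f](g_1)\bk[f](g_2)=\bk[f](g_1g_2)$ by splitting into two cases. If $s(g_1)=t(g_2)$, then functoriality gives $f(g_1)f(g_2)=f(g_1g_2)$. If $s(g_1)\neq t(g_2)$, the left side must vanish. Here I expect to need injectivity of $f$ on objects. Wideness of $f(G)$ means $\ob(f(G))=\ob(G')$; the two conditions in the statement should be read as forcing $f$ to be a bijection on objects. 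I will make this explicit, since a non-injective object map would already break multiplicativity.

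Next come the coalgebra and antipode structure. Since $\Delta(g)=g\otimes g$, $\epsilon(g)=1$ and $S(g)=g^{-1}$ on both sides, and since $f(g^{-1})=f(g)^{-1}$, the map $\bk[f]$ is automatically a counital coalgebra map that preserves the antipode. These properties need no hypothesis.

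The remaining condition is unitality:
$$\bk[f]\Bigl(\sum_{x\in\ob(G)}\id_x\Bigr)=\sum_{x}\id_{f(x)}.$$
This equals $1_{\bk[G']}=\sum_{y\in\ob(G')}\id_y$ exactly when $x\mapsto f(x)$ is a bijection $\ob(G)\to\ob(G')$. Surjectivity on objects is precisely wideness of $f(G)$. For the converse direction, I will argue as follows. If $\bk[f]$ is a weak Hopf algebra homomorphism, then unitality forces every $\id_y$ to appear in the image of $1$, so $f(G)$ is wide. Alternatively, I can invoke Lemma \ref{targets-iso}: $\bk[f]$ must restrict to an isomorphism $\bk[G]_t\to\bk[G']_t$, and these algebras are spanned by the identity morphisms, so $f$ is bijective on objects. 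Conversely, a bijection on objects gives unitality and the vanishing case of multiplicativity.

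The main obstacle is the mismatch between wideness of the image and bijectivity on objects. I will handle it by interpreting wideness in the sense that $f$ identifies $\ob(G)$ with $\ob(G')$, which Lemma \ref{targets-iso} shows is forced anyway. The final sentence of the statement then follows immediately. A wide inclusion is the identity on objects and injective on morphisms, so $\iota=\bk[\mathrm{incl}]$ is a weak Hopf algebra homomorphism and is injective, because it sends a basis into a basis. Finiteness of the object sets is needed only so that the units exist in $\bk[G]$ and $\bk[G']$.
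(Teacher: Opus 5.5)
The paper states this lemma without proof, so there is no argument of its own to compare against. Your basis-level verification is the natural proof, and it is correct. Coalgebra, counit and antipode compatibility are automatic, because morphisms are grouplike with counit $1$ and $f(g^{-1})=f(g)^{-1}$. Everything therefore reduces to unitality and multiplicativity.

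You are right that the literal ``if'' direction needs injectivity on objects. Take $G$ with two objects $x\neq y$ and only identity morphisms, $G'=\{\id_z\}$, and let $f$ send both objects to $z$. Then $f(G)=G'$ is a wide subgroupoid. Yet $\bk[f](\id_x)\bk[f](\id_y)=\id_z\neq 0=\bk[f](\id_x\id_y)$, and $\bk[f](1)=2\id_z\neq\id_z$. So the correct statement is that $\bk[f]$ is a weak Hopf algebra homomorphism if and only if $f$ is bijective on objects. That is exactly what you prove, and it is what the later cokernel results need.

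One small correction concerns your claim that $\bk[f](1)=1_{\bk[G']}$ holds ``exactly when'' $f$ is bijective on objects. This fails in characteristic $p>0$. Since $\bk[f](1)=\sum_{y}|f^{-1}(y)|\,\id_y$, fibers of size $\equiv 1 \pmod p$ would also give unitality. Unitality alone yields only surjectivity on objects, i.e. wideness. Injectivity must come from multiplicativity or from Lemma~\ref{targets-iso}, both of which you also invoke, so the argument is complete once the two are combined.
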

        \begin{theorem}
            If $f:G\to G'$ is a homomorphism of groupoids and $f(G)$ is a wide subgroupoid of $G'$. Let $R$ be a set of representatives of the connected components of $f(G)$. Then, the cokernel Hopf algebra $H$ of $\bk[f]:\bk[G]\to\bk[G']$ is the group Hopf algebra of the following group:
            $$\frac{\displaystyle\bigsqcup_{X,Y\in R} \mathrm{FreeGrp}(\Hom_{G'}(X, Y))}{\displaystyle\ncl\left(\left\langle \bigcup_{X,Y\in R} \Hom_{f(G)}(X, Y), g'_1g'_2(g'_1\cdot_{G'} g'_2)^{-1}\text{ composable } g'_1, g'_2\in G'\right\rangle\right)},$$
            where $\bigsqcup$ denotes free product of groups and $\ncl$ denotes the group normal closure inside this free product.
        \end{theorem}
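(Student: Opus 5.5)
Call the displayed group $\Gamma$. The plan is to show that $\bk[\Gamma]$, with a suitable Hopf map $\pi':\bk[G']\to\bk[\Gamma]$, satisfies the universal property of Theorem \ref{coker}(b). Since $H$ is unique up to unique isomorphism, this gives $H\cong\bk[\Gamma]$, and the Hopf structure then matches by Theorem \ref{coker}(e). We do this directly, without Theorem \ref{V-surj}, because $\bk[G]$ need not be connected. The guiding fact is that $\bk[G']_t=\bk[G']_s=\spa\{\id_x\}$ and $\Delta(1)=\sum_x\id_x\otimes\id_x$. Birelative multiplicativity of a map $\phi:\bk[G']\to H''$ therefore reads
$$\phi(g_1g_2)=\sum_x\phi(g_1\id_x)\phi(\id_xg_2)$$
(note $S(\id_x)=\id_x$). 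Concretely, $\phi(g_1)\phi(g_2)=\phi(g_1\cdot_{G'}g_2)$ when $g_1,g_2$ are composable, and $\phi(g_1)\phi(g_2)=0$ otherwise. Together with $\phi(f(g))=1$ for $g\in G$, these are exactly the constraints we must encode.

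First, define the connecting elements. For each object $x$ of $G'$, let $X\in R$ be the representative of its $f(G)$-component, and choose $h_x\in\Hom_{f(G)}(X,x)$ with $h_X=\id_X$. For $g'\in\Hom_{G'}(x,y)$, set
$$\pi'(g')=[h_y^{-1}g'h_x]\in\Gamma,$$
where $h_y^{-1}g'h_x\in\Hom_{G'}(X,Y)$ is viewed as a generator of the free product. We then check three things.
\begin{itemize}
\item $\pi'$ is birelatively multiplicative. If $g_1,g_2$ are composable, the inner $h$'s cancel and the composition relation in $\Gamma$ gives $\pi'(g_1)\pi'(g_2)=\pi'(g_1g_2)$. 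If they are not composable, the product in $\bk[G']$ is $0$; the right-hand side is also $0$ because the terms $g_1\id_x$ and $\id_xg_2$ cannot both be nonzero.
\item $\pi'(f(g))=1$, since $h_y^{-1}f(g)h_x\in\Hom_{f(G)}(X,Y)$ is killed in $\Gamma$.
\item $\pi'$ is comultiplicative, counital and antipode-preserving; this is immediate from the grouplike structure.
\end{itemize}

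Next, verify the universal property. Let $\phi:\bk[G']\to H''$ be birelatively multiplicative with $\phi\circ\bk[f]=\epsilon_{\bk[G],H''}$. Define $\gamma$ on the generators of the free product by $\gamma([g'])=\phi(g')$ for $g'\in\Hom_{G'}(X,Y)$, $X,Y\in R$. The following facts show that $\gamma$ is well-defined on $\Gamma$ and factors $\phi$.
\begin{itemize}
\item Each $\phi(g')$ is invertible, with inverse $\phi(g'^{-1})$. Indeed $\phi(g')\phi(g'^{-1})=\phi(\id_y)$, and $\phi(\id_y)=\phi(f(\id_y))=1$ because $f(G)$ is wide. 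This is the key point that lets us use the free group rather than the free monoid.
\item The composition relations hold by birelative multiplicativity, and the generators from $f(G)$ map to $1$.
\item $\gamma\circ\pi'=\phi$, because $\phi(h_y^{-1}g'h_x)=\phi(h_y^{-1})\phi(g')\phi(h_x)=\phi(g')$ (the $h$'s lie in $f(G)$).
\end{itemize}

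Uniqueness of $\gamma$ follows because $\pi'(\Hom_{G'}(X,Y))$ for $X,Y\in R$ generates $\Gamma$ as a group. Here inverses are themselves images: $[g']^{-1}=[g'^{-1}]$ in $\Gamma$, using the relation $[g'][g'^{-1}]=[\id_Y]$ and $[\id_Y]=1$ since $\id_Y\in f(G)$. Hence $\im\pi'$ generates $\bk[\Gamma]$ as an algebra.

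The main obstacle is bookkeeping the relation set of $\Gamma$. The displayed relations only cover composable pairs, which may involve objects outside $R$. So we must check that $\pi'$ respects every composable pair, not only those between objects of $R$. Equivalently, after conjugating by the $h$'s, those relations reduce to relations among $\Hom_{G'}(X,Y)$, $X,Y\in R$. I would handle this by reading the generators as all of $G'$ modulo identifying $g'$ with $h_y^{-1}g'h_x$, which the relations from $f(G)$ together with composition force. Care is also needed to confirm that the zero relations for non-composable products impose nothing extra in $\Gamma$.
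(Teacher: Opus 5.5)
Your proposal is correct and is essentially the paper's proof. The paper computes $H=T^\bullet(\bk[G'])/I$ directly and writes down two mutually inverse algebra maps: $\bk[\Gamma]\to H$, $g'\mapsto[g']$, and $H\to\bk[\Gamma]$, $[g']\mapsto h_y^{-1}g'h_x$ (conjugation by connecting morphisms of $f(G)$). These are exactly your $\gamma$ for $\phi=\pi$ and the map induced by your $\pi'$. Routing the argument through the universal property of Theorem~\ref{coker}(b) only spares you the tensor-algebra bookkeeping and the paper's check that the conjugation does not depend on the chosen connecting morphisms.

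One correction: for non-composable $g_1,g_2$, birelative multiplicativity reduces to the vacuous identity $0=0$, not to $\phi(g_1)\phi(g_2)=0$. The latter, combined with invertibility, would force $H''=0$. Your first bullet in fact verifies the correct version. Together with your conjugation argument for composable pairs whose endpoints lie outside $R$, this already settles both worries in your final paragraph.
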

        \begin{proof}
            We compute the cokernel directly as our results on the structure of $H$ do not apply in this general situation. The cokernel $H$ is the quotient of $T^{\bullet}(\bk[G'])$ with the following identifications:
            \begin{align}
                [g_1'g_2'] &= \sum_{X\in\ob(G')} [g_1'S(\id_{X})][\id_{X}g_2'] = [g_1'\id_{s(g_1')}][\id_{s(g_1')}g_2] = \begin{cases}
                    [g_1'][g_2'] & s(g_1') = t(g_2')\\
                    0 & s(g_1')\neq t(g_2').
                \end{cases},\label{Sid-groupoid}\\
                [g_1'g_2'] &= \sum_{X\in\ob(G')} [g_1'\id_{X}][S(\id_{X})g_2'] = [g_1'\id_{s(g_1')}][\id_{s(g_1')}g_2'] = \begin{cases}
                    [g_1'][g_2'] & s(g_1') = t(g_2')\\
                    0 & s(g_1')\neq t(g_2').
                \end{cases},\nonumber\\
                [f(g)] &= \epsilon_{\bk[G]}(g)1_H = 1_H.\label{eps-groupoid}
            \end{align}
            for any $g_1',g_2'\in G'$ and $g\in G$. Relation \eqref{Sid-groupoid} implies that $[g'_1][g'_2][(g'_1\cdot_{G'} g'_2)^{-1}] = 1$ in this quotient for composable $g'_1, g'_2\in G'$. Relation \eqref{eps-groupoid} implies that $[f(G)] = \{1\}$ in the quotient as well.
            
            Relation \eqref{Sid-groupoid} implies that $[g'][(g')^{-1}] = 1$, and Relation \eqref{eps-groupoid} implies that $[\id_X] = 1$ for any $X\in\ob(G')$. Moreover, if there are morphisms $f(g_1)\in\Hom_{f(G)}(X'\to X), f(g_2)\in\Hom_{f(G)}(Y'\to Y),$ and $g'\in \Hom_{G'}(X'\to Y')$, then $[f(g_1)g'f(g_2^{-1})] = [g']$. Thus, any element in $\Hom_{G'}(X'\to Y')$ is identified with an element in $\Hom_{G'}(X\to Y)$, where $X$ and $Y$ are representatives of the isomorphism classes of $X'$ and $Y'$ respectively.  It follows that the map
            $$\bk\left[\frac{\displaystyle\bigsqcup_{X,Y\in R} \mathrm{FreeGrp}(\Hom_{G'}(X, Y))}{\displaystyle\ncl\left(\left\langle \bigcup_{X,Y\in R} \Hom_{f(G)}(X, Y), g'_1g'_2(g'_1\cdot_{G'} g'_2)^{-1}\text{ composable } g'_1, g'_2\in G'\right\rangle\right)}\right]\to H$$
            generated by $g'\mapsto [g']$ is a well-defined surjective algebra homomorphism. 
            
            If $f(g)\in \Hom_{f(G)}(X'\to Y')$, let $X, Y\in R$ be the representatives so that $X\cong X'$, and $Y\cong X'$. We define an inverse map
            $$H\to \bk\left[\frac{\displaystyle\bigsqcup_{X,Y\in R} \mathrm{FreeGrp}(\Hom_{G'}(X, Y))}{\displaystyle\ncl\left(\left\langle \bigcup_{X,Y\in R} \Hom_{f(G)}(X, Y), g'_1g'_2(g'_1\cdot_{G'} g'_2)^{-1}\text{ composable } g'_1, g'_2\in G'\right\rangle\right)}\right]$$
            generated, for $g'\in\Hom_{G'}(X'\to Y')$ by $[g']\mapsto f(g_1)g'f(g_2)^{-1}$, where $f(g_1)\in \Hom_{f(G)}(X'\to X)$ and $f(g_2)\in \Hom_{f(G)}(Y'\to Y)$ are chosen arbitrarily. Note that this choice does not change the image under this map because, if $f(\hat g_1)\in \Hom_{f(G)}(X'\to X)$ and $f(\hat g_2)\in \Hom_{f(G)}(Y'\to Y)$ are different choices, we have
            $$f(g_1)g'f(g_2)^{-1} = f(g_1)f(\hat g_1)^{-1}f(\hat g_1)g'f(\hat g_2)^{-1}f(\hat g_2)f(g_2)^{-1} = f(\hat g_1)g'f(\hat g_2)^{-1},$$
            where the last equality follows from $f(g_1)f(\hat g_1)^{-1}\in \Hom_{f(G)}(X\to X)$ and $f(\hat g_2)f(g_2)^{-1}\in \Hom_{f(G)}(Y\to Y)$, so they are the identity in the quotient. It is easy to verify that this map preserves the relations of $H$, so generates a well-defined algebra homomorphism. By the above argument, this map is invertible. It is obvious that this preserves comultiplication and antipode, so this is a Hopf algebra isomorphism.
        \end{proof}

        \begin{corollary}\label{groupoid-cokernel}
            If $f:G\to G'$ is a homomorphism of groupoids and $f(G)$ is a subconnected subgroupoid of $G'$. Let $R$ be a set of representatives of the connected components of $G'$. The cokernel Hopf algebra $H$ of $\bk[f]:\bk[G]\to\bk[G']$ is 
            $$H\cong \bk\left[\bigsqcup_{X\in R} \Aut_{G'}(X) / \ncl_{\Aut_{G'}(X)}(\Aut_{f(G)}(X))\right],$$
            where $\bigsqcup$ denotes free product and $\ncl_{\Aut_{G'}(X)}(\Aut_{f(G)}(X))$ denotes the normal closure of $\Aut_{f(G)}(X)\leq \Aut_{G'}(X)$.
        \end{corollary}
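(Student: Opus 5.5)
The plan is to deduce this from the preceding theorem on wide groupoid homomorphisms by simplifying its group presentation under the subconnectedness hypothesis. Since $f(G)$ is subconnected in $G'$, its connected components coincide with those of $G'$. So the set $R$ of representatives of components of $G'$ is also a valid set of representatives for the components of $f(G)$, and the previous theorem applies with this $R$. Moreover, two representatives $X\neq Y$ in $R$ lie in different components of $G'$, so $\Hom_{G'}(X,Y)=\emptyset$ for $X\neq Y$. The free product in the preceding theorem therefore collapses to $\bigsqcup_{X\in R}\mathrm{FreeGrp}(\Aut_{G'}(X))$.

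Next I would analyze the normal closure. The relations $g'_1g'_2(g'_1\cdot_{G'}g'_2)^{-1}$ for composable $g_1',g_2'$ are a priori indexed by all morphisms of $G'$. However, the generators that survive are only morphisms in $\Aut_{G'}(X)$ with $X\in R$, and for such morphisms composability forces both to lie in the same $\Aut_{G'}(X)$. I would read the relation set as restricted to generators present in the free product, which is how the preceding proof identified general morphisms with ones between representatives. Under that reading, imposing these relations on $\mathrm{FreeGrp}(\Aut_{G'}(X))$ yields exactly $\Aut_{G'}(X)$, so the group becomes $\bigsqcup_{X\in R}\Aut_{G'}(X)$.

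The remaining relations kill $\Aut_{f(G)}(X)$ for each $X\in R$, and we then pass to the normal closure in the free product. Here I would use the standard fact that a free product of quotients is a quotient of the free product: $\bigl(\bigsqcup_X K_X\bigr)/\ncl\bigl(\bigcup_X N_X\bigr)\cong\bigsqcup_X K_X/\ncl_{K_X}(N_X)$ for subgroups $N_X\le K_X$. I would verify this by comparing universal properties, since both sides represent families of homomorphisms $K_X\to\Gamma$ that kill $N_X$. This gives the claimed group, and taking group algebras gives the isomorphism of Hopf algebras, because the previous theorem already identifies $H$ with a group Hopf algebra.

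The main obstacle is the bookkeeping in the first reduction. I need to confirm that the relations $g'_1g'_2(g'_1g'_2)^{-1}$ involving morphisms outside $\bigcup_{X\in R}\Aut_{G'}(X)$ contribute nothing beyond the group law in each $\Aut_{G'}(X)$. If the earlier presentation is read literally, with such morphisms absent from the free product, this needs care. The fallback is to redo the identification explicitly. For $g'\in\Hom_{G'}(X'\to Y')$ I would fix morphisms $f(g_{X'})\colon X'\to X$ in $f(G)$, which exist by subconnectedness, and map $[g']\mapsto f(g_{Y'})g'f(g_{X'})^{-1}\in\Aut_{G'}(X)$, noting $X=Y$. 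I would then check that this respects composition and that the relations are independent of the choice modulo $\ncl(\Aut_{f(G)}(X))$, exactly as in the preceding proof.
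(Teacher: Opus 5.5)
Your proposal is correct and takes the same route as the paper, which states this result as an immediate consequence of the preceding theorem: under subconnectedness, distinct representatives in $R$ have no morphisms between them, so the presentation becomes a free product of the groups $\Aut_{G'}(X)$, each taken modulo the normal closure of $\Aut_{f(G)}(X)$. Your fallback transport map $[g']\mapsto f(g_{Y'})g'f(g_{X'})^{-1}$ is the same identification the paper uses in its proof of the theorem, and you write the composition order correctly there.
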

        A particular case of Corollary \ref{groupoid-cokernel} is, if $f(G)$ is a normal connected subgroupoid of $G$, then
        $$H\cong \bk[\Aut_{G'}(X) / \Aut_{f(G)}(X)]$$
        for any $X\in\ob(G')$. If $G$ is a groupoid, then the map $\bk[G]$ is connected if and only if $G$ is connected, so this case is of the most importance to us. Our notion of subconnected is only used in Corollary \ref{groupoid-cokernel}.

        \begin{remark}
        Given a groupoid $G'$ with finitely many objects and a Hopf algebra $H$, we can characterize Hopf maps $\pi:\bk[G']\to H$ for which $\im\pi$ generates $H$ as well as their kernels. Note that such a map would necessarily have $\Delta(\pi(g')) = \pi(g')\otimes \pi(g')$. Since $\bk[G']$ is generated by $g'\in G'$, $H$ is generated by the grouplikes $\pi(g')$ and hence is a group algebra $H = \bk[Q]$. For $g_1',g_2'\in G'$ composable, 
        $$\pi(g_1'g_2') = \sum_{x\in\ob(G')}\pi(g_1'\id_x)\pi(\id_xg_2') = \pi(g_1'\id_{s(g_1')})\pi(\id_{s(g_1')}g_2') = \pi(g_1')\pi(g_2').$$
        Moreover, $\pi(\id_x)=\epsilon(\id_x)e_Q = e_Q$. In particular, a surjective Hopf map $\pi$ is the linear extension of a surjective groupoid homomorphism $G'\to Q.$ Moreover, any groupoid homomorphism gives rise to a Hopf map.
        
        By Theorem \ref{ker}, the kernel of $\pi$ is given by 
        $$\{a\in \bk[G']\mid a^{(1)}\otimes \pi(xa^{(2)}y)\otimes a^{(3)} = a^{(1)}\otimes \epsilon_{\bk[G'],H}(xa^{(2)}y)\otimes a^{(3)},\forall x,y\in \bk[G']_{\min}\}.$$
        Assuming $a = \sum c_{g'} g'$ for some $g'\in G'$ and $c_{g'}\in \bk$, we have that $a$ is in the kernel of $\pi$ if and only if, for all $X, Y\in\ob(G')$,
        $$\sum c_g g\otimes \pi(\id_X g\id_Y)\otimes g = \sum c_g g\otimes \epsilon_{\bk[G'],H}(\id_X g\id_Y)\otimes g.$$
        If $X\neq t(g)$ or $Y\neq s(g)$, then both sides of the above expression are zero. Otherwise, $\id_X g\id_Y = g$. Thus, the above requirement simplifies to
        $$\sum c_g g\otimes \pi(g)\otimes g = \sum c_g g\otimes 1\otimes g$$
        Thus, we must have that $\pi(g) = 1_H = e_Q$. Moreover, any $g\in G'$ which satisfies this condition is in the kernel. In particular, the kernel of $\pi$ is given by $\spa\{g\in G\mid \pi(g)=e_Q\}.$ The kernel of $\pi$ is therefore a groupoid algebra $\bk[G]$ because it has a basis $G$ which is a subset of $G'$ and is closed under multiplication.
        \end{remark}

        We arrive at the desired result of this section, a condition on a subgroupoid for which the inclusion induces an exact sequence of tensor categories. Note that, for a connected subgroupoid $G$, $\Rep(G)\cong \Rep(\Aut_G(X))$ for any $x\in\ob(G)$. One may show that the following result does not really introduce any new exact sequences of tensor categories beyond those generated by groups.
        \begin{theorem}
            Let $G'$ be a connected finite groupoid and let $G\unlhd G'$ be a normal connected subgroupoid. Then, for any $X\in\ob(G')$ the sequence of tensor categories
            $$\Rep(\Aut_{G'}(X) / \Aut_{G}(X))\to \Rep(G')\to \Rep(G)$$
            is exact.
        \end{theorem}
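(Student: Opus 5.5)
The plan is to apply Theorem \ref{exact-seq-of-rep-cats} to the inclusion $k=\iota:\bk[G]\incl\bk[G']$ and then identify the cokernel through Corollary \ref{groupoid-cokernel}. Since $G$ is normal it is in particular wide, so by the lemma on groupoid homomorphisms $\iota$ is a weak Hopf algebra inclusion. Both $G$ and $G'$ are connected, so $\bk[G]$ and $\bk[G']$ are connected weak Hopf algebras, which gives condition (a). Because $G$ is connected and wide, it is subconnected in $G'$, and there is a single representative $X$. Corollary \ref{groupoid-cokernel} then gives $H\cong\bk[\Aut_{G'}(X)/\ncl(\Aut_G(X))]$. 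Normality of $G$ makes $\Aut_G(X)$ normal in $\Aut_{G'}(X)$: for $g'\in\Aut_{G'}(X)$ and $g\in\Aut_G(X)$ we have $g'g=\hat g g'$, so $g'g g'^{-1}\in G$. Hence $H\cong\bk[\Aut_{G'}(X)/\Aut_G(X)]$, which is condition (c).

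For condition (d), finiteness of $G'$ makes $\bk[G']$ finite-dimensional, hence finitely generated over $\bk[G]$. For purity I would show that $\bk[G']$ is free as a right $\bk[G]$-module, or else fall back on $\bk[G]$ being von Neumann regular. Freeness should follow from a coset decomposition. Each $g'\in G'$ can be written as $g'=h g$ with $h$ a chosen representative and $g\in G$. Fix $Y=s(g')$; by connectedness of $G$ the morphisms $g'\,G$ range over morphisms into $t(g')$ from every object. So $G'$ decomposes as a disjoint union of right cosets $hG$, each of which is in bijection with $\id_{s(h)}G$. Therefore $\bk[G']\cong\bigoplus_h h\,\bk[G]$, and each summand $h\bk[G]\cong \id_{s(h)}\bk[G]$ is a projective, not obviously free, right module. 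To get freeness, I would group representatives so that their sources cover every object equally often. Normality and connectedness make the coset count per source constant, namely $|\Aut_{G'}(X)/\Aut_G(X)|$ times the number of target objects. Then $\bigoplus_Y\id_Y\bk[G]=\bk[G]$ gives freeness. If this grouping turns out to be delicate, purity still follows because direct summands of flat modules are flat, so a projective $\bk[G']$ already satisfies Lemma (b)/(c).

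The main obstacle is condition (b), Inv-normality. I would verify it through Theorem \ref{full-commute-with-At}, taking $B=k(A)V$. Here $\bk[G]_t^+$ is spanned by elements $g-\id_{t(g)}$ for $g\in G$. Fix $X$ and a set $T\subseteq\Aut_{G'}(X)$ of coset representatives, choose for each object $Y$ a morphism $c_Y\in G$ from $X$ to $Y$, and let $V$ be spanned by the elements $c_Y\tau c_Z^{-1}$ summed appropriately, or more simply by $\{\tau\,c_Z^{-1}\}$. Any $g'$ factors as $g\cdot\tau\cdot c_Z^{-1}$ with $g\in G$, so $B=k(A)V$. The key computation is $(g-\id_{t(g)})v$: using normality, $g v = v\hat g$ when composable, and $\id_{t(g)}v=v\,\id_{s(v)}$ when $t(g)=t(v)$, both of which lie in $B\,k(A_t^+)$. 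The subtle part is choosing $V$ so that the non-composable terms, where $gv=0$ but $\id_{t(g)}v\neq0$ or the reverse, also lie in $B\,k(A_t^+)$. Since $s(g)=t(g)$ is required in those cases, I expect the cancellation to work out when $V$ consists of morphisms themselves, and that is what I would check carefully.

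If this verification fails, I would instead use Proposition \ref{Inv-normal-whole-cat}, at least in characteristic not dividing the group order, and check the equality $B\cdot\Inv_A(k^*(B))=k(A)\cdot\Inv_A(k^*(B))$ directly on the regular module.

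Once all four conditions hold, Theorem \ref{exact-seq-of-rep-cats} gives exactness of $\Rep(H)\to\Rep(\bk[G'])\to\Rep(\bk[G])$, which is the claimed sequence.
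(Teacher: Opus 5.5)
There is a genuine gap in condition (b). Your handling of (a), (c) and (d) follows the paper, which asserts freeness and invokes Corollary \ref{groupoid-cokernel}. But Inv-normality, which you rightly call the main obstacle, is left unproved, and the choice of $V$ you lean toward provably fails.

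Take your $V=\spa\{\tau c_Z^{-1}\}$, or any $V$ containing a morphism $v\in G'$, and suppose $G'$ has at least two objects. By connectedness of $G$ there is $g\in G$ with $s(g)=t(v)\neq t(g)$. Then $(g-\id_{t(g)})v=gv$ is a single morphism. Let $\phi$ be the linear map from $\bk[G']$ to the vector space with basis the cosets $g'G=\{g'h\mid h\in G,\ t(h)=s(g')\}$, given by $\phi(g')=g'G$. For $x\in G'$ and $h\in G$ we have $\phi(x(h-\id_{t(h)}))=0$: either $xh$ and $x$ lie in the same coset, or both products vanish. So $\phi$ kills $\bk[G']\,\bk[G]_t^+$, but $\phi(gv)\neq 0$, and hence $k(A_t^+)V\not\subseteq Bk(A_t^+)$. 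Your remark that ``$s(g)=t(g)$ is required in those cases'' is backwards. The non-composable terms occur exactly when $s(g)\neq t(g)$, and that is where a lone morphism leaves an uncancelled term. The fallback through Proposition \ref{Inv-normal-whole-cat} needs $\bk[G']$ semisimple, so it cannot give the theorem over an arbitrary field. You also do not carry out that check.

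The missing idea is to make each element of $V$ see every object at once, which is what ``summed appropriately'' would have had to mean. Take $V$ spanned by $v_\tau=\sum_{Y\in\ob(G')}c_Y\tau c_Y^{-1}$, with $\tau\in\Aut_{G'}(X)$ and fixed $c_Y\in\Hom_G(X\to Y)$; this is the paper's $V$. For $g\in G$, both $gv_\tau=gc_{s(g)}\tau c_{s(g)}^{-1}$ and $\id_{t(g)}v_\tau=c_{t(g)}\tau c_{t(g)}^{-1}$ survive. Normality applied to $c_{t(g)}^{-1}gc_{s(g)}\in\Aut_G(X)$ gives $\hat g\in\Aut_G(X)$ with $c_{t(g)}^{-1}gc_{s(g)}\tau=\tau\hat g$. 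Therefore
$$(g-\id_{t(g)})v_\tau=c_{t(g)}\tau\bigl(\hat g\, c_{s(g)}^{-1}-c_{t(g)}^{-1}\bigr)\in \bk[G']\,\bk[G]_t^+,$$
since both terms in the bracket have target $X$. Also, for $g':Z\to W$ we have $g'=(c_Wc_Z^{-1})v_\tau$ with $\tau=c_W^{-1}g'c_Z$, so $\bk[G']=\bk[G]V$.

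A smaller point on (d): flatness of $\bk[G']$ is not what purity requires. You need the cokernel of $\bk[G]\hookrightarrow\bk[G']$ to be flat. Your coset decomposition does give this, because $G$ is itself a union of cosets, so $\bk[G]$ is a direct summand with projective complement. Freeness also needs no careful grouping: left multiplication by a morphism of $G$ gives $\id_Y\bk[G]\cong\id_{Y'}\bk[G]$, and the number of cosets is a multiple of $|\ob(G')|$. The von Neumann regular fallback fails in modular characteristic.
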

        \begin{proof}
            First, we prove Inv-normality. Let 
            $$V = \spa\{\sum_{X\in\ob(G')} h_Xg'h_X^{-1}\mid g'\in \End_{G'}(Y), h_X\in\Hom_G(Y\to X), X, Y\in\ob(G')\}.$$ 
            Observe that any $g'\in G'$ may be expressed as
            $$g' = \id_{t(g')}\left(\id_{t(g')}g'\id_{t(g')}^{-1} + \sum_{X\neq t(g')} h_Xg'h_X^{-1}\right),$$
            regardless of our choices of $h_X$, so $\bk[G'] = \bk[G]V$. Moreover, $\bk[G]_t^+ = \spa\{g - \id_{t(g)} | g\in G\}$. Let $g\in G$ and $\sum h_Xg'h_X^{-1}\in V$. Observe that $h_{t(g)}^{-1}gh_{s(g)}\in \End_{G}(t(g'))$, so, by normality, there is $h\in\End_G(t(g'))$ such that $h_{t(g)}^{-1}gh_{s(g)}g' = g'h$. In particular,
            $$(g - \id_{t(g)})\sum_{X\in\ob(G')} h_Xg'h_X^{-1} = h_{t(g)}h_{t(g)}^{-1}gh_{s(g)}g'h_{s(g)}^{-1} - h_{t(g)}g'h_{t(g)}^{-1} = h_{t(g)}g'(hh_{s(g)}^{-1} - h_{t(g)}^{-1}).$$
            This last term is a product of an element $h_{t(g)}g'\in \bk[G']$ and an element $hh_{s(g)}^{-1} - h_{t(g)}^{-1}\in\bk[G]_t^+$, so, by Theorem \ref{full-commute-with-At}, the inclusion $\iota$ is Inv-normal.
            
            Note that, if $G\leq G'$, then $\bk[G']$ is a free (right) $\bk[G]$-module, so the inclusion is in particular pure. By Corollary \ref{groupoid-cokernel}, $H\cong \bk[\Aut_{G'}(X) / \Aut_{G}(X)]$, so the result follows.
        \end{proof}

    \subsection{Formal ribbon extension}
    The notion of a ribbon Hopf algebra was introduced in \cite{ReshetikhinTuraev1990}. This article showed that every quasitriangular Hopf algebra $A$ may be embedded into a canonical ribbon Hopf algebra $\tilde A$, which we dub the \textit{formal ribbon extension of $A$}. This terminology is motivated by the fact that $\Rep(\tilde A)$ is a braided extension of $\Rep(A)$, which is more or less due to Sommerh\"auser \cite{Andruskiewitsch2014hopftensor}. The formal ribbon extension is also defined for weak Hopf algebras \cite{NIKSHYCH200391}. In this section, we show that $\Rep(\tilde A)$ is a braided extension of $\Rep(A)$ in the weak Hopf algebra case using our theory. Along the way, we characterize when $\pi^*$ is braided for a Hopf map $\pi:A\to H$.
    \begin{definition}
        Let $A$ be a weak Hopf algebra. We say that $A$ is \textit{quasitriangular} if there are $R\in \Delta^{\op}(1)(A\otimes A)\Delta(1)$ and $\bar R\in \Delta(1)(A\otimes A)\Delta^{\op}(1)$ such that $R\bar R = \Delta^{op}(1)$ and $R\bar R = \Delta(1)$ and $R$ satisfies the following equations: for any $a\in A$,
        \begin{align*}
            \Delta^{\op}(a)R &= R\Delta(a),\\
            (\id\otimes\Delta)(R) &= R_{13}R_{12}\\
            (\Delta\otimes\id)(R) &= R_{13}R_{23},
        \end{align*}
        where $R_{12} = R\otimes 1$, $R_{13} = R^{(1)}\otimes 1\otimes R^{(2)}$, and $R_{23} = 1\otimes R$.
    \end{definition}
    \begin{lemma}[\cite{NIKSHYCH200391}]
        Let $A$ be a quasitriangular weak Hopf algebra. We have the following identities for the $R$-matrix $R$:
        \begin{align*}
            R\Delta(1) &= R, & \Delta^{\op}(1)R &= R, \\
            (\epsilon_s\otimes\id)(R) &= \Delta(1), & 
            (\epsilon_t\otimes\id)(R) &= \Delta^{\op}(1),\\
            (\id\otimes\epsilon_s)(R) &= S(1^{(2)})\otimes 1^{(1)}, & 
            (\id\otimes\epsilon_t)(R) &= S(1^{(1)})\otimes 1^{(2)}.
        \end{align*}
        For any $x\in A_t$, we have
        \begin{align*}
            (1\otimes x)R &= R(x\otimes 1), & (x\otimes 1)R &= (1\otimes S(x))R, & R(1\otimes x) &= R(S(x)\otimes 1).
        \end{align*}
    \end{lemma}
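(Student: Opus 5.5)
The plan is to derive everything from three ingredients: the support conditions $R\in\Delta^{\op}(1)(A\otimes A)\Delta(1)$ and $\bar R\in\Delta(1)(A\otimes A)\Delta^{\op}(1)$, the intertwining relation $\Delta^{\op}(a)R=R\Delta(a)$, and the two hexagon-type identities, together with the listed properties of $\epsilon_t,\epsilon_s$. The order is: first the absorption identities, then the $A_t$-commutation identities, and last the counit-type identities, which are the real content.

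\emph{Absorption identities.} $\Delta(1)$ and $\Delta^{\op}(1)$ are idempotents, since $\Delta$ is multiplicative and $1\cdot 1=1$. Writing $R=\Delta^{\op}(1)X\Delta(1)$ then immediately gives $R\Delta(1)=R$ and $\Delta^{\op}(1)R=R$.

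\emph{Commutation with $x\in A_t$.} By property (6a), $\Delta(x)=1^{(1)}x\otimes 1^{(2)}$. We have $\Delta(1)\in A_s\otimes A_t$ and $A_s$ commutes with $A_t$, so $1^{(1)}x\otimes 1^{(2)}=(x\otimes 1)\Delta(1)=\Delta(1)(x\otimes 1)$, and likewise $\Delta^{\op}(x)=(1\otimes x)\Delta^{\op}(1)$. Substituting into $\Delta^{\op}(x)R=R\Delta(x)$ and using the absorption identities yields $(1\otimes x)R=R(x\otimes 1)$. For the other two identities I would specialize property (10b) to $a_1=1$, which gives $S(x)1^{(1)}\otimes 1^{(2)}=1^{(1)}\otimes x1^{(2)}$. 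This lets $x$ be moved from one leg of $\Delta(1)$ to $S(x)$ on the other leg. Applying it to $R=R\Delta(1)$ gives $R(1\otimes x)=R(S(x)\otimes 1)$. Applying the flipped version to $R=\Delta^{\op}(1)R$ gives $(x\otimes 1)R=(1\otimes S(x))R$.

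\emph{Counit-type identities.} These are the main obstacle. For $(\epsilon_s\otimes\id)(R)=\Delta(1)$, I would apply $\epsilon_s\otimes\id\otimes\id$ to $(\Delta\otimes\id)(R)=R_{13}R_{23}$. On the left, property (7b) gives $1^{(1)}\otimes R^{(1)}1^{(2)}\otimes R^{(2)}$. On the right we get $\epsilon_s(R^{(1)})\otimes r^{(1)}\otimes R^{(2)}r^{(2)}$. Next I would multiply on the right by $1\otimes\bar R$ and use $R\bar R=\Delta^{\op}(1)$ to cancel the second copy of $R$ from both sides. The resulting $\Delta(1)$- and $\Delta^{\op}(1)$-factors are then absorbed using the previous step together with $\Delta(1)\in A_s\otimes A_t$. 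Finally I would apply $\id\otimes\epsilon\otimes\id$ and use $\epsilon(y\,z)=\epsilon(yz)$-type identities (4a), (4b) to collapse to $(\epsilon_s\otimes\id)(R)=\Delta(1)$.

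The identity for $\epsilon_t$ should follow by the same argument starting from $(\id\otimes\Delta)(R)=R_{13}R_{12}$ with property (7a), and the second-leg versions follow symmetrically. Where convenient I would shortcut these using $S\circ\epsilon_t=\epsilon_s\circ\epsilon_t$ from property (9) and $\Delta(1)=1^{(1)}\otimes 1^{(2)}$ with $S(1^{(1)})\otimes 1^{(2)}$ the separability idempotent of $A_t$. I expect the bookkeeping of the idempotent factors when cancelling $R$ against $\bar R$ to be the delicate part. If it becomes unwieldy, I would instead evaluate both sides against the unit module $A_t$, using that $\epsilon_s$, $\epsilon_t$ describe the action on the tensor unit.
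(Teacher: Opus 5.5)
The paper gives no proof of this lemma; it is quoted from Nikshych--Turaev--Vainerman, so your argument has to stand on its own.

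\textbf{What works.} Your absorption step is correct, as are your derivations of $(1\otimes x)R=R(x\otimes 1)$ and $(x\otimes 1)R=(1\otimes S(x))R$. The cancellation strategy also yields $(\epsilon_s\otimes\id)(R)=\Delta(1)$, provided you first use $R(1^{(2)}\otimes 1)=(1\otimes 1^{(2)})R$ before cancelling against $\bar R$. For $(\epsilon_t\otimes\id)(R)=\Delta^{\op}(1)$, the right starting point is again $(\Delta\otimes\id)(R)=R_{13}R_{23}$, applying $\id\otimes\epsilon_t\otimes\id$ and cancelling with $\bar R_{13}$ on the left; it is not $(\id\otimes\Delta)(R)$.

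\textbf{The genuine gap.} It is the step ``applying it to $R=R\Delta(1)$ gives $R(1\otimes x)=R(S(x)\otimes 1)$''. The relation $S(x)1^{(1)}\otimes 1^{(2)}=1^{(1)}\otimes x1^{(2)}$ concerns multiplying $\Delta(1)$ on the \emph{left}. But $R(1\otimes x)=R\,\Delta(1)(1\otimes x)=R(1^{(1)}\otimes 1^{(2)}x)$ multiplies it on the \emph{right}, and $1^{(2)}x\neq x1^{(2)}$ in general because $A_t$ need not be commutative. The right-multiplication rule is (10a) with $a_1=1$: $\Delta(1)(y\otimes 1)=\Delta(1)(1\otimes S(y))$ for $y\in A_s$. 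This gives $R(1\otimes x)=R(S^{-1}(x)\otimes 1)$ and $R(S(x)\otimes 1)=R(1\otimes S^2(x))$. The map $x\mapsto R(1\otimes x)$ is injective on $A_t$: multiply by $\bar R$ on the left and apply $\epsilon\otimes\id$. Hence the displayed relation is equivalent to $S^2|_{A_t}=\id$, so it cannot be proved as written. What is true is $R(y\otimes 1)=R(1\otimes S(y))$ for $y\in A_s$.

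\textbf{The same issue in the second-leg identities.} Your claim that they ``follow symmetrically'' fails for the same left/right reason. Run your argument on $(\id\otimes\Delta)(R)=R_{13}R_{12}$: apply $\id\otimes\id\otimes\epsilon_t$, rewrite $(1\otimes 1^{(1)})R=(S^{-1}(1^{(1)})\otimes 1)R$ via (10b), cancel $R_{12}$ with $\bar R_{12}$ on the right, and apply $\id\otimes\epsilon\otimes\id$. The result is $(\id\otimes\epsilon_t)(R)=S^{-1}(1^{(1)})\otimes 1^{(2)}=1^{(2)}\otimes S(1^{(1)})$, and likewise $(\id\otimes\epsilon_s)(R)=1^{(1)}\otimes S(1^{(2)})$. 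These are the tensor flips of the displayed formulas. Your fallback of testing against the unit object confirms this: $c_{M,\one}=l_M^{-1}\circ r_M$ with $r_M(m\otimes z)=S^{-1}(z)m$.

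The displayed versions are incompatible with $(\epsilon_t\otimes\id)(R)=\Delta^{\op}(1)$ unless $S^2|_{A_t}=\id$. Computing $(\epsilon_t\otimes\epsilon_t)(R)$ one way gives $1^{(2)}\otimes S(1^{(1)})=S(1^{(1)})\otimes S^2(1^{(2)})$; the other way gives $S(1^{(1)})\otimes 1^{(2)}$. Quasitriangular examples with $S^2|_{A_t}\neq\id$ do exist. One is the minimal weak Hopf algebra $M_n(\bk)\otimes M_n(\bk)^{\op}$ built from a non-tracial Frobenius functional. There $S^2|_{A_t}$ is the nontrivial Nakayama automorphism, and its representation category is $\Vec$, hence braided.

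\textbf{Missing ingredient.} You need to track on which side $\Delta(1)$ is multiplied, using (10a) for right multiplication and (10b) for left. With that bookkeeping your strategy proves the other six identities, together with corrected forms of these three.
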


    In a quasitriangular weak Hopf algebra, the \textit{Drinfeld element} $u = S(R^{(2)})R^{(1)}$ is an invertible element which implemeents the square of the antipode: $S^2(a) = uau^{-1}$. Moreover, the element $uS(u)$ is central and $uS(u)^{-1}$ is a group-like which implements the fourth power of the antipode.

    \begin{proposition}\label{pi-braided}
        Let $A$ be a quasitriangular weak Hopf algebra and $H$ be a quasitriangular Hopf algebra. Suppose $\pi:A\to H$ is a Hopf map. Then, $\pi^*$ is braided if and only if $\pi(R_A^{(1)})\otimes \pi(xR_A^{(2)}) = \epsilon_A(x)R_H$ for all $x\in A_t$.
    \end{proposition}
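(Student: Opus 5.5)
The plan is to write out both sides of the braiding compatibility for $\pi^*$ with the tensorator $J_{M,N}\colon x\trel(m\otimes n)\mapsto 1_A^{(1)}(x\trel m)\otimes 1_A^{(2)}(1\trel n)$ of Theorem \ref{tensor-functor}. We then reduce the comparison to a single identity in $A_t\otimes A_t\otimes H\otimes H$.

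Recall the two braidings. On $\Rep(A)$ the braiding is $c^A_{P,Q}(p\otimes q)=R^{(2)}q\otimes R^{(1)}p$ on $1^{(1)}P\otimes 1^{(2)}Q$; it is well defined because $R\in\Delta^{\op}(1)(A\otimes A)\Delta(1)$. On $\Rep(H)$ the braiding is $c^H_{M,N}(m\otimes n)=R_H^{(2)}n\otimes R_H^{(1)}m$. The statement ``$\pi^*$ is braided'' means
$$c^A_{\pi^*M,\pi^*N}\circ J_{M,N}=J_{N,M}\circ\pi^*(c^H_{M,N})\qquad\text{for all }M,N\in\Rep(H).$$

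\textbf{Step 1: reduce to $x=1$.} All maps in the displayed equation are $A$-linear. By Lemma \ref{hopf-quotient-is-trivial}, $x\trel(m\otimes n)=x\cdot(1\trel(m\otimes n))$ for $x\in A_t$. So it suffices to check the equation on the elements $1\trel(m\otimes n)$.

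\textbf{Step 2: compute the left side.} Since $R\Delta(1)=R$, we get $R\cdot J(1\trel(m\otimes n))=R^{(1)}(1\trel m)\otimes R^{(2)}(1\trel n)$. Using the action $a\cdot(1\trel m)=S(1^{(1)})\trel\pi(1^{(2)}a)m$, the left side becomes
$$S(1^{(1')})\trel\pi(1^{(2')}R^{(2)})n\ \otimes\ S(1^{(1)})\trel\pi(1^{(2)}R^{(1)})m .$$

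\textbf{Step 3: compute the right side.} The right side is
$$S(1^{(1)})\trel R_H^{(2)}n\ \otimes\ 1^{(2)}\trel R_H^{(1)}m .$$

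\textbf{Step 4: put both sides in the same shape.} By Lemma \ref{hopf-quotient-is-trivial}, the space $\pi^*N\otimes_A\pi^*M$ identifies with the subspace $\Delta(1)\cdot(A_t\otimes A_t)\otimes N\otimes M$. Both sides are therefore images of elements of $A_t\otimes A_t\otimes H\otimes H$ applied to $n\otimes m$. To compare them, I would move all $A_t$-coefficients onto one leg, using the identities of the Lemma from \cite{NIKSHYCH200391}:
\[
(x\otimes 1)R=(1\otimes S(x))R,\qquad (1\otimes x)R=R(x\otimes 1),
\]
together with the separability relations $xS(1^{(1)})\otimes 1^{(2)}=S(1^{(1)})\otimes 1^{(2)}x$ for $x\in A_t$. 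Combined with Proposition \ref{hopf-map-properties}(b), these should rewrite the left side as
$$S(1^{(1)})\trel \pi(R^{(2)}_A)n\ \otimes\ 1^{(2)}\trel\pi(xR^{(1)}_A)m,$$
with the $A_t$-dependence carried by a single factor $x\in A_t$ sitting inside $\pi(xR^{(1)})$ or $\pi(xR^{(2)})$.

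\textbf{Step 5: sufficiency.} Once the left side has this form, the hypothesis $\pi(R_A^{(1)})\otimes\pi(xR_A^{(2)})=\epsilon_A(x)R_H$ (after the flip) turns it into the right side. The factor $\epsilon_A(x)$ is absorbed because $\sum S(1^{(1)})\epsilon(\cdot)\ldots$ collapses via the identity $S(1^{(1)})\epsilon(1^{(2)}a)=\epsilon_t(a)$.

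\textbf{Step 6: necessity.} Take $M=N=H$ with the regular action and evaluate both sides on $1\trel(1_H\otimes 1_H)$. Then apply $\epsilon_A\otimes\id$ to the $A_t$-legs; the injectivity of this extraction comes from the linear isomorphism $\pi^*(M)\cong A_t\otimes M$ of Lemma \ref{hopf-quotient-is-trivial}. To recover the condition for each $x\in A_t$, rather than only $x=1$, I would act first by $x$ and again use $A$-linearity plus Proposition \ref{hopf-map-properties}(a).

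The main obstacle is Step 4: the Sweedler bookkeeping of the several copies of $\Delta(1)$. In particular, one must verify that $\pi(1^{(2)}R^{(1)})\otimes\pi(1^{(2')}R^{(2)})$, paired with $S(1^{(1)})\otimes S(1^{(1')})$, collapses to exactly the expression in the statement. I would try first to absorb one copy of $\Delta(1)$ into $R$ using $\Delta^{\op}(1)R=R$, and then transfer the remaining $A_t$ element across $R$ with $(x\otimes1)R=(1\otimes S(x))R$.
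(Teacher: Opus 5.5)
Your overall strategy matches the paper's. You reduce by $A$-linearity to elements $1\trel(m\otimes n)$, push both sides through the tensorator, compare in $A_t\otimes A_t\otimes H\otimes H$, and get necessity by applying $\epsilon$ to the $A_t$-legs. Steps 1--3 are correct. The gap is Step 4, which carries the entire sufficiency direction.

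\textbf{What goes wrong in Step 4.} The normal form you predict is not meaningful as written. After Step 1 there is no free $x$. Moreover, the $A_t$-legs of the left side genuinely depend on the $A_t$-elements inside $\pi$, so they cannot be brought to the shape $S(1^{(1)})\otimes 1^{(2)}$ of the right side before the hypothesis is used. Also, in your form $x$ multiplies $R^{(1)}$ while the hypothesis has it on $R^{(2)}$; flipping tensor factors does not change that. The move you propose first does not apply either. In $\pi(1^{(2')}R^{(2)})\otimes\pi(1^{(2)}R^{(1)})$ the two copies of $\Delta(1)$ are independent separability idempotents. Their other legs $S(1^{(1')})$ and $S(1^{(1)})$ are exactly the $A_t$-legs of $\pi^*N$ and $\pi^*M$. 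So $R$ is being multiplied by $1^{(2)}\otimes 1^{(2')}\in A_t\otimes A_t$, not by $\Delta^{\op}(1)$, and nothing can be absorbed.

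\textbf{The missing idea.} You need to merge the two $A_t$-elements onto a single leg of $R$, because the hypothesis only controls one such element. Use $(y\otimes 1)R=(1\otimes S(y))R$ with $y=1^{(2)}$, then the commutation of $A_s$ with $A_t$, then Proposition \ref{hopf-map-properties}(b) in the form $\pi(S(y)a)=\pi(ya)$. The left side becomes $S(1^{(1')})\trel\pi(1^{(2)}1^{(2')}R^{(2)})n\otimes S(1^{(1)})\trel\pi(R^{(1)})m$. For the right side $S(1^{(1')})\trel R_H^{(2)}n\otimes 1^{(2')}\trel R_H^{(1)}m$, substitute $1^{(2')}=S(1^{(1)})\epsilon(1^{(2)}1^{(2')})$. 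Both sides are now the same tensor $S(1^{(1')})\otimes S(1^{(1)})\otimes 1^{(2)}1^{(2')}$, with the last leg fed into $y\mapsto\pi(yR^{(2)})\otimes\pi(R^{(1)})$ and $y\mapsto\epsilon(y)R_H^{(2)}\otimes R_H^{(1)}$ respectively. The flipped hypothesis with $x=1^{(2)}1^{(2')}$ then finishes sufficiency; this is exactly the paper's computation.

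\textbf{Step 6 is fine.} Your necessity argument works directly from your Step 2 formula, without Step 4. At $x\trel(1_H\otimes 1_H)$ the left side is $R^{(2)}(1\trel 1_H)\otimes R^{(1)}(x\trel 1_H)$ and the right side is $xS(1^{(1)})\trel R_H^{(2)}\otimes 1^{(2)}\trel R_H^{(1)}$. Applying $\epsilon\otimes\id$ to both $A_t$-legs gives $\pi(R^{(2)})\otimes\pi(R^{(1)}x)=\epsilon(x)R_H^{(2)}\otimes R_H^{(1)}$. This is the stated condition once you use $R(x\otimes 1)=(1\otimes x)R$; that identity, not Proposition \ref{hopf-map-properties}(a), is what this step needs.
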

    \begin{proof}
        Observe that for any Hopf map, we have the following string of equalities:
        \begin{align*}
            &(R_A)_{21}\cdot (1_A^{(2)}\trel n)\otimes (xS(1_A^{(1)})\trel m)\\
            &= (S(1_A^{(1')})\trel \pi(1_A^{(2')}R_A^{(2)}1_A^{(2)})n)\otimes (S(1_A^{(1'')})\trel \pi(1_A^{(2'')}R_A^{(1)}xS(1_A^{(1)}))m)\\
            &= (xS(1_A^{(1')})\trel \pi(1_A^{(2')}R_A^{(2)})n)\otimes (S(1_A^{(1'')})\trel \pi(1_A^{(2'')}R_A^{(1)})m)\\
            &= (xS(1_A^{(1')})\trel \pi(1_A^{(2'')}1_A^{(2')}R_A^{(2)})n)\otimes (S(1_A^{(1'')})\trel \pi(R_A^{(1)})m).
            \intertext{We also have the following }
            x\trel (R_H)_{21}(n\otimes m)&\mapsto (xS(1_A^{(1)})\trel R_H^{(2)}n)\otimes (1_A^{(2)}\trel R_H^{(1)}m)\\
            &= (xS(1_A^{(1')})\trel \epsilon(1_A^{(2'')}1_A^{(2')})R_H^{(2)}n)\otimes (S(1_A^{(1'')})\trel R_H^{(1)}m).
        \end{align*}
        For $\pi^*$ to preserve the braiding, these two expressions must be equal. In particular, the braiding is preserved if and only if
        $$1_A^{(1)}\otimes 1_A^{(1')}\otimes \pi(1_A^{(2')}1_A^{(2)}R_A^{(2)})\otimes \pi(R_A^{(1)}) = 1_A^{(1)}\otimes 1_A^{(1')}\otimes \epsilon(1_A^{(2')}1_A^{(2)})R_H^{(2)}\otimes R_H^{(1)}.$$
        This is clearly satisfied if the given condition is satisfied. Moreover, applying $\epsilon$ to the second factor shows that, if the braiding is preserved, we must have
        $$1_A^{(1)}\otimes \pi(1_A^{(2)}R_A^{(2)})\otimes \pi(R_A^{(1)}) = 1_A^{(1)}\otimes \epsilon(1_A^{(2)})R_H.$$
        This is only possible if the given property is satisfied.
    \end{proof}
    
    \begin{corollary}
        Let $k:A\to B$ be a weak Hopf algebra homomorphism between two quasitriangular weak Hopf algebras which preserves the $R$-matrix (i.e., $(k\otimes k)(R_A) = R_B$). Let $\pi:B\to H$ denote the cokernel of $k$. Then, $H$ is cocommutative (and hence trivially quasitriangular), and $\pi^*:\Rep(H)\to \Rep(A)$ is braided.
    \end{corollary}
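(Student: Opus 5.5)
The plan has two parts: prove the braiding criterion of Proposition \ref{pi-braided} with $R_H=1\otimes 1$, then prove cocommutativity of $H$ by pushing the quasitriangularity relation $\Delta^{\op}(b)R_B=R_B\Delta(b)$ through $\pi\otimes\pi$. Both parts rest on one observation. Since $(k\otimes k)(R_A)=R_B$, every tensor leg of $R_B$ lies in $k(A)$. By Lemma \ref{targets-iso}, $B_t=k(A_t)$, $B_s=k(A_s)$ and $B_{\min}=k(A_{\min})$, so every element obtained by multiplying legs of $R_B$ by elements of $B_{\min}$ still lies in $k(A)$. On $k(A)$ the cokernel map acts by $\pi(k(a))=\epsilon_A(a)1_H$ (Diagram \eqref{coker-diagram}), so $\pi$ kills all this data down to scalars.

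\textbf{Braiding.} Take $x=k(x')\in B_t$. Then $xR_B^{(2)}=k(x'R_A^{(2)})$, and therefore
$$\pi(R_B^{(1)})\otimes\pi(xR_B^{(2)})=\epsilon(R_A^{(1)})\,\epsilon(x'R_A^{(2)})\,1\otimes 1 .$$
To evaluate the scalar, use $\epsilon\circ\epsilon_s=\epsilon$ together with the identity $(\epsilon_s\otimes\id)(R)=\Delta(1)$ from the lemma of \cite{NIKSHYCH200391}. These give $(\epsilon\otimes\id)(R_A)=(\epsilon\otimes\id)\Delta(1)=1_A$, so the scalar is $\epsilon(x')=\epsilon(x)$. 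Hence the criterion of Proposition \ref{pi-braided} holds with $R_H=1\otimes1$. Once cocommutativity is established, $(H,1\otimes 1)$ is quasitriangular, and the braiding statement follows for $\pi^*$, viewed as landing in $\Rep(B)$.

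\textbf{Cocommutativity.} By Proposition \ref{simple-surj}, when $A$ is connected it suffices to check $\Delta_H^{\op}(\pi(b))=\Delta_H(\pi(b))$ for $b\in B$. Without connectedness, $\im\pi$ still generates $H$, and $\Delta_H,\Delta_H^{\op}$ are both algebra maps, so the same reduction works. I apply $\pi\otimes\pi$, which is birelatively multiplicative by Proposition \ref{tensor-rel}(c) relative to $B_t\otimes B_t$ with idempotent $e_t\otimes e_t$, to $\Delta^{\op}(b)R_B=R_B\Delta(b)$. Relative multiplicativity splits the left side as
$$(\pi\otimes\pi)\bigl(\Delta^{\op}(b)(S(1^{(1)})\otimes S(1^{(1')}))\bigr)\cdot(\pi\otimes\pi)\bigl((1^{(2)}\otimes 1^{(2')})R_B\bigr).$$
The second factor lies in $\pi(k(A))\otimes\pi(k(A))$, so it becomes the scalar $\epsilon(1^{(2)}R^{(1)})\epsilon(1^{(2')}R^{(2)})$. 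Collapsing these scalars with the lemma's identities $(\epsilon_t\otimes\id)(R)=\Delta^{\op}(1)$ and $(\id\otimes\epsilon_t)(R)=S(1^{(1)})\otimes1^{(2)}$ should turn the left side into $(\pi\otimes\pi)(\Delta^{\op}(b))$. A symmetric argument, using the $B_s$-relative multiplicativity on the other side, should turn the right side into $(\pi\otimes\pi)(\Delta(b))$. By \eqref{pi-preserves} these two expressions are $\Delta_H^{\op}(\pi(b))$ and $\Delta_H(\pi(b))$, so they are equal.

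The main obstacle is this scalar collapse: making the leftover idempotent legs recombine exactly through $S(1^{(1)})1^{(2)}=1$ type identities. If that bookkeeping fails, I would fall back on inserting $\Delta^{\op}(1)=R_B\bar R_B$, so that $\Delta^{\op}(b)=R_B\Delta(b)\bar R_B$. I would then argue that $(\pi\otimes\pi)$ applied to $R_B(\cdot)\bar R_B$ trivializes $R_B$ and $\bar R_B$, since both have legs in $k(A)$; the same computation as in the braiding part gives $(\epsilon\otimes\epsilon)$-values equal to counits of $\Delta(1)$-terms.
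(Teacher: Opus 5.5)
Your plan is the paper's argument read through the flip $\tau$. The paper writes $(\pi\otimes\pi)\Delta(b)=(\pi\otimes\pi)(\Delta(b)R_{21})=(\pi\otimes\pi)(R_{21}\Delta^{\op}(b))=(\pi\otimes\pi)\Delta^{\op}(b)$, and your braiding check is exactly what Proposition~\ref{pi-braided} needs.

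The gap is that the collapse you leave as ``should'' is the whole content of the cocommutativity proof, and $S(1^{(1)})1^{(2)}=1$-type identities in $B\otimes B$ do not finish it. Here is how it goes through.
\begin{itemize}
\item[(i)] Carrying out your splitting, $B_t$-relative multiplicativity and $\pi\circ k=\epsilon_{A,H}$ give $\pi(b'c)=\pi(b'S(1^{(1)}))\epsilon(1^{(2)}c)=\pi(b'\epsilon_t(c))$ for $c\in k(A)$.
\item[(ii)] Then $(\epsilon_t\otimes\id)(R)=\Delta^{\op}(1)$ together with $\epsilon_t|_{B_s}=S|_{B_s}$ gives $(\epsilon_t\otimes\epsilon_t)(R)=1^{(2)}\otimes S(1^{(1)})$. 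So
\[(\pi\otimes\pi)(\Delta^{\op}(b)R_B)=\pi(b^{(2)}1^{(2)})\otimes\pi(b^{(1)}S(1^{(1)})).\]
\item[(iii)] The stray antipode is removed by a property of $\pi$, not of $B$. $B_s$-relative multiplicativity and $\pi|_{B_{\min}}=\epsilon$ give $\pi(b'x)=\pi(b'1^{(1)})\epsilon(S(1^{(2)})x)=\pi(b'S^{-1}(x))$ for $x\in B_t$. Equivalently, $\pi(b'S(y))=\pi(b'y)$ for $y\in B_s$; this is Proposition~\ref{hopf-map-properties}(b).
\item[(iv)] This turns the display into $\tau(\pi\otimes\pi)(\Delta(b)\Delta(1))=\Delta_H^{\op}(\pi(b))$.
\item[(v)] Symmetrically, for $c\in k(A)$ you have $\pi(cb')=\epsilon(c1^{(1)})\pi(S(1^{(2)})b')=\pi(\epsilon_s(c)b')$. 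Combine this with $(\epsilon_s\otimes\epsilon_s)(R)=1^{(1)}\otimes S(1^{(2)})$ and $\pi(S(x)b')=\pi(xb')$ for $x\in B_t$. Together these turn $(\pi\otimes\pi)(R_B\Delta(b))$ into $(\pi\otimes\pi)(\Delta(1)\Delta(b))=\Delta_H(\pi(b))$.
\end{itemize}

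Route the collapse through the first-leg identities $(\epsilon_t\otimes\id)(R)=\Delta^{\op}(1)$ and $(\epsilon_s\otimes\id)(R)=\Delta(1)$, as above. If you compute $(\epsilon_t\otimes\epsilon_t)(R)$ from the printed line $(\id\otimes\epsilon_t)(R)=S(1^{(1)})\otimes 1^{(2)}$, you get the unflipped $S(1^{(1)})\otimes 1^{(2)}$. The two values agree only when this idempotent is symmetric. The flipped value $1^{(2)}\otimes S(1^{(1)})$ is the one forced by $c_{\one,\one}=\id$; it also follows directly from $(1\otimes x)R=R(x\otimes 1)$ and $(\id\otimes\epsilon)(R)=1$. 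The unflipped value leaves you with $\Delta(b)\Delta^{\op}(1)=\Delta(b\,1^{(2)}1^{(1)})$, which you cannot reduce to $\Delta(b)$. That is where your bookkeeping would stall. The fallback through $R\bar R$ is unnecessary.
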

    \begin{proof}
        Observe that $R_B\in (\im k)\otimes(\im k)$, so
        \begin{align*}
            \pi(b^{(1)})\otimes\pi(b^{(2)}) &= \pi(b^{(1)}1_B^{(1)})\otimes\pi(b^{(2)}1_B^{(2)}) = \pi(b^{(1)}R_B^{(2)})\otimes\pi(b^{(2)}R_B^{(1)})\\
            &= \pi(R_B^{(2)}b^{(2)})\otimes\pi(R_B^{(1)}b^{(1)}) = \pi(1_B^{(2)}b^{(2)})\otimes\pi(1_B^{(1)}b^{(1)}) = \pi(b^{(2)})\otimes\pi(b^{(1)}).
        \end{align*}
        Thus, $\im\pi$ is cocommutative, but $\im\pi$ generates $H$, so $H$ is cocommutative. Now, observe that
        $$\pi(xR^{(1)})\otimes \pi(R^{(2)}) = \pi(x)\otimes 1_H = \epsilon(x)1_H\otimes 1_H.$$
        Thus, $\pi^*$ preserves the braiding by Proposition \ref{pi-braided}.
    \end{proof}

    \begin{definition}
        Suppose $A$ is a quasitriangular weak Hopf algebra with $R$-matrix $R\in \Delta^{\op}(1)(A\otimes A)\Delta(1)$. Then, $A$ is \textit{ribbon} with an invertible \textit{ribbon element} $\mathbf{v}\in Z(A)$ if $S(\mathbf{v}) = \mathbf{v}$ and
        \begin{align*}
            \Delta(\mathbf{v}) &= R_{21}R(\mathbf{v}\otimes\mathbf{v}).
        \end{align*}
    \end{definition}

    \begin{proposition}
        Let $A$ be a weak Hopf algebra. Then,
        \begin{enumerate}
            \item $R$-matrices on $A$ are in one-to-one correspondence with braidings on $\Rep(A)$,
            \item for a fixed $R$-matrix, ribbon elements on $A$ are in one-to-one correspondence with ribbon structures on $\Rep(A)$.
        \end{enumerate} 
    \end{proposition}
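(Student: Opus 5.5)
The plan is a Tannakian ``evaluate on the regular module'' argument, carried out for the weak tensor product $M\otimes^A N=1^{(1)}M\otimes 1^{(2)}N=\Delta(1)\cdot(M\otimes N)$.

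\emph{From $R$-matrices to braidings.} Given $R$, define
$$c_{M,N}:M\otimes^A N\to N\otimes^A M,\qquad m\otimes n\mapsto R^{(2)}n\otimes R^{(1)}m,$$
that is, $c=\tau\circ R$, where $\tau$ is the flip. First, $c$ is well defined on $\Delta(1)(M\otimes N)$ and lands in $\Delta(1)(N\otimes M)$, because $R\Delta(1)=R$ and $\Delta^{\op}(1)R=R$ by the lemma of \cite{NIKSHYCH200391}. Second, $c$ is $A$-linear because $\Delta^{\op}(a)R=R\Delta(a)$, and it is natural because it is given by the action of a fixed element. Third, $c$ is invertible with inverse $n\otimes m\mapsto \bar R^{(1)}m\otimes \bar R^{(2)}n$, using $R\bar R=\Delta^{\op}(1)$ and $\bar R R=\Delta(1)$; these are the identities on the relevant images. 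Fourth, the two hexagon axioms translate, once the associator (the identity on the images of $\Delta^2(1)$) is written out, exactly into $(\Delta\otimes\id)(R)=R_{13}R_{23}$ and $(\id\otimes\Delta)(R)=R_{13}R_{12}$.

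\emph{From braidings to $R$-matrices.} Given a braiding $c$, set
$$R:=\tau\bigl(c_{A,A}(\Delta(1))\bigr)\in A\otimes A .$$
For $m\in M$, the map $\rho_m:A\to M$, $a\mapsto am$, is $A$-linear. Naturality of $c$ applied to $\rho_m\otimes\rho_n$, evaluated at $\Delta(1)=1^{(1)}\otimes1^{(2)}$, gives $c_{M,N}(1^{(1)}m\otimes 1^{(2)}n)=R^{(2)}n\otimes R^{(1)}m$. Hence $c$ is recovered from $R$, which proves injectivity of $R\mapsto c$; conversely $c=\tau\circ R$ returns $R$, since $R\Delta(1)=R$. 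The axioms for $R$ then follow by the same evaluation trick:
\begin{itemize}
\item $R\in\Delta^{\op}(1)(A\otimes A)\Delta(1)$ holds because $c_{A,A}$ maps $A\otimes^A A$ into $A\otimes^A A$.
\item $\Delta^{\op}(a)R=R\Delta(a)$ is $A$-linearity of $c_{A,A}$ applied to $\Delta(1)$, combined with naturality under right multiplication by $a$.
\item $\bar R$ is defined from $c^{-1}_{A,A}$ in the same way.
\item The coproduct identities come from the hexagons evaluated on $A\otimes^A A\otimes^A A$ at $\Delta^2(1)$.
\end{itemize}

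\emph{Ribbon structures.} Given a ribbon element $\mathbf v$, set $\theta_M$ to be the action of $\mathbf v^{-1}$ (or of $\mathbf v$, depending on convention). Centrality of $\mathbf v$ gives naturality and $A$-linearity of $\theta$. The identity $\Delta(\mathbf v)=R_{21}R(\mathbf v\otimes\mathbf v)$ gives the balancing axiom $\theta_{M\otimes N}=c_{N,M}c_{M,N}(\theta_M\otimes\theta_N)$. The identity $S(\mathbf v)=\mathbf v$ gives $\theta_{M^*}=\theta_M^*$, since $A$ acts on the dual through $S$. Conversely, given a ribbon twist $\theta$, put $\mathbf v:=\theta_A(1)$. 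Naturality with respect to right multiplication $A\to A$, $a\mapsto ab$, gives $\theta_A(b)=\mathbf v b$, while $A$-linearity gives $\theta_A(b)=b\mathbf v$; so $\mathbf v$ is central. Naturality under the maps $\rho_m$ gives $\theta_M=\mathbf v\cdot-$. The balancing axiom and the duality axiom, evaluated at $\Delta(1)$ and at the regular module respectively, return the two defining identities for $\mathbf v$. Invertibility of $\theta_A$ makes $\mathbf v$ invertible. The two assignments are mutually inverse by the same evaluation argument.

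The main obstacle is that $\Rep(A)$ consists of finite-dimensional modules, so the regular module, $A\otimes^A A$ and $A\otimes^A A\otimes^A A$ need not be objects of the category when $A$ is infinite-dimensional. I would first handle finite-dimensional $A$, where the argument above is literally valid. In general I would either extend the braiding and twist to $A$-Mod, writing each module as the filtered union of its finite-dimensional submodules and using naturality, or define $R$ as the compatible family of elements obtained from finite-dimensional quotients $A/I$. The latter produces a genuine element of $A\otimes A$ only if $A$ is residually finite-dimensional. I would therefore state the equivalence precisely in the setting where the regular module is available, which is the finite-dimensional case or the case where one works in $A$-Mod. Everything else is a routine but index-heavy check of the coproduct identities against the hexagon axioms.
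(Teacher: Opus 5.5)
The paper states this proposition without proof, as the standard reconstruction result, so there is no argument of its own to compare against. Your evaluation-at-the-regular-module argument is the standard proof and is correct for finite-dimensional $A$. Two small points need tightening.

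First, the fact that $c_{A,A}$ preserves $A\otimes^A A$ only gives $\Delta^{\op}(1)R=R$. For $R\Delta(1)=R$, use naturality under right multiplications $r_x\otimes r_y$. This gives $c_{A,A}(\Delta(1)T)=\tau(RT)$ for all $T\in A\otimes A$; then take $T=\Delta(1)$.

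Second, evaluating the hexagons at the iterated coproduct $\Delta^2(1)$ yields $(\id\otimes\Delta)(R)=R_{13}R_{12}\Delta^2(1)$ and $(\Delta\otimes\id)(R)=R_{13}R_{23}\Delta^2(1)$, not the axioms themselves. To drop the trailing factor, write $\Delta^2(1)$ as $(\Delta(1)\otimes 1)(1\otimes\Delta(1))$, respectively $(1\otimes\Delta(1))(\Delta(1)\otimes 1)$. Then use $R\Delta(1)=R$ together with $R(1\otimes y)=(y\otimes 1)R$ for $y\in A_s$, respectively $R(x\otimes 1)=(1\otimes x)R$ for $x\in A_t$. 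Both identities follow from $\Delta^{\op}(a)R=R\Delta(a)$ and $R=\Delta^{\op}(1)R\Delta(1)$.

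You rightly used $\bar R R=\Delta(1)$; the paper's definition has a typo there. With the paper's convention $\Delta(\mathbf v)=R_{21}R(\mathbf v\otimes\mathbf v)$, the twist is $\theta_M=\mathbf v\cdot-$, not $\mathbf v^{-1}$.

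Your worry about infinite-dimensional $A$ is not a technicality. In the generality the paper allows ($A$ arbitrary, $\Rep(A)$ the finite-dimensional modules), the statement is false, so no proof can cover that case. Work over $\mathbb{C}$ and take $A=\mathbb{C}[x]$ with $x$ primitive.
\begin{itemize}
\item The units of $A\otimes A\cong\mathbb{C}[x,y]$ are scalars, and $(\Delta\otimes\id)(R)=R_{13}R_{23}$ forces that scalar to be $1$. So $1\otimes 1$ is the only $R$-matrix.
\item For each $\lambda\in\mathbb{C}$, however, $c^\lambda_{M,N}=\tau\circ\exp(\lambda\,x_M\otimes x_N)$ is a braiding on finite-dimensional modules. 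It is natural, and it is $A$-linear because $x_M\otimes x_N$ commutes with $x_M\otimes 1+1\otimes x_N$ and $\Delta(x)$ is cocommutative.
\item The hexagons hold because $x_M\otimes x_{N\otimes P}$ is the sum of the commuting operators $x_M\otimes x_N\otimes 1$ and $x_M\otimes 1\otimes x_P$, and symmetrically for the other hexagon.
\item If $x$ acts on $\mathbb{C}_\mu$ by $\mu$, then $c^\lambda$ on $\mathbb{C}_\mu\otimes\mathbb{C}_\mu$ is $e^{\lambda\mu^2}$. So these braidings are pairwise distinct, and their would-be $R$-matrices $e^{\lambda x\otimes x}$ live only in a completion of $A\otimes A$.
\end{itemize}

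The same example rules out your proposed repairs. $A$ is residually finite-dimensional, yet the compatible family still lands only in the completion. The regular module has no nonzero finite-dimensional submodules, so there is no filtered-union extension to $A$-Mod. Indeed, for $\lambda\neq 0$, $c^\lambda$ extends to no braiding on $A$-Mod, since there braidings do correspond to $R$-matrices in $A\otimes A$.

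So the right conclusion is the one you reach: the proposition needs $A$ finite-dimensional (or, for part (1), $A$-Mod in place of $\Rep(A)$). Under that hypothesis your argument is complete.
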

    
    \begin{proposition}[\cite{NIKSHYCH200391}]\label{ribbon-ext}
        Given a quasitriangular weak Hopf algebra $A$, there is a ribbon weak Hopf algebra $\tilde A = A\oplus A\rib$ with the following relations: for any $a\in A$,
        \begin{align*}
            \rib a &= a\rib, & \rib^2 &= (uS(u))^{-1},\\
            \Delta(\rib) &= R_{21}R(\rib\otimes \rib), & \epsilon(\rib a) &= \epsilon(a),\\
            S(\rib) &= \rib.
        \end{align*}
        The inclusion $A\incl\tilde A$ is a weak Hopf algebra inclusion.
    \end{proposition}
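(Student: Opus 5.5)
We construct $\tilde A$ explicitly as the vector space $A\otimes \bk[\rib]/(\rib^2-(uS(u))^{-1})$, i.e.\ $\tilde A=A\oplus A\rib$. The multiplication is $(a_1+a_2\rib)(a_3+a_4\rib)=a_1a_3+a_2a_4(uS(u))^{-1}+(a_1a_4+a_2a_3)\rib$. Since $uS(u)$ is central and invertible, this is a well-defined unital associative algebra; it is the quotient of $A[t]$ by the central relation $t^2=(uS(u))^{-1}$. The coalgebra structure extends that of $A$ by
\[
\Delta(a\rib)=\Delta(a)R_{21}R(\rib\otimes\rib),\qquad \epsilon(a\rib)=\epsilon(a),\qquad S(a\rib)=\rib S(a)=S(a)\rib.
\]
The inclusion $A\incl\tilde A$ then preserves all structure by construction and is injective. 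The work is to verify the axioms of Definition \ref{wha-def} together with the ribbon conditions.

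The key steps, in order, are as follows.
\begin{enumerate}[(1)]
\item Show that $\Delta$ is multiplicative. This reduces to three facts:
\begin{itemize}
\item $\Delta(\rib)$ commutes with $\Delta(a)$. This holds because $\Delta(a)R_{21}R=R_{21}R\Delta(a)$ follows from $\Delta^{\op}(a)R=R\Delta(a)$ applied twice, and $\rib$ is central.
\item $\Delta(\rib)^2=\Delta((uS(u))^{-1})$. This uses the standard identity $(R_{21}R)^{-1}$-twisted relation $\Delta(uS(u))=(uS(u)\otimes uS(u))(R_{21}R)^{-2}$ in the weak setting \cite{NIKSHYCH200391}, with inverses interpreted relative to $\Delta(1)$ via $\bar R$.
\item $\Delta(1)\Delta(\rib)=\Delta(\rib)$, which follows from $R\Delta(1)=R$ and $\Delta^{\op}(1)R=R$.
\end{itemize}
\item Prove coassociativity on $\rib$ from the hexagon identities $(\Delta\otimes\id)(R)=R_{13}R_{23}$ and $(\id\otimes\Delta)(R)=R_{13}R_{12}$. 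One expands both $(\Delta\otimes\id)\Delta(\rib)$ and $(\id\otimes\Delta)\Delta(\rib)$ into the same product of six $R$-factors times $\rib^{\otimes 3}$, using the Yang--Baxter equation.
\item Check counitality and axiom (3). The essential point is that $(\epsilon\otimes\id)(R_{21}R)=1$, modulo the $\Delta(1)$ factors. This comes from the identities $(\epsilon_s\otimes\id)(R)=\Delta(1)$ and $(\epsilon_t\otimes\id)(R)=\Delta^{\op}(1)$ listed above, combined with $\epsilon(\rib a)=\epsilon(a)$.
\item Check the antipode axioms on $a\rib$. For example, compute $(a\rib)^{(1)}S((a\rib)^{(2)})=a^{(1)}R^{(2')}R^{(1)}\rib^2 S(\cdots)$ and reduce it using $u=S(R^{(2)})R^{(1)}$, $S(u)$, and $\rib^2=(uS(u))^{-1}$. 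The goal is to obtain $\epsilon_t(a\rib)=\epsilon_t(a)$.
\item The ribbon axioms ($\rib$ central, $S(\rib)=\rib$, and the $\Delta(\rib)$ formula) hold by definition, taking the $R$-matrix of $\tilde A$ to be that of $A$. Quasitriangularity of $\tilde A$ then only needs $\Delta^{\op}(\rib)R=R\Delta(\rib)$, which again follows from $R_{21}R$ commuting appropriately.
\end{enumerate}

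I expect the main obstacle to be step (4), and specifically the antipode identities. In the weak case $R$ is not invertible and $u$ must be manipulated using $\bar R$ and the projections $\Delta(1)$. I would first verify that $\rib$ satisfies $\rib^{(1)}S(\rib^{(2)})=1$ directly, reducing it to $uS(u)\rib^2=1$. I would then extend to $a\rib$ using centrality of $\rib$ together with the identities for $x\in A_t$, namely $(1\otimes x)R=R(x\otimes 1)$. Alternatively, one can invoke \cite{NIKSHYCH200391} or the Reshetikhin--Turaev argument verbatim, checking only the places where $\Delta(1)\neq 1\otimes 1$.
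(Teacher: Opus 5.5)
The paper gives no proof of this proposition; it imports it from \cite{NIKSHYCH200391}. Your direct, Reshetikhin--Turaev-style verification is therefore a genuinely self-contained alternative to the citation, and most of it works as you describe. Steps (1) and (2) never need $R$ to be invertible. Multiplicativity uses only $\Delta(a)R_{21}R=R_{21}R\Delta(a)$, $\Delta(1)R_{21}R=R_{21}R=R_{21}R\Delta(1)$ and the weak formula for $\Delta(u)$. Coassociativity reduces, after two Yang--Baxter moves, to the fact that $R_{32}R_{23}$ commutes with $(\id\otimes\Delta)(R)$.

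Step (4) is easier than you expect. With $X=R_{21}R$, one has $(a\rib)^{(1)}S((a\rib)^{(2)})=a^{(1)}\,X^{(1)}S(X^{(2)})\,S(a^{(2)})\,\rib^2$. The middle factor satisfies $X^{(1)}S(X^{(2)})=uS(u)$, which follows from $(S\otimes S)(R)=R$, $S(u)yS(u)^{-1}=S^{-2}(y)$ and $S^2(u)=u$. So neither $\bar R$ nor the $A_t$-identities are needed. Axiom (6) then follows from (5), since $S$ is anti-multiplicative on $\tilde A$ (using $S(uS(u))=uS(u)$).

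\textbf{Step (3) is where the weak setting really bites, and there your argument has a gap.}

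\emph{First issue: counitality and axiom (3) do not follow from $(\epsilon\otimes\id)(R_{21}R)=1$.} Because $\epsilon$ is only weakly multiplicative, that identity does not give
$(\epsilon\otimes\id)\Delta(a\rib)=\epsilon(a^{(1)}X^{(1)})\,a^{(2)}X^{(2)}\rib=a\rib$.
It also does not give axiom (3) when $b_2$ involves $\rib$. What you need is that $X$ agrees with $\Delta(1)$ once one leg is projected, for example
$(\epsilon_t\otimes\id)(X)=(\epsilon_t\otimes\id)\Delta(1)$ and $(\id\otimes\epsilon_s)(X)=(\id\otimes\epsilon_s)\Delta(1)$.
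Since $\epsilon(a^{(1)}X^{(1)})=\epsilon(a^{(1)}\epsilon_t(X^{(1)}))$, the first of these lets you replace $X$ by $\Delta(1)$ in the left counit axiom and in axiom (3).

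These identities are true and can be extracted from the lemma listing the identities for $R$. The three facts $\epsilon_t(xy)=\epsilon_t(x\epsilon_t(y))$, $(\epsilon_t\otimes\id)(R)=\Delta^{\op}(1)$ and $R\Delta(1)=R$ reduce $(\epsilon_t\otimes\id)(X)$ to $\epsilon_t(R^{(2)})\otimes R^{(1)}$. That expression is then evaluated with the lemma's formula for $(\id\otimes\epsilon_t)(R)$.

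\emph{Second issue: compatibility of $\epsilon$ with $\rib^2=(uS(u))^{-1}$ is never checked.} Axiom (3) with $b_1=b_3=\rib$ and $b_2=a$ says $\epsilon(a(uS(u))^{-1})=\epsilon(a)$, which is equivalent to $\epsilon_t(uS(u))=1$. In the Hopf case this is just $\epsilon(u)=1$. Here you must argue in three steps:
\begin{enumerate}
\item $\epsilon_t(u)=\epsilon_t(S(R^{(2)})\epsilon_t(R^{(1)}))=\epsilon_t(S(1^{(1)})1^{(2)})=1$.
\item Analogously $\epsilon_s(u)=1$, so $\epsilon_t(S(u))=S(\epsilon_s(u))=1$.
\item Hence $\epsilon_t(uS(u))=\epsilon_t(u\,\epsilon_t(S(u)))=1$.
\end{enumerate}
With these two additions, your outline becomes a complete proof.
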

    For further discussion of quasitriangular and ribbon weak Hopf algebras, see \cite{NIKSHYCH200391,Zhang2015SovereignAR}. For a discussion of the representation theory of the formal ribbon extension described in Proposition \ref{ribbon-ext} in the case of Hopf algebras, see \cite{kolt2025formalribbonextensionquasitriangular} written by the present article's author. Most of the theory developed may be transported directly to the weak Hopf algebra case. The exception is that we can only now prove the following theorem, which could not be done as directly as in the referenced article. 

    \begin{theorem}\label{ribbon-exact}
        Suppose $A$ is a quasitriangular, connected, weak Hopf algebra. Then, the cokernel Hopf algebra of the inclusion $A\incl\tilde A$ is $\bk[\bZ_2]$, where $g$ is the $\bZ_2$-generator. Moreover, 
        $$\Vec_{\bZ_2}\to \Rep(\tilde A)\to \Rep(A)$$
        is an exact sequence of braided tensor categories, where $\Vec_{\bZ_2}$ has the trivial braiding.
    \end{theorem}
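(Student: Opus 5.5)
The plan is to apply Theorem \ref{exact-seq-of-rep-cats} to the inclusion $k:A\incl\tilde A$, after computing the cokernel by the method described at the start of Section \ref{sec:examples}.

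First, the hypotheses of Theorem \ref{exact-seq-of-rep-cats}. Since $\tilde A = A\oplus A\rib$, we have $\tilde A$ free as a right $A$-module on $\{1,\rib\}$ (using that $\rib$ is central), so $k$ is pure and $\tilde A$ is finitely generated over $A$. For connectivity of $\tilde A$, note that $\tilde A_t = A_t$, because $\epsilon_t(\rib a)=\epsilon_t(a)$ by the counit relation $\epsilon(\rib a) = \epsilon(a)$. Moreover $Z(\tilde A)\cap A_t\subseteq Z(A)\cap A_t=\bk 1$, and Proposition \ref{connected} then gives connectivity. For Inv-normality I will use Theorem \ref{full-commute-with-At} with $V=\spa\{1,\rib\}$. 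Then $\tilde A = k(A)V$. Centrality of $\rib$ gives $k(A_t^+)\rib = \rib k(A_t^+)\subseteq \tilde A k(A_t^+)$, and $k(A_t^+)1\subseteq \tilde A k(A_t^+)$ is trivial because $A_t^+$ is a left ideal.

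Second, the cokernel. By Theorem \ref{V-surj}, $H=\pi(zV)$ for a rank one $z\in A_t$ with $\epsilon(z)=1$. Using $\pi(z a)=\pi(z\epsilon_t(a))$-type reductions, $\pi(z)=\epsilon(z)1_H=1_H$, so $H$ is spanned by $1_H$ and $g:=\pi(z\rib)$. Hence $\dim H\le 2$.

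To show $H\cong\bk[\bZ_2]$, I will exhibit a surjective Hopf map $\pi':\tilde A\to\bk[\bZ_2]$ defined by $\pi'(a+b\rib)=\epsilon(a)1+\epsilon(b)g$, and then invoke the universal property in Theorem \ref{coker}(e). Checking that $\pi'$ is a Hopf map requires the following.
\begin{itemize}
\item \emph{Birelative multiplicativity.} This reduces to the weak bialgebra counit axiom $\epsilon(a_1a_2)=\epsilon(a_1S(1^{(1)}))\epsilon(1^{(2)}a_2)$ together with centrality of $\rib$. For the product $\rib\cdot\rib$ we need $\epsilon(ab(uS(u))^{-1})=\epsilon(a1^{(1)})\epsilon(S(1^{(2)})b)$, i.e.\ that $\epsilon$ on $(uS(u))^{-1}$ behaves like $1$. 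This holds because $uS(u)^{-1}$ is grouplike, so that $uS(u)$ and its inverse lie in a suitable form with $\epsilon$-values matching.
\item \emph{Comultiplicativity on $\rib$.} We need $(\pi'\otimes\pi')(R_{21}R(\rib\otimes\rib))=g\otimes g$. This follows from the identities $(\epsilon_s\otimes\id)(R)=\Delta(1)$ and their variants, which show that $\epsilon\otimes\epsilon$ kills the $R$-factors down to $\epsilon(\cdot)$ of $\Delta(1)$-terms.
\item \emph{Antipode.} This holds since $S(\rib)=\rib$ and $g=g^{-1}$.
\end{itemize}
Since $\pi'(\rib)=g\ne 1$, the induced map $H\to\bk[\bZ_2]$ is surjective, and combined with $\dim H\le 2$ it is an isomorphism. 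Then Theorem \ref{exact-seq-of-rep-cats} gives the exact sequence $\Rep(\bk[\bZ_2])=\Vec_{\bZ_2}\to\Rep(\tilde A)\to\Rep(A)$.

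Third, braiding. The functor $k^*$ is braided because $k$ preserves $R$ (we have $R_{\tilde A}=R_A$). For $\pi^*$, I apply Proposition \ref{pi-braided} with $R_H=1\otimes1$: we need $\pi(R^{(1)})\otimes\pi(xR^{(2)})=\epsilon(x)1\otimes1$ for $x\in A_t$. Since $R\in A\otimes A$ and $\pi|_A=\epsilon_{A,H}$, the left side is $\epsilon(R^{(1)})\epsilon(xR^{(2)})$. Using $(\id\otimes\epsilon_t)(R)=S(1^{(1)})\otimes1^{(2)}$, this equals $\epsilon(xS^{-1}\ldots)$, which collapses to $\epsilon(x)$.

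The main obstacle I expect is the verification that $\pi'$ is birelatively multiplicative on products involving $\rib^2=(uS(u))^{-1}$, i.e.\ controlling $\epsilon(a(uS(u))^{-1}b)$. I would first try to show that $\epsilon((uS(u))^{-1}c)=\epsilon(c)$ for all $c$, using centrality together with $\epsilon$ applied to the ribbon relation $\Delta(\rib)=R_{21}R(\rib\otimes\rib)$ in $\tilde A$, where $\epsilon(\rib c)=\epsilon(c)$ is already built into the construction of Proposition \ref{ribbon-ext}.
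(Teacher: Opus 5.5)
Your proposal is correct and follows the paper's proof essentially step for step. The shared steps are: $V=\spa\{1,\rib\}$ in Theorem \ref{full-commute-with-At} for Inv-normality, Theorem \ref{V-surj} to get $\dim H\le 2$, the Hopf map $\pi'(a_1+a_2\rib)=\epsilon(a_1)e+\epsilon(a_2)g$ together with the universal property of the cokernel, freeness of $\tilde A$ over $A$, and finally Theorem \ref{exact-seq-of-rep-cats}. Your checks that $\tilde A$ is connected (via $\tilde A_t=A_t$) and that $\pi^*$ is braided via Proposition \ref{pi-braided} with $R_H=1\otimes 1$ are also correct. They fill in points the paper's proof leaves implicit.

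The one soft spot is your justification of $\epsilon((uS(u))^{-1}c)=\epsilon(c)$. The grouplike element in the paper is $uS(u)^{-1}$, not $\rib^2=(uS(u))^{-1}$, so the sentence appealing to grouplikeness does not establish anything. The identity does follow in one line from the counit axiom of the weak bialgebra $\tilde A$, taking the middle factor to be $1$. Using $\epsilon(\rib a)=\epsilon(a)$ and centrality of $\rib$, for $c\in A$ we get $\epsilon(\rib\,\rib c)=\epsilon(\rib 1^{(1)})\,\epsilon(1^{(2)}\rib c)=\epsilon(1^{(1)})\,\epsilon(1^{(2)}c)=\epsilon(c)$. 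This is exactly what is needed for birelative multiplicativity of $\pi'$ on products $a\rib\cdot b\rib$. The paper uses this identity without comment.
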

    \begin{proof}
        Observe that $V = \spa\{1, \rib\}$ satisfies the conditions of Theorem \ref{full-commute-with-At}, so the inclusion $A\incl \tilde A$ is Inv-normal. By Theorem \ref{V-surj}, $H$ is generated, as a vector space, by $1$ and $\pi(z\rib)$ for some $z\in \tilde A_t$, so it is at most two-dimensional. There is a Hopf map $\pi':\tilde A\to \bk[\bZ_2]$ given by 
        $$\pi':a_1+a_2\rib\mapsto \epsilon(a_1)e+\epsilon(a_2)g.$$
        It is clear that this preserves antipode, counit, and $\pi'(a) = \epsilon(a)e$ for any $a\in A$. Moreover,
        \begin{align*}
            \pi'((a + a'\rib)^{(1)})\otimes \pi'((a + a'\rib)^{(2)}) &= \epsilon(a^{(1)})\otimes\epsilon(a^{(2)}) + \epsilon((a')^{(1)}R^{(2)}R^{(1)})g\otimes \epsilon((a')^{(2)}R^{(1)}R^{(2')})g\\
            &= \epsilon(a^{(1)})\otimes\epsilon(a^{(2)}) + \epsilon(1^{(2)}(a')^{(2)}1^{(2')})g\otimes \epsilon(1^{(1)}(a')^{(1)}1^{(1')})g\\
            &= \epsilon(a)1\otimes 1 + \epsilon(a')g\otimes g = \Delta(\pi(a + a'\rib)).
        \end{align*}
        Moreover, $\pi'$ is birelatively multiplicative. We verify relative multiplicativity with respect to $\tilde A_t$.
        \begin{align*}
            \pi'((a_1+a_1'\rib)S(1^{(1)}))\pi'(1^{(1)}(a_2+a_2'\rib)) &= \epsilon(a_1a_2 + a_1'a_2') + \epsilon(a_1a_2' + a_1'a_2)g\\
            &= \epsilon(a_1a_2 + a_1'a_2'(uS(u))^{-1}) + \epsilon(a_1a_2' + a_1'a_2)g\\
            &= \pi((a_1+a_1'\rib)(a_2+a_2'\rib)).
        \end{align*} 
        The case of $\tilde A_s$ is identical. Note that $\pi':B\to \bk[\bZ_2]$ is surjective and hence factors through a surjective Hopf algebra homomorphism $\gamma:H\to\bk[\bZ_2]$. However, $H$ is at most 2-dimensional, so $\gamma$ is an isomorphism.
        
        It is clear that $\tilde A$ is finitely generated and free over $A$. Thus, the result follows by Theorem \ref{exact-seq-of-rep-cats}.
    \end{proof}
    \subsection{Crossed products}
    A strictly exact sequence $A\to B\to H$ of Hopf algebras is equivalent to a cocycled crossed product of $A$ and $H$ \cite{BlattnewMontgomery1989}. That is, given a strictly exact sequence, there is a weak action $\cdot$ of $H$ on $A$ and a compatible cocycle $\sigma$ so that $B\cong A\#_{\sigma} H$. Moreover, any cocycled crossed product Hopf algebra $A\#_\sigma H$ fits into a canonical strictly exact sequence $A\to A\#_\sigma H\to H$. 
    
    In this section, we discuss cocycled crossed products when $A$ is a weak Hopf algebra (but $H$ is still a Hopf algebra to align with our theory). We give sufficient conditions under which $A\#_\sigma H$ is a weak Hopf algebra and show that, assuming $H$ is finite-dimensional and $A$ is connected, there is an exact sequence of representation categories
    $$\Rep(H)\to\Rep(A\#_\sigma H)\to \Rep(A).$$
    Note: because the conditions of Theorem \ref{exact-seq-of-rep-cats} do not agree with a strictly exact sequence in the Hopf algebra case, we should expect no correspondence between sequences which satisfy these conditions and cocycled crossed products. 
    
    \begin{definition}\label{weak}
        Let $H$ be a Hopf algebra and $A$ be a unital associative algebra. A \textit{weak action of $H$ on $A$} is a linear map $\cdot: H\otimes A\to A$ such that, for all $a, a_1, a_2\in A$ and $h, h_1, h_2\in H$,
        \begin{enumerate}
            \item $h\cdot (a_1 a_2) = (h^{(1)}\cdot a_1)(h^{(2)}\cdot a_2)$,
            \item $h\cdot 1_A = \epsilon(h)1_A$,
            \item $1_H\cdot a = a$.
        \end{enumerate}
        A weak action is \textit{symmetric} if, for all $a\in A$ and $h\in H$, 
        $$h^{(1)}\otimes h^{(2)}\cdot a = h^{(2)}\otimes h^{(1)}\cdot a.$$ 
    \end{definition}
    \begin{definition}\label{cocycle}
        Suppose $H$ acts weakly on $A$ via $\cdot:H\otimes A\to A$. A linear map $\sigma:H\otimes H\to A$
        \begin{enumerate}
            \item is \textit{normal} if, for all $h\in H$, $\sigma(h\otimes 1_H) = \sigma(1_H\otimes h) = \epsilon(h)1_A$;
            \item is a \textit{cocycle} if, for all $h_1, h_2, h_3\in H$, 
            $$h_1^{(1)}\cdot \sigma(h_2^{(1)}\otimes h_3^{(1)})\sigma(h_1^{(2)}\otimes (h_2^{(2)}h_3^{(2)})) = \sigma(h_1^{(1)}\otimes h_2^{(1)})\sigma((h_1^{(2)}h_2^{(2)})\otimes h_3);$$
            \item satisfies the \textit{twisted module condition} if, for all $h_1, h_2\in H$ and $a\in A$,
            $$h_1^{(1)}\cdot (h_2^{(1)}\cdot a)\sigma(h_1^{(2)}\otimes h_2^{(2)}) = \sigma(h_1^{(1)}\otimes h_2^{(1)})((h_1^{(2)}h_2^{(2)})\cdot a);$$
            \item is \textit{symmetric} if, for all $h_1, h_2\in H$,
            $$h_1^{(1)}h_2^{(1)}\otimes \sigma(h_1^{(2)}\otimes h_2^{(2)})=h_1^{(2)}h_2^{(2)}\otimes \sigma(h_1^{(1)}\otimes h_2^{(1)}).$$
        \end{enumerate}
    \end{definition}
    Note that we do not assume that a weak action $\cdot:H\otimes A\to A$ satisfies $h_1\cdot (h_2\cdot a) = (h_1h_2)\cdot a$, so $A$ is not a module over $A$. This is the source of weakness in Definition \ref{weak}. A map $\sigma:H\otimes H\to A$ which satisfies the twisted module condition allows us to achieve this equality up to a twist by $\sigma$. 

    \begin{lemma}[{\cite[Lem. 4.4--5]{BlattnerCohen1986}}]\label{crossed-algebra}
        Suppose $\cdot: H\otimes A\to A$ is a weak action of a Hopf algebra $H$ on a unital associative algebra $A$. Let $\sigma:H\otimes H\to A$ be a linear map and denote by $A\#_\sigma H$ the vector space $A\otimes H$ with simple tensors denoted $a\# h$. Define a multiplication on $A\#_\sigma H$ as follows: for any $a_1, a_2\in A$ and $h_1, h_2\in H$, 
        $$(a_1\# h_1)(a_2\# h_2) = [a_1 (h_1^{(1)}\cdot a_2)\sigma(h_1^{(2)}\otimes h_2^{(1)})]\# [h_1^{(3)}h_2^{(2)}].$$
        Then, this multiplication defines a unital algebra structure on $A\#_\sigma H$ if and only if $\sigma$ is a normal cocycle with the twisted module condition.
    \end{lemma}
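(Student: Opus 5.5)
We take $1_A\# 1_H$ as the candidate unit and treat unitality and associativity separately. In each case we evaluate the multiplication on special elements, where the formula collapses, and apply $\id_A\otimes\epsilon_H$ to the $H$-leg to extract the scalar conditions. This handles "only if". For "if", we reduce associativity on arbitrary triples to associativity on a few generating configurations, and these are exactly the cocycle and twisted module identities.

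\emph{Unit.} By the definition of a weak action,
\[(1\#1)(a\#h)=(1_H\cdot a)\,\sigma(1_H\otimes h^{(1)})\# h^{(2)}=a\,\sigma(1\otimes h^{(1)})\#h^{(2)}\]
and
\[(a\#h)(1\#1)=a(h^{(1)}\cdot 1_A)\sigma(h^{(2)}\otimes 1)\#h^{(3)}=a\,\sigma(h^{(1)}\otimes 1)\#h^{(2)}.\]
If $\sigma$ is normal, both products equal $a\#h$. Conversely, take $a=1$ and apply $\id\otimes\epsilon_H$: we get $\sigma(1\otimes h)=\epsilon(h)1_A=\sigma(h\otimes 1)$. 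So $1\#1$ is a two-sided unit exactly when $\sigma$ is normal. Since the statement concerns this fixed multiplication, we read "unital" as unital with unit $1\#1$, following \cite{BlattnerCohen1986}.

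\emph{Necessity of the cocycle and twisted module conditions.} Assume the algebra is associative with unit $1\#1$, so $\sigma$ is normal.
\begin{itemize}
\item Compare $[(1\#h_1)(1\#h_2)](1\#h_3)$ with $(1\#h_1)[(1\#h_2)(1\#h_3)]$. Using $h\cdot 1=\epsilon(h)1$, the left side is $\sigma(h_1^{(1)}\otimes h_2^{(1)})\sigma(h_1^{(2)}h_2^{(2)}\otimes h_3^{(1)})\#h_1^{(3)}h_2^{(3)}h_3^{(2)}$. The right side is $(h_1^{(1)}\cdot\sigma(h_2^{(1)}\otimes h_3^{(1)}))\sigma(h_1^{(2)}\otimes h_2^{(2)}h_3^{(2)})\#h_1^{(3)}h_2^{(3)}h_3^{(3)}$. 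Applying $\id\otimes\epsilon$ gives the cocycle identity.
\item Compare $[(1\#h_1)(1\#h_2)](a\#1)$ with $(1\#h_1)[(1\#h_2)(a\#1)]$. After applying $\id\otimes\epsilon$ and using normality, this gives the twisted module condition.
\end{itemize}

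\emph{Sufficiency.} Assume $\sigma$ is a normal cocycle satisfying the twisted module condition. The multiplication formula and normality give
\[(a\#1)(b\#h)=ab\#h,\qquad (a\#1)(1\#h)=a\#h,\qquad (a\#h)(b\#k)=(a\#1)\big[(1\#h)(b\#k)\big].\]
Hence every element is a sum of products $(a\#1)(1\#h)$, and left multiplication by $a\#1$ is associative with everything. This reduces associativity to triples whose first factor is $1\#h_1$.

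The key intermediate step is the identity
\[(1\#h)(b\#k)=\big[(1\#h)(b\#1)\big](1\#k),\qquad (1\#h)(b\#1)=(h^{(1)}\cdot b)\#h^{(2)}.\]
It follows from normality and $\Delta$ being an algebra map. With it, the remaining triples reduce to two forms: $(1\#h_1)(1\#h_2)(1\#h_3)$, handled by the cocycle identity, and $(1\#h_1)(1\#h_2)(b\#1)$, handled by the twisted module condition. We also use $h\cdot(ab)=(h^{(1)}\cdot a)(h^{(2)}\cdot b)$ to push the action through products of $A$-factors. For the full triple $(1\#h_1)(a\#h_2)(b\#h_3)$, we expand both sides with Sweedler notation. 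We then move in turn the action past $\sigma$ (twisted module) and two $\sigma$'s past each other (cocycle). Coassociativity of $H$ is used to realign indices after each step.

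The main obstacle is this last bookkeeping. One must check that every mixed configuration really factors through the two basic identities, with no leftover dependence on a full module axiom $h\cdot(k\cdot a)=(hk)\cdot a$, which we do not have. The step I would try first is to verify carefully the intermediate identity $(1\#h)(b\#k)=[(1\#h)(b\#1)](1\#k)$ and its associativity-compatible analogue. Once these hold, the general case is a formal consequence of the three generating cases.
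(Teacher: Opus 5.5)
Your proposal is correct and is essentially the standard Blattner--Cohen--Montgomery argument that the paper cites in place of a proof. Normality is equivalent to $1\#1$ being a two-sided unit, and the cocycle and twisted module conditions come from applying $\id\otimes\epsilon$ to associativity of $(1\#h_1)(1\#h_2)(1\#h_3)$ and $(1\#h_1)(1\#h_2)(a\#1)$. For sufficiency, after your reduction to a first factor $1\#h$, expand $((1\#h)(a\#k))(b\#m)$ and apply the twisted module condition, then the cocycle condition, then the measuring axiom. This never uses $h\cdot(k\cdot a)=(hk)\cdot a$, and it is this direct expansion, rather than a purely formal reduction to generators, that actually closes the argument.

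Your reading of ``unital'' as ``with unit $1\#1$'' is also the right one, and it is needed. Take the trivial action and $\sigma(h\otimes k)=\epsilon(h)\epsilon(k)u$ with $u\neq 1$ a central unit of $A$. Then $\sigma$ is a cocycle satisfying the twisted module condition, and the product is associative with unit $u^{-1}\#1$, yet $\sigma$ is not normal.
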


    \begin{theorem}
        Let $\cdot: H\otimes A\to A$ be a weak action of a Hopf algebra $H$ on a weak Hopf algebra $A$ and $\sigma:H\otimes H\to A$ be a normal cocycle with the twisted module condition. Endow $A\#_\sigma H$ with the algebra structure in Lemma \ref{crossed-algebra}. Suppose $\cdot$ and $\sigma$ are both symmetric coalgebra homomorphisms. Suppose further $S_A\circ \sigma$ is the convolution inverse of $\sigma$ (equivalently, $\epsilon_t^A(\sigma(h_1\otimes h_2)) = \epsilon_s^A(\sigma(h_1\otimes h_2)) =\epsilon_H(h_1)\epsilon_H(h_2)1_A$). Then, $A\#_\sigma H$ is a weak Hopf algebra with the tensor product coalgebra structure. The antipode is given by 
        $$S(a\# h) = [S_A(\sigma(S_H(h^{(2)})\otimes h^{(3)}))\# S_H(h^{(1)})][S_A(a)\# 1].$$ 
        Moreover, if $A$ is connected and $H$ is finite-dimensional, there is an exact sequence of tensor categories
        $$\Rep(H)\to \Rep(A\#_\sigma H)\to \Rep(A).$$
    \end{theorem}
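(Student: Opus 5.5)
The proof splits into two parts. First, verify the weak Hopf algebra axioms for $A\#_\sigma H$ with the tensor product coalgebra $\Delta(a\# h) = (a^{(1)}\# h^{(1)})\otimes(a^{(2)}\# h^{(2)})$. Second, apply Theorem \ref{exact-seq-of-rep-cats} to the sequence $A\overset{k}{\to} A\#_\sigma H\overset{\pi}{\to} H$, where $k(a) = a\#1$ and $\pi(a\# h) = \epsilon_A(a)h$.

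\emph{Weak Hopf structure.} I would first check multiplicativity of $\Delta$. Expanding $\Delta((a_1\# h_1)(a_2\# h_2))$ uses that $\cdot$ and $\sigma$ are coalgebra maps, so that $\Delta(h\cdot a) = h^{(1)}\cdot a^{(1)}\otimes h^{(2)}\cdot a^{(2)}$ and similarly for $\sigma$. This produces the factors in the wrong order relative to $\Delta(a_1\# h_1)\Delta(a_2\# h_2)$. The symmetry hypotheses on $\cdot$ and $\sigma$ exist precisely to permute the Sweedler legs of $h_1$ and $h_2$ so the two expressions agree. This bookkeeping is the computational heart of the proof.

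Next, since $\Delta(1\#1) = \Delta_A(1)\otimes(1\#1)$, axiom (2) of Definition \ref{wha-def} is inherited from $A$. For axiom (3), I would write $\epsilon(a\# h) = \epsilon_A(a)\epsilon_H(h)$ and reduce to $A$. This uses $\epsilon_A(h\cdot a)=\epsilon_H(h)\epsilon_A(a)$ and $\epsilon_A(x\sigma(h_1\otimes h_2)y) = \epsilon_H(h_1)\epsilon_H(h_2)\epsilon_A(xy)$, the latter coming from $\epsilon_t(\sigma)=\epsilon_s(\sigma)=\epsilon\epsilon$ together with the product rules (4a,b). From this I expect $\epsilon_t(a\# h) = \epsilon_t^A(a)\# 1$ and $\epsilon_s(a\# h) = \epsilon_s^A(a)\#1$. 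The antipode axioms (4)--(6) are then checked with the displayed formula for $S$. The essential input is that $S_A\circ\sigma$ is the convolution inverse of $\sigma$, which lets $\sigma(h^{(1)}\otimes S_H(h^{(2)}))$-type terms cancel exactly as in the Hopf case of Blattner--Montgomery. Here the Hopf-algebra convolution-inverse $\sigma^{-1}$ is replaced by $S_A\circ\sigma$, and the unit $1$ is replaced by $\epsilon_t$ or $\epsilon_s$ values in $A$. I expect axiom (6), $S(x^{(1)})x^{(2)}S(x^{(3)})=S(x)$, together with multiplicativity of $\Delta$, to be the main obstacle. For (6) I would try to reduce to the case $h\in H$ and $a\in A$ separately, using anti-multiplicativity once it is established.

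\emph{Exactness.} The map $k$ is visibly a weak Hopf inclusion, and by the formulas above $B_t = A_t\#1$, so $B$ is connected whenever $A$ is. The element $Z(B)\cap B_t$ lies in $Z(A)\cap A_t=\bk 1$. For Inv-normality, I would apply Theorem \ref{full-commute-with-At} with $V = 1\# H$. Clearly $B = k(A)V$. For $x\in A_t^+$ we have $(x\#1)(1\# h) = x\# h = (1\#h^{(2)})\bigl((S^{-1}_H(h^{(1)})\cdot x)\#1\bigr)$ up to $\sigma$-corrections, and I would use $h\cdot x\in A_t^+$ (from $\epsilon_t^A(h\cdot x) = h\cdot\epsilon_t^A(x)$, via the coalgebra-map property) to land in $Bk(A_t^+)$. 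If this rewriting is awkward, I would instead show directly that $(1\# h)\cdot\Lambda$ is $A$-invariant for invariant $\Lambda$. Purity and finite generation follow because $B\cong A\otimes H$ is free over $A$ of rank $\dim H$.

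Finally I would identify the cokernel. The map $\pi$ is a Hopf map: it is a coalgebra map, it preserves the antipode by the formula for $S$, and it is birelatively multiplicative since $\epsilon_A(x\sigma y)$ factors as above. It is also surjective. By Theorem \ref{V-surj}, the cokernel is spanned by $\pi(zV)$, which has dimension at most $\dim H$. The universal property then gives a surjection from the cokernel onto $H$, which is an isomorphism by dimension count. Theorem \ref{exact-seq-of-rep-cats} then finishes the proof.
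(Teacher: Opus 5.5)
Your route is the paper's. You verify the weak bialgebra and antipode axioms for the tensor-product coalgebra. You then feed $k(a)=a\#1$, the Hopf map $\epsilon\otimes\id$ and $V=1\#H$ into Theorems~\ref{full-commute-with-At}, \ref{V-surj} and~\ref{exact-seq-of-rep-cats}, and identify the cokernel with $H$ through the universal property. Your dimension count does the job of the paper's remark that $\epsilon\otimes\id$ is injective on $z\#H$. Your check that $B$ is connected, via $B_t=A_t\#1$, is a hypothesis of Theorem~\ref{exact-seq-of-rep-cats} that the paper's proof leaves implicit, so that is a plus.

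The genuine gap is hypothesis (d). Dominance comes from $M\cong A\otimes_A M\hookrightarrow B\otimes_A M$, so you need $B$ to be finitely generated and pure as a \emph{right} $A$-module. The paper's criterion for this is right freeness.

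The identification $B=A\otimes H$ only exhibits left freeness, since $(a'\#1)(a\#h)=a'a\#h$. On the right, $(a\#h)(a'\#1)=a(h^{(1)}\cdot a')\#h^{(2)}$ is twisted by the weak action. Moreover, $h\cdot(l\cdot a)$ recovers $(hl)\cdot a$ only after twisting by $\sigma$ (the twisted module condition). So showing that $B$ is free of rank $\dim H$ as a right $A$-module requires an actual argument.

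The paper gets it from \cite[Thm.~1.18]{BlattnewMontgomery1989}. That result uses that $S_A\circ\sigma$ is a convolution inverse of $\sigma$ and that $S_H$ is bijective, which is automatic since $\dim H<\infty$. You need this or an equivalent construction.

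Your Inv-normality step ``up to $\sigma$-corrections'' is likewise where the content lies. The paper writes $(a\#1)(1\#h)$ explicitly as a sum $\sum_i b_i(c_i\#1)$, with each $c_i$ of the form $(h'\cdot a)\,S_A^2(\sigma(h''\otimes h'''))$. Each $c_i$ lies in $A_t^+$ because $\epsilon_t^A(h\cdot a)=h\cdot\epsilon_t^A(a)$ and $\epsilon_t^A(\sigma(h_1\otimes h_2))=\epsilon_H(h_1)\epsilon_H(h_2)1_A$.

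Finally, normality together with the coalgebra-map property of $\sigma$ forces $\Delta(1_A)=\Delta(\sigma(1_H\otimes 1_H))=1_A\otimes 1_A$. So under these hypotheses $A$, and hence $B$, is an honest Hopf algebra. This is what actually makes your identity $\epsilon_A(x\sigma(h_1\otimes h_2)y)=\epsilon_H(h_1)\epsilon_H(h_2)\epsilon_A(xy)$ hold, rather than the product rules (4a,b) alone. It also makes axiom (6), which you flagged as the main obstacle, immediate from (5).
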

    \begin{proof}
        Coassociativity and counitality are obvious by construction. The preservation of multiplication under $\Delta$ works identically to the case where $A$ is a Hopf algebra, which is well-known.

        By comultiplicativity, we see that
        $$h^{(1)}\cdot 1_A^{(1)}\otimes h^{(2)}\cdot 1_A^{(2)} = (h\cdot 1_A)^{(1)}\otimes (h\cdot 1_A)^{(2)} = \epsilon(h)1_A^{(1)}\otimes 1_A^{(2)}.$$ 
        Therefore, by symmetry,
        \begin{align*}
            h\cdot \epsilon_t(a) &= h\cdot 1_A^{(2)}\epsilon(1_A^{(1)}a) = h^{(1)}\cdot 1_A^{(2)}\epsilon((h^{(2)}\cdot 1_A^{(1)})(h^{(3)}\cdot a)) = h^{(2)}\cdot 1_A^{(2)}\epsilon((h^{(1)}\cdot 1_A^{(1)})(h^{(3)}\cdot a))\\
            &= (h^{(1)}\cdot 1_A)^{(2)}\epsilon((h^{(1)}\cdot 1_A)^{(1)}(h^{(2)}\cdot a)) = 1_A^{(2)}\epsilon(1_A^{(1)}(h\cdot a)) = \epsilon_t(h\cdot a).
        \end{align*}
        Next, we show weak multiplicativity of the counit:
        \begin{align*}
            &\epsilon((a_1\# h_1)(a_2^{(1)}\# h_2^{(1)}))\epsilon((a_2^{(2)}\# h_2^{(2)})(a_3\# h_3))\\
            &= \epsilon((a_1(h_1^{(1)}\cdot a_2^{(1)})\sigma(h_1^{(2)}\otimes h_2^{(1)})\# h_1^{(3)}h_2^{(2)}))\epsilon((a_2^{(2)}(h_2^{(3)}\cdot a_3)\sigma(h_2^{(4)}\otimes h_3^{(1)})\# h_2^{(5)}h_3^{(2)}))\\
            &= \epsilon_A(a_1(h_1^{(1)}\cdot a_2^{(1)}))\epsilon_H(h_1^{(2)}h_2^{(1)})\epsilon_A(a_2^{(2)}(h_2^{(2)}\cdot a_3))\epsilon_H(h_2^{(3)}h_3)\\
            &= \epsilon_A(a_1(h_1^{(1)}\cdot a_2^{(1)}))\epsilon_A((h_1^{(2)}\cdot a_2^{(2)})h_1^{(3)}\cdot(h_2\cdot a_3))\epsilon_H(h_3)\\
            &= \epsilon(a_1(h_1^{(1)}\cdot a_2)\sigma(h_1^{(2)}\otimes h_2^{(1)})((h_1^{(3)}h_2^{(2)})\cdot a_3)\sigma(h_1^{(4)}h_2^{(3)}\otimes h_3^{(1)})\# h_1^{(5)}h_2^{(4)}h_3^{(2)}).
        \end{align*}
        Showing weak multiplicativity in the other direction follows by the same argument, applying symmetry of $\cdot$.
        Next, weak comultiplicativity of the unit may be seen as
        \begin{align*}
            (1_A^{(1)}\# 1_H)\otimes (1_A^{(2)}\# 1_H)\otimes (1_A^{(3)}\# 1_H) &= (1_A^{(1)}\# 1_H)\otimes (1_A^{(2)}1_A^{(1')}\# 1_H)\otimes (1_A^{(2')}\# 1_H)\\
            &= [(1_A^{(1)}\# 1_H)\otimes (1_A^{(2)}\# 1_H)] [(1_A^{(1')}\# 1_H)\otimes (1_A^{(2')}\# 1_H)].
        \end{align*}
        Thus, $A\#_\sigma H$ is a weak bialgebra. Now, we verify the antipode identities. First observe that we have the following identity involving the cocycle $\sigma$:
        \begin{align*}
            \epsilon(h)1_A &= \sigma(h^{(1)}\otimes S(h^{(2)})h^{(3)}) \\
            &= S(h^{(1)}\cdot \sigma(S(h^{(4)})^{(1)}\otimes h^{(5)}))h^{(2)}\cdot\sigma(S(h^{(4)})^{(2)}\otimes h^{(6)}) \sigma(h^{(3)}\otimes S(h^{(4)})^{(3)}h^{(7)}) \\
            &= S(h^{(1)}\cdot \sigma(S(h^{(4)})^{(1)}\otimes h^{(5)}))\sigma(h^{(2)}\otimes S(h^{(4)})^{(2)}) \sigma(h^{(3)}S(h^{(4)})^{(3)}\otimes h^{(6)})\\
            &= S(h^{(1)}\cdot \sigma(S(h^{(3)})^{(1)}\otimes h^{(4)}))\sigma(h^{(2)}\otimes S(h^{(3)})^{(2)}).
        \end{align*}
        Therefore, we may verify the target antipode axiom as follows:
        \begin{align*}
            & (a^{(1)}\# h^{(1)})S(a^{(2)}\# h^{(2)}) \\
            &= [a^{(1)}\# h^{(1)}][S(\sigma(S(h^{(3)})\otimes h^{(4)}))\#S(h^{(2)})][S(a^{(2)})\#1_H]\\
            &= [a^{(1)}h^{(1)}\cdot S(\sigma(S(h^{(5)})\otimes h^{(6)}))\sigma(h^{(2)}\otimes S(h^{(4)})^{(1)})\# h^{(3)}S(h^{(4)})^{(2)}][S(a^{(2)})\#1_H]\\
            &= [a^{(1)}h^{(1)}\cdot S(\sigma(S(h^{(6)})\otimes h^{(7)}))\sigma(h^{(2)}\otimes S(h^{(5)}))\# h^{(3)}S(h^{(4)})][S(a^{(2)})\#1_H]\\
            &= [a^{(1)}S(h^{(1)}\cdot \sigma(S(h^{(3)})^{(1)}\otimes h^{(4)}))\sigma(h^{(2)}\otimes S(h^{(3)})^{(2)})\# 1_H][S(a^{(2)})\#1]\\
            &= \epsilon_t^A(a)\# \epsilon_H(h)1_H = \epsilon_t(a\# h). 
        \end{align*}
        Similarly, the source antipode axiom may be verified as follows:
        \begin{align*}
            & S(a^{(1)}\# h^{(1)})(a^{(2)}\# h^{(2)}) \\
            &= [S(\sigma(S(h^{(2)})\otimes h^{(3)}))\#S(h^{(1)})][\epsilon_s(a)\# h^{(4)}]\\
            &= [S(\sigma(S(h^{(2)})\otimes h^{(3)})) (S(h^{(1)})^{(1)}\cdot\epsilon_s(a))\sigma(S(h^{(1)})^{(2)}\otimes h^{(4)})\# S(h^{(1)})^{(3)}h^{(5)}]\\
            &= S(\sigma(S(h^{(4)})\otimes h^{(5)})) (S(h^{(3)})\cdot (h^{(7)}\cdot 1_A^{(1)}\epsilon(ah^{(6)}\cdot 1_A^{(2)}))\sigma(S(h^{(2)})\otimes h^{(8)})\# S(h^{(1)})h^{(9)}\\
            &= \epsilon(ah^{(6)}\cdot 1_A^{(2)}))S(\sigma(S(h^{(4)})\otimes h^{(5)})) \sigma(S(h^{(3)})\otimes h^{(7)})(S(h^{(2)})h^{(8)})\cdot 1_A^{(1)}\# S(h^{(1)})h^{(9)}\\
            &= \epsilon(ah^{(1)}\cdot 1_A^{(2)}))1_A^{(1)}\# S(h^{(2)})h^{(3)} = \epsilon(ah\cdot 1_A^{(2)}))1_A^{(1)}\# 1_H = \epsilon_s(a\# h).
        \end{align*}
        The final antipode identity may be verified using the fact that $A$ satisfies the identity as well. Thus, $A\#_\sigma H$ is a weak Hopf algebra.
        
        Finally, we verify the conditions of Theorem \ref{exact-seq-of-rep-cats}. We first show that $\epsilon\otimes\id:A\#_\sigma H\to H$ is a surjective Hopf map. For any $a_1, a_2\in A, h_1, h_2\in H$, we have
        \begin{align*}
            &(\epsilon\otimes \id)((a_1\# h_1)(S(1_A^{(1)})\# 1_H))(\epsilon\otimes \id)((1_A^{(2)}\# 1_H)(a_2\# h_2))\\
            &= \epsilon(a_1h_1^{(1)}\cdot S(1_A^{(1)}))h_1^{(2)}\epsilon(1_A^{(2)}a_2)h_2 = \epsilon(a_1 h_1^{(1)}\cdot \epsilon_t(a_2))h_1^{(2)}h_2\\
            &= \epsilon(a_1 h_1^{(1)}\cdot a_2)h_1^{(2)}h_2 = \epsilon(a_1 (h_1^{(1)}\cdot a_2)\sigma(h_1^{(2)}\otimes h_2^{(1)}))h_1^{(3)}h_2^{(2)} \\
            &= (\epsilon\otimes \id)((a_1\#h_1)(a_2\#h_2)),\\
            &(\epsilon\otimes \id)((a_1\# h_1)(1_A^{(1)}\# 1_H))(\epsilon\otimes \id)((S(1_A^{(2)})\# 1_H)(a_2\# h_2))\\
            &= \epsilon(a_1h_1^{(1)}\cdot 1_A^{(1)})h_1^{(2)}\epsilon(S(1_A^{(2)})a_2)h_2 = \epsilon(a_1h_1^{(1)}\cdot \epsilon_s(S^{-1}(a_2)))h_1^{(2)}h_2\\
            &= \epsilon(a_1(h_1^{(1)}\cdot a_2))h_1^{(2)}h_2= (\epsilon\otimes \id)((a_1\#h_1)(a_2\#h_2)),\\
            \intertext{Thus, $\epsilon\otimes\id$ is birelatively multiplicative. Moreover, this map is antipode preserving:}
            (\epsilon\otimes\id)(S(a\#h)) &= \epsilon(S_A(\sigma(S_H(h^{(3)})\otimes h^{(4)}))S_A(h^{(1)})\cdot a))h^{(2)} = \epsilon(a)S(h).
        \end{align*}
        Preservation of comultiplication and counit and the fact that $(\epsilon\otimes \id)|_{A\# 1_H} = \epsilon_{A,H}$ are easy to verify. Thus, $\epsilon\otimes\id$ factors through a surjective Hopf algebra homomorphism $\im\pi\to H$.  

        Observe that $V = 1_A\# H$ satisfies the properties of Theorem \ref{full-commute-with-At}; of course, $A\#_\sigma H = (A\# 1_H)(1_A\# H)$ and for $a\in A_t^+$ and $h\in H$,
        \begin{align*}
            (a\# 1)(1\# h) &= (h^{(3)}S(h^{(4)})^{(3)}\cdot a)S(\sigma(h^{(2)}\otimes S(h^{(4)})^{(2)})) S^{2}(h^{(1)}\cdot \sigma(S(h^{(4)})^{(1)}\otimes h^{(5)}))\# h^{(6)}\\
            &= S(\sigma(h^{(3)}\otimes S(h^{(4)})^{(3)}))(h^{(2)}\cdot S(h^{(4)})^{(2)}\cdot a) (h^{(1)}\cdot S^2(\sigma(S(h^{(4)})^{(1)}\otimes h^{(5)})))\# h^{(6)}\\
            &= (S(\sigma(h^{(1)}\otimes S(h^{(2)})^{(3)}))\# h^{(4)})((S(h^{(2)})^{(2)}\cdot a)S^2(\sigma(S(h^{(2)})^{(1)}\otimes h^{(3)}))\# 1_H).
        \end{align*}
        It is not hard to see that the second factor is always zero under $\epsilon_t$ because $\epsilon_t(\sigma(h_1\otimes h_2)) = \epsilon(h_1)\epsilon(h_2)1_A$ and $\epsilon_t(h\cdot a) = h\cdot \epsilon_t(a)$. Thus, $A\incl A\#_\sigma H$ is Inv-normal. By Theorem \ref{V-surj}, $\pi(z\# H) = \im\pi$ for some $z$ with counit 1. If $\epsilon(z)h_1 = \epsilon(z)h_2$, then of course $h_1=h_2$. Thus, $\epsilon\otimes\id$ factors through a Hopf algebra isomorphism $\im\pi\cong H$. Right freeness is \cite[Thm. 1.18]{BlattnewMontgomery1989} noting that $S\circ \sigma$ is a convolution inverse of $\sigma$. Finally, $A\#_\sigma H$ is finitely generated over $A$ because $H$ is finite-dimensional. Thus, the result follows by Theorem \ref{exact-seq-of-rep-cats}.
    \end{proof}
    A more satisfying result would be a set of algebraic properties which are equivalent to $A\#_\sigma H$ having a weak Hopf algebra structure. This construction may be generalizable to a case where $\cdot$ is not counit-preserving and/or $S\circ \sigma$ is not the convolution inverse of $\sigma$. Unfortunately, for example, the condition that $\cdot$ must be counit-preserving is not easily verified as the technique in the Hopf algebra case assumes that $\Delta(1_A) = 1_A\otimes 1_A$.

    \section*{Acknowledgements}
        The author is supported by the National Science Foundation Graduate Research Fellowship Program under Grant No. 2139319. Any opinions, findings, and conclusions or recommendations expressed in this material are those of the author and do not necessarily reflect the views of the National Science Foundation.

        The author thanks C. Jones and Z. Wang for useful comments. No artificial intelligence was used in the creation of this manuscript.

\bibliographystyle{abbrv}
\bibliography{bib}

\end{document}